\newcommand{\bE}{\mathbb{E}}
\newcommand{\bN}{\mathbb{N}}
\newcommand{\bP}{\mathbb{P}}
\newcommand{\bR}{\mathbb{R}}
\newcommand{\bZ}{\mathbb{Z}}
\newcommand{\cB}{\mathcal{B}}
\newcommand{\cC}{\mathcal{C}}
\newcommand{\cD}{\mathcal{D}}
\newcommand{\cF}{\mathcal{F}}
\newcommand{\cI}{\mathcal{I}}
\newcommand{\cK}{\mathcal{K}}
\newcommand{\cN}{\mathcal{N}}
\newcommand{\cO}{\mathcal{O}}
\newcommand{\cT}{\mathcal{T}}
\newcommand{\cW}{\mathcal{W}}
\newcommand{\cX}{\mathcal{X}}
\newcommand{\sT}{\mathscr{T}}
\newcommand{\N}{\bN}
\newcommand{\R}{\bR}
\newcommand{\rVar}{\mathrm{Var}}
\newcommand{\rCov}{\mathrm{Cov}}
\newcommand{\diff}{\mathrm{d}}
\newcommand{\scp}[2]{{\left\langle{#1},{#2}\right\rangle}}
\newcommand{\scpSmall}[2]{{\langle{#1},{#2}\rangle}}
\newcommand{\sscp}[2]{{\left\langle\left\langle{#1},{#2}\right\rangle\right\rangle}}
\newcommand{\sscpSmall}[2]{{\langle\langle{#1},{#2}\rangle\rangle}}
\newcommand{\norm}[1]{{\left\lVert{#1}\right\rVert}}
\newcommand{\normSmall}[1]{{\lVert{#1}\rVert}}
\newcommand{\abs}[1]{{\left|{#1}\right|}}
\newcommand{\absSmall}[1]{{|{#1}|}}
\newcommand{\NonparIndex}{{\gamma}}
\newcommand{\shift}{\sT}
\newcommand{\FuncZ}{\varphi}
\newcommand{\FuncY}{\psi}
\newcommand{\FuncX}{\chi}
\newcommand{\FuncPotential}{\bar\psi}
\newcommand{\multiindex}{\bar\gamma}
\newcommand{\assumptionB}{{$(B_{\beta, \underline{\vartheta}, C})$}}
\newtheorem{theorem}{Theorem}
\newtheorem{proposition}[theorem]{Proposition}
\newtheorem{lemma}[theorem]{Lemma}
\newtheorem{corollary}[theorem]{Corollary}
\newtheorem{definition}[theorem]{Definition}
\newtheorem{remark}[theorem]{Remark}
\numberwithin{theorem}{section}
\begin{document}


\title[Diffusivity Estimation from Noisy Observations]{Nonparametric Diffusivity Estimation for the Stochastic Heat Equation from Noisy Observations}
\author[G. Pasemann, M. Rei\ss{}]{Gregor Pasemann\footnotesize{$^{1}$} and Markus Rei\ss{}\footnotesize{$^{1}$}}
\email{gregor.pasemann@hu-berlin.de \\ reismark@hu-berlin.de}
\keywords{nonparametric estimation, Trotter--Kato approximation, localization, diffusivity estimation, stochastic heat equation}
\subjclass[2010]{60H15, 62G05, 62M30}

\begin{abstract}
We estimate nonparametrically the spatially varying diffusivity of a stochastic heat equation from observations perturbed by additional noise.
To that end, we employ a two-step localization procedure, more precisely, we combine local state estimates into a locally linear regression approach.
Our analysis relies on quantitative Trotter--Kato type approximation results for the heat semigroup that are of independent interest.
The presence of observational noise leads to non-standard scaling behaviour of the model.
Numerical simulations illustrate the results.
\end{abstract}

\maketitle

\footnotetext[1]{Humboldt-Universit\"at zu Berlin}


\section{Introduction}

We consider a nonparametric estimation problem for the stochastic heat equation
\begin{align}\label{eq:introduction:spde}
	\partial_t X(t, x) = \nabla\cdot\vartheta(x)\nabla X(t, x) + \sigma\dot W(t, x),\quad t\in[0,T],\,x\in\cD,
\end{align}
driven by Gaussian space-time white noise $\dot W$
on a bounded domain $\cD\subset\R^d$, with unknown spatially varying diffusivity $\vartheta:\cD\rightarrow(0,\infty)$, where we observe a noisy trajectory of the form
\begin{align}\label{eq:introduction:observation}
	Y(t, x) = X(t, x) + \varepsilon\dot V(t, x),\quad t\in[0,T],\,x\in\cD,
\end{align}
at noise level $\varepsilon>0$, where the Gaussian space-time white noise $\dot V$ is independent of $\dot W$. We understand \eqref{eq:introduction:spde} and  \eqref{eq:introduction:observation} in the space-time weak sense, i.e. tested against a class of test functions.
The precise setting is discussed in Section \ref{sec:spde}.

This observation model is the continuous analogue of the regression model 
\begin{equation}\label{eq:introduction:regrmodel}
Y_{ij}=X(t_i, x_j)+\varepsilon_{ij},\quad i=1,\ldots,n_t,\; j=1,\ldots,n_x,
\end{equation}
with independent error variables $\varepsilon_{ij}\sim N(0,\eta^2)$
when the design points $(t_i, x_j)$ are uniformly spread over $[0,T]\times \cD$  and the noise level is set to $\varepsilon=\eta (T|\cD|/N)^{1/2}$ with total sample size $N=n_tn_x$. Observables $\int_0^T\int_{\cD}\varphi(t,x)Y(t,x)\,dxdt$ for $\varphi\in C([0,T]\times\cD)$ in model \eqref{eq:introduction:observation} are replaced in model \eqref{eq:introduction:regrmodel} by  discrete sums $T|\cD|N^{-1}\newline\sum_{i,j}\varphi(t_i,x_j)Y_{ij}$, see also \cite{Reiss2011} for a strong asymptotic Le Cam equivalence result in a similar setting.

There are two sources of randomness in our setting: The \emph{dynamic noise} $\dot W$ drives the underlying state equation and models the intrinsic stochastic behaviour of the signal process $X$, e.g. due to unresolved external forces. On the other hand, the \emph{static noise} $\dot V$ captures noise in the measurement process itself. While mathematically both objects are modelled by space-time white noise, their impact on the statistical problem at hand will be fundamentally different.
The presence of dynamic noise can influence the bifurcation structure or other key properties of the dynamical system, e.g. the time to repolarization in a stochastic Meinhardt model \cite{AltmeyerBretschneiderJanakReiss2022}.
On the other hand, static noise is clearly a common feature of measurement data, see e.g. \cite{PasemannFlemmingAlonsoBetaStannat2021} for a discussion in the case of \textit{D. discoideum} giant cell microscopy data.
Models including dynamic as well as static noise are frequently used in the context of stochastic filtering, see e.g. \cite{BaCr2009}.

From a statistical perspective, assuming $X$ is observed (i.e. setting $\varepsilon=0$), \eqref{eq:introduction:spde} is related to nonparametric regression with response variable $\partial_tX$ and random design $X$. The latter depends in a non-trivial way on $\vartheta$ via its dynamic evolution, in particular, the observation at fixed $x\in\cD$ depends on $\vartheta$ in a non-local way due to wide-range interactions of the heat semigroup at positive times.

In order to construct an estimator $\hat\vartheta_\varepsilon(x_0)$ for $\vartheta$ at a given point $x_0\in\cD$, we employ a two-step procedure: First, we estimate the underlying state $X$ from $Y$ at a grid of points around $x_0$ by averaging the observation with a local kernel of size $\delta>0$. In a second step, the resulting values are aggregated
with a locally linear regression approach
in a neighborhood of size $h>0$ around $x_0$. Both steps are based on ``localization'', but they are very different in nature: Locally averaging $Y$ in order to estimate the state effectively filters out the static noise, and relates the parabolic nature of \eqref{eq:introduction:spde} to the intensity of $\varepsilon\dot V$. On the other hand, the local aggregation of various such state estimates optimizes the bias-variance tradeoff induced by the spatially heterogeneous nature of $\vartheta$. Interestingly, it seems to be necessary to perform such an iterated localization in order to achieve rate-optimality.
This two-step procedure is similar in taste to  pre-averaging approaches for estimating the volatility from financial time series under microstructure noise \cite{PodolskijVetter2009, JacodLiMyklandPodolskijVetter2009, Reiss2011}.
We derive in Theorem \ref{thm:abstract-rate}, Theorem \ref{thm:connect-conditions} 
and Remark \ref{rem:Scaling} 
that in dimension $d\geq 3$ under suitable conditions
\begin{align*}
	\hat\vartheta_\varepsilon(x_0) = \vartheta(x_0) + \cO_\bP\left(\left(\frac{\varepsilon}{\sigma}\right)^{\frac{(2+d)\beta}{4\beta+2d}}\right)
\end{align*}
holds, where $\beta$ is the H\"older regularity index of $\vartheta$.
In dimension $d\leq 2$ we are able to obtain Lipschitz rates.
We see that the presence of static noise inverts how the estimation error depends on some of the parameters in \eqref{eq:introduction:spde}, leading to (at first sight) counterintuitive scaling: Large dynamical noise intensity makes the estimator \emph{more} precise.
In addition, it is true that the relative estimation error \emph{increases} with the size of $\vartheta$. Both effects contrast well-known results for the stochastic heat equation (or even a scalar Ornstein--Uhlenbeck process) without static noise. Such a behaviour can be understood by scaling arguments, where $X$ and $Y$ are replaced by $\sigma^{-1}X, \sigma^{-1}Y$ (Remark \ref{rem:Scaling} below).
In order to explain basic ideas, we discuss the parametric case ($\vartheta$ constant) first (Theorem \ref{thm:parametric}), before giving a general picture for the nonparametric setup.

The proof of our results is based on reframing the localization appearing in the state estimation above into an inflation of the domain $\cD$ around $x_0$, until the domain finally approaches all of $\R^d$. In order to ensure convergence of the inflated state processes $X^{(\delta)}$ (to be defined properly in Section \ref{sec:localization}), we rely on refined approximation results for the heat semigroup (Theorem \ref{thm:semigroup-approximation-simplified}).
We complement our results with evidence from numerical simulations.
\\

In \cite{Kutoyants2019, KutoyantsZhou2021, Kutoyants2024}, parameter estimation problems for scalar processes under additional static noise have been considered in a small noise regime in the context of the K\'alm\'an--Bucy equations for stochastic filtering.
A related source detection problem is studied in \cite{Kutoyants2021-detector} under small noise.
Even without static noise, drift parameters of stochastic ordinary differential equations are not identified in finite time. This is a consequence of Girsanov's theorem. The situation changes drastically when we study stochastic partial differential equations (SPDEs).
It is our aim to explore the interplay between structural properties of SPDEs and the presence of static noise.
Nonparametric smoothing methods turn out to be much simpler and more explicit than stochastic filtering  while still attaining optimal convergence rates.

Statistical inference for
SPDEs has grown into a diverse field, see \cite{Cialenco2018} for a survey.
Starting from the fundamental observation that certain drift parameters of a parabolic SPDE can be identified in finite time \cite{HuebnerKhasminskiiRozovskii1993, HuebnerRozovskii1995}, different observation schemes related to spatial precision have been analyzed. This includes the spectral approach \cite{HuebnerKhasminskiiRozovskii1993, HuebnerRozovskii1995, Lototsky2009}, where a growing number of spatial eigenfrequencies of the generating process are observed, the discrete approach \cite{PospisilTribe2007, CialencoHuang2020, HildebrandtTrabs2021, HildebrandtTrabs2023}, where a set of point evaluations of the process are used, and the local approach \cite{AltmeyerReiss2021, AltmeyerCialencoPasemann2023, AltmeyerTiepnerWahl2022}, where local averages of the process with a kernel of shrinking bandwidth are observed.
More recently, a small diffusivity regime has been studied, which is linked by scaling properties to observing the process on a growing spatial domain \cite{GaudlitzReiss2022, Gaudlitz2024}.
In our work, the spatial heterogeneity of $\vartheta$ leads us naturally to employing the local approach.

A stochastic filtering problem for an underlying stochastic heat equation with unknown spatially varying diffusivity and finite dimensional noisy observations has been considered in a series of works \cite{AiharaSunahara1988, Aihara1992, Aihara1998}, with a focus on tracing the observed parts of the signal for large times. In contrast, we are interested in statistical properties of diffusivity estimation and work in finite time.

Other statistical models for diffusivity estimation that are considered in literature include elliptic partial differential equations with measurement noise \cite{AbrahamNickl2020, NicklVanDeGeerWang2020}.
In contrast to these works, in our model the underlying signal is not stationary.
\\

In Section \ref{sec:spde} we discuss the stochastic heat equation and the concept of space-time weak solutions.
Section \ref{sec:localization} is devoted to the concept of localization in time and space.
The statistical setting and the main results are presented in Section \ref{sec:Estimation}.
We first outline the parametric case ($\vartheta$ constant) in order to explain all relevant proof ideas in a simplified setting, before proceeding to the full nonparametric setup.
Section \ref{sec:semigroupapproximation} states the approximation results for heat semigroups.
Numerical examples are contained in Section \ref{sec:Numerics}.

\section{The Stochastic Heat Equation}\label{sec:spde}

Let $T>0$, $\cT=[0,T)$ and $\cD\subset\R^d$ be a bounded domain with smooth boundary.
In this work, we consider noisy observations $(Y_t)_{t\in\cT}$ of a solution $(X_t)_{t\in\cT}$ to the stochastic heat equation in $L^2(\cD)$, i.e.
\begin{align}
	\partial_t X_t &= \Delta_\vartheta X_t + \sigma\dot W_t, \label{eq:basic:SPDE} \\
	Y_t &= X_t + \varepsilon\dot V_t						 \label{eq:basic:Observation}
\end{align}
on $\cD$, with initial condition $X_0=\xi\in L^2(\cD)$ and Dirichlet boundary conditions.
The diffusivity function $\vartheta\in C^1(\bar\cD)$ is bounded away from zero, and $\Delta_\vartheta=\nabla\cdot\vartheta\nabla:W^{2,2}(\cD)\cap W^{1,2}_0(\cD)\rightarrow L^2(\cD)$ is the self-adjoint diffusion operator,
where $W^{2,2}(\cD)$ and $W^{1,2}_0(\cD)$ denote standard Sobolev spaces, see \cite{AdamsFournier2003}.
That operator generates the $C_0$-semigroup $(S(t))_{t\geq 0}$ on $L^2(\cD)$. More detailed conditions on $\vartheta$ will be given in Assumption {\assumptionB} below. $\dot W$ and $\dot V$ are independent isonormal Gaussian processes (see \cite{Dudley2002}) on $L^2(\cT\times\cD)$, formalizing the concept of space-time white noise, with intensities $\sigma, \varepsilon>0$.
We rely on $X$ being a solution to \eqref{eq:basic:SPDE} only in a \emph{space-time weak sense} on $\cC:=C^\infty_c(\cT\times\cD)$, i.e. for $\varphi\in\cC$:
\begin{align}\label{eq:basic:SPDE:spacetimeweak}
	-\sscp{X}{\dot\varphi} = \scp{\xi}{\varphi(0,\cdot)} + \sscp{X}{\Delta_\vartheta\varphi} + \sigma\sscpSmall{\dot W}{\varphi},
\end{align}
where $\sscp{\cdot}{\cdot}$ denotes the scalar product on $L^2(\cT\times\cD)$, and $\scp{\cdot}{\cdot}$ the scalar product on $L^2(\cD)$.
Equation \eqref{eq:basic:SPDE:spacetimeweak} arises from \eqref{eq:basic:SPDE} by means of formal integration by parts in space and time.
Note that the notation $\sscpSmall{\dot W}{\varphi}$ denotes evaluation of $\dot W$ at $\varphi\in L^2(\cT\times\cD)$, and that $\bE[\sscpSmall{\dot W}{\varphi}\sscpSmall{\dot W}{\psi}]=\sscp{\varphi}{\psi}$ for $\varphi,\psi\in L^2(\cT\times\cD)$, $\dot W$ being an isonormal Gaussian process, see \cite{Dudley2002}.

In order to construct such a solution,
we consider a formal variation of constants expression
\begin{align*}
	X_t = S(t)\xi + \sigma\int_0^tS(t-s)\dot W_s\diff s = S(t)\xi + \sigma\int_0^tS(t-s)\diff W_s
\end{align*}
of \eqref{eq:basic:SPDE}, where the integral is to be read in the It\^o sense \cite{DaPratoZabczyk2014}.
Motivated by this formula,
we \emph{define} $X$ to be a Gaussian process on $\cC$ with
\begin{align}
	\bE[\sscpSmall{X}{\varphi}] &= \sscp{S(\cdot)\xi}{\varphi}, \label{eq:field:Mean} \\
	\rCov(\sscpSmall{X}{\varphi}, \sscpSmall{X}{\psi})
		&= \frac{\sigma^2}{2}\int_0^\infty\int_0^\infty\int_{\abs{t-s}}^{t+s}\scp{\varphi(t, \cdot)}{S(r)\psi(s, \cdot)}\diff r\diff s\diff t. \label{eq:field:Cov}
\end{align}
Note that these terms are well-defined for $\varphi,\psi\in\cC$.

\begin{lemma}[well-posedness] \label{lem:FieldIsStweak}
	The process
	$X$
	defined by \eqref{eq:field:Mean}, \eqref{eq:field:Cov}
	is a space-time weak solution in the sense of \eqref{eq:basic:SPDE:spacetimeweak} for some isonormal Gaussian process $\dot W$ on $L^2(\cT\times\cD)$.
\end{lemma}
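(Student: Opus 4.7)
The strategy is to \emph{define} $\dot W$ directly through the space-time weak equation and then verify that the resulting random functional is an isonormal Gaussian process on $L^2(\cT\times\cD)$. For $\varphi\in\cC$ I would set
\[
\sigma\sscp{\dot W}{\varphi}:=-\sscp{X}{\dot\varphi+\Delta_\vartheta\varphi}-\scp{\xi}{\varphi(0,\cdot)},
\]
so that \eqref{eq:basic:SPDE:spacetimeweak} holds by construction. Since $\sscp{X}{\cdot}$ is linear and Gaussian by \eqref{eq:field:Mean}--\eqref{eq:field:Cov}, so is $\sigma\sscp{\dot W}{\cdot}$; it will thus suffice to check that it has zero mean and covariance $\sigma^2\sscp{\varphi}{\psi}$ on $\cC$, and to then extend by density.

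\textbf{Mean and covariance (main step).} Vanishing of $\bE[\sigma\sscp{\dot W}{\varphi}]$ will follow from the dual identity $-\sscp{S(\cdot)\xi}{\dot\varphi+\Delta_\vartheta\varphi}=\scp{\xi}{\varphi(0,\cdot)}$, obtained by integration by parts in $t$ combined with $\partial_t S(t)\xi=\Delta_\vartheta S(t)\xi$, $\varphi(T,\cdot)=0$, and self-adjointness of $\Delta_\vartheta$ on $\varphi(t,\cdot)\in C^\infty_c(\cD)\subset D(\Delta_\vartheta)$. For the covariance, write $\tilde X:=X-\bE X$ and $\tilde\varphi:=-(\dot\varphi+\Delta_\vartheta\varphi)$, so that after subtracting the deterministic mean term, $\sigma\sscp{\dot W}{\varphi}$ becomes $\sscp{\tilde X}{\tilde\varphi}$. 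Inserting $\tilde\varphi,\tilde\psi$ into \eqref{eq:field:Cov} and performing the change of variables $r=t+s-2u$, so that $S(r)=S(t-u)S(s-u)$ by the semigroup law, converts the inner $r$-integral over $[\abs{t-s},t+s]$ into a $u$-integral over $[0,t\wedge s]$ of $\scp{S(t-u)\tilde\varphi(t,\cdot)}{S(s-u)\tilde\psi(s,\cdot)}$ (using self-adjointness of $S(\cdot)$). A Fubini rearrangement will then factorize the whole expression into
\[
\sigma^2\int_0^T\scp{\textstyle\int_u^T S(t-u)\tilde\varphi(t,\cdot)\,\diff t}{\textstyle\int_u^T S(s-u)\tilde\psi(s,\cdot)\,\diff s}\,\diff u.
\]
The key simplification is the telescoping identity $\int_u^T S(t-u)\tilde\varphi(t,\cdot)\,\diff t=\varphi(u,\cdot)$, which follows from $\partial_t[S(t-u)\varphi(t,\cdot)]=S(t-u)[\dot\varphi+\Delta_\vartheta\varphi](t,\cdot)$ together with $\varphi(T,\cdot)=0$; analogously for $\psi$, the whole display collapses to $\sigma^2\sscp{\varphi}{\psi}$.

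\textbf{Extension and main obstacle.} The map $\varphi\mapsto\sigma\sscp{\dot W}{\varphi}$ will thus be a linear isometry from $\cC$ (with the $\sigma$-scaled $L^2$-norm) into the centered Gaussian subspace of $L^2(\Omega)$. Density of $C^\infty_c(\cT\times\cD)$ in $L^2(\cT\times\cD)$, combined with $L^2$-closure of this Gaussian subspace, then yields a unique extension of $\sscp{\dot W}{\cdot}$ to all of $L^2(\cT\times\cD)$; by construction this extension is an isonormal Gaussian process, and $X$ satisfies \eqref{eq:basic:SPDE:spacetimeweak} for this $\dot W$ as required. The hard part will be the covariance identity: I expect the change of variables $r=t+s-2u$ together with the semigroup telescoping on smooth compactly supported $\varphi$ to be the only nontrivial computations, while the remaining steps are routine.
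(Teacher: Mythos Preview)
Your approach is correct and genuinely different from the paper's. The paper proceeds indirectly: it introduces a Hilbert--Schmidt regularization $B=(-\Delta_\vartheta)^{-\alpha/2}$ with $\alpha>d/2$, builds a mild solution $Z$ to $\diff Z_t=\Delta_\vartheta Z_t\,\diff t+\sigma B\,\diff W_t$ via Da Prato--Zabczyk theory, and defines $X'$ through $\sscp{X'}{\varphi}=\sscp{Z}{B^{-1}\varphi}$. It then checks that $X'$ has mean and covariance \eqref{eq:field:Mean}--\eqref{eq:field:Cov}, so $X\sim X'$ in law, and deduces the space-time weak property for $X'$ from the (ordinary) weak property of $Z$. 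Only at the very end does the paper invoke the same idea you start with, namely reading off $\dot W$ from the equation and extending by density.

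Your route bypasses the auxiliary process entirely: you compute the covariance of $\sscp{X}{\tilde\varphi}$ directly from \eqref{eq:field:Cov} via the substitution $r=t+s-2u$ and the telescoping identity $\int_u^T S(t-u)(\dot\varphi+\Delta_\vartheta\varphi)(t,\cdot)\,\diff t=-\varphi(u,\cdot)$. This is more self-contained and arguably cleaner for the stated lemma, since it uses only the semigroup law, self-adjointness, and $C^\infty_c(\cD)\subset D(\Delta_\vartheta)$. What the paper's detour buys is the connection to mild-solution theory (so one sees explicitly that any mild solution has the prescribed mean/covariance), which it comments on right after the lemma; your argument does not yield that side remark but proves the lemma with less machinery.
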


\begin{proof}
	See Appendix \ref{sec:proofs:spde}.
\end{proof}

The proof of this statement shows in particular that any mild solution in the sense of \cite{DaPratoZabczyk2014} is a space-time weak solution that satisfies \eqref{eq:field:Mean}, \eqref{eq:field:Cov}. Note, however, that for $d\geq 2$ a mild solution does not take values in $L^2(\cD)$.

\section{Localization}\label{sec:localization}

Our analysis relies heavily on localization techniques.
In this section, we provide a general picture.
Fix $x_0\in\cD$, and
define for $\delta>0$
\begin{align*}
	\cD_\delta = \{x\in\R^d:\delta x + x_0\in\cD\}\subset\R^d, \quad
	\cT_\delta = [0,\delta^{-2}T), \quad
	\cC_\delta = C^\infty_c(\cT_\delta\times\cD_\delta).
\end{align*}
Denote the scalar products on $L^2(\cD_\delta)$ and $L^2(\cT_\delta\times\cD_\delta)$ by $\scp{\cdot}{\cdot}_\delta$ and $\sscp{\cdot}{\cdot}_\delta$, i.e.
\begin{align*}
	\scp{f_1}{f_2}_\delta=\int_{\cD_\delta}f_1(x)f_2(x)\diff x,
	\quad\quad\quad
	\sscp{g_1}{g_2}_\delta = \int_0^{\delta^{-2}T}\int_{\cD_\delta}g_1(t, x)g_2(t, x)\diff x\diff t
\end{align*}
for $f_1,f_2\in L^2(\cD_\delta)$ and $g_1,g_2\in L^2(\cT_\delta\times\cD_\delta)$.
The localization mappings $(\cdot)_\delta:L^2(\cT_\delta\times\cD_\delta)\rightarrow L^2(\cT\times\cD)$ and $(\cdot)^{1/\delta}:L^2(\cT\times\cD)\rightarrow L^2(\cT_\delta\times\cD_\delta)$, given by
\begin{align*}
	\varphi_\delta(t, x) = \delta^{-1-d/2}\varphi(\delta^{-2}t, \delta^{-1} (x-x_0)), \quad
	\varphi^{1/\delta}(t, x) = \delta^{1+d/2}\varphi(\delta^2t, \delta x+x_0),
\end{align*}
are isometries.

\begin{remark}
	We also use space-only localization in the form $(\cdot)_\delta:L^2(\cD_\delta)\rightarrow L^2(\cD)$ and $(\cdot)^{1/\delta}:L^2(\cD)\rightarrow L^2(\cD_\delta)$, given by
	\begin{align*}
		\varphi_\delta(x) = \delta^{-d/2}\varphi(\delta^{-1}(x-x_0)), \quad
		\varphi^{1/\delta}(x) = \delta^{d/2}\varphi(\delta x + x_0).
	\end{align*}
	From the context it will be clear if a function defined on space and time or only on space will be localized, so there is no ambiguity.
\end{remark}

Define the localized diffusivity $\vartheta_\delta(x):=\vartheta(\delta x + x_0)$ on $\cD_\delta$, and the operator $\Delta_{\vartheta_\delta}=\nabla\cdot\vartheta_\delta\nabla$ on $W^{2,2}(\cD_\delta)$.
When restricted to $W^{2, 2}(\cD_\delta)\cap W^{1,2}_0(\cD_\delta)$, $\Delta_{\vartheta_\delta}$ generates a semigroup on $L^2(\cD_\delta)$, denoted by $(S_\delta(t))_{t\geq 0}$.

\begin{lemma}[localization of the semigroup] \label{lem:semigroup-scaling} \
		For $\varphi\in L^2(\cD_\delta)$ and fixed $t\geq 0$, we have $S(t)\varphi_\delta=(S_\delta(t\delta^{-2})\varphi)_\delta$.
\end{lemma}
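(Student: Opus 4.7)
The plan is to lift the claim from a pointwise identity at the level of the generators to the semigroups. Concretely, I would show that the spatial localization isometry $L\colon L^2(\cD_\delta)\to L^2(\cD)$, $L\varphi:=\varphi_\delta$, intertwines the two generators up to the scalar $\delta^{-2}$, namely $\Delta_\vartheta L=\delta^{-2}\,L\,\Delta_{\vartheta_\delta}$ on an appropriate common domain, and then transfer this to the identity $S(t)L=LS_\delta(t\delta^{-2})$ by invoking uniqueness of the abstract parabolic Cauchy problem on $L^2(\cD)$.

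The first step is a change-of-variables check. The map $y\mapsto\delta y+x_0$ is a smooth diffeomorphism $\cD_\delta\to\cD$ sending $\partial\cD_\delta$ onto $\partial\cD$, so $L$ is a bijection of $W^{2,2}(\cD_\delta)\cap W^{1,2}_0(\cD_\delta)$ onto $W^{2,2}(\cD)\cap W^{1,2}_0(\cD)$; in particular, Dirichlet zero traces are preserved. Writing $y=\delta^{-1}(x-x_0)$ and using $\vartheta(x)=\vartheta_\delta(y)$, the chain rule gives
\begin{align*}
\nabla_x\big[\delta^{-d/2}\varphi(y)\big] &= \delta^{-d/2-1}(\nabla\varphi)(y),\\
\Delta_\vartheta(L\varphi)(x) &= \delta^{-d/2-2}\,\nabla_y\cdot\big[\vartheta_\delta(y)\nabla_y\varphi(y)\big] = \delta^{-2}\,L\big(\Delta_{\vartheta_\delta}\varphi\big)(x),
\end{align*}
which is the intertwining relation.

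To finish, for $\varphi\in\mathrm{dom}(\Delta_{\vartheta_\delta})$ I set $u(t):=S(t)L\varphi$ and $v(t):=LS_\delta(t\delta^{-2})\varphi$. Both curves lie in $C([0,\infty);L^2(\cD))\cap C^1((0,\infty);L^2(\cD))$ with the common initial value $L\varphi$, and
\begin{align*}
\partial_t v(t) = \delta^{-2}\,L\,\Delta_{\vartheta_\delta}S_\delta(t\delta^{-2})\varphi = \Delta_\vartheta\,L\,S_\delta(t\delta^{-2})\varphi = \Delta_\vartheta v(t),
\end{align*}
so $v$ solves the same Cauchy problem as $u$ on $\cD$ with generator $\Delta_\vartheta$; uniqueness yields $u\equiv v$. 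Extending to arbitrary $\varphi\in L^2(\cD_\delta)$ is then immediate from density of $\mathrm{dom}(\Delta_{\vartheta_\delta})$ and uniform boundedness of $L$, $S(t)$, $S_\delta(t\delta^{-2})$. No step presents a genuine obstacle; the only care needed is in tracking the two distinct scaling factors (the $\delta^{-d/2}$ from the $L^2$-isometry and the $\delta^{-2}$ from the two spatial derivatives) and in confirming that Dirichlet boundary data are transported correctly under the diffeomorphism $y\mapsto\delta y+x_0$.
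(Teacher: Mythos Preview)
Your proof is correct and follows essentially the same approach as the paper: both establish the intertwining relation $\Delta_\vartheta(\psi_\delta)=\delta^{-2}(\Delta_{\vartheta_\delta}\psi)_\delta$ via the chain rule and then invoke uniqueness of the parabolic Cauchy problem. The only cosmetic difference is that the paper works on $\cD_\delta$ (defining $T_\delta(r)\varphi:=(S(r\delta^{2})\varphi_\delta)^{1/\delta}$ and showing $T_\delta=S_\delta$), whereas you work on $\cD$; you are also more explicit about domains, boundary conditions, and the density argument, which the paper leaves implicit.
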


\begin{proof}
	See Appendix \ref{sec:proofs:localization}.
\end{proof}

For an isonormal Gaussian process $\dot W$ on $L^2(\cT\times\cD)$ we define $\dot W^{1/\delta}$ on $L^2(\cT_\delta\times\cD_\delta)$ via
\begin{align*}
	\dot W^{1/\delta}(\varphi) = \dot W(\varphi_\delta).
\end{align*}
By isometry of the localization operation,
this is an isonormal Gaussian process, too.
The processes $X^{1/\delta}$ and $Y^{1/\delta}$ on $\cC_\delta$ are defined analogously.

\begin{lemma}[localization of the signal] \label{lem:localization-X}
The process $X^{(\delta)} = \delta^{-2}X^{1/\delta}$ solves
\begin{align}\label{eq:SHE:localized}
	\partial_t X^{(\delta)} = \Delta_{\vartheta_\delta} X^{(\delta)} + \sigma\dot W^{1/\delta}
\end{align}
on $\cC_\delta$ in the space-time weak sense, with initial condition $\xi^{(\delta)}=\delta^{-1}\xi^{1/\delta}\in L^2(\cD_\delta)$.
	Furthermore, we have for $\varphi, \psi\in \cC_\delta$:
	\begin{align}
		\bE[\sscpSmall{X^{(\delta)}}{\varphi}_\delta] &= \sscpSmall{S_\delta(\cdot)\xi^{(\delta)}}{\varphi}_\delta, \label{eq:mean-delta} \\
		\rCov(\sscpSmall{X^{(\delta)}}{\varphi}_\delta, \sscpSmall{X^{(\delta)}}{\psi}_\delta) &= \frac{\sigma^2}{2}\int_0^\infty\int_0^\infty\int_{\abs{t-s}}^{t+s}\scp{\varphi(t, \cdot)}{S_\delta(r)\psi(s, \cdot)}_\delta\diff r\diff s\diff t. \label{eq:covariance-delta}
	\end{align}
\end{lemma}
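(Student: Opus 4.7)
The plan is to obtain all three assertions (the space-time weak equation \eqref{eq:SHE:localized}, the mean \eqref{eq:mean-delta}, and the covariance \eqref{eq:covariance-delta}) by applying the corresponding statements about $X$ — namely \eqref{eq:basic:SPDE:spacetimeweak}, \eqref{eq:field:Mean}, \eqref{eq:field:Cov} — to localized test functions $\varphi_\delta$ (and $\psi_\delta$) in $\cC$, and then converting each term back via the isometries from Section \ref{sec:localization}. Once the right elementary scaling rules are in place, the proof reduces to bookkeeping powers of $\delta$.

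First I would record the scaling identities used as building blocks. Beyond the defining isometry $\sscpSmall{X^{1/\delta}}{\varphi}_\delta = \sscpSmall{X}{\varphi_\delta}$ (and its analogues for $\dot W$ and $\xi$, together with $(\xi^{(\delta)})_\delta = \delta^{-1}\xi$), I need three algebraic facts: the time-derivative rule $\partial_t\varphi_\delta = \delta^{-2}(\dot\varphi)_\delta$ obtained from the chain rule; the operator identity $\Delta_\vartheta \psi_\delta = \delta^{-2}(\Delta_{\vartheta_\delta}\psi)_\delta$ for the space-only localization, which follows by expanding $\nabla\cdot\vartheta\nabla$ after substituting $y = \delta^{-1}(x-x_0)$ and using $\nabla\vartheta(\delta y + x_0) = \delta^{-1}\nabla\vartheta_\delta(y)$; and Lemma \ref{lem:semigroup-scaling}, which transfers $S(t)$ to $S_\delta(t\delta^{-2})$ through the localization. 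Of these, only the operator identity requires a short computation; the others are immediate from the definitions.

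To verify \eqref{eq:SHE:localized}, I would test the original weak equation \eqref{eq:basic:SPDE:spacetimeweak} against $\varphi_\delta \in \cC$ (well-defined since $(\cdot)_\delta$ preserves compact support) and multiply by $\delta^{-2}$. The four terms then convert directly to $-\sscpSmall{X^{(\delta)}}{\dot\varphi}_\delta$, $\scp{\xi^{(\delta)}}{\varphi(0,\cdot)}_\delta$, $\sscpSmall{X^{(\delta)}}{\Delta_{\vartheta_\delta}\varphi}_\delta$ and $\sigma\sscpSmall{\dot W^{1/\delta}}{\varphi}_\delta$ upon substituting the three identities above and the definition $X^{(\delta)} = \delta^{-2}X^{1/\delta}$. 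The mean \eqref{eq:mean-delta} and covariance \eqref{eq:covariance-delta} are obtained by the same substitution strategy applied to \eqref{eq:field:Mean} and \eqref{eq:field:Cov}, followed by the parabolic time changes of variables $t \mapsto \delta^{-2}t$, $s \mapsto \delta^{-2}s$, and $r \mapsto \delta^{-2}r$ in the innermost integral.

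I expect the only non-routine aspect to be reconciling the many powers of $\delta$: the prefactor $\delta^{-2}$ in $X^{(\delta)} = \delta^{-2}X^{1/\delta}$ is the unique choice that leaves the rescaled driving noise at intensity $\sigma$ and simultaneously removes all $\delta$'s from the semigroup term, so that \eqref{eq:basic:SPDE} becomes \eqref{eq:SHE:localized} exactly — this is the parabolic scaling of the heat equation encoded precisely. In the covariance the combined Jacobian $\delta^{6}$ from the three time substitutions must cancel a contribution $\delta^{-6}$ composed of the $\delta^{-4}$ appearing in front of $\rCov(\sscpSmall{X^{(\delta)}}{\varphi}_\delta, \sscpSmall{X^{(\delta)}}{\psi}_\delta)$ and an additional $\delta^{-2}$ produced by rewriting the space-time localizations $\varphi_\delta, \psi_\delta$ as space-only localizations before applying Lemma \ref{lem:semigroup-scaling}. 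Once this accounting is done carefully, the formulas fall out.
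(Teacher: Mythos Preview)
Your proposal is correct and follows essentially the same approach as the paper: the paper also tests \eqref{eq:basic:SPDE:spacetimeweak} against $\varphi_\delta$, uses the identities $(\dot\varphi)_\delta=\delta^2\partial_t(\varphi_\delta)$ and $\Delta_\vartheta(\varphi_\delta)=\delta^{-2}(\Delta_{\vartheta_\delta}\varphi)_\delta$, and then multiplies by $\delta^{-2}$; for \eqref{eq:mean-delta} and \eqref{eq:covariance-delta} it likewise applies \eqref{eq:field:Mean}, \eqref{eq:field:Cov} to $\varphi_\delta,\psi_\delta$, invokes Lemma \ref{lem:semigroup-scaling}, and performs the three time substitutions to cancel the remaining $\delta^{-6}$. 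Your power-counting discussion matches the paper's computation exactly.
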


\begin{proof}
Use that $X$ is a space-time weak solution with initial condition $\xi$:
\begin{align*}
	-\sscpSmall{X^{1/\delta}}{\dot\varphi}_\delta &= -\sscp{X}{(\dot\varphi)_\delta} = -\delta^2\sscp{X}{\dot\varphi_\delta} \\
		&= \delta^2\scp{\xi}{\varphi_\delta(0,\cdot)} + \delta^2\sscp{X}{\Delta_{\vartheta}(\varphi_\delta)} + \delta^2\sigma\sscpSmall{\dot W}{\varphi_\delta} \\
		&= \delta\scp{\xi}{\varphi(0,\cdot)_\delta} + \sscp{X}{(\Delta_{\vartheta_\delta}\varphi)_\delta} + \delta^2\sigma\sscpSmall{\dot W}{\varphi_\delta} \\
		&= \delta\scpSmall{\xi^{1/\delta}}{\varphi(0,\cdot)}_\delta + \sscpSmall{X^{1/\delta}}{\Delta_{\vartheta_\delta}\varphi}_\delta + \delta^2\sigma\sscpSmall{\dot W^{1/\delta}}{\varphi}_\delta.
\end{align*}
Now multiply with $\delta^{-2}$.
The remaining statements are proven in Appendix \ref{sec:proofs:localization}.
\end{proof}

\begin{remark}\label{rem:localization-initialcondition} \
	Note that for the initial condition, $\xi^{(\delta)}(x)=\delta^{-1+d/2}\xi(\delta x + x_0)$.
	Thus in $d>2$, even if $\xi$ is continuous at $x_0$, one cannot assume that $\xi^{(\delta)}$ converges to $\xi(x_0)$ in a neighborhood of $x_0$ as $\delta\rightarrow 0$ in any reasonable mode of convergence on $\cC_\delta$.
\end{remark}

Now we consider the observation process:

\begin{lemma}[localization of the observation process]
The process $Y^{(\delta)}:=\delta^{-2}Y^{1/\delta}$ on $\cC_\delta$ satisfies
\begin{align}
	Y^{(\delta)} = X^{(\delta)} + \varepsilon\delta^{-2}\dot V^{1/\delta}.
\end{align}
\end{lemma}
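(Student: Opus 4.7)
The proof is a direct unpacking of definitions, essentially a bookkeeping exercise: the relation $Y=X+\varepsilon\dot V$ holds pointwise on test functions in $\cC$, and the localization operations $(\cdot)^{1/\delta}$ are linear and defined in the same way on all three objects $X$, $Y$, and $\dot V$, so they commute with the linear combination.

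Concretely, my plan is to test against an arbitrary $\varphi\in\cC_\delta$. By construction, $\varphi_\delta\in\cC$, so one may apply the observation model \eqref{eq:basic:Observation} in the space-time weak sense on $\cC$ to the test function $\varphi_\delta$, yielding
\begin{align*}
	\sscp{Y}{\varphi_\delta} = \sscp{X}{\varphi_\delta} + \varepsilon\sscpSmall{\dot V}{\varphi_\delta}.
\end{align*}
Now invoke the definitions of the $(\cdot)^{1/\delta}$-transformed processes: $Y^{1/\delta}(\varphi)=Y(\varphi_\delta)$, $X^{1/\delta}(\varphi)=X(\varphi_\delta)$, and $\dot V^{1/\delta}(\varphi)=\dot V(\varphi_\delta)$. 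Substituting these identities gives
\begin{align*}
	\sscpSmall{Y^{1/\delta}}{\varphi}_\delta = \sscpSmall{X^{1/\delta}}{\varphi}_\delta + \varepsilon\sscpSmall{\dot V^{1/\delta}}{\varphi}_\delta.
\end{align*}
Multiplying through by $\delta^{-2}$ and using $Y^{(\delta)}=\delta^{-2}Y^{1/\delta}$ together with $X^{(\delta)}=\delta^{-2}X^{1/\delta}$ (as in Lemma \ref{lem:localization-X}) yields the claimed identity on $\cC_\delta$.

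No obstacle is anticipated here: all of the ingredients have been set up in the preceding paragraphs, and the argument amounts to observing that the linear relation in \eqref{eq:basic:Observation} is preserved under the isometric rescaling, with the explicit factor $\varepsilon\delta^{-2}$ in front of $\dot V^{1/\delta}$ arising solely from the overall $\delta^{-2}$ scaling used to define $Y^{(\delta)}$ (paralleling the scaling chosen for $X^{(\delta)}$ so that the drift $\Delta_{\vartheta_\delta}$ appears cleanly in \eqref{eq:SHE:localized}).
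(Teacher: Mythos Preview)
Your proposal is correct and follows exactly the same route as the paper: test against $\varphi\in\cC_\delta$, apply the observation relation to $\varphi_\delta$, rewrite via the definitions of $Y^{1/\delta}$, $X^{1/\delta}$, $\dot V^{1/\delta}$, and multiply by $\delta^{-2}$.
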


\begin{proof}
We write
\begin{align*}
	\sscpSmall{Y^{1/\delta}}{\varphi}_\delta &= \sscp{Y}{\varphi_\delta}
		= \sscp{X}{\varphi_\delta} + \varepsilon\sscpSmall{\dot V}{\varphi_\delta} \\
		&= \sscpSmall{X^{1/\delta}}{\varphi}_\delta + \varepsilon\sscpSmall{\dot V^{1/\delta}}{\varphi}_\delta,
\end{align*}
and multiply this equation with $\delta^{-2}$.
\end{proof}

In particular, if $\delta = \sqrt{\varepsilon}$,
then the noise intensity in the former equation is
one.
Motivated by this observation, we shall use throughout
\begin{align}
	\delta=\sqrt{\varepsilon},
\end{align}
and based on this identification, we also write (by abuse of notation) all quantities appearing above in terms of $\varepsilon$, e.g. $X^{(\varepsilon)}$ for $X^{(\delta)}$ with $\delta=\sqrt{\varepsilon}$. \\

In order to define $X^{(\varepsilon)}$ in the limiting case $\varepsilon=\delta=0$, it is reasonable to set
$\cD_0:=\R^d$ and $\cT_0:=[0,\infty)$. Write $\scp{\cdot}{\cdot}_0$ for the scalar product on $L^2(\cD_0)$.
Let $\vartheta_0(x):=\vartheta(x_0)$ and $\Delta_{\vartheta_0}:=\nabla\cdot\vartheta_0\nabla=\vartheta_0\Delta$,
which generates the semigroup $(S_0(t))_{t\geq 0}$ on $L^2(\cD_0)$.
Considering Remark \ref{rem:localization-initialcondition}, we restrict to the case $X_0=0$ and write $\bar X^{(0)}$ for the limiting process, in alignment with the notation used in Appendix \ref{sec:Splitting} for processes starting at zero.
Extending \eqref{eq:covariance-delta}, this centered Gaussian process is determined by

\begin{align}\label{eq:covariance-zero}
	\bE[\sscpSmall{\bar X^{(0)}}{\varphi}_0\sscpSmall{\bar X^{(0)}}{\psi}_0] = \frac{\sigma^2}{2}\int_0^\infty\int_0^\infty\int_{\abs{t-s}}^{t+s}\scp{\varphi(t, \cdot)}{S_0(r)\psi(s, \cdot)}_0\diff r\diff s\diff t,
\end{align}
where $\sscpSmall{\bar X^{(0)}}{\cdot}_0$ refers to evaluation of $\bar X^{(0)}$ on the class of functions $\cC_0:=C^\infty_c(\cT_0\times\cD_0)$.

\section{Estimator and Main Results}\label{sec:Estimation}

We first motivate and construct our diffusivity estimator in Section \ref{sec:estimation:construction}.
In Section \ref{sec:estimation:parametric}, we give our results for the parametric case, and in Section \ref{sec:estimation:nonparametric} we treat the fully nonparametric case.
Here and in the sequel,
we write $a_\varepsilon\lesssim b_\varepsilon$ for $a_\varepsilon=\cO(b_\varepsilon)$.
$a_\varepsilon\sim b_\varepsilon$ is shorthand for $a_\varepsilon\lesssim b_\varepsilon$ and $b_\varepsilon\lesssim a_\varepsilon$, and
$a_\varepsilon\asymp b_\varepsilon$ denotes $a_\varepsilon/b_\varepsilon\rightarrow 1$.
When referring to convergence in probability of random variables, we write $A_\varepsilon\sim_\bP B_\varepsilon$ and $A_\varepsilon\asymp_\bP B_\varepsilon$.

\subsection{Construction of the Estimator}\label{sec:estimation:construction}

We are given a full realization of $Y$ on $\cC$.
Our goal is to infer the underlying diffusivity $\vartheta$ in a pointwise manner for $x_0\in\cD$.
In general, the laws of $Y$ for different $\vartheta$ are equivalent (in fact, always for $d\leq 5$ \cite{companionpaper}),
so we quantify the estimation error in terms of the noise level $\varepsilon$ as $\varepsilon\rightarrow 0$.
It is known (see \cite{HuebnerKhasminskiiRozovskii1993, HuebnerRozovskii1995, AltmeyerReiss2021}) that in the limit case $\varepsilon=0$, the diffusivity of a stochastic heat equation can be identified in finite time, i.e. the laws of $X$ for different $\vartheta$ are in fact singular.

\begin{remark}
	The static noise intensity $\varepsilon$ can be treated as known. In order to obtain $\varepsilon$ from $Y$, one may for example test $Y$ against any $\varphi\in L^2(\cD)$ of unit norm, and take the quadratic variation of the resulting process in time.
	This is the analogue of standard variance estimators in regression.
\end{remark}

A natural approach to estimation is to decompose the problem into two smaller ones: First, assuming that we had access to $X$ itself, we can construct a regression-type estimator for $\vartheta(x_0)$.
Working in a nonparametric setup, it is reasonable to consider various local averages of $X$ in a neighborhood of $x_0$ (in the sense of \cite{AltmeyerReiss2021}). For this we use a locally linear estimation approach in order to take account of the regularity of $\vartheta$.
The classical theory of this approach is given in \cite{Tsybakov2009}, and in \cite{StrauchTiepner2024}, it has been applied to a nonparametric advection estimation problem for a stochastic linear evolution equation.
The resulting estimator can be used as the basis for a plug-in approach, where the relevant functionals of $X$ are estimated from $Y$.
Let us note that regression-type estimators, coming from a least-squares rather than a maximum likelihood approach, have been used for inference for SPDEs with additive fractional noise \cite{MaslowskiTudor2013, KrizSnuparkova2022}.

As a motivation, consider the idealized case $\varepsilon=0$, where we observe $X$ directly.
We can (approximately) recover $\vartheta(x_0)$ from the regression problem
\begin{align}
	\sscp{\partial_t X}{\varphi_i} &= \sscp{\Delta_\vartheta X}{\varphi_i} + \sscpSmall{\sigma\dot W}{\varphi_i} \label{eq:estimator:regression} \\
		&\approx \vartheta(x_0)\sscp{\Delta X}{\varphi_i} + \sscpSmall{\sigma\dot W}{\varphi_i} \nonumber
\end{align}
for a set of test functions $(\varphi_i)_{i\in\cI}$ in $\cC$ with support close to $x_0$.
If the $\varphi_i$ have non-overlapping support and the same $L^2$-norm, then the $\sscpSmall{\sigma\dot W}{\varphi_i}$ form a sample of homoscedastic white noise.
Note, however, that the design $X$ depends on the noise $\dot W$ by construction, i.e. we have to tackle the endogeneity with an instrumental variable approach, see e.g. \cite{Lee2010}.
To this end, assume that we are given a second set $(\psi_i)_{i\in\cI}$ of test functions in $\cC$ such that $\sscp{\Delta X}{\psi_i}$ and $\sscpSmall{\sigma\dot W}{\varphi_i}$ are independent, while $\sscp{\Delta X}{\psi_i}$ and $\sscp{\Delta X}{\varphi_i}$ are closely correlated. In fact, $\psi_i$ will be constructed by shifting the support of $\varphi_i$ back in time, using that $X$ is an adapted process.
With these preparations, a natural regression estimator is
\begin{align}\label{eq:estimator:regression:referenceestimator}
	\tilde\vartheta_\cI(x_0) = \frac{\sum_{i\in\cI}\sscp{\Delta X}{\psi_i}\sscp{\partial_tX}{\varphi_i}}{\sum_{i\in\cI}\sscp{\Delta X}{\psi_i}\sscp{\Delta X}{\varphi_i}}.
\end{align}
Indeed, we use a modification of this regression estimator, defined below in \eqref{eq:estimator}.
Our test functions $(\varphi_i)_{i\in\cI}$ will be shifted and rescaled versions of a reference kernel $K$, with an additional time shift for the family $(\psi_i)_{i\in\cI}$.
Returning to our originial setting $\varepsilon>0$, the static noise induces additional error terms, which have to be handled by choosing the scaling of the kernel optimally.
Further, we will introduce weights in order to account for the regularity of $\vartheta$.

Fix some kernel $K\in C^{\infty}(\R\times\R^d)$ with compact support in $(0,1)\times(-1,1)^d$, and for $t\geq 0$ and $x\in\R^d$ set $K_{k, x}(t, y):=K(t-k, y-x)$.
We localize $K_{k, x}$ at $x_0$,
as described in Section \ref{sec:localization}:
\begin{align}\label{eq:kernel-localized-explicit}
	(K_{k, x})_{\varepsilon, x_0}(t, y) := \varepsilon^{-1/2-d/4}K_{k, x}(\varepsilon^{-1}t, \varepsilon^{-1/2}(y-x_0)).
\end{align}
$(K_{k, x})_{\varepsilon, x_0}$ has support in $(\varepsilon k,\varepsilon(k+1))\times(\varepsilon^{1/2}((-1,1)^d+x)+x_0)$.
We have $(K_{k, x})_{\varepsilon, x_0}\in\cC$ whenever its support is contained in $\cT\times\cD$.
In this case set
\begin{align*}
	\widehat X'_{\varepsilon, k, x} &:= \sscpSmall{\partial_tY}{(K_{k, x})_{\varepsilon, x_0}} = -\sscpSmall{Y}{\partial_t(K_{k, x})_{\varepsilon, x_0}}, \\
	\widehat X^\Delta_{\varepsilon, k, x} &:= \sscpSmall{\Delta Y}{(K_{k, x})_{\varepsilon, x_0}} = \sscpSmall{Y}{\Delta (K_{k, x})_{\varepsilon, x_0}}.
\end{align*}
These are estimators for
$\sscpSmall{\partial_t X}{(K_{k, x})_{\varepsilon, x_0}}$ as well as $\sscpSmall{\Delta X}{(K_{k, x})_{\varepsilon, x_0}}$, where $X$ is approximated by $Y$.
Figure \ref{fig:numerics:trajectory} visualizes the parabolically scaled local averages.
The simulation parameters  are identical to those of Figure \ref{fig:numerics:results} (left). As discussed in the introduction, for discrete observations from the regression model \eqref{eq:introduction:regrmodel} the statistics $\widehat X'_{\varepsilon, k, x}$ and $\widehat X^\Delta_{\varepsilon, k, x}$ are calculated by the corresponding Riemann sums on the right-hand side.
We will consider $\widehat X'_{\varepsilon, k, x},\widehat X^\Delta_{\varepsilon, k, x}$ simultaneously for
\begin{enumerate}
	\item[a)] $0\leq k\leq N_\varepsilon$, where $N_\varepsilon := \lfloor T\varepsilon^{-1}\rfloor - 1$,
	\item[b)] $x\in\cX_\varepsilon$, where $\cX_\varepsilon\subset\R^d$ is a finite set of points such that
	\begin{enumerate}
		\item[(i)] $\mathrm{supp}((K_{0, x})_{\varepsilon, x_0})\subset\cT\times\cD$ for $x\in\cX_\varepsilon$,
		\item[(ii)] $\mathrm{supp}((K_{0, x})_{\varepsilon, x_0})\cap \mathrm{supp}((K_{0, y})_{\varepsilon, x_0})=\emptyset$ for $x\neq y\in\cX_\varepsilon$.
	\end{enumerate}
\end{enumerate}

\begin{remark}
	Clearly, the size $\abs{\cX_\varepsilon}$ of the grid is at most of the order $\delta^{-d}=\varepsilon^{-d/2}$.
\end{remark}

\begin{figure}
	\includegraphics[width=0.32\textwidth]{"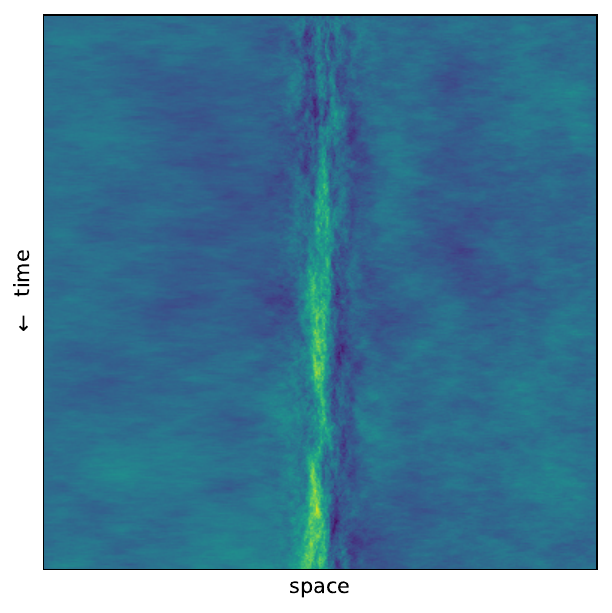"}
	\includegraphics[width=0.32\textwidth]{"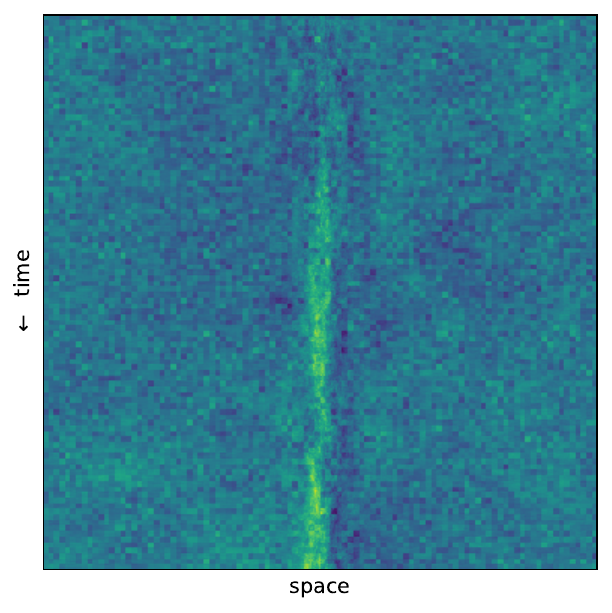"}
	\includegraphics[width=0.32\textwidth]{"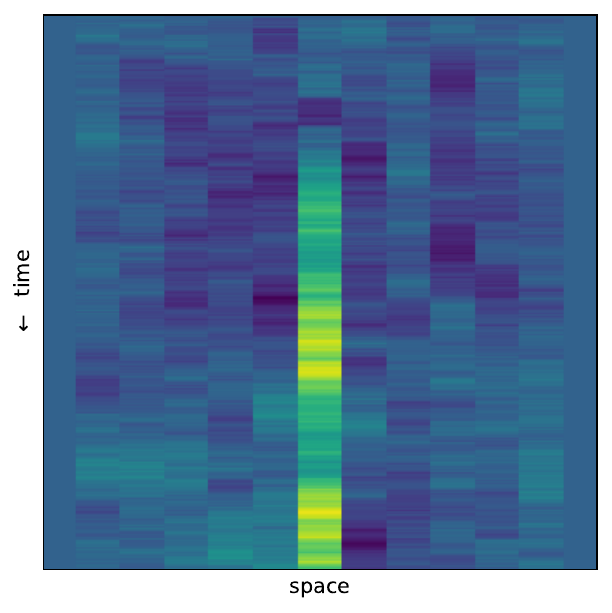"}
	\caption{
	\textbf{Left:} Realization of a stochastic heat equation with spatially heterogeneous diffusivity.
	\textbf{Center:} Same trajectory, but with additional static noise ($\varepsilon=0.16$) that is locally averaged for better visibility.
	\textbf{Right:} Smoothed trajectory with static noise, obtained by testing with $(K_{k, x})_{\varepsilon, x_0}$ for $x_0=0.5$ and $\varepsilon=0.0016$, i.e. $\delta=0.04$.
	}\label{fig:numerics:trajectory}
\end{figure}

Motivated by \eqref{eq:estimator:regression:referenceestimator}, we use a locally linear-type estimator with weights $w_\varepsilon^h(x)=w_\varepsilon^h(x,x_0, \cX_\varepsilon)$ for $x\in\cX_\varepsilon$, depending on an additional bandwidth parameter $h>0$.
Precise conditions on the weights are given below.
The standing assumption is
\begin{align}
	\delta = \cO(h) \;\Leftrightarrow\;
	\varepsilon = \cO(h^2),
\end{align}
i.e. the bandwidth $h$ is larger than the spatial precision $\delta=\varepsilon^{1/2}$ of a single kernel.
Putting things together, the formal regression problem \eqref{eq:estimator:regression} leads to a least squares estimator of the form
\begin{align}\label{eq:estimator}
	\hat\vartheta_\varepsilon(x_0) = \frac{\sum_{k=1}^{N_\varepsilon}\sum_{x\in\cX_\varepsilon}w_\varepsilon^h(x)\widehat X^\Delta_{\varepsilon, k-1, x}\widehat X'_{\varepsilon, k, x}}{\sum_{k=1}^{N_\varepsilon}\sum_{x\in\cX_\varepsilon}w_\varepsilon^h(x)\widehat X^\Delta_{\varepsilon, k-1, x}\widehat X^\Delta_{\varepsilon, k, x}}
\end{align}
where $\widehat X^\Delta_{\varepsilon, k-1, x}$ serves as an instrumental variable for $\widehat X^\Delta_{\varepsilon, k, x}$.

\begin{remark} \
	\begin{enumerate}
		\item[a)] Heuristically, the numerator of $\hat\vartheta_\varepsilon(x_0)$ has the form of a discrete stochastic integral of $\widehat X^\Delta_{\varepsilon, k, x}$ with respect to increments of $\widehat X_{\varepsilon, k, x}:=\sscp{Y}{(K_{k, x})_{\varepsilon, x_0}}$ (this is made rigorous in Lemma \ref{lem:errordecomposition}). Note, however, that $\widehat X'_{\varepsilon, k, x}$ behaves differently from such increments due to local averaging in time, which would lead to correlation with the stochastic integrand. In order to maintain the structure of a martingale, we introduce an artificial time shift and evaluate the integrand at $k-1$ instead of $k$ in the definition of $\hat\vartheta_\varepsilon(x_0)$.
		\item[b)] Interestingly, there are two different localization parameters active in \eqref{eq:estimator}: The precision of the testing kernel is bound to the parabolic scaling of the stochastic heat equation (thus relating the noise level $\varepsilon$ to local length units in space and time, see Section \ref{sec:localization}), whereas the bandwidth $h$ of the weights defines a neighborhood that is informative for estimating $\vartheta(x_0)$.
		Loosely speaking, our estimator is the ratio of ``averages of squared averages'',
		and thus conceptually related to  pre-averaging approaches in volatility estimation \cite{PodolskijVetter2009, JacodLiMyklandPodolskijVetter2009,Reiss2011}.
	\end{enumerate}
\end{remark}

\begin{lemma}[error decomposition] \label{lem:errordecomposition}
	The estimation error can be written as
	\begin{align}
		\hat\vartheta_\varepsilon(x_0) - \vartheta(x_0)
			&= M_{\varepsilon, N_\varepsilon}/I_{\varepsilon, N_\varepsilon} + B_{\varepsilon, N_\varepsilon}/I_{\varepsilon, N_\varepsilon},
	\end{align}
	where
	\begin{align*}
		I_{\varepsilon, N_\varepsilon} &= \sum_{k=1}^{N_\varepsilon}\sum_{x\in\cX_\varepsilon}w_\varepsilon^h(x)\widehat X^\Delta_{\varepsilon, k-1, x}\widehat X^\Delta_{\varepsilon, k, x}, \\
		B_{\varepsilon, N_\varepsilon} &= \sum_{k=1}^{N_\varepsilon}\sum_{x\in\cX_\varepsilon}w_\varepsilon^h(x)\widehat X^\Delta_{\varepsilon, k-1, x}\sscpSmall{[\Delta_{\vartheta}-\Delta_{\vartheta(x_0)}] X}{(K_{k, x})_{\varepsilon, x_0}},
	\end{align*}
	and $(M_{\varepsilon, N})_{N=0,\ldots N_\varepsilon}$ is a time-discrete $L^2$-martingale for fixed $\varepsilon$ with quadratic variation
	\begin{align*}
		\langle M_\varepsilon\rangle_N = \sigma_K^2\sum_{k=1}^{N}\sum_{x\in\cX_\varepsilon}w_\varepsilon^h(x)^2(\widehat X^\Delta_{\varepsilon, k-1, x})^2,
	\end{align*}
	where $\sigma_K^2=\sigma^2\norm{K}_{L^2(\R^d)}^2+\norm{\partial_tK+\Delta_{\vartheta(x_0)} K}_{L^2(\R^d)}^2$.
\end{lemma}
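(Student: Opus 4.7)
My plan is to convert the ratio into the usual form ``error = martingale/information + bias/information''. Starting from the definition of the estimator,
\begin{align*}
\hat\vartheta_\varepsilon(x_0) - \vartheta(x_0) = \frac{1}{I_{\varepsilon, N_\varepsilon}}\sum_{k=1}^{N_\varepsilon}\sum_{x\in\cX_\varepsilon} w_\varepsilon^h(x)\, X^\Delta_{\varepsilon, k-1, x}\bigl[X'_{\varepsilon, k, x} - \vartheta(x_0)\, X^\Delta_{\varepsilon, k, x}\bigr],
\end{align*}
the lemma reduces to decomposing the bracket. Abbreviating $\varphi_{k,x} := (K_{k,x})_{\varepsilon, x_0}$ (which lies in $\cC$ by condition (ii)(a)), I would insert $Y = X + \varepsilon\dot V$ into the definitions of $X'$ and $X^\Delta$. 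Integration by parts in time uses that $\varphi_{k,x}$ is supported in $(\varepsilon k, \varepsilon(k+1))\times\cD$, so no boundary contributions appear, and the space-time weak equation \eqref{eq:basic:SPDE:spacetimeweak} yields
\begin{align*}
X'_{\varepsilon, k, x} &= \sscpSmall{\Delta_\vartheta X}{\varphi_{k,x}} + \sigma\sscpSmall{\dot W}{\varphi_{k,x}} - \varepsilon\sscpSmall{\dot V}{\partial_t\varphi_{k,x}},\\
X^\Delta_{\varepsilon, k, x} &= \sscpSmall{\Delta X}{\varphi_{k,x}} + \varepsilon\sscpSmall{\dot V}{\Delta\varphi_{k,x}}.
\end{align*}
Using $\Delta_{\vartheta(x_0)} = \vartheta(x_0)\Delta$, subtraction gives
\begin{align*}
X'_{\varepsilon, k, x} - \vartheta(x_0)\, X^\Delta_{\varepsilon, k, x} = \sscpSmall{[\Delta_\vartheta - \Delta_{\vartheta(x_0)}] X}{\varphi_{k,x}} + \xi_{\varepsilon, k, x},
\end{align*}
with $\xi_{\varepsilon, k, x} := \sigma\sscpSmall{\dot W}{\varphi_{k,x}} - \varepsilon\sscpSmall{\dot V}{(\partial_t+\Delta_{\vartheta(x_0)})\varphi_{k,x}}$. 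The first summand, weighted by $w_\varepsilon^h(x)X^\Delta_{\varepsilon, k-1, x}$ and summed, is exactly $B_{\varepsilon, N_\varepsilon}$; the remainder defines $M_{\varepsilon, N} := \sum_{k=1}^N\sum_{x\in\cX_\varepsilon} w_\varepsilon^h(x)\, X^\Delta_{\varepsilon, k-1, x}\, \xi_{\varepsilon, k, x}$.

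For the martingale property, I would assume $\dot W$ and $\dot V$ to be independent (the standard dynamic vs.\ static noise setup) and work in $\cF_k := \sigma(\sscpSmall{\dot W}{\eta},\sscpSmall{\dot V}{\eta}\,:\,\mathrm{supp}\,\eta\subset[0,\varepsilon k]\times\cD)$. Since $\varphi_{k-1, x}$ is temporally supported in $(\varepsilon(k-1),\varepsilon k)$, the factor $X^\Delta_{\varepsilon, k-1, x}$ is $\cF_k$-measurable; and since $\varphi_{k, x}$ is temporally supported in $(\varepsilon k,\varepsilon(k+1))$, the increment $\xi_{\varepsilon, k, x}$ is centered Gaussian, square-integrable and independent of $\cF_k$. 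Consequently $(M_{\varepsilon, N})_{N\geq 0}$ is an $L^2$-martingale with respect to $(\cF_{k+1})_{k\geq 0}$.

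For the predictable quadratic variation, the key input is condition (ii)(b): distinct $x\in\cX_\varepsilon$ produce kernels $\varphi_{k,x}$ with disjoint spatial supports, so the Gaussians $(\xi_{\varepsilon, k, x})_{x\in\cX_\varepsilon}$ are uncorrelated, hence mutually independent. The conditional second moment of an increment collapses to the diagonal,
\begin{align*}
\bE\bigl[(M_{\varepsilon, k}-M_{\varepsilon, k-1})^2\bigm|\cF_k\bigr] = \sum_{x\in\cX_\varepsilon} w_\varepsilon^h(x)^2 (X^\Delta_{\varepsilon, k-1, x})^2\,\bE[\xi_{\varepsilon, k, x}^2].
\end{align*}
Independence of $\dot W,\dot V$ and the isonormal identity give $\bE[\xi_{\varepsilon, k, x}^2] = \sigma^2\norm{\varphi_{k,x}}^2 + \varepsilon^2\norm{(\partial_t+\Delta_{\vartheta(x_0)})\varphi_{k,x}}^2$. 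A change of variables based on the explicit scaling \eqref{eq:kernel-localized-explicit} shows $\norm{\varphi_{k,x}}^2=\norm{K}^2$ and $\norm{(\partial_t+\Delta_{\vartheta(x_0)})\varphi_{k,x}}^2 = \varepsilon^{-2}\norm{(\partial_t+\Delta_{\vartheta(x_0)})K}^2$; the $\varepsilon^2$ prefactor cancels and one recovers precisely $\sigma_K^2$.

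The only delicate step is the scaling computation for $\bE[\xi_{\varepsilon, k, x}^2]$: one explicit factor of $\varepsilon^2$ from $\xi_{\varepsilon, k, x}$ must precisely cancel the $\varepsilon^{-2}$ generated by two derivatives acting on the parabolically localized kernel, which is exactly the reason why the static noise intensity and the parabolic regularization of the SPDE combine into a non-trivial limit constant. Everything else -- the integration by parts, the choice of filtration, and the diagonalization via disjoint supports -- is routine once the right decomposition of the bracket is in place.
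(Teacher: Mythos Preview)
Your proposal is correct and follows essentially the same route as the paper: decompose $X'_{\varepsilon,k,x}-\vartheta(x_0)X^\Delta_{\varepsilon,k,x}$ into a bias term plus a noise term $\xi_{\varepsilon,k,x}$ (the paper's $D_{\varepsilon,k,x}$), invoke the space-time weak formulation to identify the pieces, and use the disjoint temporal/spatial supports together with independence of $\dot W,\dot V$ to obtain the martingale structure and the diagonal quadratic variation. Your filtration $(\cF_{k+1})_{k\ge 0}$ is just an index shift of the paper's $\cF_{\varepsilon,k}=\sigma(\dot W(\varphi),\dot V(\varphi):\varphi\in L^2([0,\varepsilon(k+1))\times\cD))$, and your scaling computation $\varepsilon^2\norm{(\partial_t+\Delta_{\vartheta(x_0)})\varphi_{k,x}}^2=\norm{(\partial_t+\Delta_{\vartheta(x_0)})K}^2$ is exactly how the paper arrives at the $\varepsilon$-free constant $\sigma_K^2$.
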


\begin{proof}
	See Appendix \ref{sec:proofs:estimation}.
\end{proof}

In this decomposition, $I_{\varepsilon, N_\varepsilon}$ plays the role of an empirical Fisher information, $M_{\varepsilon, N_\varepsilon}$ captures the stochastic error and $B_{\varepsilon, N_\varepsilon}$ is the bias induced by spatial heterogeneity of $\vartheta$.

For later use, we define the $\Delta$-order of a kernel:

\begin{definition}
	$K\in C_c^\infty(\R\times\R^d)$ is \textit{of $\Delta$-order $k\in\N$} if $K = (-\Delta)^k\bar K$ for some $\bar K\in C_c^\infty(\R\times\R^d)$.
\end{definition}

\subsection{Parametric Theory}\label{sec:estimation:parametric}

It is useful to consider the parametric case first, as most ideas appear already here. To this end, assume that $\vartheta(x)\equiv\vartheta$ is constant, in particular $B_{\varepsilon, N_\varepsilon}=0$.
For simplicity of presentation, we restrict to $X_0=0$ in this section, as well as $x_0=0$, assuming $0\in\cD$.
Abbreviate $\hat\vartheta_\varepsilon=\hat\vartheta_\varepsilon(x_0)$.
As the bandwidth of the weights only affects nonparametric estimation, we assume $h = 1$, i.e. the $w_\varepsilon^h(x)$ do in fact not depend on $h$, and we write $w_\varepsilon(x)=w^h_\varepsilon(x)$.
\\

\noindent\textbf{Assumption $(W)$:} The weights $w_\varepsilon(x)$ satisfy
	\begin{enumerate}
		\item[a)] $w_\varepsilon(x)\geq 0$ and $\lim_{\varepsilon\rightarrow 0}\sum_{x\in\cX_\varepsilon}w_\varepsilon(x) = 1$,
		\item[b)] there are closed balls $\cB\subset\cB^*$ in $\cD$
		around $x_0=0$ such that
		$\mathrm{supp}(w_\varepsilon)\subseteq\varepsilon^{-1/2}\cB$,
		and
		$\mathrm{supp}((K_{0, x})_{\varepsilon, x_0})\subseteq\cT\times\cB^*$ for all
		$x\in \mathrm{supp}(w_\varepsilon)$.
	\end{enumerate}
	
The second condition says that we evaluate $Y$ away from the boundary of the domain. A specific and natural choice is given by $w_\varepsilon(x)\equiv 1/\abs{\cX_\varepsilon}$ for some grid $\cX_\varepsilon$ such that $\varepsilon^{1/2}x\in\cB$ and  $\mathrm{supp}((K_{0, x})_{\varepsilon, x_0})\subseteq\cT\times\cB^*$ for $x\in\cX_\varepsilon$.
As the grid is determined by the statistician, this imposes no restriction.
As a last preparation, write
\begin{align}\label{eq:limitingconstant}
	C_\infty(\varphi, \psi) = \frac{\sigma^2}{2}\int_0^\infty\int_0^\infty\int_{\abs{t-s}}^{\infty}\scp{\varphi(t,\cdot)}{S_0(r)\psi(s,\cdot)}_0\diff r\diff s\diff t,\quad\quad \varphi,\psi\in\cC_0.
\end{align}
This is finite if $\varphi$ or $\psi$ have $\Delta$-order one by Lemma \ref{lem:bound:semigroup:heatkernel}.

\begin{theorem}[parametric central limit theorem] \label{thm:parametric}
	Assume $X_0=0$. If
	Assumption $(W)$ holds and
	$C_\infty(\Delta K, \Delta K_{-1, 0})>0$,
	then
	\begin{align}
		R_\varepsilon^{-1}(\hat\vartheta_\varepsilon-\vartheta)\xrightarrow{d}\cN(0, 1),
	\end{align}
	where $R_\varepsilon := \langle M_\varepsilon\rangle_{N_\varepsilon}^{1/2}/I_{\varepsilon, N_\varepsilon}$ satisfies
	with $\cW_\varepsilon:=\sum_{x\in\cX_\varepsilon} w_\varepsilon(x)^2$:
	\begin{align}
		R_\varepsilon \asymp_\bP \sigma_K\frac{\sqrt{C_\infty(\Delta K, \Delta K) + \norm{\Delta K}_{L^2}^2}}{C_\infty(\Delta K, \Delta K_{-1, 0})}N_\varepsilon^{-1/2}\cW_\varepsilon^{1/2}.
	\end{align}
	In particular, if $w_\varepsilon(x)\equiv 1/\abs{\cX_\varepsilon}$ and $\abs{\cX_\varepsilon}\asymp C_\cX\varepsilon^{-d/2}$ for some $C_\cX>0$, then
	\begin{align}\label{eq:rate:parametric}
		\varepsilon^{-\frac{1}{2}-\frac{d}{4}}\left(\hat\vartheta_\varepsilon-\vartheta\right)\xrightarrow{d}\cN\left(0, \sigma_K^2\frac{C_\infty(\Delta K, \Delta K) + \norm{\Delta K}_{L^2}^2}{T C_\cX C_\infty(\Delta K, \Delta K_{-1, 0})^2}\right).
	\end{align}
\end{theorem}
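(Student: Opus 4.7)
My plan starts from Lemma \ref{lem:errordecomposition}: in the parametric case $B_{\varepsilon, N_\varepsilon} = 0$, so
\begin{equation*}
R_\varepsilon^{-1}(\hat\vartheta_\varepsilon - \vartheta) = \frac{M_{\varepsilon, N_\varepsilon}}{\langle M_\varepsilon\rangle_{N_\varepsilon}^{1/2}}.
\end{equation*}
The theorem thus splits into two independent tasks: (i) a martingale CLT for this ratio, and (ii) separate asymptotic expressions for $\langle M_\varepsilon\rangle_{N_\varepsilon}$ and $I_{\varepsilon, N_\varepsilon}$ which together give the stated form of $R_\varepsilon$ and, via Slutsky, the explicit limit variance in \eqref{eq:rate:parametric}.

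For the asymptotics I would invoke the localization of Section \ref{sec:localization} with $\delta = \sqrt{\varepsilon}$. Under the rescaling the spatial Laplacian picks up a factor $\varepsilon^{-1}$, and using $Y^{(\varepsilon)} = X^{(\varepsilon)} + \dot V^{1/\sqrt{\varepsilon}}$ one obtains
\begin{equation*}
X^\Delta_{\varepsilon, k, x} = \sscpSmall{X^{(\varepsilon)}}{\Delta K_{k, x}}_\varepsilon + \sscpSmall{\dot V^{1/\sqrt{\varepsilon}}}{\Delta K_{k, x}}_\varepsilon,
\end{equation*}
so that the unscaled kernels $\Delta K_{k, x}$ now live on the inflated domain $\cT_\varepsilon\times\cD_\varepsilon$, which exhausts $[0, \infty)\times\bR^d$ as $\varepsilon\to 0$. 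The Trotter--Kato approximation of Section \ref{sec:semigroupapproximation} then permits comparison of the covariance structure of $X^{(\varepsilon)}$ from \eqref{eq:covariance-delta} with that of the limiting process $\bar X^{(0)}$ on $\bR^d$ from \eqref{eq:covariance-zero}. Using independence of $\dot V$ from $X^{(\varepsilon)}$ and the disjoint time supports of $K_{k-1, x}$ and $K_{k, x}$ (so the static-noise cross term vanishes), mean computations in the bulk $k\gg 1$ give
\begin{align*}
\bE[(X^\Delta_{\varepsilon, k-1, x})^2] &\to C_\infty(\Delta K, \Delta K) + \norm{\Delta K}_{L^2}^2, \\
\bE[X^\Delta_{\varepsilon, k-1, x}\, X^\Delta_{\varepsilon, k, x}] &\to C_\infty(\Delta K, \Delta K_{-1, 0}),
\end{align*}
and summing over $1\leq k\leq N_\varepsilon$ and $x\in\cX_\varepsilon$ against the weights $w_\varepsilon(x)^2$ and $w_\varepsilon(x)$, respectively, yields the claimed leading-order expectations for $\langle M_\varepsilon\rangle_{N_\varepsilon}$ and $I_{\varepsilon, N_\varepsilon}$.

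To upgrade means to convergence in probability I would bound the variances of $I_{\varepsilon, N_\varepsilon}$ and $\langle M_\varepsilon\rangle_{N_\varepsilon}$. Since the $X^\Delta_{\varepsilon, k, x}$ are centered Gaussians, Isserlis' theorem reduces these variances to sums of squared covariances $\rCov(X^\Delta_{\varepsilon, k, x}, X^\Delta_{\varepsilon, \ell, y})^2$; by heat-semigroup smoothing of $\Delta K$ these decay rapidly in $\abs{k - \ell}$, and they vanish for $x\neq y$ once the spatial supports of $(K_{0, \cdot})_{\varepsilon, x_0}$ are disjoint, as enforced by the construction of $\cX_\varepsilon$. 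The net effect is a variance of smaller order than the squared mean, giving the $\asymp_\bP$ statements. The martingale CLT for $M_{\varepsilon, N_\varepsilon}/\langle M_\varepsilon\rangle_{N_\varepsilon}^{1/2}$ is then straightforward: each increment $M_{\varepsilon, k} - M_{\varepsilon, k-1}$ is conditionally Gaussian given $\cF_{\varepsilon(k-1)}$ (it is built from white-noise contributions orthogonal to the past), so Lyapunov's condition follows at once from the already established growth of $\langle M_\varepsilon\rangle_{N_\varepsilon}$.

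The main obstacle is the variance control underlying the LLN: the process $X^{(\varepsilon)}$ lives on the $\varepsilon$-dependent domain $\cD_\varepsilon$, so the covariance bounds derived directly from \eqref{eq:covariance-delta} are not immediately uniform in $\varepsilon$. This is precisely where the quantitative Trotter--Kato estimates of Section \ref{sec:semigroupapproximation} are needed in order to transfer heat-kernel decay from $\bR^d$ back to the inflated problem with explicit error terms. A secondary technical point is the initial-layer regime of small $k$, where $\bar X^{(0)}$ has not yet reached its stationary-type covariance (the inner integral $\int_{\abs{t-s}}^{t+s}$ in \eqref{eq:covariance-zero} differs from $\int_{\abs{t-s}}^\infty$); one splits the sum into an initial block of size $o(N_\varepsilon)$, whose contribution is asymptotically negligible, and a bulk part where the limiting $C_\infty$-type covariance applies.
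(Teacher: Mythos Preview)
Your overall strategy matches the paper's proof closely: reduce to the moment conditions \eqref{eq:parametric:proof:moments}, establish the mean asymptotics via localization and the semigroup approximation (Lemma \ref{lem:cov:conv:eps}, Lemma \ref{lem:cov:conv:T}), bound the variances using Gaussianity, and conclude with a martingale CLT plus Slutsky.

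There is, however, one genuine misconception in your variance argument. You write that the covariances $\rCov(X^\Delta_{\varepsilon, k, x}, X^\Delta_{\varepsilon, \ell, y})$ ``vanish for $x\neq y$ once the spatial supports of $(K_{0, \cdot})_{\varepsilon, x_0}$ are disjoint.'' This is true for the static-noise part $\dot V^\Delta_{\varepsilon,k,x}$, but it is \emph{false} for the signal part $\bar X^\Delta_{\varepsilon,k,x}$: the process $X$ has spatial correlations induced by the heat semigroup, so disjoint support of the test functions does not decouple the tested values. Concretely, from \eqref{eq:covariance-delta} the covariance $\bE[\bar X^\Delta_{\varepsilon,k,x}\bar X^\Delta_{\varepsilon,\ell,y}]$ involves $\scp{\Delta\shift_x K_t}{S_\varepsilon(r)\Delta\shift_y K_s}_0$, which is generically nonzero for $x\neq y$ as soon as $r>0$.

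Fortunately this does not break the argument: what the paper actually uses (Lemma \ref{lem:cov:bound:kl}) is a bound on $\bE[\bar X^\Delta_{\varepsilon,k,x}\bar X^\Delta_{\varepsilon,\ell,y}]$ that is \emph{uniform} in $x,y$ and decays like $1\wedge|k-\ell|^{-\kappa}$ for any $\kappa<d/2$. Summing the spatial weights then gives the factor $\cW_\varepsilon^2$ (respectively $(\sum_x w_\varepsilon(x))^2\lesssim 1$ for $I_{\varepsilon,N_\varepsilon}$), and the temporal decay alone yields $\sum_{k,\ell}(1\wedge|k-\ell|^{-\kappa})^2=o(N_\varepsilon^2)$. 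So your variance control is salvageable, but it rests entirely on temporal decorrelation, not on any spatial independence of the signal.
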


\begin{proof}
	It suffices to show
	\begin{align}
		\bE I_{\varepsilon, N_\varepsilon} &\asymp C_\infty(\Delta K, \Delta K_{-1, 0})N_\varepsilon, \nonumber\\
		\bE\langle M_\varepsilon\rangle_{N_\varepsilon} &\asymp \sigma_K^2(C_\infty(\Delta K, \Delta K) + \norm{\Delta K}_{L^2}^2)N_\varepsilon\cW_\varepsilon, \label{eq:parametric:proof:moments} \\
			\rVar(I_{\varepsilon, N_\varepsilon}) &= o(N_\varepsilon^2)\;\;\;\mathrm{and}\;\;\; \rVar(\langle M_\varepsilon\rangle_{N_\varepsilon}) = o\left(N_\varepsilon^2\cW_\varepsilon^2\right). \nonumber
	\end{align}
In fact, the claim follows from Lemma \ref{lem:errordecomposition} and Slutsky's lemma with a standard central limit theorem for triangular martingale schemes in $N_\varepsilon,\varepsilon$  (see e.g. \cite[Theorem VII.8.4]{Shiryaev1996}), noting 
\[N_\varepsilon^{-1}I_{\varepsilon, N_\varepsilon}\xrightarrow{P} C_\infty(\Delta K, \Delta K_{-1, 0}), \quad (N_\varepsilon \cW_\varepsilon)^{-1}\langle M_\varepsilon\rangle_{N_\varepsilon}\xrightarrow{P} \sigma_K^2(C_\infty(\Delta K, \Delta K) + \norm{\Delta K}_{L^2}^2)
\]
as a consequence of \eqref{eq:parametric:proof:moments}.
In Lemma \ref{lem:conditionallyapunov}, we prove a Lyapunov condition, which implies the conditional Lindeberg condition needed in order to apply the central limit theorem for triangular martingale schemes \cite{GaensslerStrobelStute78}.

	\begin{enumerate}
	\item[a)] \emph{Asymptotic expansion for the expected values.}\\
	Using $\Delta(K_{k-1, x})_{\varepsilon, x_0}=\varepsilon^{-1}(\Delta K_{k-1, x})_{\varepsilon, x_0}$,
	\begin{align*}
		\bE\langle M_\varepsilon\rangle_{N_\varepsilon}
			&= \sigma_K^2\sum_{k=1}^{N_\varepsilon}\sum_{x\in\cX_\varepsilon}w_\varepsilon(x)^2\left(\bE\sscpSmall{X}{\Delta(K_{k-1, x})_{\varepsilon, x_0}}^2 \right. \\
			&\left.\hspace{5cm}+ \varepsilon^2\bE\sscpSmall{\dot V}{\Delta(K_{k-1, x})_{\varepsilon, x_0}}^2\right) \\
			&= \sigma_K^2\sum_{k=1}^{N_\varepsilon}\sum_{x\in\cX_\varepsilon}w_\varepsilon(x)^2\left(\varepsilon^{-2}\bE\sscpSmall{X}{(\Delta K_{k-1, x})_{\varepsilon, x_0}}^2 + \norm{(\Delta K_{k-1, x})_{\varepsilon, x_0}}^2\right) \\
			&= \sigma_K^2\norm{\Delta K}_{L^2(\R\times\R^d)}^2N_\varepsilon\cW_\varepsilon + \sigma_K^2\sum_{k=1}^{N_\varepsilon}\sum_{x\in\cX_\varepsilon}w_\varepsilon(x)^2\bE\sscpSmall{X^{(\varepsilon)}}{\Delta K_{k-1, x}}_\varepsilon^2,
	\end{align*}
	where $X^{(\varepsilon)}$ is defined in Lemma \ref{lem:localization-X}.
	As $\varepsilon\rightarrow 0$, this process approaches the space-time weak solution $\bar X^{(0)}$ to the stochastic heat equation on $[0,\infty)\times\R^d$ (defined in \eqref{eq:covariance-zero}), in the sense that the covariance converges uniformly in $x\in\cX_\varepsilon$, $k\leq N_\varepsilon$ (see Lemma \ref{lem:cov:conv:eps}). Thus
	\begin{align*}
		\sum_{k=1}^{N_\varepsilon}\sum_{x\in\cX_\varepsilon}w_\varepsilon(x)^2\bE\sscpSmall{X^{(\varepsilon)}}{\Delta K_{k-1, x}}_\varepsilon^2 \hspace{-2cm}& \\
			&= \sum_{k=1}^{N_\varepsilon}\sum_{x\in\cX_\varepsilon}w_\varepsilon(x)^2\left(\bE\sscpSmall{\bar X^{(0)}}{\Delta K_{k-1, x}}_0^2 + o(1)\right) \\
			&= \left(C_\infty(\Delta K, \Delta K) + o(1)\right)N_\varepsilon\cW_\varepsilon,
	\end{align*}
	where the ergodic limit $C_\infty(\Delta K, \Delta K)$ is identified by Lemma \ref{lem:cov:conv:T}.
	The calculation for $\bE I_{\varepsilon, N_\varepsilon}$ is identical, but easier as there is no term involving the static noise $\dot V$.
	
	\item[b)] \emph{Variance bounds.}\\
	Concerning $\rVar(\langle M_\varepsilon\rangle_{N_\varepsilon})$,
	write $\bar X^\Delta_{\varepsilon, k, x}:=\sscpSmall{X}{\Delta(K_{k, x})_{\varepsilon, x_0}}$ and $\dot V^\Delta_{\varepsilon, k, x}:=\sscpSmall{\varepsilon\dot V}{\Delta(K_{k, x})_{\varepsilon, x_0}}$.
	Then 
	\begin{align*}
		\rVar(\langle M_\varepsilon\rangle_{N_\varepsilon})
			&= \rVar\left(\sigma_K^2\sum_{k=1}^{N_\varepsilon}\sum_{x\in\cX_\varepsilon}w_\varepsilon(x)^2(\widehat X^\Delta_{\varepsilon, k-1, x})^2\right) \\
			&\hspace{-2cm} \leq 8\sigma_K^4\left[\rVar\left(\sum_{k=1}^{N_\varepsilon}\sum_{x\in\cX_\varepsilon}w_\varepsilon(x)^2(\bar X^\Delta_{\varepsilon, k-1, x})^2\right) + \rVar\left(\sum_{k=1}^{N_\varepsilon}\sum_{x\in\cX_\varepsilon}w_\varepsilon(x)^2(\dot V^\Delta_{\varepsilon, k-1, x})^2\right)\right] \\
			&\hspace{-2cm} =: 8\sigma_K^4(I + II)
	\end{align*}
	follows from resolving the square of $\widehat X^\Delta_{\varepsilon, k-1, x}$, then bounding the variance of the sum of the resulting four terms by four times the sum of the variances, using Lemma \ref{lem:abstract-variance-bounds} and grouping the terms together.
	For the first term $I$, we exploit the decorrelation in time of $\bar X^\Delta$. More precisely, Lemma \ref{lem:cov:bound:kl} yields
	for any $\kappa<d/2$:
	\begin{align*}
		I &= 2\sum_{k,\ell=1}^{N_\varepsilon}\sum_{x, y\in\cX_\varepsilon}w_\varepsilon(x)^2w_\varepsilon(y)^2\bE[\bar X^\Delta_{\varepsilon, k-1, x}\bar X^\Delta_{\varepsilon, \ell-1, y}]^2
			\lesssim \cW_\varepsilon^2\sum_{k,\ell=1}^{N_\varepsilon}(1\wedge\abs{k-\ell}^{-\kappa})^2.
	\end{align*}
	With $\kappa>1/2$, this is $\cO(\cW_\varepsilon^2N_\varepsilon)$ in $d\geq 2$. In $d=1$, this is $\cO(\cW_\varepsilon^2N_\varepsilon^{1+\eta})$ for any $\eta>0$.
	In both cases, $I=o(\cW_\varepsilon^2N_\varepsilon^2)$, as desired. A direct evaluation shows
	\begin{align*}
		II &= 2\sum_{k,\ell=1}^{N_\varepsilon}\sum_{x, y\in\cX_\varepsilon}w_\varepsilon(x)^2w_\varepsilon(y)^2\bE[\dot V^\Delta_{\varepsilon, k-1, x}\dot V^\Delta_{\varepsilon, \ell-1, y}]^2
			= 2\norm{\Delta K}_{L^2}^2N_\varepsilon\sum_{x\in\cX_\varepsilon}w_\varepsilon(x)^4,
	\end{align*}
	which is $\cO(N_\varepsilon\cW_\varepsilon^2)$.
	In total, $I+II = o(N_\varepsilon^2\cW_\varepsilon^2)$.
	Again, the argument for $\rVar(I_{\varepsilon, N_\varepsilon})$ is identical.
	\end{enumerate}
\end{proof}

\begin{remark} \label{rem:parametric} \
	\begin{enumerate}
		\item[a)] If we define the effective sample size $N_\mathrm{eff}$ to be the number of spatiotemporal shifts of the localized testing kernel $K$ that can be used for estimation, parabolic scaling yields $N_\mathrm{eff}\sim\delta^{-d}\times\delta^{-2}\sim\varepsilon^{-d/2-1}$, so Theorem \ref{thm:parametric} says that $\vartheta$ can be identified at rate $1/\sqrt{N_\mathrm{eff}}$.
		\item[b)] The parametric rate in \eqref{eq:rate:parametric} is optimal, see \cite{companionpaper}.
		\item[c)] The central limit theorem from Theorem \ref{thm:parametric} can be used to construct confidence intervals in a standard way, by substituting all unknown quantities in the asymptotic variance by consistent estimates.
		\item[d)] It is straightforward to extend the argument in order to understand the dependence of the asymptotic result on the dynamic noise level $\sigma$, assuming this is known.
		Letting $\delta=\sqrt{\varepsilon/\sigma}$ and repeating the proof of Theorem \ref{thm:parametric}, we obtain a central limit theorem with convergence rate $(\varepsilon/\sigma)^{1/2+d/4}$, with an asymptotic variance independent of $\sigma$.
		In fact, this seemingly more general case can be reduced to the present one by rescaling both $X$ and $Y$ with $\sigma^{-1}$, see Remark \ref{rem:Scaling} for a discussion. 
		\item[e)] Even if we consider the case of continuous observations, where the statistician can always choose $\delta=\sqrt{\varepsilon}$, it is illustrative to consider the effects of other relations to the proof of Theorem \ref{thm:parametric}. If $\delta=o(\sqrt{\varepsilon})$, the effect of the static noise on $\bE\langle M_\varepsilon\rangle_{N_\varepsilon}$ becomes dominant and grows faster than $N_\varepsilon\cW_\varepsilon$. As a consequence, $\hat\vartheta_\varepsilon$ will converge more slowly. If $\varepsilon=o(\delta^2)$, then the analysis remains the same if we formally replace $\varepsilon$ by $\delta^2$. In this case, the estimator does not optimally exploit the static noise level present in the data.
		\item[f)] The dependence of the asymptotic variance on $\vartheta$ is non-trivial due to the form of $\sigma_K^2$ and $C_\infty(\cdot, \cdot)$.
		\item[g)] The approximation of the covariances of $X^{(\varepsilon)}$ and $\bar X^{(0)}$ in Lemma \ref{lem:cov:conv:eps}, which has been used in the proof, relies on a Trotter--Kato type semigroup approximation result. This is discussed in Section \ref{sec:semigroupapproximation}.
		\item[h)]
		The separation of the variances of signal and noise is possible due to a general property of Gaussian processes, as discussed in Lemma \ref{lem:abstract-variance-bounds}.
		\item[i)]
		Integrating out the heat semigroup, we can write in \eqref{eq:limitingconstant}
		\begin{align*}
			C_\infty(\Delta K, \Delta K)
				&= -\frac{\sigma^2}{2\vartheta(x_0)}\int_0^\infty\int_0^\infty\scp{K(t, \cdot)}{S_0(\abs{t-s})\Delta K(s, \cdot)}_0\diff s\diff t \\
				&= \frac{\sigma^2}{2\vartheta(x_0)}\int_0^\infty\int_0^\infty\scp{\nabla K(t, \cdot)}{S_0(\abs{t-s}) \nabla K(s, \cdot)}_{L^2(\R^d)^d}\diff s\diff t.
		\end{align*}
		As we consider kernels in space and time, this can be considered as a temporally smoothed version of the constant $\Psi(\Delta K, \Delta K)$ appearing in \cite[Section 3.2]{AltmeyerReiss2021}.
	\end{enumerate}
\end{remark}

\subsection{Nonparametric Theory}\label{sec:estimation:nonparametric}

We first give general conditions that imply natural nonparametric rates, and in a second step we examine to what extent these conditions can be verified.
More precisely, in analogy to the moment conditions \eqref{eq:parametric:proof:moments} in the proof of Theorem \ref{thm:parametric}, the consistency of $\hat\vartheta_\varepsilon(x_0)$ follows if the following natural moment bounds are satisfied with some parameter $\NonparIndex\geq 1$, which quantifies the size of the bias:
\\

\noindent\textbf{Assumption $(A_\NonparIndex)$:}
For $h\sim\varepsilon^{\frac{2+d}{4\NonparIndex+2d}}$, it holds:
\begin{enumerate}
	\item[a)]
	$\bE I_{\varepsilon, N_\varepsilon}\sim N_\varepsilon$ and $\rVar(I_{\varepsilon, N_\varepsilon}) = o(N_\varepsilon^2)$,
	\item[b)]
	$\bE\langle M_\varepsilon\rangle_{N_\varepsilon} = \cO(N_\varepsilon\abs{\cX_\varepsilon}^{-1}h^{-d})$,
	\item[c)]
	$\bE B_{\varepsilon, N_\varepsilon}^2 =\cO(h^{2\NonparIndex} N_\varepsilon^2)$.
\end{enumerate}

\begin{theorem}[nonparametric rates in general] \label{thm:abstract-rate}
	Let Assumption $(A_\NonparIndex)$ hold for some $\NonparIndex\geq 1$. Then
	\begin{align}\label{eq:thm:abstract:generalrate}
		\hat\vartheta(x_0) - \vartheta(x_0) = \cO_\bP\left(\frac{1}{\sqrt{N_\varepsilon\abs{\cX_\varepsilon}h^d}} + h^\NonparIndex\right).
	\end{align}
	In particular,
	with $N_\varepsilon\sim\varepsilon^{-1}$ and
	$\abs{\cX_\varepsilon}\sim\varepsilon^{-d/2}$,
	we obtain
	\begin{align}
		\hat\vartheta(x_0) - \vartheta(x_0) = \cO_\bP\left(\varepsilon^{1/2+d/4}h^{-d/2}\right) + \cO_\bP\left(h^\NonparIndex\right).
	\end{align}
	With the choice $h\sim\varepsilon^{\frac{2+d}{4\NonparIndex+2d}}$, this expression is optimized, and
	\begin{align}\label{eq:thm:abstract:finalrate}
		\hat\vartheta_\varepsilon(x_0) = \vartheta(x_0) + \cO_\bP\left(\varepsilon^{\frac{(2+d)\NonparIndex}{4\NonparIndex+2d}}\right).
	\end{align}
\end{theorem}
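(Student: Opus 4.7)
The plan is to combine the error decomposition of Lemma \ref{lem:errordecomposition} with the three moment bounds packaged in Assumption $(A_\NonparIndex)$, treat the martingale and bias terms by standard second--moment arguments, and then optimize the bandwidth $h$.

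\textbf{Step 1 (stabilizing the denominator).} By Lemma \ref{lem:errordecomposition},
\begin{align*}
\hat\vartheta_\varepsilon(x_0)-\vartheta(x_0)=\frac{M_{\varepsilon,N_\varepsilon}}{I_{\varepsilon,N_\varepsilon}}+\frac{B_{\varepsilon,N_\varepsilon}}{I_{\varepsilon,N_\varepsilon}}.
\end{align*}
Assumption $(A_\NonparIndex)$(a) gives $\bE I_{\varepsilon,N_\varepsilon}\sim N_\varepsilon$ and $\rVar(I_{\varepsilon,N_\varepsilon})=o(N_\varepsilon^2)$; Chebyshev's inequality then yields $I_{\varepsilon,N_\varepsilon}/N_\varepsilon\to c$ in probability for some constant $c>0$, so in particular $I_{\varepsilon,N_\varepsilon}\asymp_\bP N_\varepsilon$, which is what the denominator needs.

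\textbf{Step 2 (martingale numerator).} Since $(M_{\varepsilon,N})_{N\geq 0}$ is an $L^2$--martingale, $\bE M_{\varepsilon,N_\varepsilon}^2=\bE\langle M_\varepsilon\rangle_{N_\varepsilon}=\cO(N_\varepsilon\abs{\cX_\varepsilon}^{-1}h^{-d})$ by $(A_\NonparIndex)$(b). Markov's inequality gives $M_{\varepsilon,N_\varepsilon}=\cO_\bP\bigl(\sqrt{N_\varepsilon\abs{\cX_\varepsilon}^{-1}h^{-d}}\bigr)$, hence $M_{\varepsilon,N_\varepsilon}/I_{\varepsilon,N_\varepsilon}=\cO_\bP\bigl(1/\sqrt{N_\varepsilon\abs{\cX_\varepsilon}h^d}\bigr)$.

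\textbf{Step 3 (bias numerator).} From $(A_\NonparIndex)$(c), $\bE B_{\varepsilon,N_\varepsilon}^2=\cO(h^{2\NonparIndex}N_\varepsilon^2)$, so again by Markov $\abs{B_{\varepsilon,N_\varepsilon}}=\cO_\bP(h^\NonparIndex N_\varepsilon)$ and therefore $B_{\varepsilon,N_\varepsilon}/I_{\varepsilon,N_\varepsilon}=\cO_\bP(h^\NonparIndex)$. Summing the two contributions yields \eqref{eq:thm:abstract:generalrate}.

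\textbf{Step 4 (plugging in and optimizing).} Inserting $N_\varepsilon\sim\varepsilon^{-1}$ and $\abs{\cX_\varepsilon}\sim\varepsilon^{-d/2}$ transforms the stochastic term into $\varepsilon^{1/2+d/4}h^{-d/2}$, giving the intermediate display. Balancing variance against squared bias via $\varepsilon^{1/2+d/4}h^{-d/2}=h^\NonparIndex$ yields $h^{\NonparIndex+d/2}=\varepsilon^{(2+d)/4}$, i.e.\ $h\sim\varepsilon^{(2+d)/(4\NonparIndex+2d)}$, at which value both terms equal $h^\NonparIndex=\varepsilon^{(2+d)\NonparIndex/(4\NonparIndex+2d)}$, establishing \eqref{eq:thm:abstract:finalrate}.

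The only step with any subtlety is Step 1: one must ensure that $I_{\varepsilon,N_\varepsilon}$ is not only of the correct order in expectation but also concentrates, so that $1/I_{\varepsilon,N_\varepsilon}=\cO_\bP(N_\varepsilon^{-1})$; the variance bound built into $(A_\NonparIndex)$(a) is precisely tailored for this. The remaining steps are textbook applications of Markov's inequality, and the verification of Assumption $(A_\NonparIndex)$ itself (which is where the substantive work of the paper, including the Trotter--Kato approximations, enters) is the business of the subsequent Theorem \ref{thm:connect-conditions}.
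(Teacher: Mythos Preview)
Your proof is correct and follows essentially the same route as the paper's: invoke the error decomposition of Lemma \ref{lem:errordecomposition}, use part (a) of Assumption $(A_\NonparIndex)$ with Chebyshev to stabilize the denominator, parts (b) and (c) with the martingale identity $\bE M_{\varepsilon,N_\varepsilon}^2=\bE\langle M_\varepsilon\rangle_{N_\varepsilon}$ and Markov to control the two numerators, and then balance bias against variance. One cosmetic point in Step 1: since $\bE I_{\varepsilon,N_\varepsilon}\sim N_\varepsilon$ in the paper's notation only means the ratio is bounded above and below (not convergent), you cannot conclude $I_{\varepsilon,N_\varepsilon}/N_\varepsilon\to c$ for a single constant $c$; the correct conclusion is $I_{\varepsilon,N_\varepsilon}\sim_\bP N_\varepsilon$ (as the paper writes), which is exactly what you need for $1/I_{\varepsilon,N_\varepsilon}=\cO_\bP(N_\varepsilon^{-1})$.
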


\begin{proof}
	Assumption $(A_\NonparIndex)$ a)
	implies $I_{\varepsilon, N_\varepsilon}\sim_\bP N_\varepsilon$.
	Next, since $(M_{\varepsilon,N})_{N\geq 0}$ is a centered martingale, $\bE[M_{\varepsilon, N_\varepsilon}^2]=\bE[\langle M_\varepsilon\rangle_{N_\varepsilon}]$, so $M_{\varepsilon, N_\varepsilon} = \cO_{L^2}(N_\varepsilon^{1/2}\abs{\cX_\varepsilon}^{-1/2}h^{-d/2})$ by Assumption $(A_\NonparIndex)$ b). The claim now follows from Lemma \ref{lem:errordecomposition}, using Assumption $(A_\NonparIndex)$ c) for the remaining term.
\end{proof}

\begin{remark} \
	\begin{enumerate}
		\item[a)] The rate in \eqref{eq:thm:abstract:finalrate} is indeed the optimal rate if $\NonparIndex$ is the H\"older regularity of $\vartheta$, see \cite{companionpaper}.
		\item[b)] Formally letting $\NonparIndex\rightarrow\infty$ in Theorem \ref{thm:abstract-rate}, we obtain the parametric rate $\cO_\bP(\varepsilon^{1/2+d/4})$ from Theorem \ref{thm:parametric}, together with $h\sim 1$.
		\item[c)] For $\NonparIndex=1$, the optimal bandwidth is just $h\sim\varepsilon^{1/2}$, and no additional smoothing is needed.
		For $\NonparIndex<1$, the optimal bandwidth would be smaller than the precision of the test kernel, $h=o(\varepsilon^{1/2})$, which seems unreasonable.
		Indeed, lower bounds from \cite{companionpaper} suggest an ellbow effect in the rate even on the level of statistical experiments for $\NonparIndex<1$, such that different tools have to be applied in that case.
		\item[d)] It is straightforward to derive rates from \eqref{eq:thm:abstract:generalrate} for the situation that the evaluation points in space are distributed only along some submanifold of dimension $d'$, such that $\abs{\cX_\varepsilon}\sim\varepsilon^{-d'/2}=o(\varepsilon^{-d/2})$. This result is of independent interest. We emphasize, however, that our goal is to study the case of continuous observation $Y$ in space and time, in particular, the statistician can choose the grid $\cX_\varepsilon$, and in order to maximize the information, this will be done as in Theorem \ref{thm:abstract-rate}.
	\end{enumerate}
\end{remark}

Next, we give verifiable conditions that imply Assumption $(A_\NonparIndex)$.
We will use for $1<\beta<2$ and $0<\underline{\vartheta}<C$:
\\

\noindent\textbf{Assumption {\assumptionB}:}
\begin{enumerate}
	\item[a)] $\vartheta\in \Theta(\beta, \underline{\vartheta}, C)$,
	where $\Theta(\beta, \underline{\vartheta}, C)=\{\theta\in C^1(\bar\cD)\;|\;\underline\vartheta\leq\vartheta(x)\leq C,\; \abs{\nabla\vartheta(x)}\leq C\;\mathrm{and}\;\abs{\nabla\vartheta(x)-\nabla\vartheta(y)}\leq C\abs{x-y}^{\beta-1}\;\mathrm{for}\;x,y\in\bar\cD\}$.
	\item[b)] The weights satisfy:
	\begin{enumerate}
		\item[(i)] $w^h_\varepsilon(x)=0$ if $\abs{x}>Ch\varepsilon^{-1/2}$,
		\item[(ii)] $\sup_{x\in\cX_\varepsilon}\abs{w^h_\varepsilon(x)} \leq C\abs{\cX_\varepsilon}^{-1}h^{-d}$,
		\item[(iii)] $\sum_{x\in\cX_\varepsilon}\abs{w^h_\varepsilon(x)} \leq C$,
		\item[(iv)] $\sum_{x\in\cX_\varepsilon}w^h_\varepsilon(x) = 1$, and $\sum_{x\in\cX_\varepsilon}x_i w^h_\varepsilon(x) = 0$ for $i=1,\dots,d$.
	\end{enumerate}
	\item[c)] The kernel $K$ is of $\Delta$-order $2$.
	In addition, $K$ is an odd or even function in all coordinate directions $x_i$, $i=1,\dots,d$,
	and $C_\infty(\Delta K, \Delta K_{-1, 0})>0$.
\end{enumerate}

\begin{remark}\label{rem:weights}
		Assumption {\assumptionB} is motivated by similar conditions in \cite{StrauchTiepner2024}.
		In particular, the conditions on the weights $w^h_\varepsilon(x)$ appear as a canonical nonparametric generalization (see \cite[Section 1.6]{Tsybakov2009}) of those imposed in
		Assumption $(W)$.
		It is possible to construct weights satisfying Assumption {\assumptionB} b) under natural assumption on the design $\cX_\varepsilon$. This is discussed in generality in
		\cite[Lemma 3.6/Example 3.7]{StrauchTiepner2024},
		following the lines of \cite{Tsybakov2009}.
		Here, we mention one important special case: For $\cD=(0,1)\subset\R$,
		with the uniform grid $\cX_\varepsilon = \{0, 1, \dots, \lfloor\varepsilon^{-1/2}\rfloor \}-x_0\varepsilon^{-1/2}$ and $h<(x_0\wedge(1-x_0))/2$,
		we can set $$w^h_\varepsilon(x) = \abs{I_{\varepsilon, h}}^{-1}\mathbbm{1}(x\in I_{\varepsilon, h})$$
		with $I_{\varepsilon, h}=\{x\in\cX_\varepsilon\;|\;\abs{x}<h\varepsilon^{-1/2}/2\}$.
		
\end{remark}

\begin{theorem}[sufficient conditions for nonparametric rates] \label{thm:connect-conditions}
	Let Assumption {\assumptionB} hold for some $\beta\in(1,2)$ and $0<\underline{\vartheta}<C$,
	and assume $X_0\in L^p(\cD)$ for some $p>2$.
	\begin{enumerate}
		\item[a)] In $d\geq 3$, assume $\beta<2-2/(4+d)$ and $p>p_0$, where $p_0 = p_0(d, \beta) \geq 2$ is given by \eqref{eq:p-initialcondition}. Then Assumption $(A_\NonparIndex)$ holds with $\NonparIndex=\beta$.
		\item[b)] In $d=2$, Assumption $(A_\NonparIndex)$ holds with $\NonparIndex=1$.
		\item[c)] In $d=1$, assume $\beta\geq 3/2$. Then Assumption $(A_\NonparIndex)$ holds with $\NonparIndex=1$.
	\end{enumerate}
	In particular, the conclusion from Theorem \ref{thm:abstract-rate} holds true,
	and for $h\sim\varepsilon^{\frac{2+d}{4\NonparIndex+2d}}$ we obtain $$\hat\vartheta_\varepsilon(x_0)=\vartheta(x_0)+\cO_\bP(\varepsilon^{\frac{(2+d)\NonparIndex}{4\NonparIndex+2d}}).$$
\end{theorem}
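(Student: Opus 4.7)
The plan is to verify the three parts of Assumption $(A_\NonparIndex)$ separately, after which Theorem \ref{thm:abstract-rate} yields the stated rate immediately. The approach parallels the parametric proof of Theorem \ref{thm:parametric} for parts (a) and (b), now with spatially varying weights and a nontrivial bandwidth $h$, while part (c) is the genuinely new ingredient and exploits H\"older regularity of $\vartheta$ together with the vanishing-moment conditions on $w_\varepsilon^h$.

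For parts (a) and (b), I would first localize around $x_0$: each statistic $X^\Delta_{\varepsilon, k-1, x}$ translates, via Lemma \ref{lem:localization-X}, into a test of the inflated process $X^{(\varepsilon)}$ against $\Delta K_{k-1, x}$. Since $\vartheta_\varepsilon(y) = \vartheta(x_0 + \sqrt{\varepsilon}y)$ converges to the constant $\vartheta(x_0)$ locally uniformly, the Trotter--Kato type semigroup approximation from Section \ref{sec:semigroupapproximation} should yield
\begin{align*}
\bE\sscpSmall{X^{(\varepsilon)}}{\Delta K_{k-1, x}}_\varepsilon^2 = \bE\sscpSmall{\bar X^{(0)}}{\Delta K_{k-1, x}}_0^2 + o(1),
\end{align*}
uniformly over $x$ in the support of $w_\varepsilon^h$ (which sits inside the ball of inflated radius $h\varepsilon^{-1/2}$) and $1 \leq k \leq N_\varepsilon$. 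Summing over $k$ using Lemma \ref{lem:cov:conv:T} gives $\bE I_{\varepsilon, N_\varepsilon} \sim N_\varepsilon$, the positivity following from $C_\infty(\Delta K, \Delta K_{-1,0}) > 0$ in Assumption {\assumptionB}(c), and $\bE\langle M_\varepsilon\rangle_{N_\varepsilon} = \cO(N_\varepsilon \cW_\varepsilon)$ with $\cW_\varepsilon = \sum_{x} w_\varepsilon^h(x)^2$. Combining the weight bounds from Assumption {\assumptionB}(b)(ii)--(iii) yields $\cW_\varepsilon \lesssim |\cX_\varepsilon|^{-1} h^{-d}$, which is exactly (b). The variance estimate $\mathrm{Var}(I_{\varepsilon, N_\varepsilon}) = o(N_\varepsilon^2)$ follows from the temporal decorrelation of the localized signal plus the contribution of the static noise, tracking the parametric argument.

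For part (c), I would Taylor-expand $\vartheta$ around $x_0$ on the support of the testing kernel. Integration by parts rewrites each bias summand as $\sscpSmall{X}{[\Delta_\vartheta - \Delta_{\vartheta(x_0)}](K_{k,x})_{\varepsilon, x_0}}$; since $(K_{k, x})_{\varepsilon, x_0}$ is supported in a ball of radius $\sqrt{\varepsilon}$ at spatial offset $\sqrt{\varepsilon}\,x$ from $x_0$, for $x$ in the support of $w_\varepsilon^h$ this support lies within distance $\lesssim h$ of $x_0$. Assumption {\assumptionB}(a) gives $\vartheta(y) - \vartheta(x_0) = \nabla \vartheta(x_0) \cdot (y - x_0) + r(y)$ with $|r(y)| \lesssim h^\beta$. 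The linear piece, after summation over $x \in \cX_\varepsilon$ weighted by $w_\varepsilon^h(x)$, is killed by the vanishing first-moment condition $\sum_x x_i w_\varepsilon^h(x) = 0$ of Assumption {\assumptionB}(b)(iv); what survives is the order-$h^\beta$ remainder multiplied by $X^\Delta$ factors whose second moments were controlled in step (b), giving $\bE B_{\varepsilon, N_\varepsilon}^2 = \cO(h^{2\beta} N_\varepsilon^2)$.

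The main obstacle is the dimension-dependent analysis. In $d \geq 3$, one must quantitatively control the response of $X^{(\varepsilon)}$ to the nonconstant component of $\vartheta_\varepsilon$ while the domain $\cD_\varepsilon$ stretches to $\R^d$; this is where both the restriction $\beta < 2 - 2/(4+d)$ and the integrability threshold $p_0(d, \beta)$ on $X_0$ enter, the latter because $\xi^{(\varepsilon)}$ concentrates as noted in Remark \ref{rem:localization-initialcondition} and must be tamed through an $L^p$ interpolation. In $d \leq 2$, on the other hand, the stronger low-dimensional fluctuations of $\bar X^{(0)}$ limit the attainable bias order to Lipschitz, forcing $\NonparIndex = 1$; additionally, the slower temporal decorrelation in $d = 1$ (visible already in the parametric variance bound, which degraded by a factor $N_\varepsilon^{\eta}$) demands the further assumption $\beta \geq 3/2$ to keep $\mathrm{Var}(I_{\varepsilon, N_\varepsilon}) = o(N_\varepsilon^2)$. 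Once (a)--(c) are established in each regime, Theorem \ref{thm:abstract-rate} with $h \sim \varepsilon^{(2+d)/(4\NonparIndex + 2d)}$ yields the advertised rate $\cO_\bP(\varepsilon^{(2+d)\NonparIndex/(4\NonparIndex + 2d)})$.
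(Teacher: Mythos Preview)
Your overall strategy matches the paper's: verify the three parts of Assumption~$(A_\NonparIndex)$ via localization, semigroup approximation, and a Taylor expansion of $\vartheta$, then invoke Theorem~\ref{thm:abstract-rate}. The paper organizes this through an explicit splitting $Y=\bar X+\tilde X+\varepsilon\dot V$ (Proposition~\ref{prop:splitting-reduction}) so that the initial condition and the static noise are handled by dedicated lemmas, but that is a matter of bookkeeping rather than a different idea.

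There is, however, a genuine gap in your bias argument. After Taylor expanding $\vartheta(y)-\vartheta(x_0)=\nabla\vartheta(x_0)\cdot(y-x_0)+r(y)$ on the support of $(K_{k,x})_{\varepsilon,x_0}$, the point $y-x_0$ equals $\sqrt{\varepsilon}\,(z+x)$ with $z$ the kernel integration variable and $x\in\cX_\varepsilon$ the grid offset. The weight moment condition $\sum_x x_i\,w_\varepsilon^h(x)=0$ kills only the $x$-part of the linear term. The $z$-part, $\sqrt{\varepsilon}\,\nabla\vartheta(x_0)\cdot z$, survives the summation in $x$; in $d\geq 3$ this contribution is of order $\sqrt{\varepsilon}$, which is strictly larger than $h^\beta$ for $\beta>1$ under the bandwidth choice $h\sim\varepsilon^{(2+d)/(4\beta+2d)}$. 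The paper removes this $z$-linear piece by the \emph{symmetry} assumption on $K$ in Assumption~{\assumptionB}(c) (odd or even in each coordinate), which forces $\scp{\nabla\Delta K_t}{S_0(r)\,z_i\nabla K_{s-1}}_0=0$; see term~$I$ in the proof of Lemma~\ref{lem:moments:bias}. Without invoking this, your bound $\bE B_{\varepsilon,N_\varepsilon}^2=\cO(h^{2\beta}N_\varepsilon^2)$ does not close.

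A second, smaller point: you attribute the restriction $\beta\geq 3/2$ in $d=1$ to the variance bound on $I_{\varepsilon,N_\varepsilon}$. In fact $\rVar(I_{\varepsilon,N_\varepsilon})=o(N_\varepsilon^2)$ holds without it (Lemma~\ref{lem:parametric:moments}(c)); the condition $\beta\geq 3/2$ is used in Lemma~\ref{lem:moments:bias}(b) to control $\rVar(\normSmall{\bar X^{\Delta-\Delta}_\varepsilon}_{\abs{w}}^2)$, i.e.\ the variance of the bias term, where the one-dimensional growth factor $g_1(N_\varepsilon)=N_\varepsilon^{1/2}$ must be compensated by $h^{4\beta-4}\lesssim h^2\sim N_\varepsilon^{-1}$.
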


\begin{proof}
	\label{proof:connect-conditions}
	Apply Proposition \ref{prop:splitting-reduction} in order to aggregate relevant moment bounds from Lemma \ref{lem:parametric:moments} (empirical Fisher information), Lemma \ref{lem:moments:bias} (bias), Lemma \ref{lem:bounds-initialcondition} (initial condition) and Lemma \ref{lem:moments-noise} (static noise).
\end{proof}

\begin{remark} \
	\begin{enumerate}
		\item[a)] As $d$ gets large, the regularity restrictions in Theorem \ref{thm:connect-conditions} become weaker.
		In fact, the restrictions on $\beta$ in the statement of Theorem \ref{thm:connect-conditions} stem from semigroup approximation errors.
		This is directly linked to dimension-dependent hypercontractivity bounds of the heat semigroup.
		The gap between $\beta$ and $\NonparIndex$ in $d=1$ also appears in \cite{AltmeyerReiss2021}.
		\item[b)] The locally linear approach, together with quantitative Trotter--Kato bounds (see Section \ref{sec:semigroupapproximation}), can lead to a refined analysis even in the noiseless case ($\varepsilon=0$) discussed in \cite{AltmeyerReiss2021}.
		\item[c)]
		Our approach is based on a Taylor expansion of $\delta\mapsto\Delta_{\vartheta_\delta}$.
		In particular, this leads naturally to bounds
		with respect to weighted Sobolev spaces instead of the standard ones, see Lemma \ref{lem:bias:Laplacian}. As our proof of Theorem \ref{thm:connect-conditions} relies on estimates that are uniform in a neighborhood of shifted kernels $K_{k, x}$, the weights in the Sobolev norms cause additional divergence.
		This explains the upper bound $\beta<2-2/(4+d)$ rather than $\beta<2$ in $d\geq 3$.
		\item[d)]
		In contrast to the parametric case, where no such assumption is needed, the $\Delta$-order of $K$ allows to control the approximation error between the heat semigroups as well as their generators on $\cD$ and $\R^d$.
		Tracing the proof of Theorem \ref{thm:connect-conditions}, we see that $K$ can even have $\Delta$-order $1$ in $d=2$.
		In $d=1$, such improvement is not to be expected due to the gap between $\beta$ and $\NonparIndex$. On the other hand, in $d\geq 3$, we need $\Delta$-order two in order to obtain higher order quantitative bounds on the bias term $B_{\varepsilon, N_\varepsilon}$.
		As $K$ is chosen by the statistician, we can always assume that it has sufficiently large $\Delta$-order.
	\end{enumerate}
\end{remark}

In the sequel, we will fix the class $\Theta(\beta, \underline{\vartheta}, C)$ and implicitly understand that all estimates in the proofs will be uniform in $\vartheta\in\Theta(\beta, \underline{\vartheta}, C)$.

We finish this section with a remark on non-standard scaling properties of our model.

\begin{remark}\label{rem:Scaling}
	A scaling argument reveals two features of this statistical experiment which may seem surprising at first sight:
	\begin{enumerate}
		\item[a)] \emph{Large dynamic noise level helps.} Replacing $X$, $Y$ by $\sigma^{-1}X$, $\sigma^{-1}Y$ in \eqref{eq:basic:SPDE}, \eqref{eq:basic:Observation} leads to a normalized system where $\sigma$ is set to one, and $\varepsilon$ is replaced by $\varepsilon/\sigma$.
		Thus letting $\sigma$ grow will improve the estimate and identify the parameter!
		Constructing the estimator $\hat\vartheta_\varepsilon(x_0)$ with bandwidth $\delta=\sqrt{\varepsilon}$ in the normalized system corresponds to choosing $\delta=\sqrt{\varepsilon/\sigma}$ in the original system, as discussed in Remark \ref{rem:parametric}. 
		There is a different heuristic which helps to understand this effect:
		We consider the initial condition fixed in a way that the operator $\partial_t-\nabla\cdot\vartheta\nabla$ has a unique inverse $G_\vartheta$, and $G_\vartheta f$ is given by solving a parabolic (S)PDE with inhomogeneity $f$ (which may be white noise), i.e. $G_\vartheta=(\partial_t-\nabla\cdot\vartheta\nabla)^{-1}$.
		Formally rewriting \eqref{eq:basic:SPDE} as $X=\sigma G_\vartheta \dot W$,
		we can rephrase \eqref{eq:basic:Observation} as $Y=\sigma G_\vartheta\dot W+\varepsilon\dot V$, and increasing $\sigma$ will put more weight on the term that is not agnostic to $\vartheta$ (even if that term itself is random).
		A similar phenomenon for the one-dimensional Ornstein--Uhlenbeck process has been studied in \cite{Kutoyants2020}.

		\item[b)] \emph{The relative estimation error grows for large $\vartheta(x_0)$.}
		It is natural to conjecture that Theorem \ref{thm:connect-conditions} extends to
		\begin{align}\label{eq:scalingremark:T}
			\hat\vartheta_\varepsilon(x_0) = \vartheta(x_0)+\cO_\bP((\varepsilon^{\frac{1}{2} + \frac{d}{4}}T^{-\frac{1}{2}})^{\frac{2\NonparIndex}{2\NonparIndex + d}})
		\end{align}
		if the dependence on $T$ is explicitly traced in the nonparametric setup. While the approximation error of the heat semigroup prevents us from proving this rigorously in our setup,
		complementary lower bounds (see \cite{companionpaper}) indeed match.
		Replacing $X_t$ by the rescaled process $\sqrt{\vartheta(x_0)/\sigma^2}X_{t/(\vartheta(x_0))}$, and equally for $Y$, the resulting processes satisfy \eqref{eq:basic:SPDE}, \eqref{eq:basic:Observation} on $[0,\vartheta(x_0)T)$ with $\vartheta(x_0)$ and $\sigma$ normalized to one, and $\varepsilon$ replaced by $\vartheta(x_0)\varepsilon/\sigma$.
		That is, the effective static noise level and the effective observation horizon both scale linearly in the diffusivity.
		This implies that in \eqref{eq:scalingremark:T}, the error gets larger with $\vartheta(x_0)$: While $T$ grows, leading to more precise averaging in time, the increasing static noise level affects both time \emph{and} space!
		Note that after rescaling, the target parameter is normalized to one in \eqref{eq:scalingremark:T}, so it should be interpreted as relative rather than absolute estimation error.
		
		This is notably different from the classical setting without static noise,
		where the relative error \emph{shrinks} when $\vartheta(x_0)$ gets large.
		More precisely, consider the central limit theorems from \cite{AltmeyerReiss2021, AltmeyerBretschneiderJanakReiss2022, AltmeyerTiepnerWahl2022} within the scope of the local approach to drift estimation for SPDEs without static noise. In any of these works, the asymptotic variance is proportional to $1/T$, depends on $\vartheta$ and is independent of $\sigma$. Thus, scaling the latter quantities to unity as above, we obtain a dependence of the form $1/(\vartheta(x_0)T)$, all remaining terms being unchanged.
		This phenomenon is therefore intrinsically linked to both the presence of static noise, and spatially extended dynamics, which we consider in this work.

	\end{enumerate}
\end{remark}

\section{Semigroup Approximation}\label{sec:semigroupapproximation}

The proof of
our main results
relies heavily on uniform approximation results between the heat semigroup on $\cD_\delta$ and on $\R^d$. Pointwise approximation has been established in \cite{AltmeyerReiss2021}, and approximation results in different, but related settings can be found in \cite{StrauchTiepner2024} and \cite{AltmeyerTiepnerWahl2022}.
The most important feature in our setting is that the generators $\Delta_{\vartheta_\delta}$ and $\Delta_{\vartheta_0}$ deviate already in their highest order parts (as compared to possible lower order advection and reaction terms), which makes the approximation intrinsically hard.
We provide
a uniform Trotter--Kato type result that seems to be of independent interest.
As this section is analytic in nature and provides results that may be of interest beyond the statistical setup from Section \ref{sec:Estimation}, we write all terms in dependence of the localization parameter $\delta$ instead of the static noise level $\varepsilon$.

\begin{theorem}[uniform Trotter--Kato theorem] \label{thm:semigroup-approximation-simplified}
	Let $\delta\lesssim h\lesssim 1$.
	Fix two closed balls $\cB\subset\cB^*$ in $\cD$ centered at $x_0$,
	write $\cB[\delta/h]:=\cB_{\delta/h}=h\delta^{-1}(\cB-x_0)$ and use analogous notation for $\cB^*$.
	Write $\shift_y\varphi(x):=\varphi(x-y)$ for $x, y\in\R^d$.
	Fix $T>0$ and
	$\varphi,\psi\in C^\infty_c(\R^d)$ such that
	$\mathrm{supp}(\shift_y\varphi),\mathrm{supp}(\shift_y\psi)\subseteq\cB^*[\delta]$ for $y\in\cB[\delta/h]$.
	\begin{enumerate}
		\item[a)]
		For $p\geq 2$,
		\begin{align}
			\sup_{0\leq t\leq T}\sup_{y\in\cB[\delta/h]}\norm{(S_\delta(t) - S_0(t))\shift_y\varphi}_{L^p(\R^d)} \lesssim h.
		\end{align}
		If $\vartheta$ is constant, then there is some $c>0$, depending on $T$, with
		\begin{align}
			\sup_{0\leq t\leq T}\sup_{y\in\cB[\delta/h]}\norm{(S_\delta(t) - S_0(t))\shift_y\varphi}_{L^p(\R^d)} \lesssim e^{-c\delta^{-2}}.
		\end{align}
		
		\item[b)] Let $d\geq 3$. If $\varphi$ and $\psi$ have $\Delta$-order two and three, then for $C>0$ and $\alpha>1$:
		\begin{align}
			\int_0^{C\delta^{-2}}\sup_{y_1,y_2\in\cB[\delta/h]}\abs{\scp{\shift_{y_1}\varphi}{(S_\delta(t) - S_0(t))\shift_{y_2}\psi}_0}\diff t \lesssim h(h\delta^{-1})^{\alpha}.
		\end{align}
	\end{enumerate}
\end{theorem}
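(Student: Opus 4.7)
\noindent\textit{Proof plan.}

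\textbf{Part (a).} The plan is to reduce the bound to a Duhamel identity on the inflated domain $\cD_\delta$ and then split into an interior source term and a boundary/tail contribution. Let $v(t) := S_0(t)\shift_y\varphi$ on $\R^d$ and let $u(t) := S_\delta(t)\shift_y\varphi$, extended by zero outside $\cD_\delta$. Because $\shift_y\varphi$ is supported in $\cB^*[\delta] \Subset \cD_\delta$ by hypothesis, this extension is admissible as an initial condition for both evolutions, and the difference $w := u - v\!\restriction_{\cD_\delta}$ solves
\begin{equation*}
    \partial_t w = \Delta_{\vartheta_\delta} w + g,\qquad w(0)=0,\qquad w\!\restriction_{\partial\cD_\delta} = -v\!\restriction_{\partial\cD_\delta},
\end{equation*}
with source $g(s) = \nabla \cdot \bigl((\vartheta_\delta - \vartheta(x_0))\nabla v(s)\bigr)$. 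The key observation is that for $y \in \cB[\delta/h]$ and $x$ in the heat-kernel spread of $\shift_y\varphi$ up to time $T$, one has $|x - y| \lesssim \sqrt{T}+1$, hence $|\delta x| \lesssim \delta|y| + \delta\sqrt{T} \lesssim h + \delta \lesssim h$. Since $\vartheta \in C^1(\bar{\cD})$, this yields $|\vartheta_\delta - \vartheta(x_0)| \lesssim h$ and $|\nabla \vartheta_\delta| = \delta|\nabla \vartheta(\delta\, \cdot + x_0)| \lesssim \delta$ on the relevant support.

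\textbf{Bounding each contribution.} Combining the two pointwise bounds above with standard heat-kernel smoothness estimates for $v$ (uniform in $s \in [0,T]$ because $\shift_y\varphi$ is $C^\infty_c$), one obtains $\|g(s)\|_{L^p(\R^d)} \lesssim h$, and Duhamel together with $L^p$-contractivity of the Dirichlet semigroup $S_\delta$ gives the interior contribution $\|w\|_{L^p(\cD_\delta)} \lesssim h$. For the boundary correction and the $L^p$-mass of $v$ outside $\cD_\delta$, Gaussian bounds on the Euclidean heat kernel exploit the crucial geometric fact that $\operatorname{dist}(\mathrm{supp}(\shift_y\varphi), \partial\cD_\delta) \gtrsim 1/\delta$ (since $\cB^*[\delta] \Subset \cD_\delta$ with boundary distance $\gtrsim 1/\delta$), yielding both quantities at most $e^{-c/\delta^2}$. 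Summing these three pieces proves the general estimate. In the constant-diffusivity case $g \equiv 0$, only the exponentially small boundary/tail survives, which is the claim.

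\textbf{Part (b).} Here the naive route via part (a) gives only $h \cdot \delta^{-2}$, so one must use the $\Delta$-orders to buy back a factor $h \delta^{\alpha-1}$ with $\alpha > 1$. The plan is to insert the Duhamel identity for $S_\delta(t) - S_0(t)$ applied to $\shift_{y_2}\psi = (-\Delta)^3 \shift_{y_2}\bar\psi$ into the time integral, swap the order of integration, and use the resolvent identity $\int_0^{T} S_*(r)\,\mathrm{d}r = (-\Delta_*)^{-1}(I - S_*(T))$ (with $\Delta_* \in \{\Delta_{\vartheta_\delta}, \vartheta(x_0)\Delta\}$) to absorb the long time interval. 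The remaining operator difference is controlled by the resolvent identity
\begin{equation*}
    \Delta_{\vartheta_\delta}^{-1} - \vartheta(x_0)^{-1}\Delta^{-1} = \vartheta(x_0)^{-1}\Delta^{-1}\,\nabla\cdot\bigl((\vartheta(x_0)-\vartheta_\delta)\nabla\bigr)\Delta_{\vartheta_\delta}^{-1},
\end{equation*}
which contributes the factor $h$ from $|\vartheta_\delta - \vartheta(x_0)| \lesssim h$ on scale $|x| \lesssim h/\delta$. The $\Delta$-orders ($\varphi = (-\Delta)^2\bar\varphi$, $\psi = (-\Delta)^3\bar\psi$) kill endpoint singularities and produce the potentials $\bar\varphi, \bar\psi$, whose slow spatial decay in $\R^d$, evaluated against the shifted kernel moments over $y_1,y_2 \in \cB[\delta/h]$, generates the slack factor $(h/\delta)^\alpha$. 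The restriction $d \geq 3$ enters through the integrability at infinity of the Newtonian potential, needed to make sense of $\Delta^{-1}\bar\psi$ uniformly in the shifts.

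\textbf{Main obstacle.} The principal difficulty is the non-commutativity $[\Delta, \Delta_{\vartheta_\delta}] \neq 0$, which prevents a clean interchange of powers of $(-\Delta)$ with $S_\delta(t)$; the operator difference $\Delta_{\vartheta_\delta} - \vartheta(x_0)\Delta$ must be handled as a controlled perturbation only on the effective support scale $|x| \lesssim h/\delta$, while keeping track of how the bounds degrade under the supremum over $y_1,y_2 \in \cB[\delta/h]$. Balancing the moment growth induced by these shifts against the dimension-dependent spatial decay of the Dirichlet Green's function is precisely what forces $d \geq 3$ and produces the unavoidable loss $(h/\delta)^\alpha$ with $\alpha > 1$ rather than $\alpha = 1$.
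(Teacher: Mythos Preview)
Your Duhamel split (interior source $g$ from the generator difference, plus boundary/exterior tail) is exactly the paper's decomposition, which it writes more finely as five terms $R_1,\dots,R_5$. The one step that is not rigorous as written is the ``heat-kernel spread'' argument: $v(s)=S_0(s)\shift_y\varphi$ is \emph{not} compactly supported, so there is no literal region ``$|x-y|\lesssim\sqrt{T}+1$'' on which $|\vartheta_\delta-\vartheta(x_0)|\lesssim h$. The correct version is to use the pointwise bound $|\vartheta_\delta(x)-\vartheta(x_0)|\lesssim\delta|x|$ globally on $\cD_\delta$, then split $x=(x-y)+y$: the $y$-part contributes $\delta|y|\lesssim h$ against $\|\nabla^2 v(s)\|_{L^p}$, while the $(x-y)$-part requires a \emph{weighted} estimate $\||\,\cdot\,|\,\nabla^2(S_0(s)\varphi)\|_{L^p}\lesssim_{T}1$, i.e.\ a bound in the weighted Sobolev space $\bar W^{2,p}_1$. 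This is how the paper handles $R_5$ (via Lemma~\ref{lem:bias:Laplacian} and Lemma~\ref{lem:bound:semigroup:heatkernel}(b)), and once phrased this way your argument goes through.

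\textbf{Part (b).} Here your resolvent route diverges from the paper and has a genuine gap. After you write
\[
\Delta_{\vartheta_\delta}^{-1}-\vartheta(x_0)^{-1}\Delta^{-1}
=\vartheta(x_0)^{-1}\Delta^{-1}\,\nabla\!\cdot\!\bigl((\vartheta(x_0)-\vartheta_\delta)\nabla\bigr)\Delta_{\vartheta_\delta}^{-1},
\]
the function $\Delta_{\vartheta_\delta}^{-1}\shift_{y_2}\psi$ is supported on all of $\cD_\delta$, not on the scale $|x|\lesssim h/\delta$; your claimed bound $|\vartheta_\delta-\vartheta(x_0)|\lesssim h$ therefore no longer applies to the middle factor, and the mechanism that is supposed to produce $(h/\delta)^\alpha$ with $\alpha>1$ is not identified. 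The paper takes a different route: it proves a \emph{pointwise-in-$t$} bound
\[
\bigl|\langle\Delta^{\ell_1}\shift_{y_1}\varphi_1,(S_\delta(t)-S_0(t))\Delta^{\ell_2}\shift_{y_2}\varphi_2\rangle\bigr|
\lesssim h(h/\delta)^\alpha\bigl(t\wedge t^{1-\alpha/2-d/(2q)}+t\wedge t^{3/2-\ell_2-d/(2p)}\bigr)
\]
plus exponentially small boundary terms (Theorem~\ref{thm:approx:semigroup}(a)). The $t$-decay comes from hypercontractive estimates $\|S_\delta(t)\Delta^\ell\chi\|_{L^p}\lesssim t^{-\ell-d/(2q)}$ (this is precisely why the $\Delta$-orders and $d\ge3$ are needed), while the factor $(h/\delta)^\alpha$ arises because that hypercontractive bound requires controlling $\shift_{y}\varphi$ in the weighted norm $\bar W^{2\ell,p}_\alpha$, and the shift inflates this norm by $(h/\delta)^\alpha$ (Lemma~\ref{lem:bound:space:Sobolev}). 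Choosing $\ell_1=2$, $\ell_2=3$, $q$ close to $1$ makes both exponents below $-1$, so the $t$-integral over $[0,C\delta^{-2}]$ is $O(1)$. Your resolvent identity does not reproduce this decay mechanism, and without it the long time horizon $C\delta^{-2}$ is not absorbed.
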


\begin{proof}
	See Supplement \ref{sec:proofs:semigroupapproximation}.
\end{proof}

Both statements control the approximation error between $S_\delta$ and $S_0$ in a general, albeit different way. By restricting to an analytically weak formulation in the second part of the proposition, we are able to find a quantitative bound that is integrable in time, which we need given that the final time horizon grows upon localization of the stochastic heat equation.
We mention that the constants in the estimates in Theorem \ref{thm:semigroup-approximation-simplified} are proportional to a weighted Sobolev norm of $\varphi$ and $\psi$, see Supplement \ref{sec:TrotterKato} for a detailed discussion.
In the proof, which is based on the general theory exposed in \cite{ItoKappel1998}, we have to control the errors arising from the heterogeneity of $\vartheta$ (compared to a constant diffusivity of the form $\vartheta(x_0)$), the boundedness of the domain $\cD$ (compared to $\R^d$) and the Dirichlet boundary conditions in the dynamics of the heat semigroup $(S_\delta(t))_{t\geq 0}$.
	For our statistical analysis we state the following specialized version, which translates the results from Theorem \ref{thm:semigroup-approximation-simplified} to the setting of Section \ref{sec:Estimation}:

\begin{corollary}[cumulative Trotter--Kato approximation] \label{cor:semigroup-for-statistics} \
	Fix $R>0$.
	Let $\varphi,\psi\in \cC_0$ such that
	$\mathrm{supp}(\shift_y\varphi),\mathrm{supp}(\shift_y\psi)\subseteq[0,R]\times\cB^*[\delta]$ for $y\in\cB[\delta/h]$.
	\begin{enumerate}
		\item[a)]
	In the setting of Theorem \ref{thm:connect-conditions}, if $\varphi$ or $\psi$ has $\Delta$-order $(3-d)_+$, we have for $C>0$:
	\begin{align}\label{eq:uniform-covariance-convergence}
		\int_0^R\int_0^R\int_{0}^{C\varepsilon^{-1}}\sup_{x\in\cB[\varepsilon^{1/2}/h]}\abs{\scp{\shift_x\varphi_t}{(S_\varepsilon(r)-S_0(r))\shift_x\psi_s}_0}\diff r\diff s\diff t\rightarrow 0
	\end{align}
	as $\varepsilon\rightarrow 0$.
	In $d\geq 3$, if in addition $\varphi$ and $\psi$ have $\Delta$-order two and three, then the left-hand side is of order $\cO(h^{\beta-1})$.
		\item[b)] In the setting of Theorem \ref{thm:parametric}, \eqref{eq:uniform-covariance-convergence} is true
		if $\varphi$ or $\psi$ has $\Delta$-order one in $d\leq 2$, and without conditions on the $\Delta$-order if $d\geq 3$.
	\end{enumerate}
\end{corollary}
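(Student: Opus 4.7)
The plan is to split the cumulative time integral $\int_0^{C\varepsilon^{-1}}$ into a bounded window $[0,T_0]$ and a tail $[T_0,C\varepsilon^{-1}]$, applying Theorem \ref{thm:semigroup-approximation-simplified} on the window and heat-kernel decay estimates on the tail. The $(s,t)$-integration over $[0,R]^2$ is a harmless bounded factor throughout, since $\varphi_t$ and $\psi_s$ are fixed $C^\infty_c$ spatial test functions that satisfy the support hypotheses of Theorem \ref{thm:semigroup-approximation-simplified} for every $(s,t)\in[0,R]^2$.

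For the quantitative refinement in part (a) in $d\geq 3$, Theorem \ref{thm:semigroup-approximation-simplified}(b) (which already furnishes a cumulative-in-time estimate) applies fiberwise in $(s,t)$ under the hypothesis that $\varphi$ and $\psi$ have $\Delta$-order two and three, yielding a bound of the form $h(h\varepsilon^{-1/2})^\alpha$ for any $\alpha>1$. With $h\sim\varepsilon^{(2+d)/(4\beta+2d)}$, an elementary calculation matches this to $\cO(h^{\beta-1})$ precisely when $\alpha$ can be chosen in the interval
\begin{align*}
	1<\alpha\leq\frac{(2+d)(2-\beta)}{2(\beta-1)},
\end{align*}
and this interval is non-empty iff $\beta<2-2/(4+d)$, which is the standing hypothesis of Theorem \ref{thm:connect-conditions}.

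For the mere convergence part of (a) under the weaker $\Delta$-order assumption $(3-d)_+$, and for part (b), I use the window/tail splitting directly. On the window $[0,T_0]$, Theorem \ref{thm:semigroup-approximation-simplified}(a) with $p=2$ combined with Cauchy--Schwarz yields a contribution of order $\cO(hT_0)\to 0$; in the parametric setting (b), the sharper estimate $\cO(T_0 e^{-c\varepsilon^{-1}})$ from the constant-$\vartheta$ bound gives the same conclusion. On the tail $[T_0,C\varepsilon^{-1}]$, I estimate each term separately: writing $\psi=(-\Delta)^k\bar\psi$ with $k$ the available $\Delta$-order and exploiting self-adjointness of $\Delta$, Fourier analysis on $\R^d$ gives $|\scp{\shift_x\varphi_t}{S_0(r)\shift_x\psi_s}_0|\lesssim r^{-d/2-k}$, while the same polynomial decay for $S_\varepsilon$ follows from combining the analyticity bound $\|\Delta^k S_\varepsilon(r/2)\|_{L^2\to L^2}\lesssim r^{-k}$ (uniform in $\varepsilon$ by uniform boundedness of $\vartheta$) with the ultracontractivity $\|S_\varepsilon(r/2)\|_{L^1\to L^2},\|S_\varepsilon(r/2)\|_{L^2\to L^\infty}\lesssim r^{-d/4}$, using that $\bar\psi$ is compactly supported in the interior of $\cD_\varepsilon$. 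Sending $T_0\to\infty$ after $\varepsilon\to 0$ closes the argument, the $\Delta$-order threshold $(3-d)_+$ being matched to the integrability requirement $d/2+k>1$ together with additional margin needed in low dimensions so that the antiderivative $\bar\psi$ be controlled uniformly over the growing ball $\cB[\varepsilon^{1/2}/h]$.

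The main technical obstacle is obtaining a uniform-in-$\varepsilon$ tail bound for $S_\varepsilon(r)$ at scales $r\sim\varepsilon^{-1}$ comparable to the diameter-squared of $\cD_\varepsilon$: a naive spectral-gap argument fails because $\lambda_1(\cD_\varepsilon)\sim\varepsilon$ degenerates and $e^{-\lambda_1(\cD_\varepsilon)r}$ does not vanish on this scale. The way around this is to convert the $\Delta$-order of the test functions into algebraic decay in $r$ via analyticity of the Dirichlet semigroup, staying in the interior of $\cD_\varepsilon$ thanks to compact support of the auxiliary functions $\bar\varphi,\bar\psi$ and the assumption $h\lesssim 1$.
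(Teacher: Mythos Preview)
Your treatment of the quantitative refinement in part (a) for $d\ge 3$ is correct and matches the paper exactly: both invoke Theorem~\ref{thm:semigroup-approximation-simplified}(b) fibrewise in $(s,t)$, and your verification that the admissible window $1<\alpha\le (2+d)(2-\beta)/(2(\beta-1))$ is non-empty precisely when $\beta<2-2/(4+d)$ is the same computation the paper does.

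For the qualitative convergence, your window/tail splitting is in spirit equivalent to the paper's dominated-convergence argument, but the paper proceeds differently: it extracts a single $\varepsilon$-independent integrable majorant for the integrand directly from Lemma~\ref{lem:bound:semigroup:uniform} (nonparametric) or Lemma~\ref{lem:bound:semigroup:parametric} (parametric), combined with Lemma~\ref{lem:bound:semigroup:heatkernel} for the $S_0$-term, and then applies dominated convergence on $[0,\infty)$ using the pointwise convergence from Theorem~\ref{thm:semigroup-approximation-simplified}(a). No explicit $T_0$-cutoff is needed.

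There is, however, a real gap in your tail bound for $S_\varepsilon$ in the nonparametric case when $d\le 2$. The assertion $\|\Delta^k S_\varepsilon(r/2)\|_{L^2\to L^2}\lesssim r^{-k}$ does \emph{not} follow from analyticity, because the generator of $S_\varepsilon$ is $A_\varepsilon=\Delta_{\vartheta_\varepsilon}$, not $\Delta$. Analyticity gives $\|A_\varepsilon^k S_\varepsilon(r)\|\lesssim r^{-k}$; converting $A_\varepsilon^k$ to $\Delta^k$ via elliptic regularity produces a remainder $\|S_\varepsilon(r)\varphi\|_{L^2}$ with no decay in $r$, which is useless on the tail. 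The paper's Lemma~\ref{lem:bound:semigroup:uniform} circumvents this by writing $\Delta_{\vartheta_0}=\Delta_{\vartheta_\varepsilon}+(\Delta_{\vartheta_0}-\Delta_{\vartheta_\varepsilon})$ and exploiting that the second piece is $\cO(\delta^\alpha)$-small in a weighted norm; since $\delta\lesssim r^{-1/2}$ on $[0,C\varepsilon^{-1}]$, this smallness converts into the extra $r^{-\alpha/2}$ decay needed for integrability. That mechanism is precisely what fixes the $\Delta$-order thresholds $(3-d)_+$ to the constraint $\alpha/2+d/(2q)>1$ with $\alpha\in\{0,1,\beta\}$ according to $d$, and it is also why the shifted weighted norm $\|\shift_x\bar\psi\|_{\bar W^{2k,p}_\alpha}$ appears with a factor $(h\delta^{-1})^\alpha$, which stays bounded only because $h\sim\delta$ in $d\le 2$. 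Your phrase ``uniform in $\varepsilon$ by uniform boundedness of $\vartheta$'' does not capture this, and the subsequent sentence about ``additional margin needed in low dimensions'' describes a symptom rather than the cure. In the parametric case (b) your argument is fine, since then $A_\varepsilon=\vartheta(x_0)\Delta$ and the commutation issue disappears; likewise for $d\ge 3$ in part (a), where $\alpha=0$ suffices and the pointwise comparison of Lemma~\ref{lem:AR21:35} already yields an integrable $r^{-d/(2q)}$ majorant without any $\Delta$-order.
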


\begin{proof}
	See Supplement \ref{sec:proofs:semigroupapproximation}.
\end{proof}

	Under Assumption {\assumptionB} and $h=o(1)$,
	we have that $\mathrm{supp}(w^h_\varepsilon)$ is
	asymptotically contained in
	any $\cB^*[\delta]$, which means that the support condition from Corollary \ref{cor:semigroup-for-statistics} is satisfied. Thus the results from this section can be applied. For parametric estimation, it is demanded explicitly in Assumption $(W)$ that the support of $K$ shifted by any point in
	$\mathrm{supp}(w^h_\varepsilon)$
	is separated from $\partial\cD$.

\section{A Numerical Example}\label{sec:Numerics}

\begin{figure}
	\includegraphics[width=0.49\textwidth]{"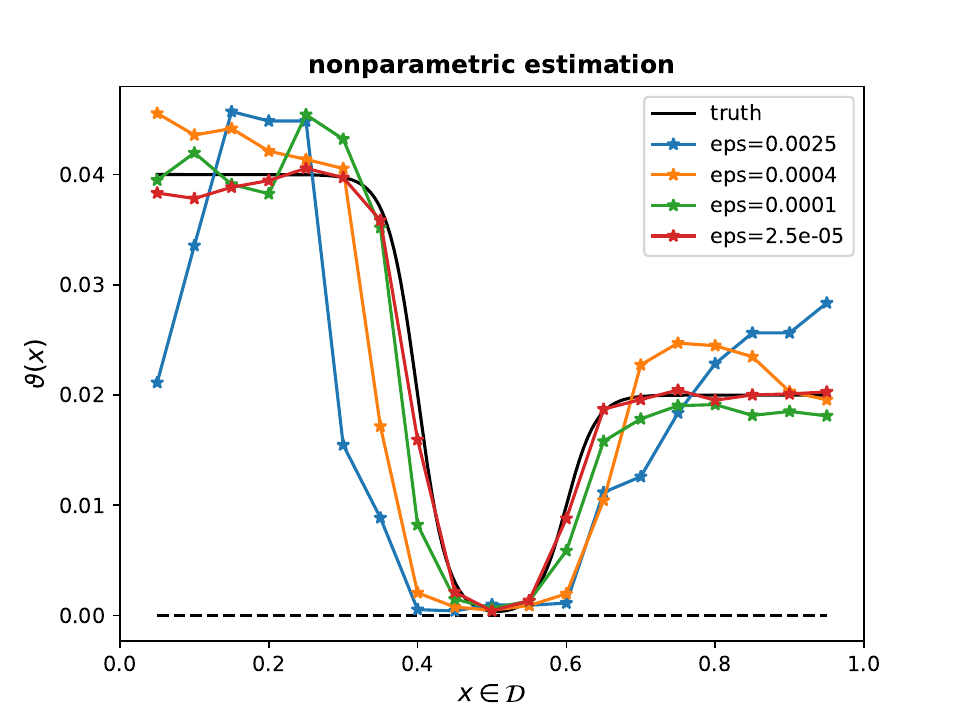"}
	\includegraphics[width=0.49\textwidth]{"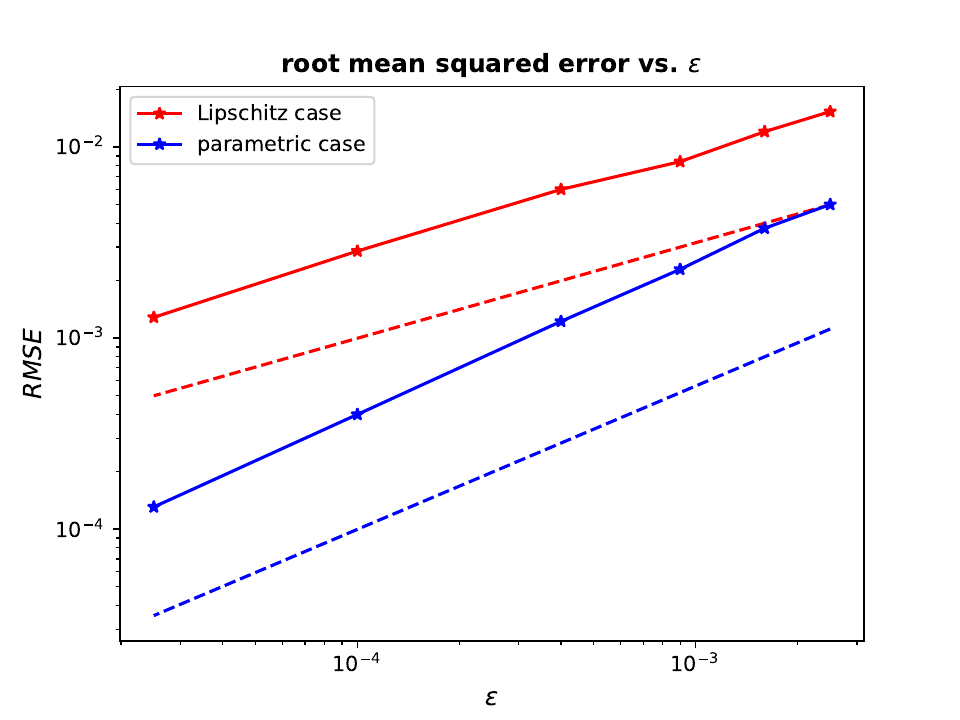"}
	\caption{
	\emph{Left:} Reconstruction of a spatially varying diffusivity at different points in space.
	\emph{Right:} RMSE vs. $\varepsilon$, formal comparison of the convergence rate of the Lipschitz ($\NonparIndex=1$) and the parametric ($\NonparIndex=\infty$) estimation problem.
	}\label{fig:numerics:results}
\end{figure}

We simulate the stochastic heat equation on $\cD=(0,1)$, $\cT=[0,1)$, where we use a discrete grid in space and time of size $\diff x=10^{-3}$ and $\diff t = 10^{-6}$, starting from initial condition
zero
and with $\sigma=10$, using an implicit Euler--Maruyama scheme with finite difference discretization in space \cite{LordPowellShardlow2014}.
Static white Gaussian noise $\dot V$ is simulated as an array of independent Gaussian variables on the grid points, with mean zero and variance $\diff t^{-1}\diff x^{-1}$.
We set $K(t, x)=2K(2t-1)K(x)$, where $K(x)$ is the normalization in $L^1(-1,1)$ of $x\mapsto\exp(-10(x+1)^{-2}(1-x)^{-2})$. While such a kernel does not satisfy the condition on the $\Delta$-order in Assumption {\assumptionB}, we will see that it leads to good numerical results nonetheless.
Our grid $\cX_\varepsilon$ consists of all points $x_k=0.5+2\delta k$, $k\in\bZ$, such that the localized kernel support $(x_k-\delta,x_k+\delta)$ is separated from the boundary of $\cD$ with distance at least $0.1\delta$.
In accordance with Remark \ref{rem:weights}, and noting that $\hat\vartheta_\varepsilon$ remains invariant under rescaling of the weights $w^h_\varepsilon(x)$ with a constant, we set them to one in the active neighborhood of size $h$ around $x_0$, and zero otherwise.

In Figure \ref{fig:numerics:results} (left), we estimate a spatially heterogeneous function $\vartheta(x) = 0.04 \psi(x-0.4) + 0.02\psi(0.6-x)$ with $\psi(y)=1/(1+\exp(50y))$ 
at equidistant grid points $x_0\in\{0.05i\;:\;i=1,\dots,19\}$
from one realization of the stochastic heat equation 
, where we set $h=\varepsilon^{1/3}$, slightly larger than the Lipschitz case $h=\varepsilon^{1/2}$.
The shape of $\vartheta$ is similar to the function used in \cite{AltmeyerReiss2021}, but our choice of parameters cannot be directly compared to the theirs due to the different scaling of our problem, see Remark \ref{rem:Scaling}.
We see that magnitude and shape of $\vartheta$ are correctly recovered, with increasing precision as $\varepsilon\rightarrow 0$. Estimates at adjacent points $x_0$ appear to be correlated as they are constructed from partially overlapping data.
Estimation near the boundary is expected to perform worse, as only those spatial shifts of $K$ are taken into account which stay within the domain, reducing
the information used in order to estimate $x_0$.
The reported values for $\varepsilon$ correspond to $\delta\in\{0.05, 0.02, 0.01, 0.005\}$, taking into account the relation $\varepsilon=\delta^2$.
In particular, at the coarsest resolution level, corresponding to the blue line in Figure \ref{fig:numerics:results} (left), the testing kernel has spatial diameter $0.1$.
The values for the effective sample size from Remark \ref{rem:parametric} are given by $N_\mathrm{eff}\in\{3600, 57500, 4.9\times 10^5, 3.96\times 10^6\}$ for the four cases considered.

In Figure \ref{fig:numerics:results} (right), the root mean squared error (RMSE) of $\hat\vartheta_\varepsilon$ estimated at $x_0=0.5$ is plotted as $\varepsilon\rightarrow 0$, based on $M=1000$ Monte Carlo runs, each with constant diffusivity $\vartheta=0.02$.
In order to compare the parametric case with estimation under Lipschitz assumptions, we plot the results for $\hat\vartheta_\varepsilon$ if either full spatial information is used ($h=1$, parametric case), or just information in a neighborhood of $x_0$ is used ($h=\sqrt{\varepsilon}$, Lipschitz case).
The dashed lines represent (up to a constant) the theoretically expected size of the error, i.e. $\varepsilon^{1/2+d/4}$ in the parametric case, and $\varepsilon^{1/2}$ in the Lipschitz case.
Note that although the simulation itself is parametric here, we choose $h$ such that Lipschitz behaviour is emulated. 
In both cases, the rate of decay of the RMSE matches theoretical predictions.

\appendix

\section{Notation}\label{sec:notation}

We use dependence on $\varepsilon$ or $\delta$ interchangeably, e.g. $\Delta_{\vartheta_\varepsilon}=\Delta_{\vartheta_\delta}$ (with $\delta\sim\sqrt{\varepsilon}$) by abuse of notation. As a rule of thumb, sections on \emph{statistics} (such as Section \ref{sec:Estimation}) use dependence on $\varepsilon$, whereas sections on \emph{structure} (such as Section \ref{sec:semigroupapproximation}) use dependence on $\delta$.
\\

\begin{description}

	\item[\emph{Sobolev spaces.}]
	We use the notation $W^{k, p}(\cD)$, $W^{k, p}_0(\cD)$ and $W^{k, p}(\R^d)$ for standard Sobolev spaces, see \cite{AdamsFournier2003} for details.
	For $p\geq 1$, and $\alpha\geq 0$ define the weighted $L^p$-norm
	\begin{align}
		\abs{\FuncZ}_{p, \alpha} := \norm{\abs{x}^\alpha \FuncZ}_{L^p(\R^d)},
	\end{align}
	and for $k\in\N_0$,
	writing $\multiindex\in\N_0^d$ for a multiindex and $\partial_{\multiindex}=\partial_{x_1}^{\multiindex_1}\cdots\partial_{x_d}^{\multiindex_d}$,
	the weighted Sobolev norm
	\begin{align}
		\norm{\FuncZ}_{\bar W^{k, p}_\alpha} := \sum_{\abs{\multiindex}\leq k}\left(\abs{\partial_{\multiindex} \FuncZ}_{p, 0} + \abs{\partial_{\multiindex} \FuncZ}_{p, \alpha}\right).
	\end{align}
	$\bar W_\alpha^{k, p}$ is the space of all $\FuncZ\in W^{k, p}(\R^d)$ such that this norm is finite.
	In particular, we have $\bar W_0^{k, p}=W^{k, p}(\R^d)$.
	For any two such spaces $\bar W_{\alpha_1}^{k_1, p_1}$ and $\bar W_{\alpha_2}^{k_2, p_2}$, we equip the Banach space $\bar W_{\alpha_1}^{k_1, p_1}\cap \bar W_{\alpha_2}^{k_2, p_2}$ with the norm defined by $\norm{\varphi}_{\bar W_{\alpha_1}^{k_1, p_1}\cap \bar W_{\alpha_2}^{k_2, p_2}}=\norm{\varphi}_{\bar W_{\alpha_1}^{k_1, p_1}}+\norm{\varphi}_{\bar W_{\alpha_2}^{k_2, p_2}}$.

	\item[\emph{Heat kernel.}] $q(x)=e^{-\abs{x}^2/2}/(2\pi)^{d/2}$ is the density of the standard normal distribution on $\R^d$, $q_t(x)=(2t)^{-d/2}q((2t)^{-1/2}x)$ denotes the heat kernel on $\R^d$. The diffusion semigroup $t\mapsto e^{t\Delta}$ on $L^p(\R^d)$, generated by $\Delta$, acts via convolution with $q_t$. This differs from $S_0$ (see below) only by its rescaling in time with $\vartheta(x_0)$.
	\item[\emph{Laplacian.}] $\Delta_{\vartheta_\delta}=\nabla\cdot\vartheta_\delta\nabla:W^{2, p}(\cD_\delta)\rightarrow L^p(\cD_\delta)$ is a second-order differential operator, where $\vartheta_\delta(x) = \vartheta(\delta x + x_0)$ maps $\cD_\delta\rightarrow (0,\infty)$ for $\delta\geq 0$.
	In particular, as $\vartheta_0$ is a constant function, the notation $\Delta_{\vartheta_0}$ may refer to an operator on a bounded or unbounded domain, depending on the context.
	\item[\emph{Semigroup.}] $A_\delta:D_p(A_\delta)\rightarrow L^p(\cD_\delta)$ for $p>1$ is the restriction of $\Delta_{\vartheta_\delta}$ to the domain $D_p(A_\delta)$,
	which includes boundary conditions.
	$A_\delta$ generates the semigroup $t\mapsto S_\delta(t)$ on $L^p(\cD_\delta)$, for $\delta\geq 0$.
	For $\delta>0$, $D_p(A_\delta)=W^{2, p}(\cD_\delta)\cap W^{1,p}_0(\cD_\delta)$ \cite[Section 2.4]{Yagi2010}.
	For $\delta=0$, $D_p(A_\delta)=W^{2,p}(\R^d)$ \cite[Section VII.4]{Werner2005}.
	\item[\emph{Linearization of $\Delta_{\vartheta_\delta}$.}] The operator $\Delta'_{(\nabla\vartheta)_0}$ acts for $p\geq 1$ as $\bar W^{2, p}_1\rightarrow L^p(\R^d)$ or $W^{2, p}(\cD_\delta)\rightarrow L^p(\cD_\delta)$ ($\delta> 0$), it is given by
\begin{align}
	\Delta'_{(\nabla\vartheta)_0}\FuncZ(x) := \nabla\vartheta(x_0)\cdot x\Delta \FuncZ(x) + \nabla\vartheta(x_0)\cdot\nabla \FuncZ(x).
\end{align}
	\item[\emph{Truncation.}] $\pi_\delta: W^{k, p}(\R^d)\rightarrow W^{k,p}(\cD_\delta)$ is the truncation operator for $\delta>0$, i.e. $\pi_\delta\FuncZ=\FuncZ|_{\cD_\delta}$.
	\item[\emph{Zero boundary projection.}] $\bar\pi_\delta:W^{2, p}(\R^d)\rightarrow D_p(A_\delta)$ is the projection onto the space of functions on $\cD_\delta$ satisfying Dirichlet boundary conditions, which is obtained by forcing the trace of $\pi_\delta\FuncZ$ on $\partial\cD_\delta$ to be zero for $\FuncZ\in W^{2, p}(\R^d)$.
	It is given as the solution to
	\begin{align}\label{eq:DirichletProjection}
		\left\{\begin{matrix}
			\Delta_{\vartheta_\delta}\bar\pi_\delta\FuncZ &=& \Delta_{\vartheta_\delta}\pi_\delta\FuncZ & \quad\mathrm{on}\quad & \cD_\delta, \\
			\hspace{0.7cm}\bar\pi_\delta\FuncZ &=& 0 \quad\quad\quad & \quad\mathrm{on}\quad & \partial\cD_\delta,
		\end{matrix}\right.
	\end{align}
	or equivalently, $\bar\pi_\delta \FuncZ = \pi_\delta \FuncZ - \FuncPotential$, where $\FuncPotential$ is the solution to
	\begin{align}\label{eq:HarmonicPotential}
		\left\{\begin{matrix}
			\Delta_{\vartheta_\delta}\FuncPotential &=& 0 & \quad\mathrm{on}\quad & \cD_\delta, \\
			\hspace{0.7cm}\FuncPotential &=& \FuncZ & \quad\mathrm{on}\quad & \partial\cD_\delta.
		\end{matrix}\right.
	\end{align}
	According to \cite[Theorem 9.15]{GilbargTrudinger2001},
	there is a unique solution $\FuncPotential\in W^{2, p}(\cD_\delta)$ to \eqref{eq:DirichletProjection} and \eqref{eq:HarmonicPotential}, where the boundary equations in \eqref{eq:DirichletProjection} and \eqref{eq:HarmonicPotential} should be interpreted as $\bar\pi_\delta\varphi\in W^{1, p}_0(\cD_\delta)$
	and $\pi_\delta\FuncZ-\FuncPotential\in W^{1,p}_0(\cD_\delta)$. So $\bar\pi_\delta$ is well-defined.
	\item[\emph{Shift.}] $\shift_y:L^p(\R^d)\rightarrow L^p(\R^d)$ is the shift operator given by $\shift_y\FuncZ(x)=\FuncZ(x-y)$.
	
	\item[\emph{Space-time kernels.}]
	As before, we write $\FuncZ_{k, x}=\FuncZ(\cdot-k, \cdot-x)$ for the space-time shift of some kernel $\varphi$.
	In addition, we abbreviate $\FuncZ_t=\FuncZ(t, \cdot)$ for its trace at time $t\geq 0$.
\end{description}

\section{Derivation of the Main Result}\label{sec:Proofs}

In this section,
we present all statements and proofs in a top-down manner, reflecting the importance of concepts.

\subsection{Postponed Proofs}

\subsubsection{Proofs from Section \ref{sec:spde}}\label{sec:proofs:spde}

\begin{proof}[Proof of Lemma \ref{lem:FieldIsStweak}] \label{proof:FieldIsStweak}
	Fix $\alpha>d/2$. Then $B:=(-\Delta_\vartheta)^{-\alpha/2}:L^2\rightarrow D((-\Delta_\vartheta)^{\alpha/2})\subset L^2$ is a Hilbert--Schmidt operator on $L^2$, and for a cylindrical Brownian motion $W_t$ there is a mild solution $Z$ to
	\begin{align*}
		\diff Z_t = \Delta_\vartheta Z_t\diff t + \sigma B\diff W_t
	\end{align*}
	with initial condition $B\xi$, see \cite{DaPratoZabczyk2014}. Defining the Gaussian process $X'$ on $\cC$ via
	\begin{align*}
		\sscpSmall{X'}{\varphi} := \sscp{Z}{B^{-1}\varphi},
	\end{align*}
	where $B^{-1}$ maps $\cC\subset D((-\Delta_\vartheta)^{\alpha/2})\rightarrow L^2$, we have that $X\sim X'$ in distribution, since
	\begin{align*}
		\sscpSmall{X'}{\varphi} = \int_0^\infty\scp{\varphi(t,\cdot)}{S(t)\xi + \sigma\int_0^tS(t-r)\diff W_r}\diff t,
	\end{align*}
	and so $\bE[\sscpSmall{X'}{\varphi}]=\sscpSmall{S(\cdot)\xi}{\varphi}$ and
	\begin{align*}
		\rCov(\sscpSmall{X'}{\varphi},\sscpSmall{X'}{\psi}) \hspace{-2cm} & \\
			&= \sigma^2\int_0^\infty\int_0^\infty\scp{\varphi(t, \cdot)}{\left(\int_0^{t\wedge s}S(t-r)S(s-r)\diff r\right)\psi(s, \cdot)}\diff s\diff t \\
			&= \sigma^2\int_0^\infty\int_0^\infty\int_0^{t\wedge s}\scp{\varphi(t, \cdot)}{S(t+s-2r)\psi(s, \cdot)}\diff r\diff s\diff t \\
			&= \frac{\sigma^2}{2}\int_0^\infty\int_0^\infty\int_{\abs{t-s}}^{t+s}\scp{\varphi(t, \cdot)}{S(r)\psi(s, \cdot)}\diff r\diff s\diff t.
	\end{align*}

	Next, note that $Z$ is a weak solution, i.e. for $\psi\in D(\Delta_\vartheta)$:
	\begin{align}\label{eq:SPDE:weak}
		\scp{Z_t}{\psi} = \scp{B\xi}{\psi} + \int_0^t\scp{Z_s}{\Delta_\vartheta\psi}\diff s + \sigma\scp{BW_t}{\psi}.
	\end{align}
	This implies that $X'$ is a space-time weak solution with respect to the isonormal Gaussian process induced by $W_t$.
	We may restrict to test functions of the form $\varphi_t=\rho\otimes\psi$, where $\rho\in C_c^\infty(\cT)$ and $\psi\in C_c^\infty(\cD)$. Then
	\begin{align*}
		\scp{\xi}{\varphi(0,\cdot)} &= \scp{B\xi}{B^{-1}\psi}\rho_0 = - \int_0^T\scp{B\xi}{B^{-1}\psi}\dot\rho_t\diff t, \\
		\sscp{X'}{\Delta_\vartheta\varphi} &= \int_0^T\scp{Z_t}{\Delta_\vartheta B^{-1}\psi}\rho_t\diff t = - \int_0^T\left(\int_0^t\scp{Z_s}{\Delta_\vartheta B^{-1}\psi}\diff s\right)\dot\rho_t\diff t, \\
		\sigma\sscpSmall{\dot W}{\varphi} &= \sigma\int_0^T\rho_t\scp{B^{-1}\psi}{B\diff W_t} = - \int_0^T\sigma\scp{BW_t}{B^{-1}\psi}\dot\rho_t\diff t.
	\end{align*}
	The claim follows from plugging $B^{-1}\psi$ into \eqref{eq:SPDE:weak} and testing with $-\dot\rho$ in $L^2(\cT)$.
	To conclude the proof, note that
	$\varphi\mapsto -(\sscpSmall{X}{\dot\varphi}+\scpSmall{\xi}{\varphi(0,\cdot)}+\sscpSmall{X}{\Delta_\vartheta\varphi})/\sigma$
	has the same finite-dimensional laws when replacing $X$ by $X'$ (which results in
	$\sscpSmall{\dot W}{\varphi}$),
	so it can be extended to an isonormal Gaussian process on $L^2(\cT\times\cD)$ given that $\cC$ is dense, and $X$ is a space-time weak solution driven by that process.
\end{proof}

\subsubsection{Proofs from Section \ref{sec:localization}} \label{sec:proofs:localization}

\begin{proof}[Proof of Lemma \ref{lem:semigroup-scaling}] \label{proof:semigroup-scaling}
		For $T_\delta(r)\varphi:=(S(r\delta^{2})\varphi_\delta)^{1/\delta}$ it holds that $$\partial_tT_\delta(r)\varphi = \delta^{2}(\Delta_{\vartheta}S(r\delta^{2})\varphi_\delta)^{1/\delta} = \Delta_{\vartheta_\delta}(S(r\delta^{2})\varphi_\delta)^{1/\delta} = \Delta_{\vartheta_\delta}T_\delta(r)\varphi,$$ so $T=S$ by uniqueness.
		Set $r=t\delta^{-2}$.
\end{proof}

\begin{proof}[Remaining proof of Lemma \ref{lem:localization-X}] \label{proof:localization-X}
	With \eqref{eq:field:Mean}, \eqref{eq:field:Cov}
	and Lemma \ref{lem:semigroup-scaling},
	\begin{align*}
		\bE[\sscpSmall{X^{(\delta)}}{\varphi}_\delta]
			&= \delta^{-2}\bE[\sscp{X}{\varphi_\delta}]
			= \delta^{-2}\sscp{S(\cdot)\xi}{\varphi_\delta}
			= \delta^{-2}\sscpSmall{(S(\cdot)\xi)^{1/\delta}}{\varphi}_\delta \\
			&= \delta^{-1}\int_0^\infty\scpSmall{(S(\delta^2t)\xi)^{1/\delta}}{\varphi(t, \cdot)}_\delta\diff t
			= \delta^{-1}\int_0^\infty\scpSmall{S_\delta(t)\xi^{1/\delta}}{\varphi(t, \cdot)}_\delta\diff t \\
			&= \sscpSmall{S_\delta(\cdot)\xi^{(\delta)}}{\varphi}_\delta,
	\end{align*}
	as well as
	\begin{align*}
		\rCov(\sscpSmall{X^{(\delta)}}{\varphi}_\delta, \sscpSmall{X^{(\delta)}}{\psi}_\delta) \hspace{-4cm} & \hspace{4cm} = \delta^{-4}\rCov\sscpSmall{X}{\varphi_\delta}, \sscpSmall{X}{\psi_\delta}) \\
			&= \frac{\sigma^2}{2}\delta^{-4}\delta^{-2-d}\int_0^\infty\int_0^\infty\int_{\abs{t-s}}^{t+s}\scp{\varphi\left(\frac{t}{\delta^2}, \frac{\cdot-x_0}{\delta}\right)}{S(r)\psi\left(\frac{s}{\delta^2}, \frac{\cdot-x_0}{\delta}\right)}\diff r\diff s\diff t \\
			&= \frac{\sigma^2}{2}\delta^{-6}\int_0^\infty\int_0^\infty\int_{\abs{t-s}}^{t+s}\scp{\varphi\left(\frac{t}{\delta^2}, \cdot\right)}{S_\delta\left(\frac{r}{\delta^2}\right)\psi\left(\frac{s}{\delta^2}, \cdot\right)}_\delta\diff r\diff s\diff t \\
			&= \frac{\sigma^2}{2}\int_0^\infty\int_0^\infty\int_{\abs{t-s}}^{t+s}\scp{\varphi(t, \cdot)}{S_\delta(r)\psi(s, \cdot)}_\delta\diff r\diff s\diff t.
	\end{align*}
\end{proof}

\subsubsection{Proofs from Section \ref{sec:Estimation}} \label{sec:proofs:estimation}

\begin{proof}[Proof of Lemma \ref{lem:errordecomposition}.] \label{proof:errordecomposition}
	First,
\begin{align*}
	\widehat X'_{\varepsilon, k, x} - \vartheta(x_0) \widehat X^\Delta_{\varepsilon, k, x}
		&= \sscpSmall{\partial_tX}{(K_{k, x})_{\varepsilon, x_0}} - \varepsilon\sscpSmall{\dot V}{\partial_t(K_{k, x})_{\varepsilon, x_0}} \\
		&\quad\quad - \vartheta(x_0)\sscpSmall{\Delta X}{(K_{k, x})_{\varepsilon, x_0}} - \vartheta(x_0)\varepsilon\sscpSmall{\dot V}{\Delta (K_{k, x})_{\varepsilon, x_0}} \\
		&\hspace{-0.5cm}= \left(\sigma\sscpSmall{\dot W}{(K_{k, x})_{\varepsilon, x_0}} - \sscpSmall{\dot V}{(\partial_tK_{k, x})_{\varepsilon, x_0}+(\Delta_{\vartheta(x_0)} K_{k, x})_{\varepsilon, x_0}}\right) \\
		&\hspace{-0.5cm}\quad\quad\quad + \sscpSmall{[\Delta_{\vartheta}-\Delta_{\vartheta(x_0)}] X}{(K_{k, x})_{\varepsilon, x_0}} \\
		&\hspace{-0.5cm}=: D_{\varepsilon, k, x} + E_{\varepsilon, k, x},
\end{align*}
where $(D_{\varepsilon, k, x})_{k\in\N_0}$ is for each $x\in\cX_\varepsilon$ a sequence of martingale differences with respect to the filtration $\cF_{\varepsilon, k}:=\sigma(\dot W(\varphi), \dot V(\varphi):\varphi\in L^2([0,\varepsilon(k+1))\times\cD))$, with
\begin{align*}
	\bE[(D_{\varepsilon, k, x})^2|\cF_{\varepsilon, k-1}]
		&= \sigma^2\norm{(K_{k, x})_{\varepsilon, x_0}}^2 + \norm{(\partial_tK_{k, x})_{\varepsilon, x_0}+(\Delta_{\vartheta(x_0)} K_{k, x})_{\varepsilon, x_0}}^2=\sigma_K^2,
\end{align*}
where $\sigma_K^2$ does neither depend on $k$, $x$ nor $\varepsilon$ (for $k\leq N_\varepsilon$).
This leads to the error decomposition
\begin{align*}
	\hat\vartheta_\varepsilon(x_0) - \vartheta(x_0) &= \frac{\sum_{k=1}^{N_\varepsilon}\sum_{x\in\cX_\varepsilon}w_\varepsilon^h(x)\widehat X^\Delta_{\varepsilon, k-1, x}[D_{\varepsilon, k, x}+E_{\varepsilon, k, x}]}{\sum_{k=1}^{N_\varepsilon}\sum_{x\in\cX_\varepsilon}w_\varepsilon^h(x)\widehat X^\Delta_{\varepsilon, k-1, x}\widehat X^\Delta_{\varepsilon, k, x}} = \frac{M_{\varepsilon, N_\varepsilon}+B_{\varepsilon, N_\varepsilon}}{I_{\varepsilon, N_\varepsilon}},
\end{align*}
which is as desired. Indeed, $(M_{\varepsilon, N})_{N\in\N}$ is a square-integrable martingale,
given as a sum (over $x\in\cX_\varepsilon$) of martingale transforms with integrator $D_{\varepsilon, k, x}$, having the claimed quadratic variation.
\end{proof}

\begin{lemma}\label{lem:conditionallyapunov}
	In the setting of Theorem \ref{thm:parametric}, and using notation from the proof of Lemma \ref{lem:errordecomposition}, we have
	\begin{align}\label{eq:conditionallyapunov}
		\bE\left[\sum_{k=1}^{N_\varepsilon}\left(\sum_{x\in\cX_\varepsilon}w_\varepsilon(x)\widehat X^\Delta_{\varepsilon, k-1, x}D_{\varepsilon, k, x}\right)^4\right]\lesssim N_\varepsilon\cW_\varepsilon^2.
	\end{align}
	In particular, the triangular martingale scheme $(N_\varepsilon^{-1/2}\cW_\varepsilon^{-1/2}M_{\varepsilon, N})_{N=0,...,N_\varepsilon}$ satisfies the Lyapunov condition with the fourth moment.
\end{lemma}

\begin{proof}
	The term on the left-hand side of \eqref{eq:conditionallyapunov} equals
	\begin{align*}
		\sum_{k=1}^{N_\varepsilon}\sum_{x, y, u, v\in\cX_\varepsilon}w_\varepsilon(x)w_\varepsilon(y)w_\varepsilon(u)w_\varepsilon(v)\bE[\widehat X^\Delta_{\varepsilon, k-1, x}\widehat X^\Delta_{\varepsilon, k-1, y}\widehat X^\Delta_{\varepsilon, k-1, u}\widehat X^\Delta_{\varepsilon, k-1, v}] & \\
		&\hspace{-3cm}\times\bE[D_{\varepsilon, k, x}D_{\varepsilon, k, y}D_{\varepsilon, k, u}D_{\varepsilon, k, v}],
	\end{align*}
	which is non-zero only if two pairs of the variables $x,y,u,v$ are equal, or all four of them.
	Note that uniformly in $1\leq k\leq N_\varepsilon$ and $x, y\in\cX_\varepsilon$,
	\begin{align*}
		\bE[\widehat X^\Delta_{\varepsilon, k-1, x}\widehat X^\Delta_{\varepsilon, k-1, y}] = \bE[\bar X^\Delta_{\varepsilon, k-1, x}\bar X^\Delta_{\varepsilon, k-1, y}] + \bE[\dot V^\Delta_{\varepsilon, k-1, x}\dot V^\Delta_{\varepsilon, k-1, y}]\lesssim 1
	\end{align*}
	due to Lemma \ref{lem:cov:bound:kl} and $\bE[\dot V^\Delta_{\varepsilon, k-1, x}\dot V^\Delta_{\varepsilon, k-1, y}] = \norm{\Delta K}^2\mathbbm{1}(x=y)$.
	Now if two pairs of $x, y, u, v$ are equal, we obtain a bound of the form
	\begin{align*}
		\sigma_K^4\sum_{k=1}^{N_\varepsilon}\sum_{x,y\in\cX_\varepsilon}w_\varepsilon(x)^2w_\varepsilon(y)^2\bE[(\widehat X^\Delta_{\varepsilon, k-1, x})^2(\widehat X^\Delta_{\varepsilon, k-1, y})^2]\lesssim \sum_{k=1}^{N_\varepsilon}\sum_{x,y\in\cX_\varepsilon}w_\varepsilon(x)^2w_\varepsilon(y)^2=N_\varepsilon\cW_\varepsilon^2,
	\end{align*}
	where we used Wick's formula for the mixed fourth moment.
	If all variables $x, y, u, v$ are equal, then
	\begin{align*}
		3\sigma_K^4\sum_{k=1}^{N_\varepsilon}\sum_{x\in\cX_\varepsilon}w_\varepsilon(x)^4\bE[(\widehat X^\Delta_{\varepsilon, k-1, x})^4]\lesssim \sum_{k=1}^{N_\varepsilon}\sum_{x\in\cX_\varepsilon}w_\varepsilon(x)^4\leq N_\varepsilon\cW_\varepsilon^2,
	\end{align*}
	again due to Wick's formula.
\end{proof}

\subsection{Splitting} \label{sec:Splitting}

For $t\geq 0$, define $\tilde X_t:=S(t)X_0$, and $\bar X_t:=X_t-\tilde X_t$. Then these processes are solutions to

\begin{align}
	\partial_t\bar X_t &= \Delta_\vartheta \bar X_t + \sigma\dot W_t, \hspace{1.4cm} \bar X_0=0, \\
	\partial_t\tilde X_t &= \Delta_\vartheta\tilde X_t,\hspace{2.5cm}\tilde X_0=X_0.
\end{align}
For $\varepsilon> 0$, we use an analogous decomposition $X^{(\varepsilon)}_t = \bar X^{(\varepsilon)}_t+\tilde X^{(\varepsilon)}_t$.
Define
\begin{align*}
	\begin{matrix}
		\bar X^\Delta_{\varepsilon, k, x} := \sscpSmall{\Delta\bar X}{(K_{k, x})_{\varepsilon, x_0}}, &
		\bar X^{\Delta-\Delta}_{\varepsilon, k, x} := \sscpSmall{[\Delta_{\vartheta}-\Delta_{\vartheta(x_0)}]\bar X}{(K_{k, x})_{\varepsilon, x_0}}, \\
		\tilde X^\Delta_{\varepsilon, k, x} := \sscpSmall{\Delta\tilde X}{(K_{k, x})_{\varepsilon, x_0}}, &
		\tilde X^{\Delta-\Delta}_{\varepsilon, k, x} := \sscpSmall{[\Delta_{\vartheta}-\Delta_{\vartheta(x_0)}]\tilde X}{(K_{k, x})_{\varepsilon, x_0}}, \\
		\dot V^\Delta_{\varepsilon, k, x} := \varepsilon\sscpSmall{\Delta\dot V}{(K_{k, x})_{\varepsilon, x_0}}. &
	\end{matrix}
\end{align*}
Further, $\scp{a}{b}_{w}=\sum_{k=1}^{N_\varepsilon}\sum_{x\in\cX_\varepsilon}w^h_\varepsilon(x)a_{k, x}b_{k, x}$ defines a bilinear form on
\newline $\R^{\{1,\dots,N_\varepsilon\}\times\cX_\varepsilon}$, and $\scp{\cdot}{\cdot}_{\abs{w}}$, $\scp{\cdot}{\cdot}_{w^2}$, defined analogously, are non-negative definite.
Set
\begin{align*}
		\bar I_{\varepsilon, N_\varepsilon} &:= \scpSmall{\bar X^\Delta_{\varepsilon, \cdot-1, \cdot}}{\bar X^\Delta_\varepsilon}_w, \\
		\langle\bar M_\varepsilon\rangle_{N_\varepsilon} &:= \sigma_K^2\scpSmall{\bar X^\Delta_{\varepsilon, \cdot-1, \cdot}}{\bar X^\Delta_{\varepsilon, \cdot-1, \cdot}}_{w^2}, \\
		\bar B_{\varepsilon, N_\varepsilon} &:= \scpSmall{\bar X^\Delta_{\varepsilon, \cdot-1, \cdot}}{\bar X^{\Delta-\Delta}_\varepsilon}_w.
\end{align*}

\begin{remark}
	Using the above defined terms, we will separately study the effects of the dynamic noise (via $\bar X$), the initial condition (via $\tilde X$) and the static noise (via $\dot V$). The notation reflects the fact that the first two quantities are governed by the dynamics of the heat semigroup.
	The next result allows to split the moment conditions from Assumption $(A_{\NonparIndex})$ into these three domains.
\end{remark}

\begin{proposition}[reduction of moments]\label{prop:splitting-reduction}
	Assume Assumption {\assumptionB} b).
	If
	for some $1\leq\NonparIndex<2$:
	\begin{enumerate}
		\item[a)] $\bE\bar I_{\varepsilon, N_\varepsilon}\sim N_\varepsilon$, $\bE\langle\bar M_\varepsilon\rangle_{N_\varepsilon} = \cO(N_\varepsilon\abs{\cX_\varepsilon}^{-1}h^{-d})$, and \\
		$\rVar(\normSmall{\bar X^{\Delta}_\varepsilon}_{\abs w}^2) + \rVar(\normSmall{\bar X^{\Delta}_{\varepsilon,\cdot-1,\cdot}}_{\abs w}^2)=\cO(h^2N_\varepsilon^{2+\eta})$ for any $\eta>0$,
		\item[b)] $\bE\bar B_{\varepsilon, N_\varepsilon} = \cO(h^{\NonparIndex} N_\varepsilon)$ and $\rVar(\normSmall{\bar X^{\Delta-\Delta}_\varepsilon}_{\abs w}^2) = \cO(h^{2\NonparIndex+2}N_\varepsilon^2)$,
		\item[c)] $\normSmall{\tilde X^\Delta_{\varepsilon, \cdot-1, \cdot}}_{\abs{w}}^2 + \normSmall{\tilde X^\Delta_{\varepsilon}}_{\abs{w}}^2 = o(h^{\NonparIndex-1}N_\varepsilon)$ and $\normSmall{\tilde X^{\Delta-\Delta}_{\varepsilon}}_{\abs{w}}^2 = \cO(h^{\NonparIndex+1}N_\varepsilon)$,
		\item[d)] $\bE\normSmall{\dot V^\Delta_{\varepsilon, \cdot-1, \cdot}}_{\abs{w}}^2 + \rVar(\normSmall{\dot V^\Delta_{\varepsilon, \cdot-1, \cdot}}_{\abs{w}}^2) + \rVar(\normSmall{\dot V^\Delta_{\varepsilon}}_{\abs{w}}^2) = \cO(N_\varepsilon)$,
	\end{enumerate}
	then $(I_{\varepsilon, N_\varepsilon}, B_{\varepsilon, N_\varepsilon}, \langle M_\varepsilon\rangle_{N_\varepsilon})$ satisfies Assumption $(A_{\NonparIndex})$.
\end{proposition}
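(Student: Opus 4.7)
My plan is to apply two linear decompositions plus Cauchy--Schwarz inequalities for the bilinear forms $\scp{\cdot}{\cdot}_{w}$, $\scp{\cdot}{\cdot}_{\abs{w}}$, $\scp{\cdot}{\cdot}_{w^2}$. First, write $X^\Delta_{\varepsilon, k, x} = \bar X^\Delta_{\varepsilon, k, x} + \tilde X^\Delta_{\varepsilon, k, x} + \dot V^\Delta_{\varepsilon, k, x}$ (from $X=\bar X+\tilde X$ and $Y=X+\varepsilon\dot V$), and similarly $\sscpSmall{[\Delta_\vartheta - \Delta_{\vartheta(x_0)}]X}{(K_{k,x})_{\varepsilon, x_0}} = \bar X^{\Delta-\Delta}_{\varepsilon, k, x} + \tilde X^{\Delta-\Delta}_{\varepsilon, k, x}$. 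Plugging these into $I_{\varepsilon, N_\varepsilon}$, $\langle M_\varepsilon\rangle_{N_\varepsilon}$, $B_{\varepsilon, N_\varepsilon}$ expands each as a finite bilinear sum; the ``bar'' pieces $\bar I, \langle\bar M\rangle, \bar B$ are directly controlled by (a), (a), (b), while all remaining cross terms will be bounded by Cauchy--Schwarz together with the secondary hypotheses (c), (d).

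For part (i) of Assumption $(A_\NonparIndex)$, $\bE I_{\varepsilon, N_\varepsilon}$ splits into $\bE\bar I \sim N_\varepsilon$ plus cross expectations. Most of the latter vanish: terms involving exactly one $\bar X$-factor drop by $\bar X$ being centered (since $\bar X_0 = 0$); those involving exactly one $\dot V^\Delta$ drop by independence of $\dot V$ from $\bar X, \tilde X$ combined with $\bE\dot V^\Delta = 0$; and $\bE\scpSmall{\dot V^\Delta_{\cdot-1}}{\dot V^\Delta_\cdot}_w = 0$ by the disjoint time supports of $(K_{k-1,x})_{\varepsilon,x_0}$ and $(K_{k,x})_{\varepsilon,x_0}$. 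The sole surviving term $\scpSmall{\tilde X^\Delta_{\cdot-1}}{\tilde X^\Delta_\cdot}_w$ is bounded via Cauchy--Schwarz and (c) as $o(h^{\NonparIndex-1}N_\varepsilon) = o(N_\varepsilon)$ (using $\NonparIndex \geq 1$), yielding $\bE I \sim N_\varepsilon$. The variance bound $\rVar(I) = o(N_\varepsilon^2)$ then follows by expanding $\rVar(I)$ into $\cO(1)$-many summands and bounding each by Cauchy--Schwarz together with the variance hypotheses in (a), (c), (d). For part (ii), dominate pointwise $\langle M_\varepsilon\rangle_{N_\varepsilon} \leq 3\sigma_K^2(\normSmall{\bar X^\Delta_{\cdot-1}}_{w^2}^2 + \normSmall{\tilde X^\Delta_{\cdot-1}}_{w^2}^2 + \normSmall{\dot V^\Delta_{\cdot-1}}_{w^2}^2)$; the first summand in expectation is $3\bE\langle\bar M_\varepsilon\rangle_{N_\varepsilon}$ from (a), and for the other two use the inequality $w^h_\varepsilon(x)^2 \leq C\abs{\cX_\varepsilon}^{-1}h^{-d}\abs{w^h_\varepsilon(x)}$ from Assumption \assumptionB(b)(ii) to convert $\scp{\cdot}{\cdot}_{w^2}$ into $C\abs{\cX_\varepsilon}^{-1}h^{-d}\scp{\cdot}{\cdot}_{\abs{w}}$, then invoke (c), (d).

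For part (iii), expanding $B$ as six bilinear products, the dominant contribution satisfies $\bE\bar B^2 = (\bE\bar B)^2 + \rVar(\bar B) = \cO(h^{2\NonparIndex}N_\varepsilon^2) + \rVar(\bar B)$ with the first summand controlled by (b); Cauchy--Schwarz in $\scp{\cdot}{\cdot}_{\abs{w}}$ bounds $\rVar(\bar B) \leq \bE\bar B^2 \leq (\bE\normSmall{\bar X^\Delta}_{\abs{w}}^4)^{1/2}(\bE\normSmall{\bar X^{\Delta-\Delta}}_{\abs{w}}^4)^{1/2}$, each fourth moment then being controlled as variance-plus-squared-mean via (a), (b). The five cross terms involving at least one $\tilde X$- or $\dot V^\Delta$-factor are handled by the same template together with (c), (d). The main obstacle is that (a) and (b) only supply second moments and quadratic-form variances of the block quantities, whereas $\bE B^2$ and $\rVar(\bar I)$ genuinely require $L^4$-type control; the key intermediate bounds $\bE\normSmall{\bar X^\Delta}_{\abs{w}}^2 = \cO(N_\varepsilon)$, $\bE\normSmall{\bar X^{\Delta-\Delta}}_{\abs{w}}^2 = \cO(h^{2\NonparIndex}N_\varepsilon)$, and $\rVar(\bar I) = o(N_\varepsilon^2)$ must be extracted from (a), (b), with naive Cauchy--Schwarz only giving $\rVar(\bar I) \leq \cO(N_\varepsilon^2)$ --- a sharper argument using Wick's formula on the Gaussian $\bar X$ together with the spatiotemporal decorrelation estimate from Lemma \ref{lem:cov:bound:kl} is needed to reach the desired $o(N_\varepsilon^2)$ gain.
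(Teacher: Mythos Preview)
Your overall architecture is right: decompose via $X^\Delta = \bar X^\Delta + \tilde X^\Delta + \dot V^\Delta$, identify $\bar I,\langle\bar M\rangle,\bar B$ as the leading pieces, and control all cross terms. The treatment of $\bE I_{\varepsilon,N_\varepsilon}$ and of $\bE\langle M_\varepsilon\rangle_{N_\varepsilon}$ (including the trick $w^2\lesssim |\cX_\varepsilon|^{-1}h^{-d}|w|$) matches the paper.

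The gap is in the variance arguments, and you have located it yourself. Your route
\[
\rVar(\bar B)\le \bE\bar B^2 \le \bigl(\bE\|\bar X^\Delta_{\cdot-1}\|_{|w|}^4\bigr)^{1/2}\bigl(\bE\|\bar X^{\Delta-\Delta}\|_{|w|}^4\bigr)^{1/2}
\]
requires fourth moments \emph{and} the second moments $\bE\|\bar X^\Delta\|_{|w|}^2$, $\bE\|\bar X^{\Delta-\Delta}\|_{|w|}^2$, none of which are among the hypotheses (a)--(d). Even granting those, the resulting bound is only $\cO(h^{\gamma+1}N_\varepsilon^2)$, which is strictly weaker than the required $\cO(h^{2\gamma}N_\varepsilon^2)$ once $\gamma>1$. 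Your proposed fix---invoking Wick's formula together with Lemma~\ref{lem:cov:bound:kl}---would work, but it amounts to re-deriving the content of hypotheses (a) and (b) from scratch, which defeats the modular purpose of this proposition.

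The paper closes the gap with Lemma~\ref{lem:abstract-variance-bounds}: for jointly Gaussian $A,B$,
\[
\rVar\bigl(\scp{A}{B}_w\bigr)\le \sqrt{\rVar\bigl(\|A\|_{|w|}^2\bigr)}\,\sqrt{\rVar\bigl(\|B\|_{|w|}^2\bigr)},\qquad
\rVar\bigl(\scp{A}{c}_w\bigr)\le \tfrac{1}{\sqrt{2}}\,\|c\|_{|w|}^2\sqrt{\rVar\bigl(\|A\|_{|w|}^2\bigr)}.
\]
This is a Wick-type identity, but packaged so that the variance of every bilinear cross term is controlled by exactly the quadratic-form variances and deterministic norms that appear in (a)--(d). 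With it one obtains directly $\rVar(I_{\varepsilon,N_\varepsilon})=o(N_\varepsilon^2)$ and $\rVar(B_{\varepsilon,N_\varepsilon})=\cO\bigl((hN_\varepsilon^{1+\eta/2}+N_\varepsilon^{1/2}+h^{\gamma-1}N_\varepsilon)\cdot h^{\gamma+1}N_\varepsilon\bigr)=\cO(h^{2\gamma}N_\varepsilon^2)$ for $\gamma<2$ and $\eta$ small, without ever touching fourth moments or Lemma~\ref{lem:cov:bound:kl}.
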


\begin{proof} \
	Plug $Y = \bar X + \tilde X + \varepsilon\dot V$ into the definition of $\widehat X^\Delta_{\varepsilon, k, x}$ and expand the expected values and variances. Taking into account that $\bar X,\dot V$ are independent, and $\tilde X$ is deterministic, we verify the different parts of Assumption $(A_\NonparIndex)$.
	
	\emph{Assumption $(A_\NonparIndex)$ a):}
	$\bE I_{\varepsilon, N_\varepsilon} = \bE \bar I_{\varepsilon, N_\varepsilon} + \scpSmall{\tilde X^\Delta_{\varepsilon, \cdot-1, \cdot}}{\tilde X^\Delta_\varepsilon}_w$, and with $\bE \bar I_{\varepsilon, N_\varepsilon}\sim N_\varepsilon$ by a) and $\absSmall{\scpSmall{\tilde X^\Delta_{\varepsilon, \cdot-1, \cdot}}{\tilde X^\Delta_\varepsilon}_w}\leq\normSmall{\tilde X^\Delta_{\varepsilon, \cdot-1, \cdot}}_{\abs{w}}\normSmall{\tilde X^\Delta_{\varepsilon}}_{\abs{w}}=o(N_\varepsilon)$ by c), we have $\bE I_{\varepsilon, N_\varepsilon}\sim N_\varepsilon$.
		Next, using Lemma \ref{lem:abstract-variance-bounds} together with Young's inequality and a), c) and d),
		\begin{align*}
			\rVar(I_{\varepsilon, N_\varepsilon})
				&\lesssim \sum_{Z^{(1)}, Z^{(2)}\in \{\bar X^\Delta_\varepsilon, \tilde X^\Delta_\varepsilon, \dot V^\Delta_\varepsilon\}}\rVar(\scpSmall{Z^{(1)}_{\cdot-1, \cdot}}{Z^{(2)}}_w) \\
				&\lesssim \rVar(\normSmall{\bar X^{\Delta}_{\varepsilon,\cdot-1,\cdot}}_{\abs w}^2) + \rVar(\normSmall{\bar X^{\Delta}_{\varepsilon}}_{\abs w}^2) + \rVar(\normSmall{\dot V^{\Delta}_{\varepsilon,\cdot-1,\cdot}}_{\abs w}^2) \\
				&\quad\quad + \rVar(\normSmall{\dot V^{\Delta}_{\varepsilon}}_{\abs w}^2) + \normSmall{\tilde X^{\Delta}_{\varepsilon,\cdot-1,\cdot}}_{\abs w}^4 + \normSmall{\tilde X^{\Delta}_{\varepsilon}}_{\abs w}^4 \\
				&= o(N_\varepsilon^2).
		\end{align*}
	
	\emph{Assumption $(A_\NonparIndex)$ b):}
		We conclude from c), d) and Assumption {\assumptionB} b)
		\begin{align*}
			\normSmall{\tilde X^\Delta_{\varepsilon, \cdot-1, \cdot}}_{w^2}^2 + \bE\normSmall{\dot V^\Delta_{\varepsilon, \cdot-1, \cdot}}_{w^2}^2
				&\lesssim \sup_{x\in\cX_\varepsilon}\abs{w^h_\varepsilon(x)}\left(\normSmall{\tilde X^\Delta_{\varepsilon, \cdot-1, \cdot}}_{\abs{w}}^2 + \bE\normSmall{\dot V^\Delta_{\varepsilon, \cdot-1, \cdot}}_{\abs{w}}^2\right) \\
				&\lesssim \abs{\cX_\varepsilon}^{-1}h^{-d}N_\varepsilon,
		\end{align*}
		so $\bE\langle M_\varepsilon\rangle_{N_\varepsilon} = \bE\langle \bar M_\varepsilon\rangle_{N_\varepsilon} + \sigma_K^2\normSmall{\tilde X^\Delta_{\varepsilon, \cdot-1, \cdot}}_{w^2}^2 + \sigma_K^2\bE\normSmall{\dot V^\Delta_{\varepsilon, \cdot-1, \cdot}}_{w^2}^2\lesssim N_\varepsilon\abs{\cX_\varepsilon}^{-1}h^{-d}$ by a).
	
	\emph{Assumption $(A_\NonparIndex)$ c):}
		First, with c),
		\begin{align*}
			\abs{\scpSmall{\tilde X^\Delta_{\varepsilon, \cdot-1, \cdot}}{\tilde X^{\Delta-\Delta}_\varepsilon}_w}
				&\lesssim \left(\normSmall{\tilde X^\Delta_{\varepsilon, \cdot-1, \cdot}}_\abs{w}^2\normSmall{\tilde X^{\Delta-\Delta}_\varepsilon}_\abs{w}^2\right)^\frac{1}{2}
				\lesssim h^{\NonparIndex} N_\varepsilon,
		\end{align*}
		and 
		writing
		\begin{align}
			B_{\varepsilon, N_{\varepsilon}} = \scpSmall{\bar X_{\varepsilon, \cdot-1, \cdot}^\Delta+\tilde X_{\varepsilon, \cdot-1, \cdot}^\Delta+\dot V_{\varepsilon, \cdot-1, \cdot}^\Delta}{\bar X_{\varepsilon}^{\Delta-\Delta}+\tilde X_\varepsilon^{\Delta-\Delta}}_w,
		\end{align}
		we obtain $\absSmall{\bE B_{\varepsilon, N_\varepsilon}} = \absSmall{\bE \bar B_{\varepsilon, N_\varepsilon} + \scpSmall{\tilde X^\Delta_{\varepsilon, \cdot-1, \cdot}}{\tilde X^{\Delta-\Delta}_\varepsilon}_w}\lesssim h^{\NonparIndex} N_\varepsilon$ by b).
		Next, taking into account that the variance of a sum is bounded up to a constant by the sum of the variances of the separate terms, we obtain from Lemma \ref{lem:abstract-variance-bounds} and a), b), c) and d),
		\begin{align*}
			\rVar(B_{\varepsilon, N_\varepsilon})
				&\lesssim \sum_{Z\in \{\bar X^\Delta_\varepsilon, \tilde X^\Delta_\varepsilon, \dot V^\Delta_\varepsilon\}}\left(\rVar(\scpSmall{Z_{\cdot-1, \cdot}}{\bar X^{\Delta-\Delta}_\varepsilon}_w) + \rVar(\scpSmall{Z_{\cdot-1, \cdot}}{\tilde X^{\Delta-\Delta}_\varepsilon}_w)\right) \\
				&\lesssim \left(\sqrt{\rVar(\normSmall{\bar X^{\Delta}_{\varepsilon,\cdot-1,\cdot}}_{\abs w}^2)} + \sqrt{\rVar(\normSmall{\dot V^{\Delta}_{\varepsilon,\cdot-1,\cdot}}_{\abs w}^2)} + \normSmall{\tilde X^{\Delta}_{\varepsilon,\cdot-1,\cdot}}_{\abs w}^2\right) \\
				&\quad\quad \times \left(\sqrt{\rVar(\normSmall{\bar X^{\Delta-\Delta}_{\varepsilon}}_{\abs w}^2)} + \normSmall{\tilde X^{\Delta-\Delta}_{\varepsilon}}_{\abs w}^2\right) \\
				&= \cO((hN_\varepsilon^{1+\eta/2} + N_\varepsilon^{1/2} + h^{\NonparIndex-1}N_\varepsilon) \times (h^{\NonparIndex+1}N_\varepsilon + h^{\NonparIndex+1}N_\varepsilon))
		\end{align*}
		for any $\eta>0$, and this is in fact $\cO(h^{2\NonparIndex}N_\varepsilon^2)$
		since $N_\varepsilon^{1/2}\sim\delta N_\varepsilon\lesssim hN_\varepsilon\lesssim h^{\NonparIndex-1}N_\varepsilon$, and $hN_\varepsilon^{1+\eta/2} \lesssim h^{\NonparIndex-1}N_\varepsilon$ for $\NonparIndex<2$ and $\eta>0$ small enough.
				Finally, we conclude that $\bE B_{\varepsilon, N_\varepsilon}^2 = \rVar(B_{\varepsilon, N_\varepsilon}) + (\bE B_{\varepsilon, N_\varepsilon})^2 = \cO(h^{2\NonparIndex}N_\varepsilon^2)$.
\end{proof}

\subsection{Bounds for $\bar X_t$}

The proof of the next result is similar to the moment bounds in the proof of Theorem \ref{thm:parametric} that do not stem from the static noise.

\begin{lemma}[bounds related to the empirical Fisher information] \label{lem:parametric:moments}
	In the setting of Theorem \ref{thm:connect-conditions},
	it holds as $\varepsilon\rightarrow 0$:
	\begin{enumerate}
		\item[a)] $\bE \bar I_{\varepsilon, N_\varepsilon}\sim N_\varepsilon$,
		\item[b)] $\bE\langle \bar M_\varepsilon\rangle_{N_\varepsilon}\lesssim N_\varepsilon\abs{\cX_\varepsilon}^{-1}h^{-d}$,
		\item[c)] $\rVar(\normSmall{\bar X^{\Delta}_{\varepsilon,\cdot-1,\cdot}}_{\abs w}^2) + \rVar(\normSmall{\bar X^{\Delta}_\varepsilon}_{\abs w}^2)=\cO(h^2N_\varepsilon^{2+\eta})$ for any $\eta>0$.
	\end{enumerate}
\end{lemma}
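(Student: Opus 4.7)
The strategy mirrors the Gaussian moment estimates from the proof of Theorem~\ref{thm:parametric}, the key difference being that weights and spatial shifts must now be handled uniformly across $\mathrm{supp}(w^h_\varepsilon)$. After localization, $\bar X^\Delta_{\varepsilon,k,x}=\sscp{\bar X^{(\varepsilon)}}{\Delta K_{k,x}}_\varepsilon$, where $\bar X^{(\varepsilon)}$ is the zero-initial-condition variant of the process constructed in Lemma~\ref{lem:localization-X}. Its covariance \eqref{eq:covariance-delta} approaches that of $\bar X^{(0)}$ on $\R^d$ at a quantitative rate controlled by Corollary~\ref{cor:semigroup-for-statistics}; crucially, the approximation is \emph{uniform} in shifts $x\in\mathrm{supp}(w^h_\varepsilon)\subseteq\cB[\varepsilon^{1/2}/h]$, which is the support condition guaranteed by Assumption~\assumptionB~(b)(i).

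\textbf{Expectations (a) and (b).} For (a), I would write
\begin{align*}
\bE[\bar X^\Delta_{\varepsilon,k-1,x}\bar X^\Delta_{\varepsilon,k,x}]=\rCov(\sscp{\bar X^{(\varepsilon)}}{\Delta K_{k-1,x}}_\varepsilon,\sscp{\bar X^{(\varepsilon)}}{\Delta K_{k,x}}_\varepsilon),
\end{align*}
apply the uniform covariance convergence from Corollary~\ref{cor:semigroup-for-statistics} to replace $\bar X^{(\varepsilon)}$ by $\bar X^{(0)}$, and then invoke the same ergodic limit used in the parametric proof (Lemmas referenced therein as \texttt{cov:conv:eps}, \texttt{cov:conv:T}) to extract the time-averaged constant $C_\infty(\Delta K,\Delta K_{-1,0})$ uniformly in $x$. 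Combined with Assumption~\assumptionB~(b)(iv) ($\sum w^h_\varepsilon(x)=1$) and (c) ($C_\infty(\Delta K,\Delta K_{-1,0})>0$), this yields $\bE\bar I_{\varepsilon,N_\varepsilon}\sim N_\varepsilon$. For (b) the same machinery gives $\bE\langle\bar M_\varepsilon\rangle_{N_\varepsilon}\lesssim N_\varepsilon\sum_{x}w^h_\varepsilon(x)^2$, and Assumption~\assumptionB~(b)(ii)-(iii) supplies
\begin{align*}
\sum_{x\in\cX_\varepsilon}w^h_\varepsilon(x)^2\leq \sup_{x\in\cX_\varepsilon}\abs{w^h_\varepsilon(x)}\cdot\sum_{x\in\cX_\varepsilon}\abs{w^h_\varepsilon(x)}\lesssim\abs{\cX_\varepsilon}^{-1}h^{-d},
\end{align*}
completing (b).

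\textbf{Variance (c).} Since $(\bar X^\Delta_{\varepsilon,k,x})_{k,x}$ is centered Gaussian, Wick's formula gives
\begin{align*}
\rVar(\normSmall{\bar X^\Delta_\varepsilon}_{\abs w}^2)=2\sum_{k,\ell=1}^{N_\varepsilon}\sum_{x,y\in\cX_\varepsilon}\abs{w^h_\varepsilon(x)}\abs{w^h_\varepsilon(y)}\bE[\bar X^\Delta_{\varepsilon,k,x}\bar X^\Delta_{\varepsilon,\ell,y}]^2.
\end{align*}
The squared covariance is uniformly bounded by $C(1\wedge\abs{k-\ell}^{-\kappa})^2$ for any $\kappa<d/2$ via the same time-decorrelation lemma used in the parametric proof (Lemma~\texttt{cov:bound:kl}), upgraded to uniform-in-$(x,y)$ form through Corollary~\ref{cor:semigroup-for-statistics}. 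Using Assumption~\assumptionB~(b)(iii) to bound $(\sum_x\abs{w^h_\varepsilon(x)})^2\lesssim 1$,
\begin{align*}
\rVar(\normSmall{\bar X^\Delta_\varepsilon}_{\abs w}^2)\lesssim\sum_{k,\ell=1}^{N_\varepsilon}(1\wedge\abs{k-\ell}^{-\kappa})^2=\cO(N_\varepsilon^{1+\eta})
\end{align*}
for any $\eta>0$ (choosing $\kappa>1/2$ in $d\geq 2$, and $\kappa$ close to $1/2$ in $d=1$). Since $h\gtrsim\sqrt\varepsilon\sim N_\varepsilon^{-1/2}$, one has $h^2N_\varepsilon^{2+\eta}\gtrsim N_\varepsilon^{1+\eta}$, matching the required bound. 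The shifted version $\bar X^\Delta_{\varepsilon,\cdot-1,\cdot}$ is handled identically.

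\textbf{Main obstacle.} All three parts hinge on upgrading the pointwise covariance asymptotics and decorrelation estimates behind the parametric proof to estimates that are uniform over spatial shifts $x\in\mathrm{supp}(w^h_\varepsilon)$. In the localized coordinate system these shifts range over a ball of radius $\sim h/\sqrt\varepsilon\to\infty$, so uniformity is nontrivial: it is precisely what the cumulative Trotter--Kato bound of Corollary~\ref{cor:semigroup-for-statistics} provides, and the care required to apply it (in particular verifying the $\Delta$-order hypothesis on $K$ built into Assumption~\assumptionB~(c)) is the main technical step. Once the uniform approximation is in place, the remaining Gaussian moment computations are essentially verbatim from the parametric argument.
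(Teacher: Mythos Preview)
Your proposal is correct in outline and matches the paper's approach closely: localize, invoke Lemma~\ref{lem:cov:conv:eps} (uniform covariance approximation) and Lemma~\ref{lem:cov:conv:T} (ergodic limit) for the expectations, and use Lemma~\ref{lem:cov:bound:kl} for the variance.

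There is one imprecision in part~(c). In the nonparametric setting of Theorem~\ref{thm:connect-conditions}, the uniform decorrelation bound from Lemma~\ref{lem:cov:bound:kl}(b) reads
\[
\sup_{x,y\in\cB[\varepsilon^{1/2}/h]}\abs{\bE[\bar X^\Delta_{\varepsilon,k,x}\bar X^\Delta_{\varepsilon,\ell,y}]}\lesssim (1\wedge\abs{k-\ell}^{-\kappa})+h,
\]
not just $(1\wedge\abs{k-\ell}^{-\kappa})$; the extra $h$ is precisely the price of the Trotter--Kato upgrade you allude to (it stems from $\Delta_{\vartheta_\varepsilon}\neq\Delta_{\vartheta_0}$). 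Squaring and summing produces an additional $(hN_\varepsilon)^2$ contribution, so the paper obtains $\cO((hN_\varepsilon)^2+N_\varepsilon^{1+\eta})$ rather than just $\cO(N_\varepsilon^{1+\eta})$. This does not affect the conclusion, since both pieces are $\cO(h^2N_\varepsilon^{2+\eta})$, but your intermediate claim as stated is not quite what the lemma delivers.
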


\begin{remark}
	This lemma remains true if $K$ has $\Delta$-order one in $d=1$, and without restriction on the order of $K$ in $d\geq 2$.
\end{remark}

\begin{proof}[Proof of Lemma \ref{lem:parametric:moments}] \
	We will use that
	\begin{align*}
		\bar X^\Delta_{\varepsilon, k, x} = \sscpSmall{\Delta\bar X}{(K_{k, x})_{\varepsilon, x_0}} = \varepsilon^{-1}\sscpSmall{\bar X}{(\Delta K_{k, x})_{\varepsilon, x_0}} = \sscpSmall{\bar X^{(\varepsilon)}}{\Delta K_{k, x}}_\varepsilon.
	\end{align*}
	First,
	\begin{align*}
		\abs{\frac{\bE \bar I_{\varepsilon, N_\varepsilon}}{N_\varepsilon} - \frac{1}{N_\varepsilon}\sum_{k=1}^{N_\varepsilon}\bE[\sscpSmall{\bar X^{(0)}}{\Delta K_{k, 0}}_0\sscpSmall{\bar X^{(0)}}{\Delta K_{k-1, 0}}_0]} \hspace{-9cm} & \\
			&\leq \frac{1}{N_\varepsilon}\sum_{k=1}^{N_\varepsilon}\sum_{x\in\cX_\varepsilon}\abs{w_\varepsilon^h(x)}\abs{\bE[\bar X^\Delta_{\varepsilon, k, x}\bar X^\Delta_{\varepsilon, k-1, x}] - \bE[\sscpSmall{\bar X^{(0)}}{\Delta K_{k, x}}_0\sscpSmall{\bar X^{(0)}}{\Delta K_{k-1, x}}_0]} \\
			&\leq \sup_{1\leq k\leq N_\varepsilon}\sup_{x\in\cB[\varepsilon^{1/2}/h]}
			\left|\bE[\sscpSmall{\bar X^{(\varepsilon)}}{\Delta K_{k, x}}_\varepsilon\sscpSmall{\bar X^{(\varepsilon)}}{\Delta K_{k-1, x}}_\varepsilon] \right. \\
			&\hspace{5cm} \left.- \bE[\sscpSmall{\bar X^{(0)}}{\Delta K_{k, x}}_0\sscpSmall{\bar X^{(0)}}{\Delta K_{k-1, x}}_0]\right| \rightarrow 0
	\end{align*}
	due to Lemma \ref{lem:cov:conv:eps} a).
	Now Lemma \ref{lem:cov:conv:T} identifies the limit: $\bE \bar I_{\varepsilon, N_\varepsilon}/N_\varepsilon\rightarrow C_\infty(\Delta K, \Delta K_{-1, 0})$.
	Next,
	\begin{align*}
		\bE[\langle \bar M_\varepsilon\rangle_{N_\varepsilon}]
			&= \sigma_K^2\sum_{k=1}^{N_\varepsilon}\sum_{x\in\cX_\varepsilon}w_\varepsilon^h(x)^2\bE[\sscpSmall{\bar X^{(\varepsilon)}}{\Delta K_{k-1, x}}_\varepsilon^2] \\
			&\lesssim N_\varepsilon \sup_{x\in\cX_\varepsilon}\abs{w_\varepsilon^h(x)}\sup_{1\leq k\leq N_\varepsilon}\sup_{x\in\cB[\varepsilon^{1/2}/h]}\abs{\bE[\sscpSmall{\bar X^{(\varepsilon)}}{\Delta K_{k-1, x}}_\varepsilon^2]} \\
			&\lesssim N_\varepsilon\abs{\cX_\varepsilon}^{-1}h^{-d},
	\end{align*}
	where we used Lemma \ref{lem:cov:conv:eps} b) and Assumption {\assumptionB}.
	Next, with Lemma \ref{lem:cov:bound:kl} and $\kappa<d/2$,
	\begin{align*}
		\rVar(\normSmall{\bar X^{\Delta}_{\varepsilon}}_{\abs w}^2)
			&= 2\sum_{k,\ell=1}^{N_\varepsilon}\sum_{x, y\in\cX_\varepsilon}\abs{w_\varepsilon^h(x)w_\varepsilon^h(y)}\bE[\bar X^\Delta_{\varepsilon, k, x}\bar X^\Delta_{\varepsilon, \ell, y}]^2 \\
			&\lesssim \sum_{k,\ell=1}^{N_\varepsilon}\sum_{x, y\in\cX_\varepsilon}\abs{w_\varepsilon^h(x)w_\varepsilon^h(y)} ((1\wedge\abs{k-\ell}^{-\kappa}) + h)^2 \\
			&\lesssim (hN_\varepsilon)^2 + \sum_{k,\ell=1}^{N_\varepsilon}(1\wedge\abs{k-\ell}^{-2\kappa}).
	\end{align*}
	This is $\cO((hN_\varepsilon)^2 + N_\varepsilon^{1+\eta})$ for any $\eta>0$ (in $d\geq 2$, one could even choose $\eta=0$), and as $N_\varepsilon^{1+\eta}\sim\delta^2N_\varepsilon^{2+\eta}$, we obtain a final bound of the form $\cO(h^2N_\varepsilon^{2+\eta})$, as claimed.
	Finally, the argument for $\rVar(\normSmall{\bar X^{\Delta}_{\varepsilon,\cdot-1,\cdot}}_{\abs w}^2)$ is identical.
\end{proof}

Statement and proof of the next lemma are inspired by \cite{StrauchTiepner2024}.

\begin{lemma}[bounds related to the bias] \label{lem:moments:bias}
	In the setting of Theorem \ref{thm:connect-conditions}, let $\NonparIndex=\beta$ in $d\geq 3$, and $\NonparIndex=1$ otherwise. It holds as $\varepsilon\rightarrow 0$:
	\begin{enumerate}
		\item[a)] $\bE\bar B_{\varepsilon, N_\varepsilon} = \cO(h^{\NonparIndex} N_\varepsilon)$,
		\item[b)] $\rVar(\normSmall{\bar X^{\Delta-\Delta}_\varepsilon}_{\abs w}^2) = \cO(h^{2\NonparIndex+2}N_\varepsilon^2)$.
	\end{enumerate}
\end{lemma}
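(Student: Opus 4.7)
The plan is to combine a Taylor expansion of $\vartheta$ around $x_0$ with the locally linear zero-moment condition from Assumption \assumptionB (b)(iv), and to exploit the parity of $K$ to force cancellation of the limiting covariance. First I would localize using Lemma \ref{lem:semigroup-scaling} and the scaling identity $(\Delta_\vartheta - \Delta_{\vartheta(x_0)})(K_{k, x})_\varepsilon = \varepsilon^{-1}((\Delta_{\vartheta_\varepsilon} - \Delta_{\vartheta_0})K_{k, x})_\varepsilon$, obtaining
\[
\bar X^{\Delta-\Delta}_{\varepsilon, k, x} = \sscpSmall{\bar X^{(\varepsilon)}}{(\Delta_{\vartheta_\varepsilon} - \Delta_{\vartheta_0}) K_{k, x}}_\varepsilon .
\]
With $\delta = \sqrt{\varepsilon}$ and $\vartheta \in \Theta(\beta, \underline{\vartheta}, C)$ one expands $\vartheta_\delta(z) - \vartheta_0(z) = \delta\, \nabla\vartheta(x_0) \cdot z + O((\delta|z|)^\beta)$ and $\nabla \vartheta_\delta(z) = \delta \nabla\vartheta(x_0) + O(\delta^\beta |z|^{\beta-1})$, giving
\[
(\Delta_{\vartheta_\delta} - \Delta_{\vartheta_0})K_{k, x} = \delta\, \Delta'_{(\nabla\vartheta)_0} K_{k, x} + \delta\, E_{k, x},
\]
where $E_{k, x}$ is supported in $\mathrm{supp}(K_{k, x})$ with $|E_{k, x}(t, z)| \lesssim \delta^{\beta-1}(1 + |z|)^\beta$.

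Second, I would split the main term via the identity
\[
\Delta'_{(\nabla\vartheta)_0} K_{k, x}(t, z) = (\nabla\vartheta(x_0)\cdot x)\,\Delta K_{k, x}(t, z) + \Delta'^{\mathrm{c}}_{(\nabla\vartheta)_0} K_{k, x}(t, z),
\]
where the \emph{centered} piece $\Delta'^{\mathrm{c}}_{(\nabla\vartheta)_0} K_{k, x}$ depends on $K_{k, x}$ only through $z - x$. Substituting into $\bE \bar B_{\varepsilon, N_\varepsilon}$, the first term contributes $\delta \sum_{k, x} w_\varepsilon^h(x) (\nabla\vartheta(x_0)\cdot x)\, \bE[\bar X^\Delta_{\varepsilon, k-1, x}\bar X^\Delta_{\varepsilon, k, x}]$; writing $\bE[\cdot] = C_\infty + \rho_{k, x}$ with $|\rho_{k, x}| = O(h)$ by Corollary \ref{cor:semigroup-for-statistics}, the constant $C_\infty$ is killed by $\sum_x w_\varepsilon^h(x) x_i = 0$, and the fluctuation is bounded by $\delta \cdot (\sum_x |w_\varepsilon^h(x)||x|) \cdot O(h) \lesssim h^2$ per time step, yielding $O(h^2 N_\varepsilon) \subseteq O(h^\beta N_\varepsilon)$. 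For the centered piece, Assumption \assumptionB (c) together with the parity preservation of the free heat semigroup $S_0$ on $\R^d$ forces the limiting covariance to vanish; the $O(h^{\beta - 1})$ rate from Corollary \ref{cor:semigroup-for-statistics} combined with the prefactor $\delta \leq h$ produces $O(h^\beta N_\varepsilon)$. Finally, for the remainder $E_{k, x}$, a direct Gaussian covariance estimate uses $(1+|z|)^\beta \lesssim (h/\delta)^\beta$ on $\mathrm{supp}(w_\varepsilon^h)$ to give a per-step contribution $\delta \cdot \delta^{\beta - 1} (h/\delta)^\beta = h^\beta$. In $d \leq 2$ only $\NonparIndex = 1$ is sought and the argument simplifies correspondingly.

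For the variance in (b), Gaussianity of $\bar X^{\Delta-\Delta}_{\varepsilon, k, x}$ and Isserlis's theorem give
\[
\rVar(\normSmall{\bar X^{\Delta-\Delta}_\varepsilon}_{\abs{w}}^2) = 2\sum_{k, \ell}\sum_{x, y} |w_\varepsilon^h(x) w_\varepsilon^h(y)|\, \bE[\bar X^{\Delta-\Delta}_{\varepsilon, k, x}\bar X^{\Delta-\Delta}_{\varepsilon, \ell, y}]^2.
\]
Each pairwise covariance is decomposed via the three-term split above; the dominant contribution $\delta^2 (\nabla\vartheta(x_0)\cdot x)(\nabla\vartheta(x_0)\cdot y)\bE[\bar X^\Delta_{\varepsilon, k, x}\bar X^\Delta_{\varepsilon, \ell, y}]$ has spatial prefactor $O(h^2)$ and, by the decorrelation bound from Lemma \ref{lem:cov:bound:kl} (for $\kappa < d/2$), carries time decay $1 \wedge |k - \ell|^{-\kappa}$. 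Squaring and summing with $\sum_x |w_\varepsilon^h(x)| \lesssim 1$ and $\sum_{k, \ell} (1 \wedge |k - \ell|^{-\kappa})^2 \lesssim N_\varepsilon$ yields $O(h^4 N_\varepsilon)$, which is $\leq O(h^{2\NonparIndex + 2} N_\varepsilon^2)$ since $N_\varepsilon \gtrsim h^{-2}$ and $\NonparIndex \leq 2$. The cross terms and the pure-remainder terms are handled analogously and are subdominant.

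The main technical obstacle is the weighted-norm bookkeeping in the remainder: the support of $w_\varepsilon^h$ extends to $|x| \lesssim h/\delta$, so the $(1 + |z|)^\beta$ factor in $E_{k, x}$ introduces a $(h/\delta)^\beta$ loss that must be compensated exactly by the prefactor $\delta^\beta$. Making this quantitative requires uniformity, in the kernel shift $x$, of the Trotter--Kato approximation from Theorem \ref{thm:semigroup-approximation-simplified}, and the margin available is responsible for the restriction $\beta < 2 - 2/(4 + d)$ in $d \geq 3$ announced in Theorem \ref{thm:connect-conditions}.
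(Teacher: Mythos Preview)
Your overall strategy matches the paper's: Taylor-expand $\Delta_{\vartheta_\delta}-\Delta_{\vartheta_0}$, use the zero-moment condition on the weights and the parity of $K$ to kill the linear term under $S_0$, and control the remaining pieces by quantitative Trotter--Kato bounds. The paper organises the same ingredients slightly differently (it first replaces $S_\varepsilon$ by $S_0$ and does the Taylor expansion inside that term, then treats $S_\varepsilon-S_0$ acting separately on the linear and on the second-order remainder), but the substance is the same.

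Two places need tightening. First, in part~(a) you write $\bE[\bar X^\Delta_{\varepsilon,k-1,x}\bar X^\Delta_{\varepsilon,k,x}]=C_\infty+\rho_{k,x}$ with $|\rho_{k,x}|=O(h)$ ``by Corollary~\ref{cor:semigroup-for-statistics}''. This is not right: the deviation from $C_\infty$ contains the ergodic error (finite $k$ in the upper limit of the $r$-integral), which is \emph{not} $O(h)$ uniformly in $k$, and the corollary only controls the $S_\varepsilon-S_0$ part, at rate $O(h^{\beta-1})$ in $d\ge3$. The correct split is to subtract the $\bar X^{(0)}$-covariance: by shift-invariance of $S_0$ this is $x$-independent and hence annihilated by $\sum_x w_\varepsilon^h(x)x_i=0$ for every $k$; what remains is exactly the Trotter--Kato error, which the corollary does bound. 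The paper does precisely this (term~$I$ in its proof).

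Second, in part~(b) your ``cross terms and pure-remainder terms are handled analogously and are subdominant'' is too quick. The pure second-order remainder $\bar G_{k,x}=\sscpSmall{\bar X^{(\varepsilon)}}{(\Delta_{\vartheta_\varepsilon}-\Delta_{\vartheta_0}-\delta\Delta'_{(\nabla\vartheta)_0})K_{k,x}}_\varepsilon$ does not enjoy time decorrelation (no $\Delta$ acts on it), so the paper bounds $\bE[\bar G_{k,x}^2]\lesssim h^{2\beta}g_d(N_\varepsilon)$ via Lemma~\ref{lem:cov:DeltaApproximation}, with $g_1(N)=N^{1/2}$, $g_2(N)=\ln N$, $g_d\equiv1$ for $d\ge3$, and then uses Cauchy--Schwarz to get $\rVar(\|\bar G\|_{|w|}^2)\lesssim h^{4\beta}N_\varepsilon^2 g_d(N_\varepsilon)^2$. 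In $d=1$ this is $h^{4\beta}N_\varepsilon^3$, and matching it against the target $h^4N_\varepsilon^2$ is exactly what forces $\beta\ge 3/2$ in Theorem~\ref{thm:connect-conditions}. So this term is not subdominant; it is the binding one in low dimension. (Relatedly, your claim $\sum_{k,\ell}(1\wedge|k-\ell|^{-\kappa})^2\lesssim N_\varepsilon$ fails in $d=1$, where $\kappa<1/2$; the correct bound is $N_\varepsilon^{1+\eta}$ for any $\eta>0$, which still suffices since $\gamma=1$ there.)
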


\begin{remark}
	The proof of this lemma uses that $K$ has $\Delta$-order $1$ in $d=2$, and $\Delta$-order $2$ otherwise.
\end{remark}

\begin{proof}[Proof of Lemma \ref{lem:moments:bias}] \
	\begin{enumerate}
		\item[a)] We write with $R=2$ and $J(s, t, k) = [\abs{t-s}, t+s+2k-2]$:
		\begin{align*}
			\bE \bar B_{\varepsilon, N_\varepsilon}
				\hspace{-1cm}&\hspace{1cm}
				= \bE\sum_{k=1}^{N_\varepsilon}\sum_{x\in\cX_\varepsilon}w_\varepsilon^h(x)\sscpSmall{\bar X^{(\varepsilon)}}{\Delta K_{k-1, x}}_\varepsilon\sscpSmall{[\Delta_{\vartheta_\varepsilon}-\Delta_{\vartheta_0}] \bar X^{(\varepsilon)}}{K_{k, x}}_\varepsilon \\
				&= \frac{\sigma^2}{2}\sum_{k=1}^{N_\varepsilon}\sum_{x\in\cX_\varepsilon}w_\varepsilon^h(x)\int_0^R\int_0^R\int_{J(s, t, k)}\scpSmall{\Delta \shift_xK_{t}}{S_\varepsilon(r)[\Delta_{\vartheta_\varepsilon}-\Delta_{\vartheta_0}]\shift_xK_{s-1}}_0\diff r\diff s\diff t \\
				&= \frac{\sigma^2}{2}\sum_{k=1}^{N_\varepsilon}\sum_{x\in\cX_\varepsilon}w_\varepsilon^h(x)\int_0^R\int_0^R\int_{J(s, t, k)}\scpSmall{\Delta \shift_xK_{t}}{S_0(r)[\Delta_{\vartheta_\varepsilon}-\Delta_{\vartheta_0}]\shift_xK_{s-1}}_0\diff r\diff s\diff t \\
				&\quad + \frac{\sigma^2}{2}\sum_{k=1}^{N_\varepsilon}\sum_{x\in\cX_\varepsilon}w_\varepsilon^h(x)\int_0^R\int_0^R\int_{J(s, t, k)} \\
				&\hspace{3cm} \langle\Delta \shift_xK_{t},[S_\varepsilon - S_0](r)[\Delta_{\vartheta_\varepsilon}-\Delta_{\vartheta_0} - \delta\Delta'_{(\nabla\vartheta)_0}]\shift_xK_{s-1}\rangle_0\diff r\diff s\diff t \\
				&\quad + \frac{\sigma^2}{2}\sum_{k=1}^{N_\varepsilon}\sum_{x\in\cX_\varepsilon}w_\varepsilon^h(x)\int_0^R\int_0^R\int_{J(s, t, k)} \\
				&\hspace{3cm} \scpSmall{\Delta \shift_xK_{t}}{[S_\varepsilon - S_0](r)\delta\Delta'_{(\nabla\vartheta)_0}\shift_xK_{s-1}}_0\diff r\diff s\diff t \\
				&= I + II + III.
		\end{align*}
		
		We bound these terms separately:
		\begin{itemize}
			\item[$I$:]
			With $\vartheta_\varepsilon^x(z) = \vartheta(\delta (z + x) + x_0)$,
			and using the compact support of $K$,
			\begin{align*}
				\scp{\Delta \shift_xK_{t}}{S_0(r)[\Delta_{\vartheta_\varepsilon} - \Delta_{\vartheta_0}]\shift_xK_{s-1}}_0 \hspace{-4cm} & \\
					&= \scp{\Delta K_{t}}{S_0(r)\nabla\cdot(\vartheta_\varepsilon^x-\vartheta_0)\nabla K_{s-1}}_{L^2(\R^d)} \\
					&= \scp{\nabla\Delta K_{t}}{S_0(r)(\vartheta_\varepsilon^x-\vartheta_0)\nabla K_{s-1}}_{L^2(\R^d)^d}.
			\end{align*}
			Next, for some $\xi^*\in[x_0, \delta(z+x)+x_0]\subset\R^d$ by the mean-value theorem
			\begin{align*}
				\sum_{x\in\cX_\varepsilon}w^h_\varepsilon(x)(\vartheta_\varepsilon^x(z)-\vartheta_0(z))\nabla K(s-1, z) \hspace{-5cm} & \\
					&= \sum_{x\in\cX_\varepsilon}w^h_\varepsilon(x)[\nabla\vartheta(\xi^*)\cdot\delta (z+x)]\nabla K(s-1, z) \\
					&= \delta\sum_{x\in\cX_\varepsilon}w^h_\varepsilon(x)[(\nabla\vartheta(\xi^*)-\nabla\vartheta(0))\cdot(z+x)]\nabla K(s-1, z) \\
					&\quad\quad + \delta\sum_{x\in\cX_\varepsilon}w^h_\varepsilon(x)[\nabla\vartheta(0)\cdot (z+x)]\nabla K(s-1, z).
			\end{align*}
			With respect to the last term, note that $\sum_{x\in\cX_\varepsilon}w_\varepsilon^h(x)x_i = 0$, and
			\begin{align*}
				\scp{\nabla\Delta K_{t}}{S_0(r)z_i\nabla K_{s-1}}_{L^2(\R^d)^d} = 0
			\end{align*}
			for $1\leq i\leq d$,
			because $K(v,\cdot)$ is an even or odd function along each spatial coordinate for $v\in\R$.
			Furthermore,
			\begin{align*}
				\norm{\delta[(\nabla\vartheta(\xi^*)-\nabla\vartheta(0))\cdot(z+x)]\nabla K(s-1, z)}_{L^2(\R^d)^d} \hspace{-5cm} & \\
					&\leq \delta \norm{\abs{\nabla\vartheta(\xi^*)-\nabla\vartheta(0)}\abs{z + x}\abs{\nabla K(s-1, z)}}_{L^2(\R^d)^d} \\
					&\lesssim \delta^\beta\norm{\abs{z + x}^{\beta}\abs{\nabla K(s-1, z)}}_{L^2(\R^d)^d} \\
					&\lesssim \delta^\beta(1 + \abs{x}^\beta) \lesssim h^\beta.
			\end{align*}
			Putting things together, using Lemma \ref{lem:bound:semigroup:heatkernel},
			\begin{align*}
				\left|\sum_{k=1}^{N_\varepsilon}\sum_{x\in\cX_\varepsilon}w^h_\varepsilon(x)\int_0^R\int_0^R\int_{\abs{t-s}}^{t+s+2k-2}\right. \hspace{-5cm }\\
					& \left.\scp{\nabla\Delta K_{t}}{S_0(r)(\vartheta_\varepsilon^x-\vartheta_0)\nabla K_{s-1}}_{L^2(\R^d)^d}\diff u\diff s\diff t\vphantom{\int_0^R}\right| \\
					&\lesssim h^{\beta} N_\varepsilon \sup_k\int_0^R\int_0^R\int_0^{2R+2k-2}\norm{S_0(r)\nabla\Delta K_{t}}_{L^2(\R^d)^d}\diff u\diff s\diff t \\
					&\lesssim h^\beta N_\varepsilon.
			\end{align*}
			
			\item[$II$:]
			Let $p=5$, $\alpha=0$ for $d\geq 3$, and $p=5$, $\alpha=1$ in $d=2$, and $p>1/(\beta-1)$, $\alpha=\beta$ for $d=1$.
			Define $q$ via $1/p+1/q=1$.
			We use Lemma \ref{lem:bound:semigroup:uniform} (noting that $K$ has $\Delta$-order at least one in $d=1$) and Lemma \ref{lem:bias:Laplacian} in order to obtain
			(writing $U = 2R+2k-2$):
			\begin{align*}
				\int_0^R\int_0^R\int_{\abs{t-s}}^{t+s+2k-2}\scpSmall{\Delta \shift_xK_{t}}{S_\varepsilon(r)[\Delta_{\vartheta_\varepsilon}-\Delta_{\vartheta_0}-\delta\Delta'_{(\nabla\vartheta)_0}]\shift_xK_{s-1}}_0\diff r\diff s\diff t \hspace{-12cm} & \\
					&\lesssim \int_0^R\int_0^R\int_{0}^{U}\norm{S_\varepsilon(r)\Delta \shift_xK_{t}}_p\norm{[\Delta_{\vartheta_\varepsilon}-\Delta_{\vartheta_0}-\delta\Delta'_{(\nabla\vartheta)_0}]\shift_xK_{s-1}}_q\diff r\diff s\diff t \\
					&\lesssim \delta^\beta\int_0^R\int_0^R\int_{0}^{U}(1\wedge r^{-\alpha/2-d/(2q)})\norm{\shift_xK_{t}}_{\bar W^{2, 1}_\alpha\cap \bar W^{2, p}_\alpha}\norm{\shift_xK_{s-1}}_{\bar W^{2, q}_\beta}\diff r\diff s\diff t \\
					&\lesssim \delta^\beta\sup_{t\geq 0}\norm{\shift_xK_{t}}_{\bar W^{2, 1}_\alpha\cap \bar W^{2, p}_\alpha}\sup_{s\geq 0}\norm{\shift_xK_{s-1}}_{\bar W^{2, q}_\beta}.
			\end{align*}
			The term involving $S_0$ is treated identically. Thus Lemma \ref{lem:bound:space:Sobolev} gives
			\begin{align*}
				\abs{II}
					&\lesssim N_\varepsilon h^\beta(h\delta^{-1})^\alpha\sim N_\varepsilon h^\beta
			\end{align*}
			as $h\sim\delta$ in $d\in\{1, 2\}$.
			
			\item[$III$:]
			Using Lemma \ref{lem:LaplaceDiff:properties} c),
			\begin{align*}
				\Delta'_{(\nabla\vartheta)_0}\shift_xK_{s-1}
					&= \shift_x \Delta \FuncY_{s-1} + (\nabla\vartheta(x_0)\cdot x)\shift_x\Delta K_{s-1}
			\end{align*}
			for some kernel $\FuncY_{s-1}$ having $\Delta$-order one less than $K$.
			We consider without loss of generality only the second term.
			With $\abs{x}\lesssim h\delta^{-1}$,
			\begin{align*}
				\abs{\int_0^R\int_0^R\int_{\abs{t-s}}^{t+s+2k-2}\scpSmall{\Delta \shift_xK_{t}}{[S_\varepsilon - S_0](r)\delta (\nabla\vartheta(x_0)\cdot x)\shift_x\Delta K_{s-1}}_0\diff r\diff s\diff t} \hspace{-11cm} & \\
					&\lesssim h\int_0^R\int_0^R\int_{0}^{2R+2k-2}\abs{\scpSmall{\Delta \shift_xK_{t}}{[S_\varepsilon - S_0](r)\Delta\shift_x K_{s-1}}_0}\diff r\diff s\diff t.
			\end{align*}
			Using Corollary \ref{cor:semigroup-for-statistics}, this is $\cO(h)$ in dimension one and two, and $\cO(h^\beta)$ in $d\geq 3$.
			Here we use that the $\Delta$-order of $K$ is at least two in $d\geq 3$, and at least one otherwise.
			Thus $\abs{III}\lesssim h^{\NonparIndex}N_\varepsilon$ in all cases.
		\end{itemize}
		
		\item[b)] Decompose
		\begin{align*}
			\bar X^{\Delta-\Delta}_{\varepsilon, k, x}
				&= \sscpSmall{[\Delta_{\vartheta}-\Delta_{\vartheta(x_0)}]\bar X}{(K_{k, x})_{\varepsilon, x_0}}
				= \sscpSmall{\bar X^{(\varepsilon)}}{[\Delta_{\vartheta_\varepsilon}-\Delta_{\vartheta_0}]K_{k, x}}_\varepsilon \\
				&= \sscpSmall{\bar X^{(\varepsilon)}}{[\Delta_{\vartheta_\varepsilon}-\Delta_{\vartheta_0} - \delta\Delta'_{(\nabla\vartheta)_0}] K_{k, x}}_\varepsilon + \delta\sscpSmall{\bar X^{(\varepsilon)}}{\Delta'_{(\nabla\vartheta)_0} K_{k, x}}_\varepsilon \\
				&=: \bar G_{k, x} + \delta \tilde G_{k, x}.
		\end{align*}
		With Lemma \ref{lem:abstract-variance-bounds} and Young's inequality, we obtain
		\begin{align*}
			\rVar(\normSmall{\bar X^{\Delta-\Delta}_\varepsilon}_{\abs w}^2) \lesssim \rVar(\normSmall{\bar G}_\abs{w}^2) + \rVar(\delta^2\normSmall{\tilde G}_\abs{w}^2).
		\end{align*}
		By Lemma \ref{lem:cov:DeltaApproximation}, with $g_d(N_\varepsilon)$ as defined there,
		we have
		\begin{align*}
			\bE[\bar G_{k, x}^2] \lesssim h^{2\beta}g_d(N_\varepsilon),
		\end{align*}
		and consequently
		\begin{align*}
			\rVar(\normSmall{\bar G}_\abs{w}^2)
				&=2\sum_{k,\ell=1}^{N_\varepsilon}\sum_{x,y\in\cX_\varepsilon}\abs{w_\varepsilon^h(x)w_\varepsilon^h(y)}\bE[\bar G_{k,x}\bar G_{\ell,y}]^2 \\
				&\leq 2\left(\sum_{k=1}^{N_\varepsilon}\sum_{x\in\cX_\varepsilon}\abs{w_\varepsilon^h(x)}\bE\left[\bar G_{k, x}^2\right]\right)^2 \lesssim h^{4\beta}N_\varepsilon^2g_d(N_\varepsilon)^2.
		\end{align*}
		This is $\cO(h^{4\NonparIndex}N_\varepsilon^2)$ (in particular, $\cO(h^{2\NonparIndex+2}N_\varepsilon^2)$) in all cases considered:
		First, in $d\geq 3$, we have $g_d(N_\varepsilon)\equiv 1$,
		whereas in $d=2$, $h^{4\beta}N_\varepsilon^2g_d(N_\varepsilon)^2=h^4N_\varepsilon^2(h^{2\beta-2}\ln(N_\varepsilon))^2\lesssim h^4N_\varepsilon^2$.
		Finally, in $d=1$, using $\beta\geq 3/2$, we have $h^{4\beta}N_\varepsilon^2g_d(N_\varepsilon)^2=h^4N_\varepsilon^2(h^{2\beta-2}N_\varepsilon^{1/2})^2\lesssim h^4N_\varepsilon^2$ due to $N_\varepsilon\sim\delta^{-2}\sim h^{-2}$.
		Now consider $\tilde G_{k, x}$. Lemma \ref{lem:cov:bound:kl} gives (using that $K$ has $\Delta$-order at least two in $d=1$, and at least one otherwise):
		\begin{align*}
			\delta^2\bE[\tilde G_{k, x}\tilde G_{\ell, y}] &\lesssim h^2((1\wedge\abs{k-1-\ell}^{-\kappa}) + h)
		\end{align*}
		for any $\kappa<d/2$, and therefore
		\begin{align*}
			\rVar(\delta^2\normSmall{\tilde G}_\abs{w}^2)
				&=2\delta^4\sum_{k,\ell=1}^{N_\varepsilon}\sum_{x,y\in\cX_\varepsilon}\abs{w_\varepsilon^h(x)w_\varepsilon^h(y)}\bE[\tilde G_{k,x}\tilde G_{\ell,y}]^2 \\
				&\lesssim h^4\sum_{k,\ell=1}^{N_\varepsilon}((1\wedge\abs{k-1-\ell}^{-\kappa}) + h)^2.
		\end{align*}
		This is $\cO(h^4N_\varepsilon^{1+\eta} + h^6N_\varepsilon^2)$
		for any $\eta>0$ in $d=1$, and $\eta=0$ otherwise.
		In $d\in\{1,2\}$, this is clearly $\cO(h^4N_\varepsilon^2)$.
		If $d\geq 3$, we have $h^4N_\varepsilon\lesssim h^6N_\varepsilon^2$ and $h^6N_\varepsilon^2\lesssim h^{2\beta+2}N_\varepsilon^2$ as $\beta<2$.
	\end{enumerate}
\end{proof}

\subsection{Initial Condition and Noise}

\begin{lemma}[bounds for the initial condition] \label{lem:bounds-initialcondition}
	In the setting of Theorem \ref{thm:connect-conditions}, with $\NonparIndex=\beta$ in $d\geq 3$ and $\NonparIndex=1$ otherwise, it holds with $h\sim \delta^{(2+d)/(2\NonparIndex+d)}$:
	\begin{enumerate}
		\item[a)] $\normSmall{\tilde X^\Delta_{\varepsilon, \cdot-1, \cdot}}_{\abs{w}}^2 + \normSmall{\tilde X^\Delta_{\varepsilon}}_{\abs{w}}^2 = o(h^{\NonparIndex-1}N_\varepsilon)$
		\item[b)] $\normSmall{\tilde X^{\Delta-\Delta}_{\varepsilon}}_{\abs{w}}^2 = \cO(h^{\NonparIndex+1}N_\varepsilon)$
	\end{enumerate}
\end{lemma}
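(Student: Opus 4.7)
My plan is to exploit the smoothing properties of the heat semigroup: since $\tilde X_t = S(t) X_0$ with $X_0 \in L^p(\cD)$, the process $\tilde X_t$ is smooth for $t > 0$, with interior estimates
\begin{align*}
    \norm{\nabla^j S(t) X_0}_{L^\infty(B)} \lesssim t^{-d/(2p)-j/2} \norm{X_0}_{L^p(\cD)}, \qquad t \in (0,T],
\end{align*}
on any ball $B \subset\subset \cD$, by standard analytic-semigroup theory for the Dirichlet Laplacian on a smooth bounded domain. Assumption \assumptionB (b)(i) confines $|x|\lesssim h\varepsilon^{-1/2}$, so the supports of the localized kernels $(K_{k,x})_{\varepsilon,x_0}$ all lie in a fixed ball $B(x_0,Ch)\subset\subset\cD$ for $\varepsilon$ small, and the interior bound applies uniformly in $x\in\mathrm{supp}(w^h_\varepsilon)$. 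Since the bound degenerates as $t\to 0$, I split the index $k$ into a bulk regime $k\geq 1$ and an initial layer $k=0$.

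For part (a) in the bulk, H\"older with $\norm{(K_{k,x})_{\varepsilon,x_0}}_{L^1}\sim\varepsilon^{1/2+d/4}$ and the $j=2$ estimate give $|\tilde X^\Delta_{\varepsilon,k,x}|\lesssim(\varepsilon k)^{-1-d/(2p)}\varepsilon^{1/2+d/4}$, so $\sum_{k\geq 1}(\tilde X^\Delta_{\varepsilon,k,x})^2\lesssim\varepsilon^{-1+d/2-d/p}$, the tail series $\sum_k k^{-2-d/p}$ being finite. For the initial layer I integrate by parts in space to move $\Delta$ onto the kernel, invoke self-adjointness of $S(t)$ and $L^q$-contractivity, and apply H\"older with $1/p+1/q=1$ to obtain
\begin{align*}
    |\tilde X^\Delta_{\varepsilon,0,x}| \leq \varepsilon^{-1}\norm{X_0}_{L^p}\int_0^\varepsilon\norm{(\Delta K_{0,x})_{\varepsilon,x_0}(t,\cdot)}_{L^q}\diff t \lesssim \varepsilon^{-1/2+d/4-d/(2p)},
\end{align*}
matching the bulk rate. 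Summing over $x$ with $\sum_x|w^h_\varepsilon(x)|\leq C$ yields $\norm{\tilde X^\Delta_\varepsilon}_{\abs w}^2\lesssim\varepsilon^{-1+d/2-d/p}$, with the same bound for the shifted version.

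For part (b), one spatial integration by parts rewrites $\tilde X^{\Delta-\Delta}_{\varepsilon,k,x}$ as the pairing of $(\vartheta-\vartheta(x_0))\nabla\tilde X$ against $\nabla(K_{k,x})_{\varepsilon,x_0}$. The $C^1$-regularity of $\vartheta$ combined with the support restriction on $w^h_\varepsilon$ gives $|\vartheta(y)-\vartheta(x_0)|\lesssim h$ on the relevant support, the kernel norm becomes $\norm{\nabla(K_{k,x})_{\varepsilon,x_0}}_{L^1}\sim\varepsilon^{d/4}$, and only the $j=1$ semigroup estimate is needed. Running the same bulk/initial-layer split, the bulk contributes $h^2\varepsilon^{-1+d/2-d/p}$, and the $k=0$ contribution (after IBP-ing back onto the kernel, producing an additional $\nabla\vartheta\cdot\nabla K$ term with no explicit $h$) is at most $\varepsilon^{d/2-d/p}\leq h^2\varepsilon^{-1+d/2-d/p}$ since $h\gtrsim\sqrt\varepsilon$. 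Hence $\norm{\tilde X^{\Delta-\Delta}_\varepsilon}_{\abs w}^2\lesssim h^2\varepsilon^{-1+d/2-d/p}$.

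Both parts then reduce to the single condition $\varepsilon^{d/2-d/p}=o(h^{\NonparIndex-1})$ (with $\cO$ sufficing for (b)). Inserting $h\sim\varepsilon^{(2+d)/(4\NonparIndex+2d)}$ transforms this into $d(1/2-1/p)>(\NonparIndex-1)(2+d)/(4\NonparIndex+2d)$, which holds automatically for any $p>2$ in the Lipschitz case $\NonparIndex=1$ ($d\leq 2$) and defines the threshold $p_0(d,\beta)$ of \eqref{eq:p-initialcondition} in the H\"older case $\NonparIndex=\beta$ ($d\geq 3$). The principal obstacle is ensuring the interior heat-kernel bounds are uniform in $\varepsilon$ and $x\in\mathrm{supp}(w^h_\varepsilon)$; this is secured by the confinement of the kernel supports to a fixed compact subset of $\cD$ stemming from Assumption \assumptionB (b)(i).
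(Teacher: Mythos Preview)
Your argument is correct and takes a genuinely different route from the paper's proof. The paper works in the rescaled picture throughout: it passes to $X_0^{(\varepsilon)}$ on $\cD_\varepsilon$, transfers the semigroup to the kernel side via self-adjointness, and then invokes the uniform bound of Lemma~\ref{lem:bound:semigroup:uniform} with weight $\alpha=1$. That lemma is formulated in terms of $\|\shift_x K_t\|_{\bar W^{2,1}_\alpha\cap\bar W^{2,q}_\alpha}$, and the shift by $x\in\cB[\delta/h]$ costs a factor $(h\delta^{-1})^\alpha$ via Lemma~\ref{lem:bound:space:Sobolev}. This is the origin of the extra $(h\delta^{-1})^2$ in the paper's bound $\delta^{-2+d-2d/p}(h\delta^{-1})^2$, which in $d\ge 3$ (where $h\gg\delta$) is genuinely larger than your $\varepsilon^{-1+d/2-d/p}=\delta^{-2+d-2d/p}$. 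Your approach---pointwise interior control $\|\Delta S(t)X_0\|_{L^\infty(B)}\lesssim t^{-1-d/(2p)}$ combined with $\|(K_{k,x})_{\varepsilon,x_0}\|_{L^1}\sim\varepsilon^{1/2+d/4}$---is uniform in $x$ by construction and so avoids this penalty entirely. The price is that you rely on heat-kernel derivative bounds (equivalently, analyticity of $S(t)$ on $C_0(\bar\cD)$ together with ultracontractivity) for the variable-coefficient operator $\Delta_\vartheta$; these are standard for $\vartheta\in C^{1,\beta-1}$ on a smooth domain, but your citation of ``standard analytic-semigroup theory'' could be made more precise.

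One inaccuracy: your final inequality $d(1/2-1/p)>(\beta-1)(2+d)/(4\beta+2d)$ does \emph{not} define the threshold $p_0$ of \eqref{eq:p-initialcondition}; it is strictly weaker, because the paper's condition carries the extra $(h\delta^{-1})^2$ factor. Since $p>p_0$ implies your condition, the lemma is still proved under the stated hypotheses---your argument simply establishes more than claimed. Also, the $k=0$ discussion for part (b) is unnecessary, as $\|\tilde X^{\Delta-\Delta}_\varepsilon\|_{|w|}^2$ sums only over $k\ge 1$.
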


\begin{proof}
	We will use that $X_0\in L^p(\cD)$ for some $p>2$ in $d\in\{1, 2\}$, and $p>p_0$ in $d\geq 3$, where
	\begin{align}\label{eq:p-initialcondition}
		p_0 = 2\times\frac{1+z}{1-z}\quad\quad\mathrm{with}\quad\quad z = \frac{6(\beta-1)}{d(d+1+\beta)}.
	\end{align}
	Note that for $1<\beta<2$, we have $2<p_0<14/3$.
	With
	\begin{align*}
		\tilde X^\Delta_{\varepsilon, k, x}
			&= \sscpSmall{\Delta\tilde X}{(K_{k, x})_{\varepsilon, x_0}}
			= \sscpSmall{\tilde X^{(\varepsilon)}}{\Delta K_{k, x}}_\varepsilon
			= \sscpSmall{S_\varepsilon(\cdot)X_0^{(\varepsilon)}}{\Delta K_{k, x}}_\varepsilon,
	\end{align*}
	and using
	\begin{align*}
		\normSmall{X_0^{(\varepsilon)}}_{L^p(\cD_\varepsilon)} = \varepsilon^{-1/2}\normSmall{X_0^{1/\varepsilon}}_{L^p(\cD_\varepsilon)} = \varepsilon^{-1/2}\varepsilon^{\frac{d}{4}-\frac{d}{2p}}\normSmall{X_0}_{L^p(\cD)}\sim \delta^{-1+\frac{d}{2}-\frac{d}{p}}
	\end{align*}
	whenever $X_0\in L^p$,
	we have with $\alpha=1$:
	\begin{align*}
		\abs{\tilde X^\Delta_{\varepsilon, k, x}}
			&= \abs{\int_0^\infty\scp{S_\varepsilon(t)X_0^{(\varepsilon)}}{\Delta K_{k, x}}_0\diff t}
			= \abs{\int_0^1\scp{S_\varepsilon(t+k)X_0^{(\varepsilon)}}{\Delta \shift_xK_t}_0\diff t} \\
			&\leq \normSmall{X_0^{(\varepsilon)}}_{L^p}\int_0^1\norm{S_\varepsilon(t+k)\Delta \shift_xK_t}_{L^q}\diff t \\
			&\lesssim \normSmall{X_0^{(\varepsilon)}}_{L^p}\sup_{t\geq 0; x\in\cB[\delta/h]}\norm{\shift_xK_t}_{\bar W^{2, 1}_\alpha\cap \bar W^{2, q}_\alpha}\int_0^1(1\wedge (t+k)^{-\alpha/2-d/(2p)})\diff t \\
			&\lesssim \delta^{-1+d/2-d/p}(h\delta^{-1})^{\alpha}(1\wedge k^{-\alpha/2-d/(2p)}),
	\end{align*}
	where we allow for $k=0$ (in this case $1\wedge\infty$ evaluates to one). Thus
	\begin{align*}
		\normSmall{\tilde X^\Delta_{\varepsilon}}_{\abs{w}}^2
			&= \sum_{k=1}^{N_\varepsilon}\sum_{x\in\cX_\varepsilon}\abs{w_\varepsilon^h(x)}(\tilde X^\Delta_{\varepsilon, k, x})^2
			\lesssim \delta^{-2+d-2d/p}(h\delta^{-1})^2\sum_{k=1}^{\infty}k^{-1-d/p}.
	\end{align*}
	In $d\in\{1, 2\}$, this is of order $\delta^{-2+d(1-2/p)}\sim h^{d(1-2/p)}N_\varepsilon = o(N_\varepsilon) = o(h^{\NonparIndex-1}N_\varepsilon)$, and in $d\geq 3$, choosing $p$ as in \eqref{eq:p-initialcondition}, the last panel is of order $\delta^{-2+d-2d/p}h^2N_\varepsilon = o(h^{\beta-1}N_\varepsilon)$.
	The bound for $\normSmall{\tilde X^\Delta_{\varepsilon, \cdot-1, \cdot}}_{\abs{w}}^2$ works analogously.
	Next, we have
	\begin{align*}
		\abs{\tilde X^{\Delta-\Delta}_{\varepsilon, k, x}}
			&= \abs{\int_0^1\scp{S_\varepsilon(t+k)X_0^{(\varepsilon)}}{[\Delta_{\vartheta_\varepsilon}-\Delta_{\vartheta_0}] \shift_xK_t}_0\diff t} \\
			&\leq \abs{\int_0^1\scp{S_\varepsilon(t+k)X_0^{(\varepsilon)}}{[\Delta_{\vartheta_\varepsilon}-\Delta_{\vartheta_0}-\delta\Delta'_{(\nabla\vartheta)_0}] \shift_xK_t}_0\diff t} \\
			&\quad\quad + \delta\abs{\int_0^1\scp{S_\varepsilon(t+k)X_0^{(\varepsilon)}}{\Delta'_{(\nabla\vartheta)_0} \shift_xK_t}_0\diff t}.
	\end{align*}
	With Lemma \ref{lem:LaplaceDiff:properties} c), the second term can be treated as before (because $K$ has $\Delta$-order $\geq 1$ by assumption), but with an additional factor $h^2$, leading to an overall bound of order $o(h^{\NonparIndex+1}N_\varepsilon)$.
	The remaining term is bounded by
	\begin{align*}
			&\normSmall{X_0^{(\varepsilon)}}_{L^p}\int_0^1\norm{S_\varepsilon(t+k)[\Delta_{\vartheta_\varepsilon}-\Delta_{\vartheta_0}-\delta\Delta'_{(\nabla\vartheta)_0}] \shift_xK_t}_{L^q}\diff t \\
			&\lesssim \hspace{-0.05cm}\normSmall{X_0^{(\varepsilon)}}_{L^p}\hspace{-0.25cm}\sup_{\tiny\begin{matrix}t\geq 0; \\ x\in\cB[\delta/h]\end{matrix}}\hspace{-0.15cm}\norm{[\Delta_{\vartheta_\varepsilon}-\Delta_{\vartheta_0}-\delta\Delta'_{(\nabla\vartheta)_0}]\shift_xK_t}_{\bar W^{0, 1}_0\cap \bar W^{0, q}_0}\hspace{-0.1cm}\int_0^1(1\wedge (t+k)^{-d/(2p)})\diff t \\
			&\lesssim \delta^{-1+d/2-d/p}h^\beta(1\wedge k^{-d/(2p)}),
	\end{align*}
	thus
	\begin{align*}
		\normSmall{\tilde X^{\Delta-\Delta}_{\varepsilon}}_{\abs{w}}^2
			&= \sum_{k=1}^{N_\varepsilon}\sum_{x\in\cX_\varepsilon}\abs{w_\varepsilon^h(x)}(\tilde X^{\Delta-\Delta}_{\varepsilon, k, x})^2
			\lesssim \delta^{-2+d-2d/p}h^{2\beta}\sum_{k=1}^{N_\varepsilon}k^{-d/p} + h^{\NonparIndex+1}N_\varepsilon.
	\end{align*}
	In $d\geq 3$, with $p=2$, the first term has order $h^{2\beta}\delta^{-2}\sim h^{2\beta}N_\varepsilon = o(h^{\beta+1}N_\varepsilon)$.
	For $d=2$, again with $p=2$ and using $\beta>1$, the same term has order $o(h^{\NonparIndex+1}N_\varepsilon)$.
	Finally, for $d=1$, with $\beta\geq 3/2$ and $p=2$, the term has the order $\cO(h^2N_\varepsilon) = \cO(h^{\NonparIndex+1}N_\varepsilon)$.
\end{proof}

The static noise terms are treated in a similar way as in the proof of Theorem \ref{thm:parametric}, although the weighted scalar product is different:

\begin{lemma}[bounds for the static noise] \label{lem:moments-noise}
	Under Assumption {\assumptionB} b), it holds
	\begin{align}
		\bE\normSmall{\dot V^\Delta_{\varepsilon, \cdot-1, \cdot}}_{\abs{w}}^2 + \rVar(\normSmall{\dot V^\Delta_{\varepsilon, \cdot-1, \cdot}}_{\abs{w}}^2) + \rVar(\normSmall{\dot V^\Delta_{\varepsilon}}_{\abs{w}}^2) = \cO(N_\varepsilon).
	\end{align}
\end{lemma}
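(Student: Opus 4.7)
The plan is to exploit two structural features of the static noise: $\dot V^\Delta_{\varepsilon, k, x}$ is linear in the white noise $\dot V$ (hence the family $(\dot V^\Delta_{\varepsilon, k, x})_{k, x}$ is jointly centered Gaussian), and the testing kernels $(K_{k, x})_{\varepsilon, x_0}$ have pairwise disjoint supports by construction of $\cX_\varepsilon$ (spatial separation, condition (b) in its definition) together with the unit time-shift structure (temporal separation into the slots $(\varepsilon k,\varepsilon(k+1))$). A centered jointly Gaussian family indexed by $L^2$-orthogonal test functions is in fact independent, so the whole family $(\dot V^\Delta_{\varepsilon, k, x})_{k, x}$ is independent.

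I would first compute the common variance. Using $\Delta(K_{k, x})_{\varepsilon, x_0} = \varepsilon^{-1}(\Delta K_{k, x})_{\varepsilon, x_0}$, the $L^2$-isometry of the localization $(\cdot)_{\varepsilon, x_0}$, and the defining property $\bE[\dot V(\varphi)\dot V(\psi)] = \sscp{\varphi}{\psi}$ of an isonormal Gaussian process, the two factors of $\varepsilon$ and $\varepsilon^{-1}$ cancel and one obtains $\bE[(\dot V^\Delta_{\varepsilon, k, x})^2] = \norm{\Delta K}_{L^2(\R\times\R^d)}^2$, independently of $k$, $x$ and $\varepsilon$. The bound on the first moment is then immediate by linearity:
\begin{align*}
\bE\normSmall{\dot V^\Delta_{\varepsilon, \cdot-1, \cdot}}_{\abs w}^2 = \norm{\Delta K}_{L^2}^2\, N_\varepsilon \sum_{x\in\cX_\varepsilon}\abs{w^h_\varepsilon(x)} \lesssim N_\varepsilon
\end{align*}
by Assumption \assumptionB{} (b) (iii).

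For the two variance terms I would use the Gaussian moment identity $\rVar(Z^2) = 2(\bE Z^2)^2$ and decouple via independence across $(k, x)$ to obtain
\begin{align*}
\rVar(\normSmall{\dot V^\Delta_{\varepsilon, \cdot-1, \cdot}}_{\abs w}^2) = 2\norm{\Delta K}_{L^2}^4\, N_\varepsilon \sum_{x\in\cX_\varepsilon}(w^h_\varepsilon(x))^2,
\end{align*}
and analogously for the un-shifted version $\rVar(\normSmall{\dot V^\Delta_\varepsilon}_{\abs w}^2)$. Since the summands are non-negative, the elementary estimate $\sum_x (w^h_\varepsilon(x))^2 \leq \bigl(\sum_x \abs{w^h_\varepsilon(x)}\bigr)^2$ combined with (b) (iii) yields a uniform constant, hence both variances are $\cO(N_\varepsilon)$, as required.

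There is essentially no analytic obstacle in this argument. The only ingredient that needs to be checked carefully is the pairwise disjointness of the kernel supports for $(k, x) \neq (\ell, y)$, which is built directly into the construction of $\cX_\varepsilon$ and into the unit time slots. In particular, no semigroup dynamics, no Trotter--Kato approximation, and no It\^o-type analysis enter: the static noise contribution is by a wide margin the simplest piece of the overall error decomposition, reflecting the fact that $\dot V$ is by assumption independent of the signal dynamics.
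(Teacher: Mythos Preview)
Your proof is correct and follows essentially the same approach as the paper: both compute the common second moment $\bE[(\dot V^\Delta_{\varepsilon,k,x})^2]=\norm{\Delta K}_{L^2}^2$ via the scaling identity, then use the $L^2$-orthogonality of the shifted kernels to reduce the variance to a diagonal sum bounded via Assumption \assumptionB{} (b)(iii). The only cosmetic difference is that the paper phrases the variance step through its general Gaussian variance lemma (Lemma~\ref{lem:abstract-variance-bounds}), whereas you invoke independence and $\rVar(Z^2)=2(\bE Z^2)^2$ directly; these are equivalent here.
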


\begin{proof} \
	First,
	\begin{align*}
		\bE\normSmall{\dot V^\Delta_\varepsilon}_\abs{w}^2
			&= \varepsilon^2\sum_{k=1}^{N_\varepsilon}\sum_{x\in\cX_\varepsilon}\absSmall{w^h_\varepsilon(x)}\norm{\Delta(K_{k, x})_{\varepsilon, x_0}}_{L^2(\cT\times\cD)}^2 \\
			&= \sum_{k=1}^{N_\varepsilon}\sum_{x\in\cX_\varepsilon}\absSmall{w^h_\varepsilon(x)}\norm{\Delta K_{k, x}}_{L^2(\cT_\varepsilon\times\cD_\varepsilon)}^2 = \sum_{k=1}^{N_\varepsilon}\sum_{x\in\cX_\varepsilon}\absSmall{w^h_\varepsilon(x)}\norm{\Delta K}_{L^2(\R\times\R^d)}^2 \\
			&\lesssim N_\varepsilon.
	\end{align*}
	With Lemma \ref{lem:abstract-variance-bounds} and $\bE[(\dot V^\Delta_{\varepsilon, k, x})^2]=\varepsilon^2\norm{\Delta(K_{k, x})_{\varepsilon, x_0}}_{L^2(\cT\times\cD)}^2=\norm{\Delta K}_{L^2(\R\times\R^d)}^2$,
	we see
	\begin{align*}
		\rVar(\normSmall{\dot V^\Delta_{\varepsilon}}_{\abs{w}}^2)
			&= 2\sum_{k,\ell=1}^{N_\varepsilon}\sum_{x, y\in\cX_\varepsilon}\abs{w^h_\varepsilon(x)w^h_\varepsilon(y)}\bE[\dot V^\Delta_{\varepsilon, k, x}\dot V^\Delta_{\varepsilon, \ell, y}]^2 \\
			&= 2\sum_{k=1}^{N_\varepsilon}\sum_{x\in\cX_\varepsilon}\abs{w^h_\varepsilon(x)}^2\norm{\Delta K}_{L^2(\R\times\R^d)}^2 \lesssim N_\varepsilon.
	\end{align*}
	This implies the claim as $\dot V$ is homogeneous in time.
\end{proof}

\subsection{The Covariance of $\bar X$}

In this section assume $\delta\lesssim h\lesssim 1$, with $\delta=\sqrt{\varepsilon}=o(1)$.
The notation $\bar X^{(\varepsilon)}$ and $\bar X^{(0)}$ refers to the rescaled localization of $\bar X$, as explained in Section \ref{sec:localization}.

\begin{lemma}[covariance under shift] \label{lem:cov:form}
	Let $\varphi, \psi\in \cC_\varepsilon$,
	and $k,\ell\in\N_0$, $x, y\in \cD_\varepsilon$.
	Assume that the support of $\varphi_{0, x}$, $\psi_{0, y}$ is contained in $[0,R]\times\cD_\varepsilon$ for some $R>0$.
	Then
	\begin{align*}
		\bE[\sscpSmall{\bar X^{(\varepsilon)}}{\varphi_{k, x}}_\varepsilon\sscpSmall{\bar X^{(\varepsilon)}}{\psi_{\ell, y}}_\varepsilon] \hspace{-2cm}& \\
			&= \frac{\sigma^2}{2}\int_0^R\int_0^R\int_{\abs{(t+k) - (s+\ell)}}^{(t+k) + (s+\ell)}\scp{\shift_x\varphi_t}{S_\varepsilon(r)\shift_y\psi_s}_0\diff r\diff s\diff t.
	\end{align*}
\end{lemma}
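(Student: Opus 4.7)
The plan is to derive this identity as a direct consequence of the covariance formula \eqref{eq:covariance-delta} from Lemma \ref{lem:localization-X}, combined with the fact that $\bar X^{(\varepsilon)}$ has vanishing initial condition. Concretely, since $\bar X$ solves the SPDE with $\bar X_0 = 0$, the scaling identity of Lemma \ref{lem:localization-X} applies with $\xi = 0$, so by \eqref{eq:mean-delta} the process $\bar X^{(\varepsilon)}$ is a centered Gaussian field. Consequently
\begin{align*}
\bE[\sscpSmall{\bar X^{(\varepsilon)}}{\varphi_{k,x}}_\varepsilon\sscpSmall{\bar X^{(\varepsilon)}}{\psi_{\ell,y}}_\varepsilon] = \rCov(\sscpSmall{\bar X^{(\varepsilon)}}{\varphi_{k,x}}_\varepsilon,\sscpSmall{\bar X^{(\varepsilon)}}{\psi_{\ell,y}}_\varepsilon),
\end{align*}
and the covariance on the right-hand side is given by \eqref{eq:covariance-delta} applied to the test functions $\varphi_{k,x}$ and $\psi_{\ell,y}$.

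Next I would exploit the support assumption. By hypothesis $\mathrm{supp}(\varphi_{0,x})\subseteq[0,R]\times\cD_\varepsilon$, so $\varphi_{k,x}$ is supported in $[k,k+R]\times\cD_\varepsilon$, and likewise $\psi_{\ell,y}$ in $[\ell,\ell+R]\times\cD_\varepsilon$. The infinite time integrals in \eqref{eq:covariance-delta} therefore truncate to $t\in[k,k+R]$, $s\in[\ell,\ell+R]$. Performing the translations $t\mapsto t+k$ and $s\mapsto s+\ell$, which have unit Jacobian and map both intervals to $[0,R]$, transforms the inner integration bounds $|t-s|$ and $t+s$ into $|(t+k)-(s+\ell)|$ and $(t+k)+(s+\ell)$. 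Moreover, $\varphi_{k,x}(t+k,\cdot)=\varphi(t,\cdot-x)=\shift_x\varphi_t$ by definition of the space-time shift notation, and analogously $\psi_{\ell,y}(s+\ell,\cdot)=\shift_y\psi_s$.

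Finally, I would reconcile the two scalar-product symbols. Since both $\shift_x\varphi_t$ and $\shift_y\psi_s$ are supported in $\cD_\varepsilon$, and the Dirichlet semigroup $S_\varepsilon(r)$ maps $L^2(\cD_\varepsilon)$ to $L^2(\cD_\varepsilon)$, the pairing $\scp{\shift_x\varphi_t}{S_\varepsilon(r)\shift_y\psi_s}_\varepsilon$ coincides with $\scp{\shift_x\varphi_t}{S_\varepsilon(r)\shift_y\psi_s}_0$ after extension by zero to $\R^d$. Combining these observations produces exactly the asserted formula. The argument is essentially a substitution, so no step is genuinely hard; the only point requiring care is verifying the translation of support intervals and the harmless switch from $\scp{\cdot}{\cdot}_\varepsilon$ to $\scp{\cdot}{\cdot}_0$, which is why I would single those out explicitly.
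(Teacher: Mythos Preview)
Your proposal is correct and follows essentially the same approach as the paper: apply the covariance formula \eqref{eq:covariance-delta} from Lemma \ref{lem:localization-X} (with zero initial condition for $\bar X^{(\varepsilon)}$), then shift the time variables by $k$ and $\ell$ and use the support assumption to truncate the integrals. You are in fact more explicit than the paper about justifying the passage from $\scp{\cdot}{\cdot}_\varepsilon$ to $\scp{\cdot}{\cdot}_0$, which the paper leaves implicit.
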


\begin{proof}
	Using Lemma \ref{lem:localization-X},
	we have
	\begin{align*}
		\bE[\sscpSmall{\bar X^{(\varepsilon)}}{\varphi_{k, x}}_\varepsilon\sscpSmall{\bar X^{(\varepsilon)}}{\psi_{\ell, y}}_\varepsilon] \hspace{-3cm} & \\
			&= \frac{\sigma^2}{2}\int_0^\infty\int_0^\infty\int_{\abs{t-s}}^{t+s}\scp{\varphi_{k, x}\left(t, \cdot\right)}{S_\varepsilon(r)\psi_{\ell, y}\left(s, \cdot\right)}_\varepsilon\diff r\diff s\diff t \\
			&= \frac{\sigma^2}{2}\int_0^\infty\int_0^\infty\int_{\abs{(t+k)-(s+\ell)}}^{(t+k)+(s+\ell)}\scp{\varphi\left(t, \cdot-x\right)}{S_\varepsilon(r)\psi\left(s, \cdot-y\right)}_\varepsilon\diff r\diff s\diff t,
	\end{align*}
	which implies the claim.
\end{proof}

\begin{lemma}[ergodic limit] \label{lem:cov:conv:T}
	Let $\varphi,\psi\in \cC_0$, and assume one of them is of $\Delta$-order one if $d\in\{1,2\}$.
	Then
	\begin{align}
		\lim_{k\rightarrow\infty}\sup_{x\in\R}\abs{\bE[\sscpSmall{\bar X^{(0)}}{\varphi_{k, x}}_0\sscpSmall{\bar X^{(0)}}{\psi_{k, x}}_0] - C_\infty(\varphi, \psi)} = 0
	\end{align}
	with
	$C_\infty(\varphi, \psi)$ as in \eqref{eq:limitingconstant}.
\end{lemma}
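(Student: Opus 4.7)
The plan is to apply Lemma \ref{lem:cov:form} with $\varepsilon=0$, exploit translation invariance of the free heat semigroup $S_0$ on $\R^d$ to remove the dependence on $x$, and then control the tail of the $r$-integral using heat kernel estimates (invoking the $\Delta$-order hypothesis to obtain integrable decay in low dimensions).

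\textbf{Step 1: Reduction to a tail integral.} By Lemma \ref{lem:cov:form} applied with $\varepsilon=0$ (here $\cD_0=\R^d$ and $x=y$, $k=\ell$), and with $R>0$ such that $\mathrm{supp}(\varphi), \mathrm{supp}(\psi)\subseteq[0,R]\times\R^d$,
\begin{align*}
	\bE[\sscpSmall{\bar X^{(0)}}{\varphi_{k, x}}_0\sscpSmall{\bar X^{(0)}}{\psi_{k, x}}_0]
		= \frac{\sigma^2}{2}\int_0^R\int_0^R\int_{\abs{t-s}}^{t+s+2k}\scp{\shift_x\varphi_t}{S_0(r)\shift_x\psi_s}_0\diff r\diff s\diff t.
\end{align*}
Since $S_0(r)$ is a convolution operator on $\R^d$, it commutes with translations, and the $L^2(\R^d)$ inner product is translation invariant, so $\scp{\shift_x\varphi_t}{S_0(r)\shift_x\psi_s}_0=\scp{\varphi_t}{S_0(r)\psi_s}_0$. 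Thus the right-hand side does not depend on $x$, and the difference $\bE[\sscpSmall{\bar X^{(0)}}{\varphi_{k, x}}_0\sscpSmall{\bar X^{(0)}}{\psi_{k, x}}_0]-C_\infty(\varphi,\psi)$ equals
\begin{align*}
	-\frac{\sigma^2}{2}\int_0^R\int_0^R\int_{t+s+2k}^{\infty}\scp{\varphi_t}{S_0(r)\psi_s}_0\diff r\diff s\diff t,
\end{align*}
uniformly in $x$. It remains to show this tail tends to zero and, en passant, that $C_\infty(\varphi,\psi)$ is finite.

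\textbf{Step 2: Heat kernel decay.} Without loss of generality assume $\psi$ has $\Delta$-order one in the case $d\in\{1,2\}$, i.e.\ $\psi=(-\Delta)\bar\psi$ with $\bar\psi\in\cC_0$ compactly supported. For $d\geq 3$ no such reduction is needed. Using $S_0(r)\psi=-\Delta S_0(r)\bar\psi$ (in $d\leq 2$) or directly $S_0(r)\psi$ (in $d\geq 3$), standard Gaussian heat kernel derivative estimates (Lemma \ref{lem:bound:semigroup:heatkernel}) give
\begin{align*}
	\abs{\scp{\varphi_t}{S_0(r)\psi_s}_0}\lesssim r^{-m/2}\norm{\varphi_t}_{L^1}\norm{\bar\psi_s}_{L^1},
\end{align*}
with $m=d$ in the case $d\geq 3$ (no $\Delta$-order needed) and $m=d+2$ in the case $d\leq 2$ (thanks to the $\Delta$ applied inside). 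In both situations $m\geq 3$, so $r\mapsto r^{-m/2}$ is integrable near infinity and $\int_{t+s+2k}^\infty r^{-m/2}\diff r\lesssim k^{1-m/2}\to 0$ as $k\to\infty$, uniformly over $(t,s)\in[0,R]^2$.

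\textbf{Step 3: Conclusion.} Bounding the tail integral by the estimate from Step 2 and integrating over the bounded rectangle $[0,R]^2$ yields
\begin{align*}
	\sup_{x\in\R^d}\abs{\bE[\sscpSmall{\bar X^{(0)}}{\varphi_{k, x}}_0\sscpSmall{\bar X^{(0)}}{\psi_{k, x}}_0] - C_\infty(\varphi, \psi)}\lesssim k^{1-m/2}\xrightarrow{k\to\infty} 0,
\end{align*}
which gives the claim and, incidentally, confirms that $C_\infty(\varphi,\psi)$ is well defined. The main (minor) obstacle is precisely the non-integrability of the bare heat kernel decay $r^{-d/2}$ in dimensions $d\leq 2$: this is exactly what forces the $\Delta$-order-one requirement on $\varphi$ or $\psi$, by trading two powers of $r$ at infinity against one Laplacian of the test function.
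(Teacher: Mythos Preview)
Your proof is correct and follows essentially the same approach as the paper: both use translation invariance of $S_0$ to eliminate the $x$-dependence and then appeal to the heat kernel decay in Lemma \ref{lem:bound:semigroup:heatkernel} (with the $\Delta$-order hypothesis supplying the missing integrability in $d\le 2$). The only cosmetic difference is that you estimate the tail $\int_{t+s+2k}^\infty$ directly and obtain an explicit rate $k^{1-m/2}$, whereas the paper writes the covariance with an indicator $\mathbbm{1}_{r\le t+s+2k}$ and invokes dominated convergence; also, strictly speaking Lemma \ref{lem:cov:form} is stated for $\varepsilon>0$, so the starting identity should be read off \eqref{eq:covariance-zero} directly (same computation).
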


\begin{proof}
	We have
	\begin{align*}
		\bE[\sscpSmall{\bar X^{(0)}}{\varphi_{k, x}}_0\sscpSmall{\bar X^{(0)}}{\psi_{k, x}}_0] \hspace{-3cm} & \\
			&= \frac{\sigma^2}{2}\int_0^\infty\int_0^\infty\int_{\abs{t-s}}^{\infty}\mathbbm{1}_{r\leq t+s+2k}\scp{\shift_x\varphi_t}{S_0(r)\shift_x\psi_s}_0\diff r\diff s\diff t,
	\end{align*}
	and due to shift invariance of convolution with the heat kernel, we can assume $x=0$.
	The integrand converges in a pointwise sense for fixed $r,s,t$. Apply dominated convergence together with Lemma \ref{lem:bound:semigroup:heatkernel}, using that $\varphi$ or $\psi$ has $\Delta$-order one in $d\in\{1,2\}$.
\end{proof}

\begin{lemma}[uniform covariance approximation] \label{lem:cov:conv:eps}
	Let $\varphi, \psi\in \cC_0$
	such that \newline $\mathrm{supp}(\varphi_{0, x}), \mathrm{supp}(\psi_{0,x})\subseteq \cT_\varepsilon\times\cB^*[\varepsilon^{1/2}]$ for $x\in\cB[\varepsilon^{1/2}/h]$.
	In the setting of Theorem \ref{thm:parametric}, assuming that $\varphi$ or $\psi$ has $\Delta$-order one in $d\leq 2$, and in the setting of Theorem \ref{thm:connect-conditions}, assuming that $\varphi$ or $\psi$ has $\Delta$-order $(3-d)_+$, we have:
	\begin{enumerate}

	\item[a)]
	\begin{align*}
		\sup_{0\leq k\leq C\varepsilon^{-1}}\sup_{x\in\cB[\varepsilon^{1/2}/h]}\left|\bE[\sscpSmall{\bar X^{(\varepsilon)}}{\varphi_{k, x}}_\varepsilon\sscpSmall{\bar X^{(\varepsilon)}}{\psi_{k, x}}_\varepsilon] \right. \hspace{-3cm} & \\
		& \left. - \bE[\sscpSmall{\bar X^{(0)}}{\varphi_{k, x}}_0\sscpSmall{\bar X^{(0)}}{\psi_{k, x}}_0]\right|\rightarrow 0,
	\end{align*}
	
	\item[b)]
	\begin{align*}
		\sup_{0\leq k\leq C\varepsilon^{-1}}\sup_{x\in\cB[\varepsilon^{1/2}/h]}\abs{\bE[\sscpSmall{\bar X^{(\varepsilon)}}{\varphi_{k, x}}_\varepsilon\sscpSmall{\bar X^{(\varepsilon)}}{\psi_{k, x}}_\varepsilon]}<\infty.
	\end{align*}
	\end{enumerate}
\end{lemma}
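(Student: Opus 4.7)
The plan is to reduce the lemma to the cumulative Trotter--Kato bound from Corollary \ref{cor:semigroup-for-statistics} via the explicit covariance representation of Lemma \ref{lem:cov:form}. Concretely, fix $R>0$ such that $\mathrm{supp}(\varphi_{0, x}),\mathrm{supp}(\psi_{0,x})\subseteq[0,R]\times\cB^*[\varepsilon^{1/2}]$ for all $x\in\cB[\varepsilon^{1/2}/h]$. Applying Lemma \ref{lem:cov:form} to both $\bar X^{(\varepsilon)}$ and $\bar X^{(0)}$ (the latter being the $\varepsilon=0$ instance using $S_0$), the difference in (a) is
\begin{align*}
\frac{\sigma^{2}}{2}\int_{0}^{R}\!\!\int_{0}^{R}\!\!\int_{\abs{t-s}}^{t+s+2k}\!\scp{\shift_{x}\varphi_{t}}{(S_{\varepsilon}(r)-S_{0}(r))\shift_{x}\psi_{s}}_{0}\diff r\diff s\diff t.
\end{align*}

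For (a), note that for $0\leq k\leq C\varepsilon^{-1}$ and $t,s\in[0,R]$, the inner domain $[\abs{t-s},t+s+2k]$ is contained in $[0, C'\varepsilon^{-1}]$ for $C'=2C+2R$ and sufficiently small $\varepsilon$. Bounding the integrand pointwise by its absolute value and enlarging to the supremum over $x\in\cB[\varepsilon^{1/2}/h]$, the whole quantity is dominated by
\begin{align*}
\frac{\sigma^{2}}{2}\int_{0}^{R}\!\!\int_{0}^{R}\!\!\int_{0}^{C'\varepsilon^{-1}}\sup_{x\in\cB[\varepsilon^{1/2}/h]}\abs{\scp{\shift_{x}\varphi_{t}}{(S_{\varepsilon}(r)-S_{0}(r))\shift_{x}\psi_{s}}_{0}}\diff r\diff s\diff t,
\end{align*}
which vanishes as $\varepsilon\to 0$ by Corollary \ref{cor:semigroup-for-statistics}: the $\Delta$-order hypothesis on $\varphi$ or $\psi$ is by design identical to that in the corollary in both the parametric (Theorem \ref{thm:parametric}) and nonparametric (Theorem \ref{thm:connect-conditions}) regimes. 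Since this bound is uniform in $k\leq C\varepsilon^{-1}$ and $x\in\cB[\varepsilon^{1/2}/h]$, (a) follows.

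For (b), use the triangle inequality and (a) to reduce to uniform boundedness of $\bE[\sscpSmall{\bar X^{(0)}}{\varphi_{k,x}}_{0}\sscpSmall{\bar X^{(0)}}{\psi_{k,x}}_{0}]$. Since $S_0$ on $\R^d$ commutes with translations, the shift in $x$ is harmless and one may set $x=0$. Using Lemma \ref{lem:cov:form} again (now on $\R^d$) and enlarging the $r$-range to $[0,\infty)$,
\begin{align*}
\abs{\bE[\sscpSmall{\bar X^{(0)}}{\varphi_{k,0}}_{0}\sscpSmall{\bar X^{(0)}}{\psi_{k,0}}_{0}]}\leq\frac{\sigma^{2}}{2}\int_{0}^{R}\!\!\int_{0}^{R}\!\!\int_{0}^{\infty}\abs{\scp{\varphi_{t}}{S_{0}(r)\psi_{s}}_{0}}\diff r\diff s\diff t,
\end{align*}
and the right-hand side is finite by Lemma \ref{lem:bound:semigroup:heatkernel}, whose hypotheses are again met via the assumed $\Delta$-order of $\varphi$ or $\psi$ (this is the same integrability that underlies finiteness of $C_\infty$ in \eqref{eq:limitingconstant}). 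The bound is independent of $k$, yielding (b).

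The only nontrivial ingredient is the uniform Trotter--Kato estimate of Corollary \ref{cor:semigroup-for-statistics}; once that is invoked, the argument is a direct combination of Lemma \ref{lem:cov:form} with the triangle inequality and free-space heat kernel integrability. The potential pitfall is merely bookkeeping: verifying that the $\Delta$-order conditions match in each dimensional case, which they do by the way the hypotheses of this lemma are phrased.
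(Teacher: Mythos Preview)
Your proof is correct and follows essentially the same route as the paper: part (a) is identical (Lemma \ref{lem:cov:form} plus Corollary \ref{cor:semigroup-for-statistics}), and for part (b) the paper invokes Lemma \ref{lem:cov:conv:T} where you instead bound the $\bar X^{(0)}$ covariance directly via translation invariance and Lemma \ref{lem:bound:semigroup:heatkernel}, which is precisely the integrability used inside the proof of Lemma \ref{lem:cov:conv:T}.
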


\begin{proof} \
	Let $R>0$ such that the temporal support of $\varphi$, $\psi$ is contained in $[0,R]$.
	By Lemma \ref{lem:cov:form},
	it is true that
	\begin{align*}
		\sup_{0\leq k\leq C\varepsilon^{-1}}\sup_{x\in\cB[\varepsilon^{1/2}/h]}\left|\bE[\sscpSmall{\bar X^{(\varepsilon)}}{\varphi_{k, x}}_\varepsilon\sscpSmall{\bar X^{(\varepsilon)}}{\psi_{k, x}}_\varepsilon] \right. \hspace{-7.5cm} & \\
			&\hspace{5cm}\left. - \bE[\sscpSmall{\bar X^{(0)}}{\varphi_{k, x}}_0\sscpSmall{\bar X^{(0)}}{\psi_{k, x}}_0]\right| \hspace{-13cm} & \\
			&\lesssim \sup_{0\leq k\leq C\varepsilon^{-1}}\int_0^R\int_0^R\int_{0}^{t+s+2k}\sup_{x\in\cB[\varepsilon^{1/2}/h]}\abs{\scp{\shift_x\varphi_t}{S_\varepsilon(r)-S_0(r)\shift_x\psi_s}_0}\diff r\diff s\diff t \\
			&\lesssim \int_0^R\int_0^R\int_{0}^{t+s+2C\varepsilon^{-1}}\sup_{x\in\cB[\varepsilon^{1/2}/h]}\abs{\scp{\shift_x\varphi_t}{S_\varepsilon(r)-S_0(r)\shift_x\psi_s}_0}\diff r\diff s\diff t,
	\end{align*}
	which tends to zero by Corollary \ref{cor:semigroup-for-statistics}.
	Finally, b) is a consequence of a) and Lemma \ref{lem:cov:conv:T}.
\end{proof}

\begin{lemma}[uniform operator approximation] \label{lem:cov:DeltaApproximation}
	Let $\varphi\in \cC_0$.
	Let $g_1(N)=N^{1/2}$, $g_2(N)=\ln(N)$, and $g_d(N)=1$ for $d\geq 3$.
	Then we have
	\begin{align}
		\sup_{1\leq k\leq N_\varepsilon}\sup_{x\in\cB[\varepsilon^{1/2}/h]}\bE\left[\sscpSmall{[\Delta_{\vartheta_\varepsilon}-\Delta_{\vartheta_0}] \bar X^{(\varepsilon)}}{\varphi_{k, x}}_\varepsilon^2\right] &\lesssim h^2g_d(N_\varepsilon), \nonumber \\			
			\sup_{1\leq k\leq N_\varepsilon}\sup_{x\in\cB[\varepsilon^{1/2}/h]}\bE\left[\sscpSmall{[\Delta_{\vartheta_\varepsilon}-\Delta_{\vartheta_0} - \varepsilon^{1/2}\Delta'_{(\nabla\vartheta)_0}] \bar X^{(\varepsilon)}}{\varphi_{k, x}}_\varepsilon^2\right] &\lesssim h^{2\beta}g_d(N_\varepsilon). \nonumber
	\end{align}
\end{lemma}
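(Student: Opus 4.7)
The plan is to reduce both bounds to a single covariance estimate by moving the differential operator onto the test function. Since $\varphi_{k,x}\in\cC_\varepsilon$ has compact support inside $\cT_\varepsilon\times\cD_\varepsilon$ and $\Delta_{\vartheta_\varepsilon}$ is formally self-adjoint, duality gives
\begin{align*}
\sscpSmall{[\Delta_{\vartheta_\varepsilon}-\Delta_{\vartheta_0}]\bar X^{(\varepsilon)}}{\varphi_{k,x}}_\varepsilon = \sscpSmall{\bar X^{(\varepsilon)}}{[\Delta_{\vartheta_\varepsilon}-\Delta_{\vartheta_0}]\varphi_{k,x}}_\varepsilon,
\end{align*}
and analogously with $\delta\Delta'_{(\nabla\vartheta)_0}$ subtracted. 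Writing $A_1:=\Delta_{\vartheta_\varepsilon}-\Delta_{\vartheta_0}$, $A_2:=A_1-\delta\Delta'_{(\nabla\vartheta)_0}$ and $\Xi^{(j)}_{x,t}:=A_j\shift_x\varphi_t$, Lemma \ref{lem:cov:form} yields
\begin{align*}
\bE[\sscpSmall{\bar X^{(\varepsilon)}}{A_j\varphi_{k,x}}_\varepsilon^2] = \frac{\sigma^2}{2}\int_0^R\int_0^R\int_{|t-s|}^{t+s+2k}\scpSmall{\Xi^{(j)}_{x,t}}{S_\varepsilon(r)\Xi^{(j)}_{x,s}}_\varepsilon\diff r\diff s\diff t
\end{align*}
for an $R>0$ bounding the temporal support of $\varphi$.

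Next I will estimate $\norm{\Xi^{(j)}_{x,t}}_{L^p(\cD_\varepsilon)}$, $p\in\{1,2\}$, uniformly in $x\in\cB[\varepsilon^{1/2}/h]$. Writing $A_1\shift_x\varphi_t=(\nabla\vartheta_\varepsilon)\cdot\nabla\shift_x\varphi_t+(\vartheta_\varepsilon-\vartheta_0)\Delta\shift_x\varphi_t$ and noting that any $y$ in the support of $\shift_x\varphi_t$ has $|\delta y|\leq|\delta x|+|\delta z|\lesssim h$ by Assumption \assumptionB (b)(i) together with $\delta\lesssim h$, the regularity $\vartheta\in C^1$ with $(\beta-1)$-H\"older gradient combined with $\nabla\vartheta_\varepsilon(y)=\delta(\nabla\vartheta)(\delta y+x_0)$ gives $\norm{\Xi^{(1)}_{x,t}}_{L^p}\lesssim h$. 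For $A_2$, Taylor expanding $\vartheta$ at $x_0$ yields $\vartheta_\varepsilon(y)-\vartheta_0(y)-\delta\nabla\vartheta(x_0)\cdot y=\cO(\delta^\beta|y|^\beta)$ and $\nabla\vartheta_\varepsilon(y)-\delta\nabla\vartheta(x_0)=\cO(\delta^\beta|y|^{\beta-1})$; since $|y|\leq|x|+|z|\lesssim h/\delta$, the factor $\delta^\beta|y|^\beta\lesssim h^\beta$ yields $\norm{\Xi^{(2)}_{x,t}}_{L^p}\lesssim h^\beta$.

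To bound the covariance integral I will apply the $L^1$-to-$L^\infty$ heat-kernel estimate $\norm{S_\varepsilon(r)\psi}_{L^\infty(\cD_\varepsilon)}\lesssim(1\wedge r^{-d/2})\norm{\psi}_{L^1(\cD_\varepsilon)}$, which follows from pointwise domination of the Dirichlet kernel on $\cD_\varepsilon$ by the whole-space Gaussian kernel of $\Delta_{\vartheta_0}$ and is uniform in $\varepsilon>0$. H\"older then gives $|\scpSmall{\Xi^{(j)}_{x,t}}{S_\varepsilon(r)\Xi^{(j)}_{x,s}}_\varepsilon|\lesssim\norm{\Xi^{(j)}}_{L^1}^2(1\wedge r^{-d/2})$, and integrating in $r$ over $[0,R+2k]$ with $k\leq N_\varepsilon$ produces the dispersive factor $g_d(N_\varepsilon)$: bounded in $d\geq 3$, logarithmic in $d=2$, and of order $N_\varepsilon^{1/2}$ in $d=1$. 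Multiplying by $\norm{\Xi^{(1)}}_{L^1}^2\lesssim h^2$, respectively $\norm{\Xi^{(2)}}_{L^1}^2\lesssim h^{2\beta}$, gives the claimed bounds.

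The main obstacle is ensuring that the Taylor remainder is controlled uniformly in $x\in\cB[\varepsilon^{1/2}/h]$: the coupling $|\delta x|\lesssim h$ built into Assumption \assumptionB (b)(i) is what compactifies the potential blow-up $|y|^\beta\sim(h/\delta)^\beta$ into the desired $h^\beta$ rate, and without this coupling no better than the crude $h$-bound from $A_1$ could be extracted. A secondary technicality is the uniform-in-$\varepsilon$ dispersive bound for the Dirichlet semigroup on the growing domain $\cD_\varepsilon$; once both ingredients are in place the argument reduces to the explicit time integral of $1\wedge r^{-d/2}$.
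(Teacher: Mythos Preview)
Your proof is correct and follows essentially the same strategy as the paper: both move the operator onto the test function, express the second moment via the covariance formula (Lemma~\ref{lem:cov:form}), bound $\norm{[\Delta_{\vartheta_\varepsilon}-\Delta_{\vartheta_0}]\shift_x\varphi_t}_{L^p}$ (respectively with the linear correction subtracted) by $h$ (respectively $h^\beta$) via Taylor expansion of $\vartheta$ at $x_0$, and then integrate the dispersive decay $(1\wedge r^{-d/2})$ up to $N_\varepsilon$ to produce $g_d(N_\varepsilon)$. The only cosmetic difference is that the paper splits the semigroup as $S_\varepsilon(r)=S_\varepsilon(r/2)S_\varepsilon(r/2)$ and applies the $L^1\cap L^2\to L^2$ bound of Lemma~\ref{lem:bound:semigroup:uniform} to each factor, whereas you use a single $L^1\to L^\infty$ dispersive estimate; note, however, that your stated inequality $\norm{S_\varepsilon(r)\psi}_{L^\infty}\lesssim(1\wedge r^{-d/2})\norm{\psi}_{L^1}$ is false for small $r$, so for $r\le 1$ you should instead invoke the $L^2$--$L^2$ contraction (you already have the $L^2$ bound on $\Xi^{(j)}$).
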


\begin{proof}
	By an argument that is identical to the proof of Lemma \ref{lem:cov:form}, we find
		\begin{align*}
			\bE\left[\sscpSmall{[\Delta_{\vartheta_\varepsilon}-\Delta_{\vartheta_0}] \bar X^{(\varepsilon)}}{\varphi_{k, x}}_\varepsilon^2\right] \hspace{-5cm} &\\
				&= \frac{\sigma^2}{2}\int_0^R\int_0^R\int_{\abs{t-s}}^{t+s+2k}\scpSmall{[\Delta_{\vartheta_\varepsilon}-\Delta_{\vartheta_0}]\shift_x\varphi_{t}}{S_\varepsilon(r)[\Delta_{\vartheta_\varepsilon}-\Delta_{\vartheta_0}]\shift_x\varphi_s}_0\diff r\diff s\diff t \\
				&\lesssim \int_0^R\hspace{-0.1cm}\int_0^R\hspace{-0.1cm}\int_{\abs{t-s}}^{t+s+2k}\hspace{-0.15cm}\norm{S_\varepsilon(r/2)[\Delta_{\vartheta_\varepsilon}-\Delta_{\vartheta_0}]\shift_x\varphi_{t}}_0\norm{S_\varepsilon(r/2)[\Delta_{\vartheta_\varepsilon}-\Delta_{\vartheta_0}]\shift_x\varphi_{s}}_0\diff r\diff s\diff t.
		\end{align*}
		Now with Lemma \ref{lem:bound:semigroup:uniform}, Lemma \ref{lem:bias:Laplacian} and Lemma \ref{lem:bound:space:Sobolev}, with $\alpha=1$,
		we have
		\begin{align*}
			\norm{S_\varepsilon(r/2)[\Delta_{\vartheta_\varepsilon}-\Delta_{\vartheta_0}]\shift_x\varphi_{t}}_0
				&\lesssim (1\wedge r^{-d/4})\norm{[\Delta_{\vartheta_\varepsilon}-\Delta_{\vartheta_0}]\shift_x\varphi_{t}}_{L^1\cap L^2} \\
				&\lesssim \delta^{\alpha}(1\wedge r^{-d/4})\norm{\shift_x\varphi_{t}}_{\bar W^{2, 1}_\alpha\cap\bar W^{2, 2}_\alpha} \\
				&\lesssim h^{\alpha}(1\wedge r^{-d/4})\norm{\varphi_{t}}_{\bar W^{2, 1}_\alpha\cap\bar W^{2, 2}_\alpha},
		\end{align*}
		and equally for the other factor, thus
		\begin{align*}
			\sup_{1\leq k\leq N_\varepsilon}\sup_{x\in\cB[\varepsilon^{1/2}/h]}\bE\left[\sscpSmall{[\Delta_{\vartheta_\varepsilon}-\Delta_{\vartheta_0}] \bar X^{(\varepsilon)}}{\varphi_{k, x}}_\varepsilon^2\right] \hspace{-7cm} & \\
				&\lesssim h^{2\alpha}\sup_{1\leq k\leq N_\varepsilon}\int_0^R\int_0^R\int_{\abs{t-s}}^{t+s+2k}(1\wedge r^{-d/2})\norm{\varphi_{t}}_{\bar W^{2, 1}_\alpha\cap\bar W^{2, 2}_\alpha}\norm{\varphi_{s}}_{\bar W^{2, 1}_\alpha\cap\bar W^{2, 2}_\alpha}\diff r\diff s\diff t \\
				&\lesssim h^{2\alpha}g_d(N_\varepsilon).
		\end{align*}
		The second claim is proven analogously with $\alpha=\beta$.
\end{proof}

\begin{lemma}[asymptotic independence] \label{lem:cov:bound:kl}
	Let $0\leq k, \ell\leq N_\varepsilon$.
	Let $\kappa<d/2$.
	Then for all $\varphi,\psi\in \cC_0$,
	such that $\mathrm{supp}(\varphi_{0, x}), \mathrm{supp}(\psi_{0, x})\subseteq\cT_\varepsilon\times\cD_\varepsilon$ for $x\in\cB[\varepsilon^{1/2}/h]$ the following is true:
	
	\begin{enumerate}
		\item[a)]
	Let $\vartheta$ be constant and $h\sim 1$. Then we have
	\begin{align}\label{eq:cov:bound:kl:parametric}
			\sup_{x, y\in \cB[\varepsilon^{1/2}/h]}\abs{\bE[\sscpSmall{\bar X^{(\varepsilon)}}{\Delta\varphi_{k, x}}_\varepsilon\sscpSmall{\bar X^{(\varepsilon)}}{\Delta\psi_{\ell,y}}_\varepsilon]}
				&\lesssim (1\wedge\abs{k-\ell}^{-\kappa}).
	\end{align}
	
		\item[b)]
	In the setting of Theorem \ref{thm:connect-conditions}, assuming in $d=1$ that $\varphi$ or $\psi$ is of $\Delta$-order one, we have
	\begin{align}\label{eq:cov:bound:kl}
			\sup_{x, y\in \cB[\varepsilon^{1/2}/h]}\abs{\bE[\sscpSmall{\bar X^{(\varepsilon)}}{\Delta\varphi_{k, x}}_\varepsilon\sscpSmall{\bar X^{(\varepsilon)}}{\Delta\psi_{\ell,y}}_\varepsilon]}
				&\lesssim (1\wedge\abs{k-\ell}^{-\kappa}) + h.
	\end{align}
	Moreover, if $\varphi, \psi$ are of $\Delta$-order one (one of them of order two in $d=1$), then we have
	\begin{align}
			\varepsilon^{1/2}\sup_{x, y\in \cB[\varepsilon^{1/2}/h]}\abs{\bE[\sscpSmall{\bar X^{(\varepsilon)}}{\Delta\varphi_{k, x}}_\varepsilon\sscpSmall{\bar X^{(\varepsilon)}}{\Delta'_{(\nabla\vartheta)_0}\psi_{\ell,y}}_\varepsilon]} \hspace{-2cm} & \nonumber\\
				&\lesssim h((1\wedge\abs{k-\ell}^{-\kappa}) + h), \label{eq:cov:bound:kl:Delta} \\
			\varepsilon\sup_{x, y\in \cB[\varepsilon^{1/2}/h]}\abs{\bE[\sscpSmall{\bar X^{(\varepsilon)}}{\Delta'_{(\nabla\vartheta)_0}\varphi_{k, x}}_\varepsilon\sscpSmall{\bar X^{(\varepsilon)}}{\Delta'_{(\nabla\vartheta)_0}\psi_{\ell,y}}_\varepsilon]} \hspace{-2cm} & \nonumber\\
				&\lesssim h^2((1\wedge\abs{k-\ell}^{-\kappa}) + h). \label{eq:cov:bound:kl:DeltaDelta}
	\end{align}
	\end{enumerate}
\end{lemma}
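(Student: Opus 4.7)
By Lemma \ref{lem:cov:form}, each covariance can be written as
\begin{equation*}
\frac{\sigma^2}{2}\int_0^R\int_0^R\int_{\abs{(t+k)-(s+\ell)}}^{(t+k)+(s+\ell)}\scp{\shift_x\Delta\varphi_t}{S_\varepsilon(r)\shift_y\Delta\psi_s}_0\,\diff r\diff s\diff t,
\end{equation*}
and I split $S_\varepsilon(r)=S_0(r)+(S_\varepsilon(r)-S_0(r))$. For the $S_0$-part I commute both Laplacians through the semigroup to obtain the integrand $\scp{\shift_x\varphi_t}{\Delta^2 S_0(r)\shift_y\psi_s}_0$, which by Lemma \ref{lem:bound:semigroup:heatkernel} admits a shift-invariant pointwise bound of the form $r^{-d/2-2}\norm{\varphi_t}_{L^1}\norm{\psi_s}_{L^1}$ for $r\geq 1$, while for $r\leq 1$ a simple Cauchy--Schwarz estimate controls the integrand by $\norm{\Delta\varphi_t}_{L^2}\norm{\Delta\psi_s}_{L^2}$. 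Since the lower limit of the $r$-integral is $\abs{(t+k)-(s+\ell)}\geq(\abs{k-\ell}-2R)_+$, integrating then yields the bound $(1\wedge\abs{k-\ell}^{-d/2-1})$, which is stronger than $(1\wedge\abs{k-\ell}^{-\kappa})$ for any $\kappa<d/2$.

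For the correction $S_\varepsilon-S_0$ I invoke Corollary \ref{cor:semigroup-for-statistics} uniformly in shifts. In case (a) where $\vartheta$ is constant, Theorem \ref{thm:semigroup-approximation-simplified}(a) gives an exponentially small factor $e^{-c/\varepsilon}$ uniformly on the relevant time interval, so no additive term appears and \eqref{eq:cov:bound:kl:parametric} follows. In case (b) the $\Delta$-order hypothesis is exactly what is required for Corollary \ref{cor:semigroup-for-statistics}(1) to apply on the full horizon $r\in[0,2N_\varepsilon]$, producing a uniform integrated error of order $h$ (following the conventions already used in the proof of Lemma \ref{lem:parametric:moments}), which yields the additive $h$ in \eqref{eq:cov:bound:kl}.

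For \eqref{eq:cov:bound:kl:Delta} and \eqref{eq:cov:bound:kl:DeltaDelta} I use the decomposition of $\Delta'_{(\nabla\vartheta)_0}$ from Lemma \ref{lem:LaplaceDiff:properties}(c) together with the shift identity
\begin{equation*}
\Delta'_{(\nabla\vartheta)_0}(\shift_y\psi) = \shift_y\Delta'_{(\nabla\vartheta)_0}\psi + (\nabla\vartheta(x_0)\cdot y)\shift_y\Delta\psi.
\end{equation*}
The first summand is intrinsically bounded of size $\cO(1)$, while the second carries a factor $\abs{y}\lesssim h/\delta$ since $y\in\cB[\varepsilon^{1/2}/h]$. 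Multiplying by the prefactor $\varepsilon^{1/2}=\delta$, both contributions produce a factor $h$, giving \eqref{eq:cov:bound:kl:Delta}; repeating the argument symmetrically on both sides yields the factor $h^2$ in \eqref{eq:cov:bound:kl:DeltaDelta}. The stronger $\Delta$-order hypothesis (order one, and order two in $d=1$) ensures that after replacing $\Delta'$ by its leading $z\Delta$-component, the surviving kernel still carries enough Laplacians for the heat-kernel integrability argument from the first paragraph to go through.

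\textbf{Main obstacle.} The delicate point is maintaining uniformity of all estimates over shifts in the \emph{growing} ball $\cB[\varepsilon^{1/2}/h]$. The pointwise heat-kernel bounds on $\R^d$ are translation invariant, so the $S_0$-part poses no difficulty, but the $S_\varepsilon-S_0$ correction relies on the cumulative Trotter--Kato estimate of Theorem \ref{thm:semigroup-approximation-simplified}, whose implicit constants are weighted Sobolev norms of the shifted kernels. Trading these $\abs{y}^\alpha$-growth factors against the $\delta^\alpha$ gained from the semigroup-approximation order, so that the final bound is $\cO(h)$ uniformly on the diverging time interval $[0,2N_\varepsilon]$, is the principal technical task and is precisely what motivates the support hypothesis $y\in\cB[\varepsilon^{1/2}/h]$ in the statement.
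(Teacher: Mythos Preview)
Your approach via the splitting $S_\varepsilon = S_0 + (S_\varepsilon - S_0)$ does not yield the additive term $+h$ in \eqref{eq:cov:bound:kl}, and this is a genuine gap. Corollary~\ref{cor:semigroup-for-statistics} only asserts $o(1)$ for the integrated Trotter--Kato error on the growing horizon $[0,C\varepsilon^{-1}]$ (or, in $d\geq 3$ and under \emph{stronger} $\Delta$-order assumptions than the present lemma grants, $\cO(h^{\beta-1})$); it never produces $\cO(h)$. Your remark that the $\cO(h)$ bound ``follows the conventions in the proof of Lemma~\ref{lem:parametric:moments}'' is circular: that proof \emph{cites} the present lemma to obtain the $+h$. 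Likewise, for part~(a) your claim that Theorem~\ref{thm:semigroup-approximation-simplified}(a) gives an exponentially small factor is not correct as stated, because the constant $c$ there depends on a \emph{fixed} horizon $T$, whereas the $r$-integral here runs up to $(t+k)+(s+\ell)\lesssim N_\varepsilon\sim\varepsilon^{-1}$.

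The paper's argument avoids the semigroup splitting altogether by a different idea: it replaces $\Delta\psi_{\ell,y}$ by $\vartheta(x_0)^{-1}\Delta_{\vartheta_\varepsilon}\psi_{\ell,y}$, so that the inner $r$-integral becomes an exact derivative,
\[
\int_a^b S_\varepsilon(r)\Delta_{\vartheta_\varepsilon}\shift_y\psi_s\,\diff r = S_\varepsilon(b)\shift_y\psi_s - S_\varepsilon(a)\shift_y\psi_s,
\]
and one bounds $\norm{S_\varepsilon(\abs{t+k-s-\ell})\Delta\shift_x\varphi_t}_{L^p}\lesssim(1\wedge\abs{t+k-s-\ell}^{-d/(2q)})$ directly via Lemma~\ref{lem:bound:semigroup:uniform} with $\alpha=0$. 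In the parametric case $\Delta_{\vartheta_\varepsilon}=\vartheta_0\Delta$ exactly, so no correction arises and \eqref{eq:cov:bound:kl:parametric} follows at once. In the nonparametric case the correction $[\Delta_{\vartheta_0}-\Delta_{\vartheta_\varepsilon}]\shift_y\psi_s$ is $\cO(\delta)$ in $\bar W^{2,q}_1$ by Lemma~\ref{lem:bias:Laplacian}; combined with the shift bound of Lemma~\ref{lem:bound:space:Sobolev} and the convergent integral $\int_0^\infty(1\wedge r^{-\alpha/2-d/(2q)})\diff r$ (for appropriate $\alpha,q$), this gives precisely the additive $h$. Your reduction of \eqref{eq:cov:bound:kl:Delta} and \eqref{eq:cov:bound:kl:DeltaDelta} to \eqref{eq:cov:bound:kl} via Lemma~\ref{lem:LaplaceDiff:properties}(c) is correct and matches the paper.
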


\begin{proof} \
		The temporal support of $\shift_x\varphi, \shift_y\psi$ is contained in $[0,R]$ for some $R>0$.
		By an argument as in Lemma \ref{lem:cov:form}, we see
		\begin{align*}
			\bE[\sscpSmall{\bar X^{(\varepsilon)}}{\Delta\varphi_{k,x}}_\varepsilon\sscpSmall{\bar X^{(\varepsilon)}}{\Delta_{\vartheta_\varepsilon}(\psi_{\ell,y})}_\varepsilon] \hspace{-6cm} & \\
				&= \frac{\sigma^2}{2}\int_0^R\int_0^R\int_{\abs{(t+k) - (s+\ell)}}^{(t+k) + (s+\ell)}\scp{\Delta\shift_x\varphi_t}{S_\varepsilon(r)\Delta_{\vartheta_\varepsilon}\shift_y\psi_s}_0\diff r\diff s\diff t \\
				&= \frac{\sigma^2}{2}\int_0^R\int_0^R\scp{\Delta\shift_x\varphi_t}{[S_\varepsilon((t+k) + (s+\ell))-S_\varepsilon(\abs{(t+k) - (s+\ell)})]\shift_y\psi_s}_0\diff s\diff t,
		\end{align*}
		so for any $p,q>1$ with $1/p+1/q=1$, using Lemma \ref{lem:bound:semigroup:uniform} with $\alpha=0$, as well as Lemma \ref{lem:bound:space:Sobolev},
		we estimate
		\begin{align*}
			\abs{\bE[\sscpSmall{\bar X^{(\varepsilon)}}{\Delta\varphi_{k, x}}_\varepsilon\sscpSmall{\bar X^{(\varepsilon)}}{\Delta_{\vartheta_\varepsilon}(\psi_{\ell, y})}_\varepsilon]} \hspace{-6cm} & \\
				&\lesssim \int_0^R\int_0^R(\norm{S_\varepsilon(2((t+k)\wedge(s+\ell)))\shift_y\psi_s}_{L^q(\cD_\varepsilon)} + \norm{\shift_y\psi_s}_{L^q(\cD_\varepsilon)}) \\
				&\hspace{5cm} \times\norm{S_\varepsilon(\abs{t+k-s-\ell})\Delta\shift_x\varphi_t}_{L^p(\cD_\varepsilon)}\diff s\diff t \\
				&\lesssim \int_0^R\int_0^R\norm{S_\varepsilon(\abs{t+k-s-\ell})\Delta\shift_x\varphi_t}_{L^p(\cD_\varepsilon)}\diff s\diff t \\
				&\lesssim \sup_{t\geq 0}\norm{\shift_x\varphi_t}_{\bar W^{2, 1}_0\cap \bar W^{2, p}_0}\int_0^R\int_0^R1\wedge\abs{t+k-s-\ell}^{-d/(2q)}\diff s\diff t \\
				&\lesssim R^2(1\wedge(\abs{k-\ell}-R)^{-d/(2q)}) \lesssim 1\wedge \abs{k-\ell}^{-d/(2q)},
		\end{align*}
		where $q>1$ is arbitrary.
		Consequently, as $\vartheta$ is bounded away from zero, we have
		\begin{align*}
			\abs{\bE[\sscpSmall{\bar X^{(\varepsilon)}}{\Delta \varphi_{k, x}}_\varepsilon \sscpSmall{\bar X^{(\varepsilon)}}{\Delta \psi_{\ell, y}}_\varepsilon]} \hspace{-3cm} & \\
				&= \frac{1}{\vartheta(x_0)}\abs{\bE[\sscpSmall{\bar X^{(\varepsilon)}}{\Delta \varphi_{k, x}}_\varepsilon \sscpSmall{\bar X^{(\varepsilon)}}{\Delta_{\vartheta_0} \psi_{\ell, y}}_\varepsilon]} \\
				&\lesssim 1\wedge\abs{k-\ell}^{-d/(2q)} \\
				&\quad\quad + \abs{\bE[\sscpSmall{\bar X^{(\varepsilon)}}{\Delta \varphi_{k, x}}_\varepsilon \sscpSmall{\bar X^{(\varepsilon)}}{[\Delta_{\vartheta_0} - \Delta_{\vartheta_\varepsilon}] (\psi_{\ell, y})}_\varepsilon]}.
		\end{align*}
		This proves the parametric case, so assume the setting of Theorem \ref{thm:connect-conditions} for the rest of the proof.
		The last term is estimated as follows, with $\alpha=0$, $p=5$, $q=5/4$ in $d\geq 3$, and $\alpha=1$, $p=5$, $q=5/4$ in $d=2$, as well as $\alpha=\beta$, $1<q<1/(2-\beta)$, $p=q/(q-1)$ in $d=1$:
		\begin{align*}
			\abs{\bE[\sscpSmall{\bar X^{(\varepsilon)}}{\Delta \varphi_{k, x}}_\varepsilon \sscpSmall{\bar X^{(\varepsilon)}}{[\Delta_{\vartheta_0} - \Delta_{\vartheta_\varepsilon}] \psi_{\ell, y}}_\varepsilon]} \hspace{-7cm} & \\
				&= \frac{\sigma^2}{2}\abs{\int_0^R\int_0^R\int_{\abs{(t+k) - (s+\ell)}}^{(t+k) + (s+\ell)}\scp{\Delta\shift_x\varphi_t}{S_\varepsilon(r)[\Delta_{\vartheta_0} - \Delta_{\vartheta_\varepsilon}]\shift_y\psi_s}_0\diff r\diff s\diff t} \\
				&\lesssim \sup_{s\geq 0}\norm{[\Delta_{\vartheta_0} - \Delta_{\vartheta_\varepsilon}]\shift_y\psi_s}_{L^q(\R^d)}\int_0^R\int_0^R\int_{0}^{(t+k) + (s+\ell)}\norm{S_\varepsilon(r)\Delta\shift_x\varphi_t}_{L^p(\R^d)}\diff r\diff s\diff t \\
				&\lesssim \varepsilon^{1/2}\sup_{s\geq 0}\norm{\shift_y\psi_s}_{\bar W^{2, q}_{1}} \sup_{t\geq 0}\norm{\shift_x \varphi_t}_{\bar W^{2, 1}_\alpha\cap \bar W^{2, p}_\alpha}\int_0^\infty(1\wedge r^{-\alpha/2-d/(2q)})\diff r \\
				&\lesssim h(h\varepsilon^{-1/2})^\alpha,
		\end{align*}
		where we have used Lemma \ref{lem:bias:Laplacian}, Lemma \ref{lem:bound:semigroup:uniform} (here we need the $\Delta$-order of $\varphi$ to be one in $d=1$) and Lemma \ref{lem:bound:space:Sobolev}.
		Note that in all cases considered, $h(h\varepsilon^{-1/2})^\alpha\sim h$.
		This proves \eqref{eq:cov:bound:kl}.
		Next, write
		\begin{align*}
			\bE[\sscpSmall{\bar X^{(\varepsilon)}}{\Delta\varphi_{k, x}}_\varepsilon\sscpSmall{\bar X^{(\varepsilon)}}{\Delta'_{(\nabla\vartheta)_0}\psi_{\ell,y}}_\varepsilon] \hspace{-5cm} & \\
				&= \frac{\sigma^2}{2}\int_0^R\int_0^R\int_{\abs{(t+k) - (s+\ell)}}^{(t+k) + (s+\ell)}\scp{\Delta \shift_x \varphi_t}{S_\varepsilon(r)\Delta'_{(\nabla\vartheta)_0}\shift_y\psi_s}_0\diff r\diff s\diff t \\
				&= \frac{\sigma^2}{2}\int_0^R\int_0^R\int_{\abs{(t+k) - (s+\ell)}}^{(t+k) + (s+\ell)}\scp{\Delta \shift_x \varphi_t}{S_\varepsilon(r)\shift_y \Delta \bar\FuncY_{s}}_0\diff r\diff s\diff t \\
				&\quad\quad + \frac{\sigma^2}{2}\int_0^R\int_0^R\int_{\abs{(t+k) - (s+\ell)}}^{(t+k) + (s+\ell)}\scp{\Delta\shift_x \varphi_t}{S_\varepsilon(r)(\nabla\vartheta(x_0)\cdot y)\shift_y\Delta\psi_s}_0\diff r\diff s\diff t \\
				&= \bE[\sscpSmall{\bar X^{(\varepsilon)}}{\Delta\varphi_{k, x}}_\varepsilon\sscpSmall{\bar X^{(\varepsilon)}}{\Delta \bar\FuncY_{\ell,y}}_\varepsilon] \\
				&\hspace{3cm} + (\nabla\vartheta(x_0)\cdot y)\bE[\sscpSmall{\bar X^{(\varepsilon)}}{\Delta\varphi_{k, x}}_\varepsilon\sscpSmall{\bar X^{(\varepsilon)}}{\Delta \psi_{\ell,y}}_\varepsilon]
		\end{align*}
		with $\bar\FuncY$ determined by Lemma \ref{lem:LaplaceDiff:properties} c) (with $\ell=1$).
		If $\psi$ is of $\Delta$-order two, then $\bar\FuncY$ has $\Delta$-order one.
		Now \eqref{eq:cov:bound:kl:Delta} follows from \eqref{eq:cov:bound:kl}, taking into account $\varepsilon^{1/2}\abs{y}\lesssim h$.
		The proof of \eqref{eq:cov:bound:kl:DeltaDelta} works analogously.
\end{proof}

\section{Trotter--Kato Approximation}\label{sec:TrotterKato}

\subsection{Statement and Discussion}

Let $\vartheta\in C^\beta(\bar\cD)$ for $\beta\leq 2$.
We consider the case that $\delta\lesssim h\lesssim 1$.
The goal of this section is to prove:

\begin{theorem}[uniform Trotter--Kato theorem, general case] \label{thm:approx:semigroup}
	Let $p\geq 2$, $q\leq 2$ with $1=1/p+1/q$.
	Fix closed balls $\cB\subset\cB^*$ in $\cD$ centered at $x_0$.
	\begin{enumerate}
		\item[a)] Let $\ell_1,\ell_2\in\N$, and $0\leq\alpha\leq\beta$ with $d>(2-\alpha)q$.
		Assume $\ell_1\geq 2$ if $\alpha>1$,
		and $\ell_2>3/2-d/(2p)$.
		Then for $\FuncZ_1\in \bar W^{2\ell_1,1}_\alpha\cap \bar W^{2\ell_1,p}_\alpha$ and $\FuncZ_2\in \bar W^{2,1}_1\cap \bar W^{2\ell_2+2,q}_1$
		with $\mathrm{supp}(\shift_y\varphi_1),\mathrm{supp}(\shift_y\varphi_2)\subseteq\cB^*[\delta]$ for $y\in\cB[\delta/h]$, and $t\geq 0$:
		\begin{align*}
			\sup_{y_1, y_2\in\cB[\delta/h]}\abs{\scp{\Delta^{\ell_1}\shift_{y_1}\FuncZ_1}{(S_\delta(t) - S_0(t))\Delta^{\ell_2}\shift_{y_2}\FuncZ_2}_0} \hspace{-5cm} & \\
				&\lesssim \left(h(h\delta^{-1})^\alpha((t\wedge t^{1-\frac{\alpha}{2}-\frac{d}{2q}}) + (t\wedge t^{\frac{3}{2}-\ell_2-\frac{d}{2p}})) \right. \\
				&\hspace{3cm} \left. + [\delta^{2\ell_2+\frac{d}{q}} + \delta^{2\ell_2+2+\frac{d}{p}}(h\delta^{-1})^\alpha]e^{-\bar c\delta^{-2}t^{-1}}\right) \\
				&\quad\quad\quad\quad \times \norm{\FuncZ_1}_{\bar W^{2\ell_1,1}_\alpha\cap \bar W^{2\ell_1,p}_\alpha}\norm{\FuncZ_2}_{\bar W^{2,1}_1\cap \bar W^{2\ell_2+2,q}_1}.
		\end{align*}
		
		\item[b)] Let $\ell\in\N_0$, $\FuncZ\in \bar W^{2, 1}_1\cap \bar W^{2+2\ell, p}_1$
		with $\mathrm{supp}(\shift_y\varphi)\subseteq\cB^*[\delta]$ for $y\in\cB[\delta/h]$.
		Then
		\begin{align*}
			\sup_{0\leq t\leq T}\sup_{y\in\cB[\delta/h]}\norm{(S_\delta(t) - S_0(t))\Delta^\ell\shift_y\FuncZ}_{L^p(\R^d)}
			&\lesssim \delta^{2\ell+\frac{d}{q}-\frac{d-1}{p}}Te^{-\bar c\delta^{-2}T^{-1}}\normSmall{\FuncZ}_{L^1(\R^d)} \\
			&\quad\quad\quad + hC_T\normSmall{\FuncZ}_{\bar W^{2, 1}_1\cap \bar W^{2+2\ell, p}_1},
		\end{align*}
		with $C_T=\int_0^T(1\wedge s^{-\ell-\frac{d}{2q}})(1\vee s^\frac{1}{2})\diff s$, and
		in particular the right-hand side tends to zero for $h\rightarrow 0$.
		If $\vartheta$ is constant, it holds even
		\begin{align*}
			\sup_{0\leq t\leq T}\sup_{y\in\cB[\delta/h]}\norm{(S_\delta(t) - S_0(t))\Delta^\ell\shift_y\FuncZ}_{L^p(\R^d)}
			&\lesssim \delta^{2\ell+\frac{d}{q}-\frac{d-1}{p}}Te^{-\bar c\delta^{-2}T^{-1}}\normSmall{\FuncZ}_{L^1(\R^d)}.
		\end{align*}
	\end{enumerate}

\end{theorem}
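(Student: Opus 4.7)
The plan is to compare $S_\delta(t)$ and $S_0(t)$ through a variation-of-constants identity, first bringing them onto a common function-space footing via the truncation $\pi_\delta$ and the Dirichlet projection $\bar\pi_\delta$ of Section~\ref{sec:localization}. This decomposes the error into an \emph{interior} contribution, driven by the difference of generators $\Delta_{\vartheta_\delta}-\Delta_{\vartheta_0}$ (measuring the heterogeneity of $\vartheta$ around $x_0$), and a \emph{boundary} contribution, coming from the gap $\bar\pi_\delta-\pi_\delta$ between Dirichlet data on $\partial\cD_\delta$ and free propagation on $\R^d$. The key geometric input is that for $y\in\cB[\delta/h]$ the shifted support $\mathrm{supp}(\shift_y\FuncZ)\subseteq\cB^*[\delta]$ sits at distance $\gtrsim\delta^{-1}$ from $\partial\cD_\delta$.

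The interior part is controlled via the Duhamel identity
\begin{equation*}
S_\delta(t)\bar\pi_\delta-\pi_\delta S_0(t)=\int_0^t S_\delta(t-s)\bigl(\Delta_{\vartheta_\delta}-\Delta_{\vartheta_0}\bigr)\pi_\delta S_0(s)\,\diff s+(\text{boundary correction}),
\end{equation*}
together with a Taylor expansion of $\vartheta$ about $x_0$: for $\alpha\leq\beta$ the coefficient difference $\vartheta(\delta x+x_0)-\vartheta(x_0)$ is pointwise bounded by $\delta|x|+(\delta|x|)^\alpha$. Testing this against $\Delta^{\ell_2}\shift_{y_2}\FuncZ_2$ and using the weighted norm $\bar W^{2,1}_1\cap\bar W^{2\ell_2+2,q}_1$ (through Lemma~\ref{lem:bias:Laplacian}) produces the prefactor $h$ from the linear term (since $\delta|x|\lesssim h$ on the shifted support, for $y_2\in\cB[\delta/h]$) and the extra $(h\delta^{-1})^\alpha$ from the H\"older remainder, where the $|x|^\alpha$ weight swallows the shift. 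Standard Gaussian heat-semigroup bounds on $\R^d$ and their uniform-in-$\delta$ analogues for $S_\delta$ (Lemma~\ref{lem:bound:semigroup:uniform}) supply the short-time singularities at both endpoints; the $s$-integration from $0$ to $t$ then converts these into the stated factors $(t\wedge t^{1-\alpha/2-d/(2q)})$ and $(t\wedge t^{3/2-\ell_2-d/(2p)})$, and the conditions $d>(2-\alpha)q$, $\ell_1\geq 2$ when $\alpha>1$, $\ell_2>3/2-d/(2p)$ are precisely the integrability thresholds at $s=0$ and $s=t$.

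The boundary part is handled through the residual $\Delta^\ell\shift_y\FuncZ-\bar\pi_\delta\Delta^\ell\shift_y\FuncZ$, i.e.\ the harmonic potential $\FuncPotential$ of \eqref{eq:HarmonicPotential}, whose boundary data can only be reached by heat-kernel propagation across a distance $\gtrsim\delta^{-1}$ in time $t$. Gaussian off-diagonal bounds for the Dirichlet and free heat kernels therefore contribute the exponential factor $e^{-\bar c\delta^{-2}t^{-1}}$ that appears as the second summand in both parts of the theorem; when $\vartheta$ is constant the interior contribution vanishes identically, leaving only this exponentially small remainder, which is exactly the last clause of part~(b). Part~(b) itself then follows from taking $L^p$ norms in the above decomposition and a sup over $t\in[0,T]$, while part~(a) is obtained in its weak formulation by pairing with $\Delta^{\ell_1}\shift_{y_1}\FuncZ_1$ and carefully tracking the weighted $L^p$--$L^q$ dualities.

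The main obstacle is coordinating uniformity in $y\in\cB[\delta/h]$ with the integrability over $[0,t]$ forced by Duhamel: the linear Taylor coefficient $\nabla\vartheta(x_0)\cdot x$ is unbounded on the shifted support, so the $|x|^\alpha$-weight in $\bar W^{k,p}_\alpha$ is unavoidable, and on shift it inflates by $(h/\delta)^\alpha$, which is precisely the origin of the $(h\delta^{-1})^\alpha$ factor as well as of the restrictions on $(\alpha,\ell_1,\ell_2,p,q)$. The general Trotter--Kato framework of Ito--Kappel~\cite{ItoKappel1998} provides a clean template for the abstract Duhamel step and for the semigroup convergence on common cores, but transferring it to a \emph{quantitative} and \emph{uniform-in-$y$} statement in weighted Sobolev scales requires the case-by-case bookkeeping described above.
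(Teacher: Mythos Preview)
Your proposal is correct and follows essentially the same approach as the paper: the variation-of-constants decomposition via $\bar\pi_\delta$ and $\pi_\delta$ (the paper's Lemma~\ref{lem:approx:semigroup}, splitting into the five terms $R_1,\dots,R_5$), with the interior contribution $R_5$ controlled through Lemma~\ref{lem:bias:Laplacian} and the semigroup bounds of Lemmas~\ref{lem:bound:semigroup:heatkernel}--\ref{lem:bound:semigroup:uniform}, and the boundary contributions $R_1$--$R_4$ handled via the Gaussian off-diagonal estimates of Lemma~\ref{lem:approx:domain}. One minor point of attribution: in the paper the factor $(h\delta^{-1})^\alpha$ arises not from a H\"older remainder in the generator difference (which is taken only to first order, $\alpha=1$, in Lemma~\ref{lem:bias:Laplacian}) but from the shift inflation of the $\alpha$-weighted norm of $\shift_{y_1}\FuncZ_1$ needed for the semigroup bound on the test-function side (Lemma~\ref{lem:bound:space:Sobolev}); the total $\delta\cdot(h\delta^{-1})^{\alpha+1}=h(h\delta^{-1})^\alpha$ is of course the same either way.
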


\begin{proof}
	See Appendix \ref{sec:proof:approx:semigroup}, p. \pageref{proof:approx:semigroup}.
\end{proof}

\begin{remark} \
	\begin{enumerate}
		\item[a)] In order to obtain a bound that is integrable in time, it is necessary to consider the weak formulation in Theorem \ref{thm:approx:semigroup} a) compared to the norm estimate from b).
		\item[b)] We can understand the coefficients of the kernel norms appearing on the right-hand side in the panel in Theorem \ref{thm:approx:semigroup} a) as follows: $h(h\delta^{-1})^\alpha=\delta(h\delta^{-1})^{\alpha+1}$ comes from the approximation error of the generators of the semigroups of order $\delta$, together with a penalty $(h\delta^{-1})^{\alpha+1}$ for having estimates uniform over shifts in space up to distance $h\delta^{-1}$. The terms $t\wedge t^{1-\frac{\alpha}{2}-\frac{d}{2q}}$ and $t\wedge t^{\frac{3}{2}-\ell_2-\frac{d}{2p}}$ relate to contraction properties of the heat semigroup. The remaining terms appear due to boundary effects.
		\item[c)] The formulation involving a ball $\cB^*$ separated from the boundary of the domain is necessary only for the limit case $h\sim 1$, otherwise such condition is automatically satisfied as the active neighborhood of size $h$ shrinks. A similar effect occurs in \cite{StrauchTiepner2024}.
		\item[d)] The proof that we give is analytic in nature and should be contrasted to the stochastic proof of \cite[Proposition 3.5]{AltmeyerReiss2021}.
		There, the convergence of semigroups has been established via a stochastic representation, using a Feynman--Kac argument. For this, it is necessary to bound three terms, related to the inhomogeneity of $\vartheta$, the possible presence of reaction and advection terms, and the boundedness of the underlying domain. Such terms also appear in our approach (except that we do not consider lower order perturbations of the diffusion operator in this work).
		Notably, the error relating to the inhomogeneity of $\vartheta$ is handled using general approximation results for Feller processes stated in \cite{Kallenberg2002}, which are analytic in nature. In this sense, we provide a more detailed analysis by proving and using a quantitative version of the Trotter--Kato approximation.
	\end{enumerate}
\end{remark}

\subsection{Remaining Proofs from Section \ref{sec:semigroupapproximation}} \label{sec:proofs:semigroupapproximation}

The statements from Section \ref{sec:semigroupapproximation} are direct consequences of Theorem \ref{thm:approx:semigroup}. We provide their proofs here.

\begin{proof}[Proof of Theorem \ref{thm:semigroup-approximation-simplified}] \label{proof:semigroup-approximation-simplified} \
	\begin{enumerate}
		\item[a)] This follows immediately from Theorem \ref{thm:approx:semigroup} b).
		\item[b)] This follows from Theorem \ref{thm:approx:semigroup} a) with $\ell_1=2$, $\ell_2=3$, $\alpha<\beta$,
		and $q>1$ small enough, taking into account $\delta\lesssim 1\wedge t^{-1/2}$.
	\end{enumerate}
\end{proof}

\begin{proof}[Proof of Corollary \ref{cor:semigroup-for-statistics}] \label{proof:semigroup-for-statistics} \
	\begin{enumerate}
		\item[a)]
	By Theorem \ref{thm:semigroup-approximation-simplified} a),
	the integrand converges to zero as $\varepsilon\rightarrow 0$ for fixed $r, s, t\geq 0$.
	Then we can use dominated convergence, using Lemma \ref{lem:bound:semigroup:uniform} and Lemma \ref{lem:bound:semigroup:heatkernel}, together with Lemma \ref{lem:bound:space:Sobolev}:
	More precisely, in Lemma \ref{lem:bound:semigroup:uniform} set $\alpha=0$ for $d\geq 3$, $\alpha=1$ for $d=2$ and $\alpha=\beta$ for $d=1$, where in all cases $q>1$ is chosen small enough via H\"older's inequality.
	In $d\in\{1,2\}$, use that $h\sim\delta$, so $(h\delta^{-1})^\alpha\sim 1$ in all dimensions.
	
	In $d\geq 3$, Theorem \ref{thm:semigroup-approximation-simplified} b)
	implies that the left-hand side of \eqref{eq:uniform-covariance-convergence} is $\cO(h(h\delta^{-1})^{\alpha})$ for every $\alpha>1$,
	and this is $\cO(h^{\beta-1})$ because $\beta<2-2/(4+d)$ and $h\sim \delta^{\frac{2+d}{2\beta+d}}$ by assumption.
		
		\item[b)]
		For fixed $r, s, t\geq 0$, the integrand converges to zero by Theorem \ref{thm:semigroup-approximation-simplified} a). As before, we use dominated convergence, using H\"older's inequality, Lemma \ref{lem:bound:semigroup:parametric} and Lemma \ref{lem:bound:semigroup:heatkernel}. In $d\geq 3$, choose $p>d/(d-2)$, i.e. $q<d/2$, then the resulting bound is integrable in time. In $d\in\{1,2\}$, choose $p=q=2$ and $\ell=1$.
	\end{enumerate}
\end{proof}

\subsection{Proof of Theorem \ref{thm:approx:semigroup}} \label{sec:proof:approx:semigroup}

The main idea in the proof is to reduce the error in the semigroups to the error of their generators (up to boundary terms which need to be treated separately). The generators are easier to understand.
Our proceeding is inspired by the general theory in \cite{ItoKappel1998}, although we avoid the use of the resolvent that is crucial in the presentation therein.

\begin{lemma}[approximation error decomposition] \label{lem:approx:semigroup}
	For $\FuncZ\in W^{2,p}(\R^d)$, we have
	\begin{align*}
		S_\delta(t)\pi_\delta \FuncZ - S_0(t)\FuncZ
			&= R_1 + R_2 + R_3 + R_4 + R_5,
	\end{align*}
	where
	\begin{align*}
		R_1 &= (\pi_\delta - \mathrm{id})S_0(t)\FuncZ, \\
		R_2 &= S_\delta(t)(\pi_\delta-\bar\pi_\delta)\FuncZ, \\
		R_3 &= (\bar\pi_\delta-\pi_\delta)S_0(t)\FuncZ, \\
		R_4 &= \int_0^tS_\delta(t-s)(\pi_\delta-\bar\pi_\delta)S_0(s)\Delta_{\vartheta_0}\FuncZ\diff s, \\
		R_5 &= \int_0^tS_\delta(t-s)(\Delta_{\vartheta_\delta}\pi_\delta - \pi_\delta \Delta_{\vartheta_0})S_0(s)\FuncZ\diff s.
	\end{align*}
\end{lemma}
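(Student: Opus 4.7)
The plan is to use a telescoping argument combined with a Duhamel-type identity, with the Dirichlet projection $\bar\pi_\delta$ mediating between the truncation $\pi_\delta$ and the semigroup domain $D_p(A_\delta)$.

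The first step is to insert $\pm\,\bar\pi_\delta\FuncZ$, $\pm\,\bar\pi_\delta S_0(t)\FuncZ$ and $\pm\,\pi_\delta S_0(t)\FuncZ$ into the difference $S_\delta(t)\pi_\delta\FuncZ - S_0(t)\FuncZ$, yielding the telescoping sum
\begin{align*}
S_\delta(t)\pi_\delta\FuncZ - S_0(t)\FuncZ
&= S_\delta(t)(\pi_\delta-\bar\pi_\delta)\FuncZ + \bigl[S_\delta(t)\bar\pi_\delta\FuncZ - \bar\pi_\delta S_0(t)\FuncZ\bigr] \\
&\quad + (\bar\pi_\delta-\pi_\delta)S_0(t)\FuncZ + (\pi_\delta-\mathrm{id})S_0(t)\FuncZ.
\end{align*}
This immediately identifies the first, third, and fourth summands with $R_2$, $R_3$, $R_1$, leaving the bracketed commutator to be matched with $R_4+R_5$.

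For the commutator, I would apply the Duhamel identity to $F(s):=S_\delta(t-s)\bar\pi_\delta S_0(s)\FuncZ$, $s\in[0,t]$, which satisfies $F(0)=S_\delta(t)\bar\pi_\delta\FuncZ$ and $F(t)=\bar\pi_\delta S_0(t)\FuncZ$. Since $\bar\pi_\delta S_0(s)\FuncZ\in D_p(A_\delta)$ by the construction \eqref{eq:DirichletProjection}, $C_0$-semigroup calculus gives
\begin{equation*}
F'(s) = S_\delta(t-s)\bigl[\bar\pi_\delta\Delta_{\vartheta_0} - \Delta_{\vartheta_\delta}\bar\pi_\delta\bigr]S_0(s)\FuncZ.
\end{equation*}
The key rewriting is to combine the defining identity $\Delta_{\vartheta_\delta}\bar\pi_\delta=\Delta_{\vartheta_\delta}\pi_\delta$ from \eqref{eq:DirichletProjection} with $\bar\pi_\delta\Delta_{\vartheta_0}=\pi_\delta\Delta_{\vartheta_0}-(\pi_\delta-\bar\pi_\delta)\Delta_{\vartheta_0}$, so that the bracket becomes $-(\Delta_{\vartheta_\delta}\pi_\delta-\pi_\delta\Delta_{\vartheta_0})-(\pi_\delta-\bar\pi_\delta)\Delta_{\vartheta_0}$. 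Integrating $F(t)-F(0)=\int_0^t F'(s)\,\diff s$ and commuting $\Delta_{\vartheta_0}$ past $S_0(s)$ (valid since $\vartheta_0$ is constant, so $S_0$ is a rescaled convolution with the heat kernel on $\R^d$) then delivers exactly $R_4+R_5$ for the bracketed term, completing the decomposition.

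The main obstacle I anticipate is bookkeeping the domains and commutations: one must verify that $s\mapsto\bar\pi_\delta S_0(s)\FuncZ$ is differentiable in $L^p(\cD_\delta)$ with values in $D_p(A_\delta)$, that $\bar\pi_\delta$ extends continuously $W^{2,p}(\R^d)\to D_p(A_\delta)$ (which follows from the elliptic regularity of \cite[Theorem 9.15]{GilbargTrudinger2001} used to define $\bar\pi_\delta$), and that all the resulting Bochner integrals in $L^p(\cD_\delta)$ make sense. These are standard consequences of analytic semigroup theory, but ensuring that each operator identity is applied on the correct domain at each step of the rewriting requires careful attention.
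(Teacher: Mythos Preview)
Your proposal is correct and follows essentially the same approach as the paper: both telescope to isolate $R_1,R_2,R_3$ and the commutator $S_\delta(t)\bar\pi_\delta\FuncZ-\bar\pi_\delta S_0(t)\FuncZ$, then derive a Duhamel representation for the latter using the key identity $A_\delta\bar\pi_\delta=\Delta_{\vartheta_\delta}\pi_\delta$ to produce $R_4+R_5$. The only cosmetic difference is that the paper phrases the Duhamel step as a variation-of-constants formula for $E_\delta(t)=S_\delta(t)\bar\pi_\delta\FuncZ-\bar\pi_\delta S_0(t)\FuncZ$ (deriving $\partial_t E_\delta = A_\delta E_\delta + (A_\delta\bar\pi_\delta-\bar\pi_\delta A_0)S_0(t)\FuncZ$), whereas you differentiate the interpolation $F(s)=S_\delta(t-s)\bar\pi_\delta S_0(s)\FuncZ$ directly; the paper also handles your domain concern by first restricting to $\FuncZ\in W^{4,p}(\R^d)$ and extending by density (cf.\ Remark~\ref{rem:approx:decomp:density}).
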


$R_1$ describes the discrepancy between the bounded and unbounded domain, $R_2$, $R_3$ and $R_4$ describe boundary effects, and $R_5$ is related to the inhomogeneity of $\vartheta$.
Note that if $\varphi$ is supported in $\cD_\delta$ (this is the standing assumption in Theorem \ref{thm:approx:semigroup}), then we can write $S_\delta(t)\pi_\delta \FuncZ - S_0(t)\FuncZ=S_\delta(t)\FuncZ - S_0(t)\FuncZ$. In this case, $R_2$ vanishes right away.

\begin{remark}\label{rem:approx:decomp:density}
	As $\bar\pi$ acts on $W^{2,p}(\R^d)$, the term $R_4$ is well-defined a priori only for $\FuncZ\in W^{4,p}(\R^d)$.
	However, since $S_\delta(t)\pi_\delta \FuncZ - S_0(t)\FuncZ$ and $R_1$, $R_2$, $R_3$, $R_5$ are well-defined in $L^2(\R^d)$ for all $\FuncZ\in W^{2,p}(\R^d)$, we can extend the domain of definition of $R_4$ to $W^{2,p}(\R^d)$ by a density argument.
\end{remark}

\begin{proof}[Proof of Lemma \ref{lem:approx:semigroup}]
	By Remark \ref{rem:approx:decomp:density}, we can restrict to $\FuncZ\in W^{4, p}(\R^d)$. We have
	\begin{align*}
		S_\delta(t)\pi_\delta \FuncZ-S_0(t)\FuncZ &= (S_\delta(t)\pi_\delta \FuncZ - \pi_\delta S_0(t)\FuncZ) + R_1 \\
			&= (S_\delta(t)\bar\pi_\delta \FuncZ - \bar\pi_\delta S_0(t)\FuncZ) + R_1 + R_2 + R_3.
	\end{align*}
	Define $E_\delta(t) := S_\delta(t)\bar\pi_\delta \FuncZ - \bar\pi_\delta S_0(t)\FuncZ$. Then $E_\delta(0)=0$ and
	\begin{align*}
		\partial_tE_\delta(t)
			&= A_\delta S_\delta(t)\bar\pi_\delta \FuncZ - \bar\pi_\delta A_0S_0(t)\FuncZ \\
			&= A_\delta E_\delta(t) + (A_\delta\bar\pi_\delta-\bar\pi_\delta A_0)S_0(t)\FuncZ,
	\end{align*}
	and consequently, by the variation-of-constants formula,
	\begin{align*}
		E_\delta(t) = \int_0^tS_\delta(t-s)(A_\delta\bar\pi_\delta - \bar\pi_\delta A_0)S_0(s)\FuncZ\diff s.
	\end{align*}
	Taking into account that $A_\delta\bar\pi_\delta = \Delta_{\vartheta_{\delta}}\bar\pi_\delta=\Delta_{\vartheta_\delta}\pi_\delta$ on $\cD_\delta$ by construction of $\bar\pi_\delta$, the claim now follows from
	\begin{align*}
		S_\delta(t-s)(A_\delta\bar\pi_\delta - \bar\pi_\delta A_0)S_0(s)\FuncZ
			&= S_\delta(t-s)(\Delta_{\vartheta_\delta}\pi_\delta - \pi_\delta\Delta_{\vartheta_0})S_0(s)\FuncZ \\
			&\quad\quad + S_\delta(t-s)(\pi_\delta-\bar\pi_\delta)\Delta_{\vartheta_0}S_0(s)\FuncZ.
	\end{align*}
\end{proof}

\begin{lemma}[domain and boundary effects] \label{lem:approx:domain} \
	Let $\ell\in\N_0$, and $p,q\geq 1$ with $1=1/p+1/q$.
	Let
	$\varphi\in W^{2\ell+2, p}(\R^d)\cap L^1(\R^d)$.
	Assume that there are closed balls $\cB\subset\cB^*$ in $\cD$ centered at $x_0$ with $\mathrm{supp}(\shift_y\varphi)\subseteq\cB^*[\delta]$ for $y\in\cB[\delta/h]$.
	Then:
	\begin{enumerate}
		\item[a)]
	For some $\bar c>0$ and all $t>0$:
	\begin{align*}
		\sup_{y\in\cB[\delta/h]}\norm{(\mathrm{id}-\pi_\delta)S_0(t)\Delta^\ell\shift_y\FuncZ}_{L^p(\R^d)}
			\lesssim t^{-\ell-\frac{d}{2q}+\frac{d-1}{2p}}e^{-\bar c\delta^{-2}t^{-1}}\normSmall{\FuncZ}_{L^1(\R^d)}.
	\end{align*}
	In particular, if $p\geq 2$,
	\begin{align*}
		\sup_{0\leq t\leq T}\sup_{y\in\cB[\delta/h]}\norm{(\mathrm{id}-\pi_\delta)S_0(t)\Delta^\ell\shift_y\FuncZ}_{L^p(\R^d)}
			\lesssim \delta^{2\ell+\frac{d}{q}-\frac{d-1}{p}}e^{-\bar c\delta^{-2}T^{-1}}\normSmall{\FuncZ}_{L^1(\R^d)}.
	\end{align*}
	
		\item[b)]
	\begin{align*}
		\sup_{0\leq t\leq T}\sup_{y\in\cB[\delta/h]}\norm{(\pi_\delta-\bar\pi_\delta)S_0(t)\Delta^\ell\shift_y\FuncZ}_{L^p(\cD_\delta)}
			\lesssim \delta^{2\ell+\frac{d}{q}}e^{-\bar c\delta^{-2}T^{-1}}\normSmall{\FuncZ}_{L^1(\R^d)}.
	\end{align*}
	\end{enumerate}
\end{lemma}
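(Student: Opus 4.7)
Both estimates reduce to Gaussian tail bounds on the heat kernel $q_\mu$ with $\mu = t\vartheta(x_0)$, exploiting the key geometric observation that the support of $\shift_y\FuncZ$ lies in $\cB^*[\delta]$ and is therefore separated from $\R^d \setminus \cD_\delta$ by a distance $\gtrsim \delta^{-1}\mathrm{dist}(\cB^*, \partial \cD)$, because the rescaling inflates distances by a factor $\delta^{-1}$.

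For part (a), I would represent $S_0(t)\Delta^\ell \shift_y \FuncZ = (\Delta^\ell q_\mu) * \shift_y \FuncZ$ and combine the pointwise bound $|\Delta^\ell q_\mu(u)| \lesssim \mu^{-\ell - d/2}(1 + |u|^2/\mu)^\ell e^{-|u|^2/(4\mu)}$ with the splitting $e^{-|u|^2/(4\mu)} \leq e^{-c_0^2\delta^{-2}/(8\mu)} e^{-|u|^2/(8\mu)}$ for $|u| \geq c_0 \delta^{-1}$, which pulls out the factor $e^{-\bar c\delta^{-2}/t}$. Young's inequality $\|f*g\|_{L^p}\leq\|f\|_{L^p}\|g\|_{L^1}$ applied to $f = \Delta^\ell q_\mu \mathbbm 1_{|u|\geq c_0\delta^{-1}}$ reduces the estimate to a tail $L^p$-norm, which via spherical coordinates becomes $\int_R^\infty r^{d-1} e^{-p r^2/(8\mu)}\,dr \sim R^{d-2} \mu e^{-pR^2/(8\mu)}$ with $R = c_0\delta^{-1}$; the codimension-one surface factor $R^{d-2}$ is exactly what encodes the $(d-1)/(2p)$ refinement in the claimed $t$-exponent. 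The ``in particular'' statement, i.e.\ the sup over $t \in (0,T]$, is then extracted via the elementary inequality $\sup_{t \leq T} t^{-a} e^{-b/t} \lesssim b^{-a} e^{-b/(2T)}$ applied with $a = \ell + d/(2q) - (d-1)/(2p)$ and $b = \bar c \delta^{-2}$.

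For part (b), I would use that by definition $(\pi_\delta - \bar\pi_\delta)\psi = \FuncPotential$ solves the homogeneous problem $\Delta_{\vartheta_\delta}\FuncPotential = 0$ on $\cD_\delta$ with boundary data $\psi|_{\partial \cD_\delta}$, applied to $\psi = S_0(t)\Delta^\ell \shift_y\FuncZ$. The maximum principle gives $\|\FuncPotential\|_{L^\infty(\cD_\delta)} \leq \|\psi\|_{L^\infty(\partial \cD_\delta)}$, and the right-hand side is controlled by the pointwise version of the Gaussian tail bound from (a) specialized to $x \in \partial\cD_\delta$, yielding $\|\psi\|_{L^\infty(\partial\cD_\delta)} \lesssim t^{-\ell-d/2} e^{-\bar c\delta^{-2}/t}\|\FuncZ\|_{L^1}$. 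Converting to $L^p$ via the trivial embedding $\|\FuncPotential\|_{L^p(\cD_\delta)} \leq |\cD_\delta|^{1/p}\|\FuncPotential\|_{L^\infty}$ contributes a factor $\delta^{-d/p}$, and the same $t$-supremum trick as above produces the claimed $\delta$-exponent $-d/p + 2(\ell + d/2) = 2\ell + d/q$.

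The main obstacle is the careful bookkeeping in part (a): extracting the refined $t$-exponent $-\ell - d/(2q) + (d-1)/(2p)$, rather than the cruder $-\ell - d/(2q)$ that one obtains by simply replacing $\int_{|u|\geq c_0\delta^{-1}}$ with $\int_{\R^d}$ in the Gaussian integral, requires the sharp tail asymptotics for $\int_R^\infty r^{d-1} e^{-pr^2/(8\mu)}\,dr$. Part (b) is then essentially a corollary of (a): once the pointwise tail bound is in hand, the maximum principle together with the volume estimate $|\cD_\delta| \lesssim \delta^{-d}$ delivers everything needed.
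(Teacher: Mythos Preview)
Your proposal is correct and follows essentially the same route as the paper: for (a), represent $S_0(t)\Delta^\ell\shift_y\FuncZ$ as a convolution with $\Delta^\ell q_\mu$, exploit the separation $\mathrm{dist}(\cB^*[\delta],\R^d\setminus\cD_\delta)\gtrsim\delta^{-1}$ to restrict to the Gaussian tail, and integrate in spherical coordinates; for (b), apply the maximum principle to the harmonic correction $\FuncPotential$ and convert the resulting $L^\infty$-bound to $L^p$ via $|\cD_\delta|^{1/p}\lesssim\delta^{-d/p}$. The only real difference is cosmetic: the paper takes a pointwise $\sup$ over $z\in\mathrm{supp}(\FuncZ)$ before integrating in $x$, whereas you package the same estimate via Young's inequality applied to the truncated kernel.

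One minor bookkeeping correction: your Young-plus-tail-asymptotic computation actually yields the exponent $t^{-\ell-d/(2q)}$, not the stated $t^{-\ell-d/(2q)+(d-1)/(2p)}$. Tracing your own numbers, $\mu^{-\ell p-pd/2}\cdot\mu R^{d-2}$ gives, after the $p$-th root and absorbing $R^{d-2}=\delta^{-(d-2)}$ into the exponential, a net $t$-power of $-\ell-d/2+d/(2p)=-\ell-d/(2q)$; the $R^{d-2}$ factor contributes $(d-2)/(2p)$, not $(d-1)/(2p)$. This is harmless, however: since $t^{-(d-1)/(2p)}e^{-c\delta^{-2}/t}=\delta^{(d-1)/p}(\delta^{-2}/t)^{(d-1)/(2p)}e^{-c\delta^{-2}/t}\lesssim e^{-c'\delta^{-2}/t}$ uniformly in $\delta$ and $t$, your bound implies the stated one with a slightly smaller $\bar c$. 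The ``in particular'' statement and part (b) then follow exactly as you outline.
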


\begin{remark}
	By a standard density argument, the left-hand side of all panels appearing in Lemma \ref{lem:approx:domain} can be defined for $\FuncZ\in L^1(\R^d)$.
\end{remark}

\begin{proof}[Proof of Lemma \ref{lem:approx:domain}]
	First note that with Lemma \ref{lem:bound:properties:heatkernel} a), for $t>0$,
	\begin{align*}
		\abs{S_0(t)\Delta^\ell\shift_y\FuncZ(x)}
			&= \abs{(\Delta^\ell q_t*\shift_y \FuncZ)(x)} = \abs{\int_{\R^d}\Delta^\ell q_t(x-y-z) \FuncZ(z)\diff z} \\
			&\leq \sup_{z\in\mathrm{supp}(\FuncZ)} \abs{\Delta^\ell q_t(x-y-z)}\int_{\R^d}\absSmall{\FuncZ(z)}\diff z \\
			&\lesssim t^{-\ell}\sup_{z\in\mathrm{supp}(\FuncZ)} q_{2t}(x-y-z)\int_{\R^d}\absSmall{\FuncZ(z)}\diff z.
	\end{align*}
	As a last preparation, we write $C_*:=\mathrm{dist}(\partial\cD,\cB^*)>0$.
	
	\begin{enumerate}
		
		\item[a)]
		Let $R$ be the radius of $\cB^*$.
	For constants $c_i>0$, any unit vector $e\in S^{d-1}$, and taking into account $y\in\cB[\delta/h]$,
	\begin{align*}
		\norm{(\mathrm{id}-\pi_\delta)S_0(t)\Delta^\ell\shift_y\FuncZ}_{L^p(\R^d)}^p
			\hspace{-3cm}&\hspace{3cm}= \int_{\R^d\backslash\cD_\delta}\abs{S_0(t)\Delta^\ell\shift_y\FuncZ(x)}^p\diff x \\
			&\lesssim t^{-\ell p}\normSmall{\FuncZ}_{L^1(\R^d)}^p\int_{\R^d\backslash\cD_\delta}\sup_{z\in\mathrm{supp}(\FuncZ),y\in\cB[\delta/h]}q_{2t}(x-y-z)^p\diff x \\
			&\leq t^{-\ell p}\normSmall{\FuncZ}_{L^1(\R^d)}^p\int_{\{\abs{x}\geq (R+C_*)\delta^{-1}\}}\sup_{u\in\cB^*[\delta]}q_{2t}(x-u)^p\diff x \\
			&\leq t^{-\ell p}\normSmall{\FuncZ}_{L^1(\R^d)}^p\abs{S^{d-1}}\int_{(R+C_*)\delta^{-1}}^\infty q_{2t}((r-R\delta^{-1})e)^pr^{d-1}\diff r \\
			&\lesssim t^{-\ell p}\normSmall{\FuncZ}_{L^1(\R^d)}^p\int_{C_*\delta^{-1}}^\infty q_{2t}(re)^p(r+R\delta^{-1})^{d-1}\diff r \\
			&\lesssim \normSmall{\FuncZ}_{L^1(\R^d)}^pt^{-\ell p+d/2-pd/2}\int_{C_*\delta^{-1}}^\infty t^{-d/2}e^{-\frac{pr^2}{8t}}(r+R\delta^{-1})^{d-1}\diff r \\
			&\lesssim \normSmall{\FuncZ}_{L^1(\R^d)}^pt^{-\ell p+ d/2-pd/2}\int_{C_*\delta^{-1}t^{-1/2}}^\infty e^{-\frac{p}{8}r^2}(r\sqrt{t}+R\delta^{-1})^{d-1}\diff r \\
			&\lesssim \normSmall{\FuncZ}_{L^1(\R^d)}^pt^{-\ell p+d/2-pd/2}(t^{(d-1)/2}+(R\delta^{-1})^{d-1})e^{-c_1\delta^{-2}t^{-1}} \\
			&\lesssim \normSmall{\FuncZ}_{L^1(\R^d)}^pt^{-\ell p+d/2-pd/2}(t^{(d-1)/2}+(Rt^{1/2})^{d-1})e^{-c_2\delta^{-2}t^{-1}},
	\end{align*}
	which implies the first claim.
	For the second statement, use that for $\alpha\geq 0$, $$(\delta^{-2}t^{-1})^\alpha e^{-c_2\delta^{-2}t^{-1}}\lesssim e^{-c_3\delta^{-2}t^{-1}},$$ and further $e^{-c_3\delta^{-2}t^{-1}}\leq e^{-c_3\delta^{-2}T^{-1}}$.
	The case $t=0$ is trivial
	as $\shift_y\varphi$ has compact support in $\cD$.
		
		\item[b)]
	
	First observe that for any $\FuncY\in C^\infty(\R^d)$,
	\begin{align}\label{eq:approx:boundary:maximum-applied}
		\norm{(\pi_\delta-\bar\pi_\delta)\FuncY}_{L^p(\cD_\delta)} \lesssim \delta^{-d/p}\sup_{z\in\partial\cD_\delta}\abs{\FuncY(z)}.
	\end{align}
	Indeed, with $\FuncPotential\in C(\bar\cD)\cap C^2(\cD)$ being the solution of
	\eqref{eq:HarmonicPotential} with $\FuncZ=\FuncY$ (see \cite[Theorem 6.13]{GilbargTrudinger2001}),
	the maximum principle (see e.g. \cite[Theorem 3.5]{GilbargTrudinger2001}) implies that
	\begin{align*}
		\norm{(\pi_\delta-\bar\pi_\delta)\FuncY}_{L^p(\cD_\delta)}
			&= \norm{\FuncPotential}_{L^p(\cD_\delta)} \lesssim \delta^{-d/p}\sup_{z\in\cD_\delta}\abs{\FuncPotential(z)} \leq \delta^{-d/p}\sup_{z\in\partial\cD_\delta}\abs{\FuncPotential(z)},
	\end{align*}
	proving \eqref{eq:approx:boundary:maximum-applied}.
	Now, for $t>0$, set $\FuncY=S_0(t)\Delta^\ell\shift_y\FuncZ$.
	We get
	\begin{align*}
		\sup_{x\in\partial\cD_\delta}\abs{S_0(t)\Delta^\ell\shift_y\FuncZ(x)}
			&\lesssim t^{-\ell}\sup_{x\in\partial\cD_\delta}\sup_{z\in\mathrm{supp}(\FuncZ)}q_{2t}(x-y-z)\normSmall{\FuncZ}_{L^1(\R^d)}.
	\end{align*}
	Taking further the supremum over $y\in\cB[\delta/h]$, we see that
	\begin{align*}
		\sup_{y\in\cB[\delta/h]}\norm{(\pi_\delta-\bar\pi_\delta)S_0(t)\Delta^\ell\shift_y\FuncZ}_{L^p(\cD_\delta)} \hspace{-5cm} & \\
			&\lesssim \delta^{-d/p}t^{-\ell}\normSmall{\FuncZ}_{L^1(\R^d)}\sup_{x\in\partial\cD_\delta, z\in\mathrm{supp}(\FuncZ),y\in\cB[\delta/h]}q_{2t}(x-y-z) \\
			&\lesssim \delta^{-d/p}t^{-\ell-d/2}e^{-\frac{C_*^2\delta^{-2}}{8t}}\normSmall{\FuncZ}_{L^1(\R^d)} \\
			&\lesssim \delta^{2\ell+d-d/p}e^{-\bar c\delta^{-2}t^{-1}}\normSmall{\FuncZ}_{L^1(\R^d)} \\
			&\lesssim \delta^{2\ell+d/q}e^{-\bar c\delta^{-2}T^{-1}}\normSmall{\FuncZ}_{L^1(\R^d)}.
	\end{align*}
	Again, the case $t=0$ is included trivially
	as $\shift_y\varphi$ has compact support in $\cD$.
		
	\end{enumerate}
\end{proof}

\begin{proof}[Proof of Theorem \ref{thm:approx:semigroup}] \label{proof:approx:semigroup} \
	\begin{enumerate}
		\item[a)]
	Use Lemma \ref{lem:approx:semigroup} and bound the terms resulting from $R_1$ - $R_5$ separately.
	First, from $R_1$ we obtain
	\begin{align*}
		\scp{\Delta^{\ell_1}\shift_{y_1}\FuncZ_1}{(\pi_\delta-\mathrm{id})S_0(t)\Delta^{\ell_2}\shift_{y_2}\FuncZ_2}_0 \hspace{-3cm} & \\
			&= \scp{(\pi_\delta-\mathrm{id})\Delta^{\ell_1}\shift_{y_1}\FuncZ_1}{S_0(t)\Delta^{\ell_2}\shift_{y_2}\FuncZ_2}_0 = 0
	\end{align*}
	as the support of $\shift_{y_1}\FuncZ_1$ is contained in $\cD_\delta$.
	The term containing $R_2$ vanishes for the same reason, as $\mathrm{supp}(\shift_{y_2}\FuncZ_2)\subset\cD_\delta$. For $R_3$,
	\begin{align*}
		\sup_{y_1, y_2\in\cB[\delta/h]}\abs{\scp{\Delta^{\ell_1}\shift_{y_1}\FuncZ_1}{(\bar\pi_\delta-\pi_\delta)S_0(t)\Delta^{\ell_2}\shift_{y_2}\FuncZ_2}_0} \hspace{-6cm} & \\
			&\lesssim \norm{\FuncZ_1}_{W^{2\ell_1, q}(\R^d)}\sup_{y_2\in\cB[\delta/h]}\norm{(\bar\pi_\delta-\pi_\delta)S_0(t)\Delta^{\ell_2}\shift_{y_2}\FuncZ_2}_{L^p(\bar\cD_\delta)} \\
			&\lesssim \delta^{2\ell_2+d/q}e^{-\bar c\delta^{-2}t^{-1}}\norm{\FuncZ_1}_{W^{2\ell_1, q}(\R^d)}\norm{\FuncZ_2}_{L^1(\R^d)} \\
			&\lesssim \delta^{2\ell_2+d/q}e^{-\bar c\delta^{-2}t^{-1}}\norm{\FuncZ_1}_{W^{2\ell_1, 1}(\R^d)\cap W^{2\ell_1, p}(\R^d)}\norm{\FuncZ_2}_{L^1(\R^d)},
	\end{align*}
	where we have used Lemma \ref{lem:approx:domain} b) (with $T=t$) and interpolation of $L^p$ spaces.
	Similarly, using $\ell_1\geq 2$ for $\alpha>1$, the term resulting from $R_4$ yields the bound
	\begin{align*}
		\sup_{y_1, y_2\in\cB[\delta/h]}\abs{\int_0^t\scp{\Delta^{\ell_1}\shift_{y_1}\FuncZ_1}{S_\delta(t-s)(\pi_\delta-\bar\pi_\delta)S_0(s)\Delta^{\ell_2+1}\shift_{y_2}\FuncZ_2}_0\diff s} \hspace{-11cm} & \\
			&\lesssim \sup_{y_1, y_2\in\cB[\delta/h]}\int_0^t\norm{S_\delta(t-s)\Delta^{\ell_1}\shift_{y_1}\FuncZ_1}_{L^{p}(\cD_\delta)} \\
			&\hspace{5cm} \times\norm{(\bar\pi_\delta-\pi_\delta)S_0(s)\Delta^{\ell_2+1}\shift_{y_2}\FuncZ_2}_{L^q(\bar\cD_\delta)}\diff s \\
			&\lesssim \delta^{2(\ell_2+1)+d/p}e^{-\bar c\delta^{-2}t^{-1}}\norm{\FuncZ_2}_{L^1(\R^d)} \\
			&\hspace{3cm} \times\sup_{y_1\in\cB[\delta/h]}\int_0^t\norm{S_\delta(t-s)\Delta^{\ell_1}\shift_{y_1}\FuncZ_1}_{L^{p}(\cD_\delta)}\diff s \\
			&\lesssim \delta^{2(\ell_2+1)+d/p}e^{-\bar c\delta^{-2}t^{-1}}\norm{\FuncZ_2}_{L^1(\R^d)}\sup_{y_1\in\cB[\delta/h]}\norm{\shift_{y_1}\FuncZ_1}_{\bar W^{2\ell_1, 1}_\alpha\cap \bar W^{2\ell_1, p}_\alpha} \\
			&\hspace{3cm} \times\int_0^t1\wedge(t-s)^{-\alpha/2-d/(2q)}\diff s \\
			&\lesssim \delta^{2(\ell_2+1)+d/p}e^{-\bar c\delta^{-2}t^{-1}}\norm{\FuncZ_2}_{L^1(\R^d)}(h\delta^{-1})^\alpha\norm{\FuncZ_1}_{\bar W^{2\ell_1, 1}_\alpha\cap \bar W^{2\ell_1, p}_\alpha},
	\end{align*}
	where the integral is bounded due to $d>(2-\alpha)q$.
	Consider the last remaining term $R_5$:
	Noting that $\ell_2+d/(2p)-1/2>0$ as $\ell_2\geq 1$,
	and using Lemma \ref{lem:bias:Laplacian},
	together with the semigroup bounds from Lemma \ref{lem:bound:semigroup:heatkernel} and Lemma \ref{lem:bound:semigroup:uniform}, and the shift operator bounds from Lemma \ref{lem:bound:space:Sobolev},
	\begin{align*}
		\sup_{y_1, y_2\in\cB[\delta/h]}\abs{\int_0^t\scp{\Delta^{\ell_1}\shift_{y_1}\FuncZ_1}{S_\delta(t-s)(\Delta_{\vartheta_\delta}\pi_\delta - \pi_\delta\Delta_{\vartheta_0})S_0(s)\Delta^{\ell_2}\shift_{y_2}\FuncZ_2}_0\diff s} \hspace{-12cm} & \\
			&\lesssim \sup_{y_1, y_2\in\cB[\delta/h]}\int_0^t\norm{S_\delta(t-s)\Delta^{\ell_1}\shift_{y_1}\FuncZ_1}_{L^{p}(\cD_\delta)} \\
			&\hspace{5cm} \times\norm{(\Delta_{\vartheta_\delta}\pi_\delta - \pi_\delta\Delta_{\vartheta_0})S_0(s)\Delta^{\ell_2}\shift_{y_2}\FuncZ_2}_{L^q(\cD_\delta)}\diff s \\
			&\lesssim \delta\sup_{y_1, y_2\in\cB[\delta/h]}\int_0^t\norm{S_\delta(t-s)\Delta^{\ell_1}\shift_{y_1}\FuncZ_1}_{L^{p}(\cD_\delta)}\norm{S_0(s)\Delta^{\ell_2}\shift_{y_2}\FuncZ_2}_{\bar W^{2, q}_{1}}\diff s \\
			&\lesssim \delta\sup_{y_1, y_2\in\cB[\delta/h]}\norm{\shift_{y_1}\FuncZ_1}_{\bar W^{2\ell_1, 1}_\alpha\cap \bar W^{2\ell_1, p}_\alpha}\norm{\shift_{y_2}\FuncZ_2}_{\bar W^{2, 1}_{1}\cap\bar W^{2+2\ell_2, q}_{1}} \\
			&\quad\quad\quad\quad \times \int_0^t(1\wedge(t-s)^{-\alpha/2-d/(2q)})(1\wedge s^{-\ell_2-d/(2p)+1/2})\diff s \\
			&\lesssim \delta((t\wedge t^{1-\alpha/2-d/(2q)}) + (t\wedge t^{3/2-\ell_2-d/(2p)}))\\
			&\quad\quad\quad\quad \times \sup_{y_1, y_2\in\cB[\delta/h]}\norm{\shift_{y_1}\FuncZ_1}_{\bar W^{2\ell_1, 1}_\alpha\cap \bar W^{2\ell_1, p}_\alpha}\norm{\shift_{y_2}\FuncZ_2}_{\bar W^{2, 1}_{1}\cap\bar W^{2+2\ell_2, q}_{1}} \\
			&\lesssim \delta((t\wedge t^{1-\alpha/2-d/(2q)}) + (t\wedge t^{3/2-\ell_2-d/(2p)}))(h\delta^{-1})^{\alpha+1} \\
			&\hspace{6.5cm} \times\norm{\FuncZ_1}_{\bar W^{2\ell_1, 1}_\alpha\cap \bar W^{2\ell_1, p}_\alpha}\norm{\FuncZ_2}_{\bar W^{2, 1}_{1}\cap\bar W^{2+2\ell_2, q}_{1}}.
	\end{align*}

		\item[b)] Again, bound the different terms from Lemma \ref{lem:approx:semigroup}, this time directly in $L^p$-norm.
		$R_1$, $R_2$ and $R_3$ together give (by Lemma \ref{lem:approx:domain}) a bound of the order $$\delta^{2\ell+d/q-(d-1)/p}e^{-\bar c\delta^{-2}T^{-1}}\normSmall{\FuncZ}_{L^1(\R^d)}.$$
		Next, $R_4$ yields a bound of the order $$T\delta^{2(\ell+1)+d/q-(d-1)/p}e^{-\bar c\delta^{-2}T^{-1}}\normSmall{\FuncZ}_{L^1(\R^d)}.$$
		$R_5$ is treated as in a),
		resulting in a bound of the order $$hC_T\normSmall{\FuncZ}_{\bar W^{2, 1}_1\cap \bar W^{2+2\ell, p}_1}.$$
		In case $\vartheta$ is constant, $\Delta_{\vartheta_\delta}\pi_\delta - \pi_\delta\Delta_{\vartheta_0}=0$, so in this case even $R_5=0$,
		and the approximation error from the last panel does not appear.
		
	\end{enumerate}
	
\end{proof}

\section{Semigroup bounds}

In this section, we gather various bounds for the heat semigroups on bounded and unbounded domains. The exposition in this section is based on \cite[Appendix A.2]{AltmeyerReiss2021}.
We start with recalling some well-known contraction properties of the heat kernel, see e.g. \cite[Chapter 9.2]{Friedman2008} for similar results for more general parabolic equations.

\begin{lemma}[heat kernel bounds] \label{lem:bound:properties:heatkernel}
	Let $\ell\in\N_0$ and $\alpha\geq 0$.
	\begin{enumerate}
		
		\item[a)] For $t>0$, $x\in\R^d$,
		\begin{align}
			\abs{\abs{x}^\alpha\Delta^\ell q_t(x)}\lesssim t^{\frac{\alpha}{2}-\ell}q_{2t}(x).
		\end{align}
		
		\item[b)] For $p,q\geq 1$ with $1=1/p+1/q$,
		\begin{align}
			\norm{\abs{x}^\alpha\Delta^\ell q_t}_{L^p(\R^d)} \lesssim t^{\frac{\alpha}{2}-\ell-\frac{d}{2q}}.
		\end{align}
		
	\end{enumerate}
\end{lemma}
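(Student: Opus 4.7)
The whole statement is a scaling computation with Gaussian decay, so part (b) will follow from part (a) by a one-line integration. I would therefore concentrate on (a) and structure the proof around the explicit form $q_t(x)=(4\pi t)^{-d/2}\exp(-|x|^2/(4t))$ together with the parabolic self-similarity $q_t(x)=t^{-d/2}\bar q(x/\sqrt t)$, where $\bar q(y)=(4\pi)^{-d/2}e^{-|y|^2/4}$.

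\emph{Step 1 (reduction to a fixed Gaussian).} Differentiating under the scaling yields
\[
\Delta^\ell q_t(x)=t^{-d/2-\ell}(\Delta^\ell\bar q)(x/\sqrt t),
\]
and induction on $\ell$ shows that $\Delta^\ell\bar q(y)=P_\ell(y)\bar q(y)$ for some polynomial $P_\ell$ of degree $2\ell$ (a rescaled Hermite-type polynomial). Hence
\[
|x|^\alpha|\Delta^\ell q_t(x)|=t^{\alpha/2-\ell}\,t^{-d/2}\,|y|^\alpha |P_\ell(y)|\,\bar q(y),\qquad y:=x/\sqrt t.
\]

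\emph{Step 2 (absorbing the polynomial into a wider Gaussian).} Since $|y|^\alpha|P_\ell(y)|e^{-|y|^2/8}$ is uniformly bounded on $\R^d$ by some constant $C_{\alpha,\ell}$, and $\bar q(y)=e^{-|y|^2/8}\cdot(4\pi)^{-d/2}e^{-|y|^2/8}$, we obtain
\[
|y|^\alpha|P_\ell(y)|\bar q(y)\lesssim e^{-|y|^2/8}=\tfrac{(8\pi)^{d/2}}{(4\pi)^{d/2}}\cdot (4\pi)^{-d/2}e^{-|x|^2/(8t)}.
\]
Recognising $(8\pi t)^{-d/2}e^{-|x|^2/(8t)}=q_{2t}(x)$ and reinserting the factor $t^{-d/2}$ gives
\[
t^{-d/2}|y|^\alpha |P_\ell(y)|\bar q(y)\lesssim q_{2t}(x),
\]
which combined with Step 1 yields $(a)$.

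\emph{Step 3 ($L^p$-bound from the pointwise bound).} Raising the estimate of (a) to the $p$-th power and integrating reduces the claim to $\|q_{2t}\|_{L^p(\R^d)}\lesssim t^{-d/(2q)}$. A direct Gaussian computation shows
\[
\|q_{2t}\|_{L^p(\R^d)}=(8\pi t)^{-d/2}\bigl(8\pi t/p\bigr)^{d/(2p)}\cdot C=C_p\,t^{-d/2(1-1/p)}=C_p\,t^{-d/(2q)},
\]
since $1/q=1-1/p$. Combining with (a) yields (b).

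\emph{Main obstacle.} There is no serious obstacle; the only step requiring care is verifying inductively that $\Delta^\ell\bar q=P_\ell\bar q$ with $\deg P_\ell=2\ell$ (so that the polynomial $|y|^\alpha P_\ell(y)$ indeed gets absorbed into the exponential $e^{-|y|^2/8}$), and keeping track of the dimensional constants when passing from $q_t$ to $q_{2t}$. Both are elementary.
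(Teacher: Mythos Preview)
Your proposal is correct and follows essentially the same approach as the paper: both arguments reduce via parabolic scaling to the fixed Gaussian, observe inductively that $\Delta^\ell$ applied to the Gaussian yields a polynomial times the Gaussian, absorb the polynomial factor into a Gaussian with doubled variance, and then deduce (b) from (a) by a direct $L^p$-computation of the Gaussian norm. The only cosmetic difference is that the paper works with the standard normal density $q$ and the scaling $q_t(x)=(2t)^{-d/2}q((2t)^{-1/2}x)$, and records the polynomial as $p_\ell(|x|^2)$ with $\deg p_\ell\le\ell$, whereas you use $\bar q(y)=(4\pi)^{-d/2}e^{-|y|^2/4}$ and a polynomial $P_\ell(y)$ of degree $2\ell$; these are equivalent formulations.
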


\begin{proof} \
	\begin{enumerate}
		
		\item[a)] An inductive argument shows that for $\ell\in\N_0$,
		\begin{align}
			\Delta^\ell q(x) = q(x)p_\ell(\abs{x}^2)
		\end{align}
		with some polynomial $p_\ell$ of degree not larger than $\ell$, so
		\begin{align}
			\abs{\abs{x}^\alpha\Delta^\ell q(x)} &= (2\pi)^{-d/2}e^{-\abs{x}^2/4}(e^{-\abs{x}^2/4}\abs{x}^\alpha\absSmall{p_\ell(\abs{x}^2)})\lesssim q(x/\sqrt{2}),
		\end{align}
		and
		\begin{align*}
			(2t)^{\ell-\frac{\alpha}{2}}\abs{\abs{x}^\alpha\Delta^\ell q_t(x)} &= (2t)^{-d/2}\abs{\abs{\cdot}^\alpha\Delta^\ell q}((2t)^{-1/2}x) \\
				&\leq (2t)^{-d/2}q((2t)^{-1/2}x/\sqrt{2}) = 2^{d/2}q_{2t}(x).
		\end{align*}
		
		\item[b)] This follows from a) together with
		\begin{align*}
			\norm{q_t}_p &\lesssim t^{-\frac{d}{2}+\frac{d}{2p}}\left(\int_{\R^d}\frac{1}{(2t)^{d/2}}e^{-\frac{p\abs{x}^2}{4t}}\diff x\right)^\frac{1}{p} \lesssim t^{-\frac{d}{2q}}.
		\end{align*}
		Note that $\norm{q_t}_p\lesssim t^{-d/(2q)}$ is valid in the border cases $p,q\in\{1,\infty\}$, too.
	\end{enumerate}
\end{proof}

\begin{lemma}[semigroup bounds on $\R^d$] \label{lem:bound:semigroup:heatkernel}
	Let $p,q\geq 1$ with $1=1/p+1/q$, and $k,\ell\in\N_0$.
	\begin{enumerate}
		
		\item[a)]
		For $\FuncZ\in W^{k, 1}(\R^d)\cap W^{k+2\ell, p}(\R^d)$ and $t>0$,
		\begin{align}
			\norm{e^{t\Delta}\Delta^\ell \FuncZ}_{W^{k, p}(\R^d)}\leq C(1\wedge t^{-\ell-d/(2q)})\norm{\FuncZ}_{W^{k,1}(\R^d)\cap W^{k+2\ell,p}(\R^d)}.
		\end{align}
		
		\item[b)] Let $\alpha\geq 0$. For $\FuncZ\in \bar W_\alpha^{0,1}\cap\bar W_\alpha^{2\ell,p}$:
		\begin{align}
			\norm{\abs{x}^\alpha e^{t\Delta}\Delta^\ell \FuncZ}_{L^p(\R^d)} \lesssim (1\wedge t^{-\ell-d/(2q)})(1\vee t^{\alpha/2})\norm{\FuncZ}_{\bar W_\alpha^{0,1}\cap\bar W_\alpha^{2\ell,p}},
		\end{align}
		and in particular, for $\FuncZ\in \bar W_\alpha^{k,1}\cap\bar W_\alpha^{k+2\ell,p}$:
		\begin{align}
			\norm{e^{t\Delta}\Delta^\ell \FuncZ}_{\bar W^{k, p}_\alpha} \lesssim (1\wedge t^{-\ell-d/(2q)})(1\vee t^{\alpha/2})\norm{\FuncZ}_{\bar W_\alpha^{k,1}\cap\bar W_\alpha^{k+2\ell,p}}.
		\end{align}
		
	\end{enumerate}
\end{lemma}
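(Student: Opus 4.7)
The plan is to represent $e^{t\Delta}$ as convolution with the heat kernel $q_t$ and split into the two temporal regimes $t\leq 1$ and $t\geq 1$. In each regime, Young's convolution inequality combined with the pointwise and $L^p$ bounds from Lemma~\ref{lem:bound:properties:heatkernel} should yield the stated estimate. Throughout I would exploit that $e^{t\Delta}$, $\Delta^\ell$, and every $\partial_{\multiindex}$ commute on Schwartz functions, extending by density when needed.

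For (a), when $t\leq 1$ the factor $(1\wedge t^{-\ell-d/(2q)})$ equals one, so the target reduces to $\norm{\varphi}_{W^{k+2\ell,p}(\R^d)}$; this follows from $L^p$-contractivity of $e^{t\Delta}$: $\norm{\partial_{\multiindex}e^{t\Delta}\Delta^\ell\varphi}_{L^p}=\norm{e^{t\Delta}\partial_{\multiindex}\Delta^\ell\varphi}_{L^p}\leq\norm{\partial_{\multiindex}\Delta^\ell\varphi}_{L^p}$. When $t\geq 1$ I would move all derivatives onto $\varphi$ and invoke Young's inequality: $\norm{(\Delta^\ell q_t)*\partial_{\multiindex}\varphi}_{L^p}\leq\norm{\Delta^\ell q_t}_{L^p}\norm{\partial_{\multiindex}\varphi}_{L^1}\lesssim t^{-\ell-d/(2q)}\norm{\varphi}_{W^{k,1}(\R^d)}$ by Lemma~\ref{lem:bound:properties:heatkernel}(b). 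The two regimes combine to the claimed bound.

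For (b), the new ingredient is the weight $\abs{x}^\alpha$, which I would absorb via the elementary estimate $\abs{x}^\alpha\leq C_\alpha(\abs{x-y}^\alpha+\abs{y}^\alpha)$ inside the convolution integral, producing two pieces. When $t\leq 1$ I would use $e^{t\Delta}\Delta^\ell\varphi=q_t*\Delta^\ell\varphi$ and bound the two pieces by $\norm{\abs{\cdot}^\alpha q_t}_{L^1}\norm{\Delta^\ell\varphi}_{L^p}\lesssim t^{\alpha/2}\norm{\varphi}_{\bar W^{2\ell,p}_\alpha}$ and $\norm{q_t}_{L^1}\norm{\abs{\cdot}^\alpha\Delta^\ell\varphi}_{L^p}\leq\norm{\varphi}_{\bar W^{2\ell,p}_\alpha}$. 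When $t\geq 1$ I would use the complementary form $e^{t\Delta}\Delta^\ell\varphi=(\Delta^\ell q_t)*\varphi$ and bound the two pieces by $\norm{\abs{\cdot}^\alpha\Delta^\ell q_t}_{L^p}\norm{\varphi}_{L^1}\lesssim t^{\alpha/2-\ell-d/(2q)}\norm{\varphi}_{L^1}$ and $\norm{\Delta^\ell q_t}_{L^p}\norm{\abs{\cdot}^\alpha\varphi}_{L^1}\lesssim t^{-\ell-d/(2q)}\norm{\varphi}_{\bar W^{0,1}_\alpha}$. Since $1\leq t^{\alpha/2}$ for $t\geq 1$, both contributions fit inside the target rate $t^{\alpha/2-\ell-d/(2q)}$. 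The second panel of (b) then follows by applying the first panel to $\partial_{\multiindex}\varphi$ for each $\abs{\multiindex}\leq k$ and summing, since the weighted Sobolev norm is a sum over such derivatives.

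The main subtlety I anticipate is the choice of Young split in (b) for $t\geq 1$. The naive route through $\norm{\Delta^\ell q_t}_{L^1}\lesssim t^{-\ell}$ applied to $\abs{\cdot}^\alpha\varphi\in L^p$ loses the decisive $t^{-d/(2q)}$ decay and only matches the target when $\alpha\geq d/q$. Routing the second piece instead through $\norm{\Delta^\ell q_t}_{L^p}\norm{\abs{\cdot}^\alpha\varphi}_{L^1}$ recovers the correct rate for all $\alpha\geq 0$, at the price of using $\varphi\in L^1_\alpha$; this explains why the hypothesized norm in (b) contains the $\bar W^{0,1}_\alpha$-component rather than merely an unweighted $L^1$-bound.
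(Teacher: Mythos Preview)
Your proposal is correct and follows essentially the same approach as the paper. The only cosmetic difference is that the paper does not explicitly split into the regimes $t\le 1$ and $t\ge 1$: it records both Young-inequality estimates (the one through $\norm{q_t}_{L^1}\norm{\Delta^\ell\FuncZ}_{L^p}$ and the one through $\norm{\Delta^\ell q_t}_{L^p}\norm{\FuncZ}_{L^1}$) simultaneously and takes their minimum, which is precisely $(1\wedge t^{-\ell-d/(2q)})$; your regime split and the paper's minimum are equivalent formulations of the same idea.
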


\begin{proof} \
	\begin{enumerate}
		
		\item[a)] Let first $k=0$. With Young's inequality and Lemma \ref{lem:bound:properties:heatkernel} (with $\alpha=0$),
			\begin{align*}
				\norm{e^{t\Delta}\Delta^\ell \FuncZ}_{p}
					&= \norm{\Delta^\ell(q_t*\FuncZ)}_p \leq \norm{q_t}_1\norm{\Delta^\ell \FuncZ}_p\wedge\norm{\Delta^\ell q_t}_p\norm{\FuncZ}_1 \\
					&\lesssim (1\wedge t^{-\ell-d/(2q)})\norm{\FuncZ}_{L^1\cap W^{2\ell,p}}.
			\end{align*}
			Next, for arbitrary $k\in\N_0$, the $W^{k,p}$-norm is a sum of $L^p$-norms of weak partial derivatives of the argument, which commute with $e^{t\Delta}\Delta^\ell$. It suffices to apply the previous panel to $\partial_\alpha \FuncZ$, $\abs{\alpha}\leq k$.
		
		\item[b)]
		First note that
		\begin{align}
			\norm{\abs{x}^\alpha(\FuncX*\FuncY)}_p\lesssim \norm{(\abs{x}^\alpha \abs{\FuncX})*\abs{\FuncY}}_p + \norm{\abs{\FuncX}*(\abs{x}^\alpha \abs{\FuncY})}_p
		\end{align}
		for all $\FuncX,\FuncY:\R^d\rightarrow\R$ such that these norms are well-defined.
		Indeed,
		\begin{align*}
			\norm{\abs{x}^\alpha(\FuncX*\FuncY)}_p
				&= \left(\int_{\R^d}\abs{\int_{\R^d}\abs{x}^\alpha \FuncX(x-y)\FuncY(y)\diff y}^p\diff x\right)^\frac{1}{p} \\
				&\lesssim \left(\int_{\R^d}\abs{\int_{\R^d}\abs{x-y}^\alpha \abs{\FuncX(x-y)}\abs{\FuncY(y)}\diff y}^p\diff x\right)^\frac{1}{p} \\
				&\quad\quad\quad\quad + \left(\int_{\R^d}\abs{\int_{\R^d}\abs{\FuncX(x-y)}\abs{y}^\alpha\abs{\FuncY(y)}\diff y}^p\diff x\right)^\frac{1}{p} \\
				&= \norm{(\abs{x}^\alpha \abs{\FuncX})*\abs{\FuncY}}_p + \norm{\abs{\FuncX}*(\abs{x}^\alpha \abs{\FuncY})}_p
		\end{align*}
		and an analogous argument for $p=\infty$.
		With $\FuncX=\Delta^\ell q_{t}$ and $\FuncY=\FuncZ$,
		\begin{align*}
			\norm{\abs{x}^\alpha e^{t\Delta}\Delta^\ell \FuncZ}_p
				&= \norm{\abs{x}^\alpha(\FuncX*\FuncY)}_p \lesssim \norm{(\abs{x}^\alpha \abs{\FuncX})*\abs{\FuncY}}_p + \norm{\abs{\FuncX}*(\abs{x}^\alpha \abs{\FuncY})}_p \\
				&\lesssim \norm{\abs{x}^\alpha\Delta^\ell q_t}_p\norm{\FuncZ}_1 + \norm{\Delta^\ell q_t}_p\norm{\abs{x}^\alpha \FuncZ}_1 \\
				&\lesssim t^{-\ell-\frac{d}{2q}}(1\vee t^\frac{\alpha}{2})\norm{\FuncZ}_{\bar W^{0, 1}_\alpha}
		\end{align*}
		again by Lemma \ref{lem:bound:properties:heatkernel}, and with $\FuncX=q_t$ and $\FuncY=\Delta^\ell \FuncZ$,
		\begin{align*}
			\norm{\abs{x}^\alpha e^{t\Delta}\Delta^\ell \FuncZ}_p
				&\lesssim \norm{\abs{x}^\alpha q_t}_1\norm{\Delta^\ell \FuncZ}_p + \norm{q_t}_1\norm{\abs{x}^\alpha \Delta^\ell \FuncZ}_p \\
				&\lesssim (1\vee t^\frac{\alpha}{2})\norm{\FuncZ}_{\bar W^{2\ell, p}_\alpha}.
		\end{align*}
		These estimates imply the first claim. The second claim is a direct consequence.
	\end{enumerate}
\end{proof}

\begin{lemma}[semigroup bounds on bounded domains] \label{lem:bound:semigroup:uniform} \
	Let $\vartheta\in C^\beta(\bar\cD)$ for $\beta\leq 2$, and $p\geq 2$, $q\leq 2$ with $1/p+1/q=1$.
	Then we have for $T>0$, $\delta>0$, $0\leq t\leq T\delta^{-2}$:
	\begin{enumerate}
		\item[a)] If $\FuncZ\in \bar W^{2\ell, 1}_\alpha\cap\bar W^{2\ell, p}_\alpha$ with support in $\cD_\delta$,
		under either of the combinations
		\addtocounter{equation}{1}
		\begin{align}
			&\ell\geq 0 \quad\mathrm{and}\quad \alpha=0, \tag{\theequation a} \label{eq:bound:semigroup:uniform:a} \\
			&\ell\geq 1 \quad\mathrm{and}\quad 0\leq\alpha\leq 1, \tag{\theequation b} \label{eq:bound:semigroup:uniform:b} \\
			&\ell\geq 2 \quad\mathrm{and}\quad 0\leq\alpha\leq\beta, \tag{\theequation c} \label{eq:bound:semigroup:uniform:c}
		\end{align}
		it follows that
		\begin{align*}
			\norm{S_\delta(t)\Delta^\ell \FuncZ}_{L^p(\cD_\delta)} \lesssim (1\wedge t^{-\alpha/2-d/(2q)})\normSmall{\FuncZ}_{\bar W^{2\ell, 1}_\alpha\cap\bar W^{2\ell, p}_\alpha}.
		\end{align*}
		
		\item[b)] If $\FuncZ\in \bar W^{2\ell, 1}_\alpha\cap\bar W^{2\ell, p}_\alpha$ with support in $\cD_\delta$, under either of the combinations
		\addtocounter{equation}{1}
		\begin{align}
			&\ell\geq 1 \quad\mathrm{and}\quad 0\leq\alpha\leq 1, \tag{\theequation a} \label{eq:bound:semigroup:uniform:diff:a} \\
			&\ell\geq 2 \quad\mathrm{and}\quad 0\leq\alpha\leq\beta, \tag{\theequation b} \label{eq:bound:semigroup:uniform:diff:b}
		\end{align}
		it holds that
		\begin{align*}
			\norm{S_\delta(t)[\Delta_{\vartheta_\delta}-\Delta_{\vartheta_0}]\Delta^{\ell-1}\FuncZ}_{L^p(\cD_\delta)} &\lesssim (1\wedge t^{-\alpha/2-d/(2q)})\normSmall{\FuncZ}_{\bar W^{2\ell, 1}_\alpha\cap\bar W^{2\ell, p}_\alpha}.
		\end{align*}
		
		\item[c)] If $\FuncZ\in \bar W^{2, 1}_\alpha\cap\bar W^{2, p}_\alpha$ with support in $\cD_\delta$, where $0\leq\alpha\leq 1$,
		\begin{align*}
			\norm{S_\delta(t)[\Delta_{\vartheta_\delta}-\Delta_{\vartheta_0}]\FuncZ}_{L^p(\cD_\delta)} &\lesssim \delta^\alpha(1\wedge t^{-d/(2q)})\norm{\FuncZ}_{\bar W^{2, 1}_\alpha\cap\bar W^{2, p}_\alpha}.
		\end{align*}
		
		\item[d)] All bounds remain true if $S_\delta$ is replaced by $S_0$.
		
	\end{enumerate}
\end{lemma}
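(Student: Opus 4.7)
The strategy is to reduce these bounds on the Dirichlet semigroup $S_\delta$ to the explicit heat-kernel estimates on $\R^d$ of Lemma \ref{lem:bound:semigroup:heatkernel} via three standard tools: the $L^p$-contraction $\|S_\delta(t)\|_{L^p\to L^p}\leq 1$ of a symmetric Markov semigroup; the Aronson-type Gaussian upper bound for the Dirichlet heat kernel of a divergence-form operator with bounded, uniformly elliptic coefficients, yielding $|S_\delta(t)g(x)|\lesssim(q_{ct}*|g|)(x)$ for some $c=c(\underline{\vartheta},C)>0$ and hence the ultracontractivity $\|S_\delta(t)\|_{L^1\to L^p}\lesssim t^{-d/(2q)}$; and the analyticity of $S_\delta$ on $L^p(\cD_\delta)$, giving $\|A_\delta^k S_\delta(t)\|_{L^p\to L^p}\lesssim t^{-k}$. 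Part (d) is immediate: since $\vartheta_0$ is constant, $S_0$ is a time-rescaled free heat semigroup on $\R^d$, and the support assumption on $\FuncZ$ permits direct application of Lemma \ref{lem:bound:semigroup:heatkernel} after extending $\FuncZ$ by zero.

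Parts (b) and (c) reduce to part (a). For (c), Lemma \ref{lem:bias:Laplacian} provides the Taylor-expansion bound $\|[\Delta_{\vartheta_\delta}-\Delta_{\vartheta_0}]\FuncZ\|_{\bar W^{0,1}_0\cap\bar W^{0,p}_0}\lesssim\delta^\alpha\|\FuncZ\|_{\bar W^{2,1}_\alpha\cap\bar W^{2,p}_\alpha}$, and applying part (a) at $(\ell,\alpha)=(0,0)$ supplies the time decay $(1\wedge t^{-d/(2q)})$, matching the claim. For (b), the same lemma applied to $\Delta^{\ell-1}\FuncZ$ under the admissible combinations \eqref{eq:bound:semigroup:uniform:diff:a}--\eqref{eq:bound:semigroup:uniform:diff:b} produces the factor $\delta^\alpha$ on $\|[\Delta_{\vartheta_\delta}-\Delta_{\vartheta_0}]\Delta^{\ell-1}\FuncZ\|_{L^1\cap L^p}$, and composing with part (a) at $(0,0)$ gives $\delta^\alpha(1\wedge t^{-d/(2q)})$. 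This is absorbed into the target $(1\wedge t^{-\alpha/2-d/(2q)})$ by the elementary inequality $\delta^\alpha\lesssim t^{-\alpha/2}$ on $t\in[1,T\delta^{-2}]$, which is the decisive place where the assumption $t\leq T\delta^{-2}$ enters.

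The substance of the argument is therefore part (a). For $\alpha=0$, $L^p$-contraction handles $t\lesssim 1$ while Aronson domination plus ultracontractivity yields $\|S_\delta(t)\Delta^\ell\FuncZ\|_p\lesssim(1\wedge t^{-d/(2q)})\|\Delta^\ell\FuncZ\|_{L^1\cap L^p}$ for $t\gtrsim 1$, which matches \eqref{eq:bound:semigroup:uniform:a}. The main obstacle is extracting the additional $t^{-\alpha/2}$ decay for $\alpha>0$; this cannot follow from abstract $L^p$-analyticity alone, because it reflects spatial decay of the input rather than spectral gap. My plan is to exploit the vanishing-moment structure available for $\ell\geq 1$: since $\FuncZ$ has compact support and $\ell\geq 1$ (forced by \eqref{eq:bound:semigroup:uniform:b}--\eqref{eq:bound:semigroup:uniform:c}), integration by parts gives $\int\Delta^\ell\FuncZ=0$ and $\int y_i\Delta^\ell\FuncZ=0$. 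Transferring to $S_0$ via the Trotter--Kato comparison of Theorem \ref{thm:approx:semigroup} (whose error is itself $\cO(\delta^\alpha)\lesssim t^{-\alpha/2}$ in our range) and inserting a Taylor expansion of the free heat kernel $q_{ct}(x-y)$ in $y$ into the convolution representation of $S_0(t)\Delta^\ell\FuncZ$, the first non-vanishing term couples the $\lceil\alpha\rceil$-th derivative of $q_{ct}$ (of size $t^{-\lceil\alpha\rceil/2-d/(2q)}$ in $L^p$) with $|\int y^\beta\Delta^\ell\FuncZ|\lesssim\||\cdot|^\alpha\Delta^\ell\FuncZ\|_{L^1}$.

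The three admissible combinations \eqref{eq:bound:semigroup:uniform:a}--\eqref{eq:bound:semigroup:uniform:c} arise precisely from matching the number of vanishing moments that $\Delta^\ell\FuncZ$ possesses against the required weight $\alpha$, and from ensuring that the regularity hypothesis of Lemma \ref{lem:bias:Laplacian} (and of the Trotter--Kato bound) holds at the corresponding order: weight $\alpha\in[0,1]$ requires first-order Taylor in $\vartheta$ (hence $\ell\geq 1$), while $\alpha\in(1,\beta]$ demands the second-order expansion provided by $\beta$-H\"older regularity of $\nabla\vartheta$ (hence $\ell\geq 2$). The commutator errors from writing $\Delta^\ell=\vartheta(x_0)^{-\ell}\Delta_{\vartheta_0}^\ell=\vartheta(x_0)^{-\ell}(A_\delta+[\Delta_{\vartheta_0}-\Delta_{\vartheta_\delta}])^\ell$, which are needed to pass between the non-boundary-adapted operator $\Delta^\ell$ and the Dirichlet semigroup, each contribute a factor $\delta^\alpha\lesssim t^{-\alpha/2}$ by Lemma \ref{lem:bias:Laplacian} and again stay within the target.
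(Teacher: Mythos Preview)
Your reduction of (c) and of (b) case \eqref{eq:bound:semigroup:uniform:diff:a} to (a) at $(\ell,\alpha)=(0,0)$ via Lemma \ref{lem:bias:Laplacian} and the conversion $\delta^\alpha\lesssim T^{\alpha/2}t^{-\alpha/2}$ is exactly what the paper does, and your treatment of (a) at $\alpha=0$ via the Aronson domination (this is Lemma \ref{lem:AR21:35} in the paper) is also the same.

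The genuine gap is in your plan for (a) with $\alpha>0$. You propose to ``transfer to $S_0$ via the Trotter--Kato comparison of Theorem \ref{thm:approx:semigroup}''. But in this paper the proof of Theorem \ref{thm:approx:semigroup} \emph{uses} Lemma \ref{lem:bound:semigroup:uniform}: the $R_4$ and $R_5$ terms in Lemma \ref{lem:approx:semigroup} are controlled by invoking exactly the bounds you are trying to establish. So your argument is circular as written. The vanishing-moment route via Taylor expansion of the free heat kernel would indeed give the extra $t^{-\alpha/2}$ for $S_0$, but you have no independent way of passing from $S_\delta$ to $S_0$.

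The paper's mechanism is the one you relegate to your last sentence: write $\vartheta(x_0)\Delta\psi=\Delta_{\vartheta_\delta}\psi+[\Delta_{\vartheta_0}-\Delta_{\vartheta_\delta}]\psi$ with $\psi=\Delta^{\ell-1}\FuncZ$. The second summand is handled by the already-proven part (b), giving $(1\wedge t^{-\alpha/2-d/(2q)})$. For the first, since $\Delta_{\vartheta_\delta}=A_\delta$ on compactly supported functions, analyticity (Lemma \ref{lem:ACP:17}) gives $\|S_\delta(t)\Delta_{\vartheta_\delta}\psi\|_p\lesssim t^{-1}\|S_\delta(t/2)\psi\|_p$, and composing with the $\alpha=0$ ultracontractivity bound yields $t^{-1-d/(2q)}$, which dominates $t^{-\alpha/2-d/(2q)}$ for $\alpha\le 1$. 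This is a bootstrap entirely inside the lemma, with no appeal to Trotter--Kato. For case \eqref{eq:bound:semigroup:uniform:c} the same trick is iterated, but to push (b) up to $\alpha\in(1,\beta]$ one first splits off the linear term $\delta\Delta'_{(\nabla\vartheta)_0}\Delta^{\ell-1}\FuncZ$, rewrites it via Lemma \ref{lem:LaplaceDiff:properties} as $\Delta\chi$ with $\chi$ controlled in $\bar W^{2,1}_{\alpha-1}\cap\bar W^{2,p}_{\alpha-1}$, and applies the already-established (a) case \eqref{eq:bound:semigroup:uniform:b} at weight $\alpha-1$; you do not account for this step.
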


\begin{proof} \
	\begin{itemize}
		\item[a)] \emph{Case \eqref{eq:bound:semigroup:uniform:a}.}
		Lemma \ref{lem:AR21:35} implies
		\begin{align}
			\norm{S_{\delta}(t)\FuncY}_{L^p(\cD_\delta)}\lesssim e^{CT}\norm{S_0(ct)\abs{\FuncY}}_{L^p(\cD_\delta)}
		\end{align}
		for $\FuncY\in C(\bar \cD_\delta)$, thus for $\FuncY\in L^p(\cD_\delta)$ by a density argument. Now set $\FuncY=\Delta^\ell \FuncZ$ and apply Lemma \ref{lem:bound:semigroup:heatkernel} a) with $k=\ell=0$ to obtain
		\begin{align*}
			\norm{S_\delta(t)\Delta^\ell \FuncZ}_{L^p(\cD_\delta)}
				&\lesssim (1\wedge t^{-d/(2q)})\normSmall{\Delta^\ell \FuncZ}_{L^1(\R^d)\cap L^p(\R^d)} \\
				&\lesssim (1\wedge t^{-d/(2q)})\normSmall{\FuncZ}_{W^{2\ell, 1}(\R^d)\cap W^{2\ell, p}(\R^d)}.
		\end{align*}
		
		\item[c)] Apply a) with $\ell=0$ and $\alpha=0$ to $[\Delta_{\vartheta_\delta}-\Delta_{\vartheta_0}]\FuncZ$ and use Lemma \ref{lem:bias:Laplacian} in $L^1$ and $L^p$.
		
		\item[b)] \emph{Case \eqref{eq:bound:semigroup:uniform:diff:a}.}
		The case $\ell=1$ follows from c) via $\delta\leq T^{1/2}t^{-1/2}$, and for $\ell>1$, replace $\FuncZ$ by $\Delta^{\ell-1}\FuncZ$ and use $\normSmall{\Delta^{\ell-1}\FuncZ}_{\bar W^{2,1}_\alpha\cap\bar W^{2, p}_\alpha}\lesssim \norm{\FuncZ}_{\bar W^{2\ell,1}_\alpha\cap\bar W^{2\ell, p}_\alpha}$.
		
		\item[a)] \emph{Case \eqref{eq:bound:semigroup:uniform:b}.}
		Abbreviate $\FuncY:=\Delta^{\ell-1}\FuncZ$.
		First note that from Lemma \ref{lem:bias:Laplacian} with $\alpha=0$,
		\begin{align*}
			\norm{\Delta_{\vartheta_\delta}\FuncY}_{L^p(\cD_\delta)}
				&\leq \norm{[\Delta_{\vartheta_\delta}-\Delta_{\vartheta_0}]\FuncY}_{L^p(\cD_\delta)} + \norm{\Delta_{\vartheta_0}\FuncY}_{L^p(\cD_\delta)} \\
				&\lesssim \norm{\FuncY}_{\bar W^{2, p}_0} + \norm{\FuncY}_{W^{2, p}(\R^d)} \\
				&\lesssim \norm{\FuncY}_{W^{2, p}(\R^d)}.
		\end{align*}
		Next, by Lemma \ref{lem:ACP:17} and a) with $\ell=0$, $\alpha=0$,
		\begin{align*}
			\norm{S_\delta(t)\Delta_{\vartheta_\delta}\FuncY}_{L^p(\cD_\delta)}
				&\lesssim t^{-1}\norm{S_\delta(t/2)\FuncY}_{L^p(\cD_\delta)} \wedge \norm{\Delta_{\vartheta_\delta}\FuncY}_{L^p(\cD_\delta)} \\
				&\lesssim t^{-1-d/(2q)}\norm{\FuncY}_{L^1(\R^d)\cap L^p(\R^d)}\wedge \norm{\FuncY}_{W^{2, p}(\R^d)} \\
				&\lesssim (1\wedge t^{-1-d/(2q)})\norm{\FuncY}_{L^1(\R^d)\cap W^{2, p}(\R^d)}.
		\end{align*}
		Write $\Delta_{\vartheta_0}\FuncY = [\Delta_{\vartheta_0}-\Delta_{\vartheta_\delta}]\FuncY+\Delta_{\vartheta_{\delta}}\FuncY$, then with b):
		\begin{align*}
			\norm{S_\delta(t)\Delta^\ell \FuncZ}_{L^p(\cD_\delta)}
				&\lesssim \norm{S_\delta(t)[\Delta_{\vartheta_0}-\Delta_{\vartheta_\delta}]\FuncY}_{L^p(\cD_\delta)} + \norm{S_\delta(t)\Delta_{\vartheta_\delta}\FuncY}_{L^p(\cD_\delta)} \\
				&\hspace{-1.2cm}\lesssim (1\wedge t^{-\alpha/2-d/(2q)})\norm{\FuncZ}_{\bar W^{2\ell, 1}_\alpha\cap\bar W^{2\ell, p}_\alpha} + (1\wedge t^{-1-d/(2q)})\norm{\FuncY}_{L^1(\R^d)\cap W^{2, p}(\R^d)} \\
				&\hspace{-1.2cm}\lesssim (1\wedge t^{-\alpha/2-d/(2q)})\norm{\FuncZ}_{\bar W^{2\ell,1}_\alpha\cap\bar W^{2\ell, p}_\alpha}.
		\end{align*}
		
		\item[b)] \emph{Case \eqref{eq:bound:semigroup:uniform:diff:b}.}
		It suffices to consider the case $1<\alpha\leq\beta$. Again, abbreviate $\FuncY=\Delta^{\ell-1}\FuncZ$.
		Write $$[\Delta_{\vartheta_\delta}-\Delta_{\vartheta_0}]\FuncY=[\Delta_{\vartheta_\delta}-\Delta_{\vartheta_0}-\delta\Delta'_{(\nabla\vartheta)_0}]\FuncY + \delta\Delta'_{(\nabla\vartheta)_0}\FuncY$$ and apply a) and Lemma \ref{lem:bias:Laplacian} to the first term, using $\delta\leq T^{1/2}t^{-1/2}$:
		\begin{align*}
			\norm{S_\delta(t)[\Delta_{\vartheta_\delta}-\Delta_{\vartheta_0}-\delta\Delta'_{(\nabla\vartheta)_0}]\FuncY}_{L^p(\cD_\delta)} \hspace{-4cm} & \\
				&\lesssim (1\wedge t^{-d/(2q)})\norm{[\Delta_{\vartheta_\delta}-\Delta_{\vartheta_0}-\delta\Delta'_{(\nabla\vartheta)_0}]\FuncY}_{L^1(\R^d)\cap L^p(\R^d)} \\
				&\lesssim \delta^\alpha (1\wedge t^{-d/(2q)}) \norm{\FuncY}_{\bar W^{2, 1}_\alpha\cap\bar W^{2, p}_\alpha} \\
				&\lesssim (1\wedge t^{-\alpha/2-d/(2q)}) \norm{\FuncZ}_{\bar W^{2\ell, 1}_\alpha\cap\bar W^{2\ell, p}_\alpha},
		\end{align*}
		and with $\Delta'_{(\nabla\vartheta)_0}\FuncY=\Delta \FuncX$ from Lemma \ref{lem:LaplaceDiff:properties} b) with $\ell=1$,
		\begin{align*}
			\norm{S_\delta(t)\delta\Delta'_{(\nabla\vartheta)_0}\FuncY}_{L^p(\cD_\delta)}
				&= \delta\norm{S_\delta(t)\Delta \FuncX}_{L^p(\cD_\delta)} \\
				&\lesssim \delta(1\wedge t^{-(\alpha-1)/2-d/(2q)})\norm{\FuncX}_{\bar W^{2, 1}_{\alpha-1}\cap\bar W^{2, p}_{\alpha-1}} \\
				&\lesssim (1\wedge t^{-\alpha/2-d/(2q)})\normSmall{\Delta^{\ell-2}\FuncZ}_{\bar W^{4, 1}_\alpha\cap\bar W^{4, p}_\alpha} \\
				&\lesssim (1\wedge t^{-\alpha/2-d/(2q)})\normSmall{\FuncZ}_{\bar W^{2\ell, 1}_\alpha\cap\bar W^{2\ell, p}_\alpha}.
		\end{align*}
		
		\item[a)] \emph{Case \eqref{eq:bound:semigroup:uniform:c}.}
		The proof works verbatim as in the case
		\eqref{eq:bound:semigroup:uniform:b}.
		
		\item[d)] For $S_0$, the statement a) is just a restriction of Lemma \ref{lem:bound:semigroup:heatkernel} a). This can be substituted in the proof of b) and c) wherever needed.
		
	\end{itemize}
\end{proof}

If $\vartheta$ is constant, this can be improved to the rates expected from Lemma \ref{lem:bound:semigroup:heatkernel}.

\begin{lemma}\label{lem:bound:semigroup:parametric}
	Let $\vartheta$ be constant and fix $\ell\in\N_0$ and $p\geq 2$, $q\leq 2$ with $1/p+1/q=1$. Then we have for $\delta>0$, $t\geq 0$ and $\varphi\in L^1(\R^d)\cap W^{2\ell, p}(\R^d)$ with support in $\cD_\delta$
	\begin{align*}
		\norm{S_\delta(t)\Delta^\ell\varphi}_{L^p(\cD_\delta)}\lesssim(1\wedge t^{-\ell-d/(2q)})\norm{\varphi}_{L^1(\R^d)\cap W^{2\ell, p}(\R^d)}.
	\end{align*}
\end{lemma}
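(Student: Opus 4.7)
The plan is to combine the analyticity of the Dirichlet heat semigroup with the $L^1\cap L^p\to L^p$ smoothing bound already at our disposal from Lemma \ref{lem:bound:semigroup:uniform} (a) case \eqref{eq:bound:semigroup:uniform:a}. Since $\vartheta$ is constant, the generator on $\cD_\delta$ is just $A_\delta=\vartheta\Delta$ with Dirichlet boundary conditions, and for $\varphi\in C^\infty_c(\cD_\delta)$ all iterates $\Delta^k\varphi$, $k\leq \ell-1$, again have support strictly inside $\cD_\delta$ and therefore lie in $D(A_\delta)$; hence $\varphi\in D(A_\delta^\ell)$ with $A_\delta^\ell\varphi=\vartheta^\ell\Delta^\ell\varphi$. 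By commutativity on $D(A_\delta^\ell)$,
\[
S_\delta(t)\Delta^\ell\varphi \;=\; \vartheta^{-\ell}\,S_\delta(t)A_\delta^\ell\varphi \;=\; \vartheta^{-\ell}\,A_\delta^\ell S_\delta(t/2)\,S_\delta(t/2)\varphi.
\]

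The first key step is to apply the analytic semigroup estimate $\|A_\delta^\ell S_\delta(s)\|_{L^p\to L^p}\lesssim s^{-\ell}$ (Lemma \ref{lem:ACP:17}, and iteration) to the factor $A_\delta^\ell S_\delta(t/2)$. Combined with Lemma \ref{lem:bound:semigroup:uniform} (a) case \eqref{eq:bound:semigroup:uniform:a} applied to $S_\delta(t/2)\varphi$, this yields
\[
\norm{S_\delta(t)\Delta^\ell\varphi}_{L^p(\cD_\delta)} \;\lesssim\; t^{-\ell}\,\norm{S_\delta(t/2)\varphi}_{L^p(\cD_\delta)} \;\lesssim\; t^{-\ell}\bigl(1\wedge t^{-d/(2q)}\bigr)\norm{\varphi}_{L^1(\R^d)\cap L^p(\R^d)}
\]
for every $t>0$. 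For $t\geq 1$ this is $\lesssim t^{-\ell-d/(2q)}\norm{\varphi}_{L^1\cap L^p}$, which is exactly the desired bound on that range.

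For $t\leq 1$, the analytic estimate above blows up, so the second key step is to use contractivity of $S_\delta$ on $L^p$ directly,
\[
\norm{S_\delta(t)\Delta^\ell\varphi}_{L^p(\cD_\delta)} \;\leq\; \norm{\Delta^\ell\varphi}_{L^p(\cD_\delta)} \;\leq\; \norm{\varphi}_{W^{2\ell,p}(\R^d)},
\]
which on $\{t\leq 1\}$ matches $1\wedge t^{-\ell-d/(2q)}$. Combining both regimes produces the required bound $(1\wedge t^{-\ell-d/(2q)})\norm{\varphi}_{L^1\cap W^{2\ell,p}}$.

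The only delicate point is the identification $A_\delta^\ell\varphi=\vartheta^\ell\Delta^\ell\varphi$, which genuinely requires that every intermediate Laplacian $\Delta^k\varphi$ lies in $D(A_\delta)$; it is clean for $\varphi\in C^\infty_c(\cD_\delta)$ and extends to general $\varphi\in L^1(\R^d)\cap W^{2\ell,p}(\R^d)$ with support in $\cD_\delta$ by a standard density argument, since both sides of the claimed inequality are continuous in $\varphi$ for this topology. I expect this approximation step to be the main (minor) technical obstacle; the actual rate follows transparently from the analyticity-plus-smoothing decomposition above.
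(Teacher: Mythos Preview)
Your argument is essentially the paper's own proof: split $S_\delta(t)\Delta^\ell\varphi$ via the analytic estimate from Lemma~\ref{lem:ACP:17} acting on $S_\delta(t/2)\varphi$, then invoke the $\ell=0$ smoothing bound, and for small $t$ use contractivity. One point you should make explicit: Lemma~\ref{lem:bound:semigroup:uniform} as stated carries the restriction $0\le t\le T\delta^{-2}$, which is exactly what the present lemma is meant to remove; the paper handles this by observing that for constant $\vartheta$ the proof of case~\eqref{eq:bound:semigroup:uniform:a} does not actually use that restriction (the Feynman--Kac comparison in Lemma~\ref{lem:AR21:35} produces no exponential growth when $\vartheta_\delta$ is constant), so the $\ell=0$ bound is valid for all $t\ge 0$.
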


\begin{proof}
	The case $\ell=0$ works verbatim as in Lemma \ref{lem:bound:semigroup:uniform} a), where the condition $0\leq t\leq T\delta^{-2}$ is not used. For $\ell\geq 1$, noting that $\vartheta_\delta$ is constant and using Lemma \ref{lem:ACP:17},
	\begin{align*}
		\normSmall{S_\delta(t)\Delta^\ell\varphi}_{L^p(\cD_\delta)}
			&\lesssim \normSmall{S_\delta(t)\Delta_{\vartheta_\delta}^\ell\varphi}_{L^p(\cD_\delta)}
			\lesssim t^{-\ell}\norm{S_\delta(t/2)\varphi}_{L^p(\cD_\delta)} \wedge \normSmall{\Delta_{\vartheta_\delta}^\ell\varphi}_{L^p(\cD_\delta)} \\
			&\lesssim t^{-\ell-d/(2q)}\norm{\varphi}_{L^1(\R^d)\cap L^p(\R^d)}\wedge \norm{\varphi}_{W^{2\ell, p}(\R^d)},
	\end{align*}
	where we applied the case $\ell=0$ in the last estimate. This finishes the proof.
\end{proof}

\section{Auxiliary Results}

\begin{lemma}[Taylor expansion of $\delta\mapsto\Delta_{\vartheta_\delta}$] \label{lem:bias:Laplacian}
	Let $p\geq 1$ and $\vartheta\in C^\beta(\bar\cD)$ with $\beta\leq 2$.
	\begin{enumerate}
		\item[a)]
		We have $\pi_\delta\Delta_{\vartheta_0}=\Delta_{\vartheta_0}\pi_\delta$ and $\pi_\delta\Delta'_{(\nabla\vartheta)_0}=\Delta'_{(\nabla\vartheta)_0}\pi_\delta$, considered as mappings $W^{2, p}(\R^d)\rightarrow L^p(\cD_\delta)$ and $\bar W^{2, p}_1\rightarrow L^p(\cD_\delta)$, respectively.
		
		\item[b)]
		For $\FuncZ\in\bar W^{2, p}_\alpha$, where $0\leq\alpha\leq 1$,
		\begin{align}
			\norm{[\Delta_{\vartheta_\delta}-\Delta_{\vartheta_0}]\pi_\delta \FuncZ}_{L^p(\cD_\delta)} \lesssim \delta^\alpha \norm{\FuncZ}_{\bar W^{2, p}_\alpha},
		\end{align}
		and if $\FuncZ\in \bar W^{2, p}_\alpha$, where $1<\alpha\leq\beta$,
		\begin{align}
			\norm{[\Delta_{\vartheta_\delta}-\Delta_{\vartheta_0} - \delta \Delta'_{(\nabla\vartheta)_0}]\pi_\delta \FuncZ}_{L^p(\cD_\delta)} \lesssim \delta^\alpha\norm{\FuncZ}_{\bar W^{2, p}_\alpha}.
		\end{align}
	\end{enumerate}
\end{lemma}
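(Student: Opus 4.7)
Part (a) is a soft observation. Since $\vartheta(x_0)$ is constant, $\Delta_{\vartheta_0}=\vartheta(x_0)\Delta$ is a constant-coefficient differential operator on $\R^d$, so restriction to $\cD_\delta$ commutes with it: $\pi_\delta\Delta_{\vartheta_0}\FuncZ=\Delta_{\vartheta_0}\pi_\delta\FuncZ$ pointwise on $\cD_\delta$. Similarly, the operator $\Delta'_{(\nabla\vartheta)_0}$ acts pointwise by $\nabla\vartheta(x_0)\cdot x\,\Delta+\nabla\vartheta(x_0)\cdot\nabla$, i.e.\ only through values of $\FuncZ$ and its derivatives at the point $x$, so it commutes with $\pi_\delta$ as well. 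One just needs to check that both sides live in $L^p(\cD_\delta)$, which follows from the assumed regularity/weight of $\FuncZ$.

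For part (b) the starting point is the expansion
\begin{align*}
[\Delta_{\vartheta_\delta}-\Delta_{\vartheta_0}]\FuncZ(x)=(\vartheta_\delta(x)-\vartheta(x_0))\Delta\FuncZ(x)+\nabla\vartheta_\delta(x)\cdot\nabla\FuncZ(x),
\end{align*}
together with $\nabla\vartheta_\delta(x)=\delta(\nabla\vartheta)(\delta x+x_0)$. For $x\in\cD_\delta$ one has the key bound $\delta\abs{x}\leq\mathrm{diam}(\cD)$, which is what allows to trade powers of $\delta\abs{x}$. For $0\leq\alpha\leq 1$, the Lipschitz bound on $\vartheta$ yields $\abs{\vartheta_\delta(x)-\vartheta(x_0)}\lesssim\delta\abs{x}\lesssim\delta^\alpha\abs{x}^\alpha$, and $\abs{\nabla\vartheta_\delta(x)}\leq C\delta\leq C\delta^\alpha$; taking $L^p(\cD_\delta)$-norms and using the definition of $\norm{\cdot}_{\bar W^{2,p}_\alpha}$ (which controls $\norm{\abs{x}^\alpha\Delta\FuncZ}_{L^p(\R^d)}$ as well as $\norm{\nabla\FuncZ}_{L^p(\R^d)}$) gives the first bound.

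For $1<\alpha\leq\beta$, subtract the linearization and group the remainder as
\begin{align*}
[\Delta_{\vartheta_\delta}-\Delta_{\vartheta_0}-\delta\Delta'_{(\nabla\vartheta)_0}]\FuncZ(x)
 &=\bigl(\vartheta_\delta(x)-\vartheta(x_0)-\delta\nabla\vartheta(x_0)\cdot x\bigr)\Delta\FuncZ(x)\\
 &\quad+\bigl(\nabla\vartheta_\delta(x)-\delta\nabla\vartheta(x_0)\bigr)\cdot\nabla\FuncZ(x).
\end{align*}
The Taylor remainder of $\vartheta$ with $(\beta-1)$-H\"older gradient is bounded by $C(\delta\abs{x})^\beta$, and using $\delta\abs{x}\leq\mathrm{diam}(\cD)$ this is at most $C\delta^\alpha\abs{x}^\alpha$. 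For the gradient term, $\nabla\vartheta_\delta(x)-\delta\nabla\vartheta(x_0)=\delta[(\nabla\vartheta)(\delta x+x_0)-\nabla\vartheta(x_0)]$, bounded by $C\delta^\beta\abs{x}^{\beta-1}\lesssim\delta^\alpha\abs{x}^{\alpha-1}$, and $\abs{x}^{\alpha-1}\lesssim 1+\abs{x}^\alpha$ for $\alpha\geq 1$ so that the corresponding $L^p$-norm is bounded by $\norm{\nabla\FuncZ}_{L^p(\R^d)}+\norm{\abs{x}^\alpha\nabla\FuncZ}_{L^p(\R^d)}\leq\norm{\FuncZ}_{\bar W^{2,p}_\alpha}$. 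Combining both contributions gives the claim. The calculation is essentially bookkeeping; the only subtle point, which I would highlight, is the use of the domain-diameter bound $\delta\abs{x}\leq\mathrm{diam}(\cD)$ to convert the natural H\"older rate $\delta^\beta\abs{x}^\beta$ of the Taylor remainder into the weaker but more flexible $\delta^\alpha\abs{x}^\alpha$ compatible with the weight $\alpha<\beta$ used in the $\bar W^{2,p}_\alpha$-norm.
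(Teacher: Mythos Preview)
Your proof is correct and follows essentially the same approach as the paper: both expand $\Delta_{\vartheta_\delta}=\vartheta_\delta\Delta+\nabla\vartheta_\delta\cdot\nabla$, form the same pointwise decomposition, and bound each piece using H\"older regularity of $\vartheta$ and $\nabla\vartheta$. The only cosmetic difference is that the paper applies $(\alpha-1)$-H\"older continuity of $\nabla\vartheta$ directly (noting this holds a fortiori from $(\beta-1)$-H\"older on $\bar\cD$), whereas you first obtain the natural $\delta^\beta\abs{x}^\beta$ bound and then use $\delta\abs{x}\leq\mathrm{diam}(\cD)$ to reduce the exponent to $\alpha$; these are equivalent.
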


\begin{proof} \
	\begin{enumerate}
		\item[a)] This is trivial.
		
		\item[b)] Using $\Delta_{\vartheta_\delta}=\vartheta_\delta\Delta + \nabla\vartheta_\delta\cdot\nabla$, we see that for $x\in \cD_\delta$:
		\begin{align*}
			[\Delta_{\vartheta_\delta}-\Delta_{\vartheta_0}]\pi_\delta \FuncZ(x)
				&= (\vartheta(\delta x + x_0)-\vartheta(x_0))\Delta \FuncZ(x) \\
				&\quad\quad\quad\quad + \delta\nabla\vartheta(\delta x + x_0)\cdot\nabla \FuncZ(x),
		\end{align*}
		thus
		\begin{align*}
			\abs{[\Delta_{\vartheta_\delta}-\Delta_{\vartheta_0}]\pi_\delta \FuncZ(x)}
				&\lesssim \delta^\alpha\abs{x}^\alpha\abs{\Delta \FuncZ(x)} + \delta \sup_{z\in\cD}\abs{\nabla\vartheta(z)}\abs{\nabla \FuncZ(x)},
		\end{align*}
		which yields the first claim.
		Next,
		\begin{align*}
			[\Delta_{\vartheta_\delta}-\Delta_{\vartheta_0} - \delta \Delta'_{(\nabla\vartheta)_0}]\pi_\delta \FuncZ(x)
				&= (\vartheta(\delta x + x_0)-\vartheta(x_0)-\delta\nabla\vartheta(x_0)\cdot x)\Delta \FuncZ(x) \\
				&\quad\quad\quad\quad + \delta(\nabla\vartheta(\delta x+x_0)-\nabla \vartheta(x_0))\cdot\nabla \FuncZ(x).
		\end{align*}
		For some $\xi^*\in[x_0, x_0+\delta x]$, we have $\vartheta(\delta x + x_0)-\vartheta(x_0) = \delta \nabla\vartheta(\xi^*)\cdot x$, so using $(\beta-1)$-H\"older continuity (and a forteriori $(\alpha-1)$-H\"older continuity) on $\bar\cD$ of $\nabla\vartheta$, we obtain
		\begin{align*}
			\abs{[\Delta_{\vartheta_\delta}-\Delta_{\vartheta_0} - \delta \Delta'_{(\nabla\vartheta)_0}]\pi_\delta \FuncZ(x)}
				&\leq \delta\abs{\nabla\vartheta(\xi^*)-\nabla\vartheta(x_0)}\abs{x}\abs{\Delta \FuncZ(x)} \\
				&\quad\quad + \delta\abs{\nabla\vartheta(\delta x+x_0)-\nabla\vartheta(x_0)}\abs{\nabla \FuncZ(x)} \\
				&\leq C\delta^\alpha(\abs{x}^\alpha\abs{\Delta \FuncZ(x)}+\abs{x}^{\alpha-1}\abs{\nabla \FuncZ(x)}),
		\end{align*}
		implying the second claim.
	\end{enumerate}
\end{proof}

\begin{lemma}[properties of $\Delta'_{(\nabla\vartheta)_0}$] \label{lem:LaplaceDiff:properties} \
	\begin{enumerate}
		\item[a)] For $p\geq 2$ and $\FuncZ\in W^{2, p}(\R^d)$ with compact support and $\int_{\R^d}\FuncZ(x)\diff x=0$, there is $\FuncY\in W^{2, p}(\R^d)$ with compact support such that $\Delta'_{(\nabla\vartheta)_0}\FuncZ=\Delta \FuncY$.
		
		\item[b)]
		Let $\ell\in\N_0$, $p\geq 1$, and $\FuncZ\in \bar W^{2\ell+2, p}_1$. Then there is $\FuncY^{(\ell)}\in W^{2\ell, p}(\R^d)$ such that
		\begin{align}
			\Delta'_{(\nabla\vartheta)_0}(\Delta^\ell \FuncZ) = \Delta^\ell \FuncY^{(\ell)}.
		\end{align}
		For $k\in\N_0$, $p\geq 1$ and $\alpha\geq 0$, $\normSmall{\FuncY^{(\ell)}}_{\bar W^{k, p}_\alpha}\lesssim\norm{\FuncZ}_{\bar W^{k+2, p}_{\alpha+1}}$.
		If $\FuncZ$ has compact support, then $\FuncY^{(\ell)}$ has compact support. If $\FuncZ\in C^\infty(\R^d)$, then $\FuncY^{(\ell)}\in C^\infty(\R^d)$.
		
		\item[c)] In the setting of b), for $y\in\R^d$, it holds
		\begin{align}
			\Delta'_{(\nabla\vartheta)_0}\shift_y(\Delta^\ell \FuncZ)
				&= \shift_y\Delta^\ell \FuncY^{(\ell)} + (\nabla\vartheta(x_0)\cdot y)\shift_y\Delta^{\ell+1} \FuncZ.
		\end{align}
	\end{enumerate}
\end{lemma}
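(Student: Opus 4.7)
The identity in (c) is a commutator statement quantifying the failure of $\Delta'_{(\nabla\vartheta)_0}$ to commute with the translation $\shift_y$. My plan is to derive this commutator in a generic form and then combine it with part (b).

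Since $\Delta$ and $\nabla$ are translation-invariant, so are $\Delta^k$ for every $k$, and hence all commute with $\shift_y$. The only non-translation-invariant ingredient of
\[
\Delta'_{(\nabla\vartheta)_0} g(x) = (\nabla\vartheta(x_0)\cdot x)\Delta g(x) + \nabla\vartheta(x_0)\cdot\nabla g(x)
\]
is the multiplier $a\cdot x$, where I abbreviate $a=\nabla\vartheta(x_0)$. Splitting $a\cdot x = a\cdot y + a\cdot(x-y)$ and applying the operator to $\shift_y g$, the $a\cdot(x-y)$ piece reassembles into $\shift_y$ of the full operator applied to $g$, while the $a\cdot y$ piece produces a scalar multiple of $\shift_y\Delta g$. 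A short direct computation therefore yields the generic commutator identity
\[
\Delta'_{(\nabla\vartheta)_0}\shift_y g \;=\; \shift_y\bigl(\Delta'_{(\nabla\vartheta)_0} g\bigr) + (a\cdot y)\shift_y\Delta g,
\]
valid for any sufficiently regular $g$.

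The conclusion then follows by specializing to $g = \Delta^\ell\FuncZ$: the first term becomes $\shift_y\Delta'(\Delta^\ell\FuncZ) = \shift_y\Delta^\ell\FuncY^{(\ell)}$ by part (b), while the second becomes $(a\cdot y)\shift_y\Delta^{\ell+1}\FuncZ$, which is exactly the claimed identity. Under the hypothesis $\FuncZ\in\bar W^{2\ell+2,p}_1$ inherited from (b), the pointwise manipulations are legitimate because the weighted Sobolev norm controls $|x|\cdot|\Delta\FuncZ|$ in $L^p$ and shifts preserve $L^p$-classes. There is no genuine obstacle here: the argument is purely algebraic, and the only \emph{trick} is the affine splitting of $a\cdot x$, which isolates the piece responsible for the commutator defect.
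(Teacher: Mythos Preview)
Your proof of part (c) is correct and follows exactly the same route as the paper: establish the commutator identity $\Delta'_{(\nabla\vartheta)_0}\shift_y g = \shift_y\Delta'_{(\nabla\vartheta)_0} g + (\nabla\vartheta(x_0)\cdot y)\shift_y\Delta g$ via the affine splitting of $a\cdot x$, then specialize to $g=\Delta^\ell\FuncZ$ and invoke (b). The paper's proof is just the same two-line computation without the surrounding commentary.
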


\begin{proof} \
	\begin{enumerate}
		\item[a)] The case $p=2$ is covered by Lemma \ref{lem:AR21:A5}. For $p>2$ it holds that $\FuncZ\in W^{2,2}(\R^d)$ due to its compact support (contained in some smoothly bounded domain $\cK\subset\R^d$, say), so there is $\FuncY\in W^{2, 2}(\R^d)$ with $\Delta \FuncY=\Delta'_{(\nabla\vartheta)_0}\FuncZ$. In particular, $-\FuncY$ solves the Poisson problem with inhomogeneity $\Delta'_{(\nabla\vartheta)_0}\FuncZ$ and Dirichlet boundary conditions on $\cK$, so $\norm{\FuncY}_{W^{2, p}(\R^d)}=\norm{\FuncY}_{W^{2, p}(\cK)}\lesssim \normSmall{\Delta'_{(\nabla\vartheta)_0}\FuncZ}_{L^p(\cK)}\lesssim \norm{\FuncZ}_{W^{2, p}(\cK)}<\infty$ (cf. \cite[Theorem 9.15]{GilbargTrudinger2001}), and therefore $\FuncY\in W^{2, p}(\R^d)$.
		
		\item[b)] Inductively, one shows that
		\begin{align*}
			\Delta^\ell[(\nabla\vartheta(x_0)\cdot x)\Delta \FuncZ]
				&= (\nabla\vartheta(x_0)\cdot x)\Delta^{\ell+1} \FuncZ + \sum_{i=1}^da_{i}^\ell\partial_i \Delta^\ell \FuncZ
		\end{align*}
		for certain coefficients $a_i^\ell\in\R$.
		Indeed, the case $\ell=0$ is trivial, and for the step $\ell\mapsto \ell+1$ use
		\begin{align*}
			\Delta[(\nabla\vartheta(x_0)\cdot x) f]
				&= (\nabla\vartheta(x_0)\cdot x) \Delta f + 2\nabla\vartheta(x_0)\cdot\nabla f
		\end{align*}
		with $f=\Delta^{\ell+1}\FuncZ$.
		Now
		\begin{align*}
			\Delta'_{(\nabla\vartheta)_0}\Delta^\ell \FuncZ
				&= (\nabla\vartheta(x_0)\cdot x)\Delta^{\ell+1} \FuncZ + \nabla\vartheta(x_0)\cdot\nabla\Delta^\ell \FuncZ \\
				&= \Delta^\ell[(\nabla\vartheta(x_0)\cdot x)\Delta \FuncZ] - \sum_{i=1}^da_{i}^\ell\partial_i \Delta^\ell \FuncZ + \nabla\vartheta(x_0)\cdot\nabla\Delta^\ell \FuncZ \\
				&= \Delta^\ell\left[(\nabla\vartheta(x_0)\cdot x)\Delta \FuncZ + \nabla\vartheta(x_0)\cdot\nabla \FuncZ - \sum_{i=1}^da_{i}^\ell\partial_i \FuncZ\right] =: \Delta^\ell \FuncY^{(\ell)}.
		\end{align*}
		The properties of $\FuncY^{(\ell)}$ follow immediately.
		
		\item[c)] By b), it is true that
		\begin{align*}
			\Delta'_{(\nabla\vartheta)_0}\shift_y(\Delta^\ell \FuncZ)
				&= \shift_y\Delta'_{(\nabla\vartheta)_0}(\Delta^\ell \FuncZ) + (\nabla\vartheta(x_0)\cdot y)\shift_y\Delta^{\ell+1} \FuncZ \\
				&= \shift_y\Delta^\ell \FuncY^{(\ell)} + (\nabla\vartheta(x_0)\cdot y)\shift_y\Delta^{\ell+1} \FuncZ.
		\end{align*}
	\end{enumerate}
\end{proof}

\begin{lemma}[shift operator bounds] \label{lem:bound:space:Sobolev}
	Let $\delta\lesssim h\lesssim 1$, and $k\in\N_0$, $p\geq 1$, $\alpha\geq 0$.
		For $\FuncZ\in\bar W^{k, p}_\alpha$,
		we have
		\begin{align}
			\sup_{y\in\cB[\delta/h]}\norm{\shift_y\FuncZ}_{\bar W^{k, p}_\alpha}\lesssim h^\alpha\delta^{-\alpha}\norm{\FuncZ}_{\bar W^{k, p}_\alpha}.
		\end{align}
\end{lemma}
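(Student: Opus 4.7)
The plan is to exploit translation invariance of the unweighted $L^p$-norm and reduce the weighted part to a triangle-inequality argument on the power weight $|x|^\alpha$. Since derivatives commute with spatial translation, $\partial_\multiindex \shift_y \FuncZ = \shift_y \partial_\multiindex \FuncZ$, so it suffices to bound $|\shift_y g|_{p,0}$ and $|\shift_y g|_{p,\alpha}$ by $h^\alpha \delta^{-\alpha}(|g|_{p,0} + |g|_{p,\alpha})$ for $g = \partial_\multiindex \FuncZ$ with $|\multiindex|\le k$, and then sum.

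For the unweighted part, translation invariance of Lebesgue measure gives $|\shift_y g|_{p,0} = |g|_{p,0}$, which is dominated by the right-hand side (noting $h^\alpha\delta^{-\alpha}\gtrsim 1$ since $\delta\lesssim h$). For the weighted part, the substitution $z = x-y$ yields
\begin{equation*}
  |\shift_y g|_{p,\alpha} = \big\| |x|^\alpha g(x-y)\big\|_{L^p} = \big\| |z+y|^\alpha g(z)\big\|_{L^p}.
\end{equation*}
The elementary inequality $|z+y|^\alpha \le C_\alpha(|z|^\alpha + |y|^\alpha)$ (with $C_\alpha = 1$ for $\alpha\le 1$ and $C_\alpha = 2^{\alpha-1}$ for $\alpha\ge 1$) then gives
\begin{equation*}
  |\shift_y g|_{p,\alpha} \lesssim |g|_{p,\alpha} + |y|^\alpha |g|_{p,0}.
\end{equation*}

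The final step is to control $|y|^\alpha$ uniformly over $y \in \cB[\delta/h]$. By definition, $\cB[\delta/h] = (h/\delta)(\cB - x_0)$, so since $\cB$ is a fixed bounded ball, $\sup_{y\in\cB[\delta/h]}|y| \lesssim h/\delta$, and hence $|y|^\alpha \lesssim (h/\delta)^\alpha$. Combining the bounds and summing over $|\multiindex|\le k$ yields
\begin{equation*}
  \sup_{y\in\cB[\delta/h]} \norm{\shift_y\FuncZ}_{\bar W^{k,p}_\alpha} \lesssim (h/\delta)^\alpha \norm{\FuncZ}_{\bar W^{k,p}_\alpha},
\end{equation*}
as claimed. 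There is no real obstacle here; the only mild point is remembering that $h/\delta \gtrsim 1$ so that the constant term $|g|_{p,0}$ arising from the triangle inequality is absorbed into the final estimate without loss.
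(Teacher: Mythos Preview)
Your proof is correct and follows essentially the same approach as the paper: commute derivatives with the shift, substitute to move the translation into the weight, apply the elementary inequality $|z+y|^\alpha \lesssim |z|^\alpha + |y|^\alpha$, and bound $|y|\lesssim h\delta^{-1}$ on $\cB[\delta/h]$, absorbing the unweighted term via $h\delta^{-1}\gtrsim 1$. Your version is slightly more explicit about the constants and the treatment of the unweighted seminorm, but the argument is the same.
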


\begin{proof}
	We see that
	with multiindex $\abs{\multiindex}\leq k$,
	\begin{align*}
		\left(\int_{\R^d}\abs{x}^{\alpha p}\abs{\partial_{\multiindex} \shift_y\FuncZ}^p\diff x\right)^\frac{1}{p}
			&= \left(\int_{\R^d}\abs{x+y}^{\alpha p}\abs{\partial_{\multiindex} \FuncZ}^p\diff x\right)^\frac{1}{p} \\
			&\lesssim \norm{\abs{x}^\alpha\partial_{\multiindex} \FuncZ}_{L^p} + \abs{y}^\alpha\norm{\partial_{\multiindex} \FuncZ}_{L^p},
	\end{align*}
	where $\abs{y}\leq Ch\delta^{-1}$ for some $C>0$, and $h\delta^{-1}\gtrsim 1$.
\end{proof}

\section{Further Results from the Literature}

For reference, we state some results from \cite{AltmeyerReiss2021} and \cite{AltmeyerCialencoPasemann2023}.

\begin{lemma}[see Proposition 3.5 in \cite{AltmeyerReiss2021}]\label{lem:AR21:35}
	There are $c_1,c_2,c_3>0$ such that for $\delta>0$, $\FuncZ\in C(\bar\cD_\delta)$, $x\in\cD_\delta$ and $t>0$
	we have
	\begin{align*}
		\abs{[S_\delta(t)\FuncZ](x)} \leq c_1e^{c_2\delta^2t}[S_0(c_3t)\abs{\FuncZ}](x).
	\end{align*}
\end{lemma}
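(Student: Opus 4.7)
The plan is to prove the pointwise domination through the classical theory of Gaussian upper bounds (Aronson–Nash–Moser) for divergence-form parabolic operators, exploiting that $\Delta_{\vartheta_\delta}=\partial_i(\vartheta_\delta\partial_i)$ is uniformly elliptic with ellipticity constants independent of $\delta$.

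First, I would represent $S_\delta(t)$ through its Dirichlet heat kernel $p_\delta(t,x,y)$ on $\cD_\delta$, so that $[S_\delta(t)\FuncZ](x)=\int_{\cD_\delta}p_\delta(t,x,y)\FuncZ(y)\,\diff y$ for $\FuncZ\in C(\bar\cD_\delta)$. By the parabolic maximum principle / domain monotonicity for divergence-form operators, $p_\delta$ is dominated by the whole-space fundamental solution $\bar p_\delta(t,x,y)$ for (a bounded, uniformly elliptic extension of) $\Delta_{\vartheta_\delta}$ to $\R^d$.

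Next, I would invoke Aronson's Gaussian upper bound: since $\underline\vartheta\leq\vartheta_\delta(\cdot)\leq C$ uniformly in $\delta$, there exist $A_1,A_2>0$ depending only on $d,\underline\vartheta,C$ (hence independent of $\delta$) such that
\begin{align*}
\bar p_\delta(t,x,y)\leq \frac{A_1}{t^{d/2}}\exp\!\Bigl(-\frac{|x-y|^2}{A_2 t}\Bigr),\qquad t>0,\; x,y\in\R^d.
\end{align*}
Choosing $c_3\geq A_2/(4\vartheta_0)$ ensures that the exponent on the right is dominated by that of the Gaussian kernel $q_{\vartheta_0 c_3 t}(x-y)$ generating $S_0(c_3 t)=e^{c_3 t\vartheta_0\Delta}$, and comparing the prefactors yields a constant $c_1>0$ (depending on $A_1,\vartheta_0,c_3,d$) with $\bar p_\delta(t,x,y)\leq c_1\,q_{\vartheta_0 c_3 t}(x-y)$. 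Integrating against $|\FuncZ|$, extended by zero outside $\cD_\delta$, gives
\begin{align*}
\bigl|[S_\delta(t)\FuncZ](x)\bigr|\leq \int_{\cD_\delta}p_\delta(t,x,y)|\FuncZ(y)|\,\diff y\leq c_1\bigl[S_0(c_3 t)|\FuncZ|\bigr](x),
\end{align*}
which is even stronger than the claim; taking any $c_2>0$ and using $1\leq e^{c_2\delta^2 t}$ yields the stated form.

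The main technical obstacle is justifying that Aronson's estimate holds with $A_1,A_2$ uniform in $\delta$ and that the Dirichlet kernel on $\cD_\delta$ is bounded by a whole-space kernel with the same ellipticity constants. Both are standard consequences of the Nash–Moser iteration together with the parabolic maximum principle, but pinning down that the constants depend only on the ellipticity bounds $\underline\vartheta,C$ and on $d$ (not on the regularity of $\vartheta$, nor on the geometry of $\cD_\delta$, nor on $\delta$) is the delicate point. An alternative route avoiding a direct appeal to Aronson uses the Feynman–Kac representation of $S_\delta(t)$ via the diffusion $\diff Y_s=\sqrt{2\vartheta_\delta(Y_s)}\diff B_s+\nabla\vartheta_\delta(Y_s)\diff s$ killed at $\partial\cD_\delta$, followed by a Girsanov transformation to remove the drift (whose supremum is $\cO(\delta)$) and a time change to compare the driftless, variable-coefficient diffusion to $\sqrt{2\vartheta_0}B$. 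Along this route the factor $e^{c_2\delta^2 t}$ emerges naturally from the Novikov-type bound $\bE[M_t^{-2}]\lesssim e^{c_2\delta^2 t}$ on the exponential martingale controlling the removed drift.
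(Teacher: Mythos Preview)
The paper does not give its own proof of this lemma; it is quoted from \cite{AltmeyerReiss2021} in the appendix section ``Further Results from Literature''. Your proposal is correct, and you in fact outline two independent routes.

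Your primary argument via Aronson's Gaussian upper bound is sound and slightly sharper than the stated claim: since the ellipticity constants of $\Delta_{\vartheta_\delta}$ are inherited from those of $\vartheta$ and hence uniform in $\delta$, the Aronson constants $A_1,A_2$ are $\delta$-free, and the kernel comparison then yields the inequality already with $c_2=0$. The exponential factor $e^{c_2\delta^2 t}$ is superfluous along this route. The one technical point you correctly flag---extending $\vartheta_\delta$ from $\bar\cD_\delta$ to $\R^d$ while preserving the ellipticity bounds, and invoking domain monotonicity of Dirichlet heat kernels---is routine.

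The alternative you sketch at the end, via a Feynman--Kac representation and Girsanov removal of the $\cO(\delta)$ drift $\nabla\vartheta_\delta$, is the approach actually used in the cited reference; the present paper explicitly remarks on this stochastic proof strategy in the discussion following Theorem~\ref{thm:approx:semigroup}. Along that route the factor $e^{c_2\delta^2 t}$ arises naturally from the second-moment bound on the exponential martingale and is not an artefact. Your Aronson shortcut is therefore more direct for this particular pointwise bound, while the stochastic argument in \cite{AltmeyerReiss2021} is tailored to handle lower-order perturbations (advection, reaction) simultaneously within the same framework.
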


\begin{lemma}[see Lemma A.5 in \cite{AltmeyerReiss2021}]\label{lem:AR21:A5}
	Let $\FuncZ\in W^{2, 2}(\R^d)$ have compact support.
		If $\int_{\R^d}\FuncZ(x)\diff x=0$, then there is $\FuncY\in W^{2, 2}(\R^d)$ with compact support such that
		\begin{align*}
			\Delta'_{(\nabla\vartheta)_0}\FuncZ = \Delta \FuncY.
		\end{align*}
\end{lemma}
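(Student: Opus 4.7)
Let $v := \nabla\vartheta(x_0)$, so that $\Delta'_{(\nabla\vartheta)_0}\FuncZ(x) = (v\cdot x)\Delta\FuncZ(x) + v\cdot\nabla\FuncZ(x)$. I would begin with the reduction provided by the product-rule identity
$$\Delta\bigl[(v\cdot x)\FuncZ\bigr] = (v\cdot x)\Delta\FuncZ + 2\,v\cdot\nabla\FuncZ,$$
which allows one to rewrite the target equation as
$$\Delta'_{(\nabla\vartheta)_0}\FuncZ = \Delta\bigl[(v\cdot x)\FuncZ\bigr] - v\cdot\nabla\FuncZ.$$
Since $(v\cdot x)\FuncZ$ has the same compact support as $\FuncZ$ and inherits $W^{2,2}$-regularity (the polynomial factor is smooth on $\mathrm{supp}(\FuncZ)$), it suffices to construct a compactly supported $u\in W^{2,2}(\R^d)$ solving $\Delta u = v\cdot\nabla\FuncZ$; setting $\FuncY := (v\cdot x)\FuncZ - u$ then completes the proof.

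To construct $u$, after rotating coordinates so that $v = c\,e_1$, the reduced task is to invert the Laplacian on the directional derivative $c\,\partial_1\FuncZ$ of a compactly supported mean-zero function. My plan is to use the vanishing mean to represent $\FuncZ$ in divergence form. Concretely, since $\FuncZ\in W^{2,2}(\R^d)$ has compact support contained in some ball $B_R$ and satisfies $\int\FuncZ = 0$, Bogovskii's construction yields a compactly supported vector field $w\in W^{3,2}(\R^d;\R^d)$ with $\FuncZ = \nabla\cdot w$. Then $c\,\partial_1\FuncZ = c\,\nabla\cdot(\partial_1 w)$, which suggests seeking $u$ in the form $u = \nabla\cdot U$ where $U$ is a compactly supported vector potential satisfying $\Delta U = c\,\partial_1 w$ componentwise. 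Each component equation is a standard Poisson problem with compactly supported right-hand side; the Newtonian potential delivers an $L^2$-solution, and the mean-zero hypothesis on $\FuncZ$ feeds through to enforce the cancellations that are needed to cut $U$ off outside a larger ball without disturbing the equation.

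Once $u$ is produced as an $L^2$ function with compact support satisfying $\Delta u = c\,\partial_1\FuncZ$ in the distributional sense, $W^{2,2}$-regularity follows by interior elliptic regularity on any smooth ball $B_{R'}\supset\mathrm{supp}(u)$, since $\Delta u\in L^2(\R^d)$ and $u\equiv 0$ on $\partial B_{R'}$. This yields $\FuncY = (v\cdot x)\FuncZ - u \in W^{2,2}(\R^d)$ with compact support and $\Delta\FuncY = \Delta'_{(\nabla\vartheta)_0}\FuncZ$, as required.

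The main obstacle is precisely securing compact support of $u$: the Newtonian potential of a compactly supported source decays at infinity but is not compactly supported, so naive inversion of $\Delta$ on $\R^d$ does not directly produce a localised solution, and the vanishing of $\int\FuncZ$ is only a zeroth-order moment condition, while Laplacians of compactly supported functions satisfy infinitely many moment constraints. The divergence-form representation $\FuncZ = \nabla\cdot w$ with compactly supported $w$ is the key mechanism that replaces these higher moment conditions by the single integral assumption, by effectively shifting one derivative onto the test object. The alternative route via Paley--Wiener (writing $\hat u(\xi) = -ic\,\xi_1\hat\FuncZ(\xi)/|\xi|^2$ and arguing that $\hat\FuncZ(0)=0$ makes this entire of exponential type) seems equally viable but requires careful analysis of the removable singularity on the complex variety $\{|\xi|^2=0\}$.
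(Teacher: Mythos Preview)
The paper does not prove this lemma; it is quoted verbatim from \cite{AltmeyerReiss2021} and placed in the section ``Further Results from Literature'' without argument. So there is no proof in the paper to compare against.

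Your reduction is clean and correct up to the point you yourself flag: after writing $\Delta'_{(\nabla\vartheta)_0}\FuncZ = \Delta[(v\cdot x)\FuncZ] - v\cdot\nabla\FuncZ$, everything hinges on producing a \emph{compactly supported} $u\in W^{2,2}(\R^d)$ with $\Delta u = v\cdot\nabla\FuncZ$. The Bogovskii step does not close this. Writing $\FuncZ=\nabla\cdot w$ with $w$ compactly supported and then asking for compactly supported $U$ with $\Delta U_i = c\,\partial_1 w_i$ reproduces exactly the same obstruction one level down: each $\partial_1 w_i$ has mean zero, but mean zero is far from sufficient for a compactly supported Poisson inverse. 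Your sentence ``the mean-zero hypothesis on $\FuncZ$ feeds through to enforce the cancellations needed to cut $U$ off'' is where the argument breaks, and it cannot be repaired.

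In fact, for $d\ge 2$ the statement as written is false. If $\FuncY$ is compactly supported with $\Delta\FuncY = \Delta'_{(\nabla\vartheta)_0}\FuncZ$, then $\int p\,\Delta'_{(\nabla\vartheta)_0}\FuncZ = 0$ for every harmonic polynomial $p$. A short computation (integrate by parts twice, using $\Delta[(v\cdot x)p]=2\partial_v p$ for harmonic $p$) gives
\[
\int p\,\Delta'_{(\nabla\vartheta)_0}\FuncZ \;=\; \int (\partial_v p)\,\FuncZ.
\]
In $d\ge 2$ the map $\partial_v$ from harmonic polynomials of degree $k+1$ to those of degree $k$ is surjective, so the necessary condition becomes $\int q\,\FuncZ=0$ for \emph{every} harmonic polynomial $q$, not just $q\equiv 1$. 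Concretely, in $d=2$ with $v=e_1$ and $p=x_1^2-x_2^2$ one gets $\int p\,\Delta'_{(\nabla\vartheta)_0}\FuncZ = 2\int x_1\FuncZ$, which need not vanish under the sole hypothesis $\int\FuncZ=0$. (In $d=1$ the image of $\partial_v$ on harmonic polynomials is only the constants, and there the hypothesis $\int\FuncZ=0$ does suffice.) So the gap you identified is not a technicality: the single moment condition is genuinely too weak in higher dimensions, and no construction can succeed without a stronger assumption such as $\FuncZ$ being itself in the range of $\Delta$---which is precisely the situation in the paper's actual applications via Lemma~\ref{lem:LaplaceDiff:properties}(b).
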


\begin{lemma}[see Proposition 17 in \cite{AltmeyerCialencoPasemann2023}]\label{lem:ACP:17}
	Let $\alpha\geq 0$, $p\geq 2$. Then we have
	\begin{align}
		\sup_{0<\delta\leq 1,t> 0}\norm{(-t\Delta_{\vartheta_\delta})^\alpha S_\delta(t)}_{L^p(\cD_\delta)}\leq C_\alpha.
	\end{align}
\end{lemma}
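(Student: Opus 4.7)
The plan is to reduce the uniform-in-$\delta$ estimate to a standard analytic semigroup bound for the \emph{fixed} operator $\Delta_\vartheta$ on $L^p(\cD)$, using the spatial rescaling from Section \ref{sec:localization}. Let $U_\delta\varphi := \varphi_\delta$ denote the localization map, viewed as an isomorphism $L^p(\cD_\delta)\to L^p(\cD)$. A direct substitution gives $\|U_\delta\|_{L^p(\cD_\delta)\to L^p(\cD)} = \delta^{d(1/p-1/2)}$ and $\|U_\delta^{-1}\|_{L^p(\cD)\to L^p(\cD_\delta)} = \delta^{d(1/2-1/p)}$; although each factor blows up or vanishes as $\delta\to 0$ when $p\neq 2$, their product equals one, so that conjugation is uniformly bounded on operator norms,
\begin{align*}
\|U_\delta^{-1} T U_\delta\|_{L^p(\cD_\delta)\to L^p(\cD_\delta)} \leq \|T\|_{L^p(\cD)\to L^p(\cD)}\quad\text{for every }\delta\in(0,1].
\end{align*}

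Differentiating the scaling identity $S(t)U_\delta\varphi = U_\delta S_\delta(t\delta^{-2})\varphi$ of Lemma \ref{lem:semigroup-scaling} at $t=0$ yields the intertwining relation $\Delta_{\vartheta_\delta} = \delta^2 U_\delta^{-1}\Delta_\vartheta U_\delta$, with Dirichlet boundary conditions preserved under $U_\delta$. By the analytic functional calculus this lifts to $S_\delta(t) = U_\delta^{-1}S(t\delta^2)U_\delta$ and $(-t\Delta_{\vartheta_\delta})^\alpha = U_\delta^{-1}(-s\Delta_\vartheta)^\alpha U_\delta$ with $s:=t\delta^2$, hence
\begin{align*}
\|(-t\Delta_{\vartheta_\delta})^\alpha S_\delta(t)\|_{L^p(\cD_\delta)} \leq \sup_{s>0}\|(-s\Delta_\vartheta)^\alpha S(s)\|_{L^p(\cD)}.
\end{align*}
Since $\vartheta\in C^1(\bar\cD)$ is bounded away from zero, $\Delta_\vartheta$ with Dirichlet boundary conditions on the smoothly bounded domain $\cD$ is uniformly elliptic and thus generates a bounded analytic semigroup on $L^p(\cD)$ for every $p\in(1,\infty)$ (see e.g.\ \cite{Yagi2010}). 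The Poincar\'e inequality provides a uniform spectral gap, so classical analytic-semigroup theory gives $\|(-\Delta_\vartheta)^\alpha S(s)\|_{L^p(\cD)}\lesssim s^{-\alpha}$ for $s\leq 1$ together with exponential decay for $s\geq 1$, and both together yield $\|(-s\Delta_\vartheta)^\alpha S(s)\|_{L^p(\cD)}\leq M_\alpha$ uniformly in $s>0$.

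The main obstacle is psychological rather than technical: naively bounding $\|U_\delta\|$ and $\|U_\delta^{-1}\|$ separately gives no useful estimate, and only the cancellation of weights within the conjugation $U_\delta^{-1}TU_\delta$ delivers a $\delta$-free bound. A secondary subtlety is justifying the intertwining identity at the level of the fractional power $(-\Delta_{\vartheta_\delta})^\alpha$; for integer $\alpha$ this follows by iteration of the generator identity, while for general $\alpha\geq 0$ one invokes Balakrishnan's integral representation of fractional powers, which is manifestly preserved under conjugation by a bounded isomorphism. Once these two observations are in place, the proof is a short exercise in analytic semigroup theory applied on the fixed domain $\cD$.
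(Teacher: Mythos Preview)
Your argument is correct. The conjugation identity you extract from Lemma \ref{lem:semigroup-scaling} is the right tool: since $U_\delta$ is a constant multiple of an $L^p$-isometry, conjugation by $U_\delta$ preserves operator norms exactly (not just up to an inequality), so the problem collapses to the single fixed operator $\Delta_\vartheta$ on $L^p(\cD)$. The analytic-semigroup estimate $\|(-sA)^\alpha S(s)\|\lesssim 1$ then follows from sectoriality plus the Dirichlet spectral gap, as you outline.

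The paper does not give an independent proof: it simply cites Proposition 17 of \cite{AltmeyerCialencoPasemann2023} and remarks that the constancy of $\vartheta$ assumed there plays no role. Your route is more self-contained, since it stays entirely within the scaling machinery of Section \ref{sec:localization} and invokes only textbook analytic-semigroup facts on the fixed domain $\cD$. One minor suggestion: you can avoid the fractional-power functional-calculus discussion altogether by observing that the scalar factor in $U_\delta$ cancels in conjugation, so $U_\delta^{-1}TU_\delta$ is literally conjugation by an $L^p$-isometry; then the identity $f(A_\delta)=U_\delta^{-1}f(\delta^2 A)U_\delta$ is immediate for any Borel function $f$ via the spectral/holomorphic functional calculus, with no need to single out integer $\alpha$ or invoke Balakrishnan's formula.
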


\begin{proof}
	The proof works verbatim as in Proposition 17 in \cite{AltmeyerCialencoPasemann2023}.
	There, $\vartheta$ is constant, but this is not used in the proof.
\end{proof}

\section{Variance Bounds}\label{sec:VarianceBounds}

In this section, we provide a useful variance bound for Gaussian random elements.
A similar statement can be found in \cite[Lemma C.1]{ReissStrauchTrottner2023}. We simplify the proof therein by a stochastic argument.

\begin{lemma}[separation of variances]\label{lem:abstract-variance-bounds}
	Let $A, B$ be centered jointly Gaussian random elements in $\R^{\{1,\dots,N_\varepsilon\}\times\cX_\varepsilon}$, and $c\in \R^{\{1,\dots,N_\varepsilon\}\times\cX_\varepsilon}$. Then
	\begin{align}
		\rVar(\norm{A}_w^2) &= 2\sum_{k,\ell=1}^{N_\varepsilon}\sum_{x, y\in\cX_\varepsilon}w^h_\varepsilon(x)w^h_\varepsilon(y)\bE[A_{k, x}A_{\ell, y}]^2, \\
		\rVar(\scp{A}{B}_w) &\leq \sqrt{\rVar(\norm{A}_{\abs{w}}^2)}\sqrt{\rVar(\norm{B}_{\abs{w}}^2)},\\
		\rVar(\scp{A}{c}_w) &\leq \frac{1}{\sqrt{2}}\norm{c}_{\abs{w}}^2\sqrt{\rVar(\norm{A}_{\abs{w}}^2)}.
	\end{align}
\end{lemma}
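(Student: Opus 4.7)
My plan is to reduce everything to Isserlis' formula for centered jointly Gaussian variables, combined with Cauchy--Schwarz and a symmetrization by an independent copy. Throughout, I abbreviate $i=(k,x)\in\{1,\dots,N_\varepsilon\}\times\cX_\varepsilon$ and $w_i=w^h_\varepsilon(x)$, so that $\scp{a}{b}_w=\sum_i w_i a_ib_i$. For (a), the scalar Isserlis identity $\rCov(X^2,Y^2)=2\bE[XY]^2$ for centered jointly Gaussian $X,Y$ applied to each pair $(A_i,A_j)$ and summed against $w_iw_j$ gives the claim immediately, with no further estimation.

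For (b), the bilinear Isserlis identity
$$\rCov(A_iB_i,A_jB_j)=\bE[A_iA_j]\bE[B_iB_j]+\bE[A_iB_j]\bE[A_jB_i]$$
splits $\rVar(\scp{A}{B}_w)$ into two sums $S_1+S_2$, with $S_1=\sum_{ij}w_iw_j\bE[A_iA_j]\bE[B_iB_j]$ and $S_2=\sum_{ij}w_iw_j\bE[A_iB_j]\bE[A_jB_i]$. For $S_1$, Cauchy--Schwarz on the double-index set with weights $|w_iw_j|$ factorizes it, and part~(a) identifies each factor to yield $|S_1|\leq\tfrac{1}{2}\sqrt{\rVar(\norm{A}_{\abs w}^2)\rVar(\norm{B}_{\abs w}^2)}$. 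For $S_2$, the key trick is stochastic: letting $(A',B')$ be an independent copy of $(A,B)$, the independence $(A,B)\perp(A',B')$ together with $(A',B')\stackrel{d}{=}(A,B)$ yields the identities
$$S_2=\bE[\scp{A}{B'}_w\scp{A'}{B}_w]\quad\text{and}\quad \bE[\scp{A}{B'}_w^2]=\bE[\scp{A'}{B}_w^2]=S_1.$$
Cauchy--Schwarz in probability then gives $|S_2|\leq S_1$, and combining with the bound on $S_1$ produces $\rVar(\scp{A}{B}_w)=S_1+S_2\leq\sqrt{\rVar(\norm{A}_{\abs w}^2)\rVar(\norm{B}_{\abs w}^2)}$, as announced.

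For (c), $\scp{A}{c}_w=\sum_i w_ic_iA_i$ is a centered Gaussian with $\rVar(\scp{A}{c}_w)=\sum_{ij}w_iw_jc_ic_j\bE[A_iA_j]$, so a single Cauchy--Schwarz on the double-index set against $|w_iw_j|$ factors this as $\sqrt{\sum_{ij}|w_iw_j|c_i^2c_j^2}\cdot\sqrt{\sum_{ij}|w_iw_j|\bE[A_iA_j]^2}=\norm{c}_{\abs w}^2\cdot\tfrac{1}{\sqrt{2}}\sqrt{\rVar(\norm{A}_{\abs w}^2)}$, which is exactly the claimed bound. The only conceptual obstacle is the cross sum $S_2$ in (b): it cannot be bounded term-by-term because the cross covariances $\bE[A_iB_j]$ carry no sign constraints, and the stochastic identity $S_2=\bE[\scp{A}{B'}_w\scp{A'}{B}_w]$ is precisely the device that sidesteps the more technical Hilbert--Schmidt comparison between diagonal and off-diagonal blocks of the joint Gaussian covariance matrix which underlies \cite{ReissStrauchTrottner2023}.
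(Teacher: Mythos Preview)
Your proof is correct and follows essentially the same route as the paper: Isserlis' formula for the expansion, Cauchy--Schwarz on the double-index set for $S_1$, and an independent-copy trick with Cauchy--Schwarz in $L^2(\Omega)$ for the cross term $S_2$. The only variation is in the pairing: the paper writes the cross term as $\bE[\scp{A}{A'}_{|w|}\scp{B}{B'}_{|w|}]$ (after first passing to absolute weights), whereas you write $S_2=\bE[\scp{A}{B'}_w\scp{A'}{B}_w]$ directly and bound it by $S_1$ --- a slightly cleaner shortcut, but the same idea.
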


\begin{proof}
		\begin{align*}
			\rVar(\scp{A}{B}_w)
				\hspace{-1.7cm}&\hspace{1.7cm}= \sum_{k,\ell=1}^{N_\varepsilon}\sum_{x, y\in\cX_\varepsilon}w^h_\varepsilon(x)w^h_\varepsilon(y)\rCov(A_{k, x}B_{k, x}, A_{\ell, y}B_{\ell,y}) \\
				&= \sum_{k,\ell=1}^{N_\varepsilon}\sum_{x, y\in\cX_\varepsilon}w^h_\varepsilon(x)w^h_\varepsilon(y)\left(\bE[A_{k, x}B_{k, x}A_{\ell, y}B_{\ell,y}] - \bE[A_{k, x}B_{k, x}]\bE[A_{\ell, y}B_{\ell,y}]\right) \\
				&= \sum_{k,\ell=1}^{N_\varepsilon}\sum_{x, y\in\cX_\varepsilon}w^h_\varepsilon(x)w^h_\varepsilon(y)\left(\bE[A_{k, x}A_{\ell, y}]\bE[B_{k, x}B_{\ell,y}] + \bE[A_{k, x}B_{\ell, y}]\bE[A_{\ell, y}B_{k, x}]\right),
		\end{align*}
		proving the first claim for $A=B$. Continuing the general case, the Cauchy-Schwarz inequality in $\R^{\{1,\dots,N_\varepsilon\}^2\times\cX_\varepsilon^2}$ yields
		\begin{align*}
			\rVar(\scp{A}{B}_w)
				\hspace{-2.1cm}&\hspace{2.1cm} \leq \\
				&\left(\sum_{k,\ell=1}^{N_\varepsilon}\sum_{x, y\in\cX_\varepsilon}\hspace{-0.1cm}\abs{w^h_\varepsilon(x)w^h_\varepsilon(y)}\bE[A_{k, x}A_{\ell, y}]^2\right)^{\hspace{-0.1cm}\frac{1}{2}}\hspace{-0.1cm}\left(\sum_{k,\ell=1}^{N_\varepsilon}\sum_{x, y\in\cX_\varepsilon}\hspace{-0.1cm}\abs{w^h_\varepsilon(x)w^h_\varepsilon(y)}\bE[B_{k, x}B_{\ell, y}]^2\right)^{\hspace{-0.1cm}\frac{1}{2}} \\
				&\quad\quad\quad\quad  + \sum_{k,\ell=1}^{N_\varepsilon}\sum_{x, y\in\cX_\varepsilon}\abs{w^h_\varepsilon(x)w^h_\varepsilon(y)}\bE[A_{k, x}B_{\ell, y}]^2 \\
				&= \frac{1}{2}\sqrt{\rVar(\norm{A}_{\abs{w}}^2)}\sqrt{\rVar(\norm{B}_{\abs{w}}^2)} \\
				&\hspace{3cm} + \bE\left(\sum_{k=1}^{N_\varepsilon}\sum_{x\in\cX_\varepsilon}\abs{w^h_\varepsilon(x)}A_{k, x}A_{k, x}'\right)\left(\sum_{k=1}^{N_\varepsilon}\sum_{x\in\cX_\varepsilon}\abs{w^h_\varepsilon(x)}B_{k, x}B_{k, x}'\right)
		\end{align*}
		for an independent copy $(A', B')$ of $(A, B)$. Apply the Cauchy-Schwarz inequality in $L^2(\Omega)$ to the last term, and note that
		\begin{align*}
			\sqrt{\bE\left(\sum_{k=1}^{N_\varepsilon}\sum_{x\in\cX_\varepsilon}\abs{w^h_\varepsilon(x)}A_{k, x}A_{k, x}'\right)^2} = \sqrt{\sum_{k,\ell=1}^{N_\varepsilon}\sum_{x, y\in\cX_\varepsilon}\abs{w^h_\varepsilon(x)w^h_\varepsilon(y)}\bE[A_{k, x}A_{\ell, y}]^2},
		\end{align*}
		and equally for $B$, to prove the bound for $\rVar(\scp{A}{B}_w)$.
		Finally,
		\begin{align*}
			\rVar(\scp{A}{c}_w)
				\hspace{-2cm}&\hspace{2cm}= \sum_{k,\ell=1}^{N_\varepsilon}\sum_{x, y\in\cX_\varepsilon}w^h_\varepsilon(x)w^h_\varepsilon(y)c_{k, x}c_{\ell, y}\rCov(A_{k, x}, A_{\ell, y}) \\
				&\leq \left(\sum_{k,\ell=1}^{N_\varepsilon}\sum_{x, y\in\cX_\varepsilon}\abs{w^h_\varepsilon(x)w^h_\varepsilon(y)}\bE[A_{k, x}A_{\ell, y}]^2\right)^{\hspace{-0.1cm}\frac{1}{2}}\hspace{-0.1cm}\left(\sum_{k,\ell=1}^{N_\varepsilon}\sum_{x, y\in\cX_\varepsilon}\abs{w^h_\varepsilon(x)w^h_\varepsilon(y)}c_{k, x}^2c_{\ell, y}^2\right)^{\hspace{-0.1cm}\frac{1}{2}} \\
				&= \frac{1}{\sqrt{2}}\norm{c}_{\abs{w}}^2\sqrt{\rVar(\norm{A}_{\abs{w}}^2)}.
		\end{align*}
\end{proof}

\section*{Acknowledgements}

This research has been partially funded by the Deutsche Forschungsgemeinschaft (DFG) via SFB 1294, project-ID 318763901, and TRR 388, project-ID 516748464.
The authors like to thank Anton Tiepner and Eric Ziebell for helpful discussions, and two anonymous referees for their helpful reports.
Numerical simulations have been partially performed on the servers of the Humboldt Lab for Empirical and Quantitative Research.

\bibliographystyle{amsalpha}
\bibliography{references}

\end{document}